\newcommand{\M}{{\mathcal M}}       %
\newcommand{\R}{{\mathbb R}}       
\newcommand{\Z}{{\mathbb Z}}       
\newcommand{\DD}{{\mathcal D}}
\newcommand{\HH}{{\mathcal H}}
\newcommand{\LL}{{\mathcal L}}
\newcommand{\QQ}{{\mathcal Q}}
\newcommand{\AZ}{{\mathcal A}}
\newcommand{\GZ}{{\mathcal G}}
\newcommand{\RR}{{\mathcal R}}
\newcommand{\SSS}{{\mathcal S}}
\newcommand{\diam}{\mathop{\rm diam}}
\newcommand{\dist}{{\rm dist}}
\newcommand{\ds}{\displaystyle }
\newcommand{\interior}[1]{{\stackrel{\mbox{\scriptsize$\circ$}}{#1}}}
\newcommand{\rf}[1]{{(\ref{#1})}}
\newcommand{\supp}{\operatorname{supp}}
\newcommand{\vphi}{{\varphi}}
\newcommand{\ve}{{\varepsilon}}
\newcommand{\vu}{{\vspace{1mm}}}
\newcommand{\vv}{{\vspace{2mm}}}
\newcommand{\vvv}{\vspace{4mm}}
\newcommand{\wt}[1]{{\widetilde{#1}}}
\newcommand{\wh}[1]{{\widehat{#1}}}
\newcommand{{\pv}}{{\rm{pv}}}
\newcommand{\LD}{{\mathsf{LD}}}
\newcommand{\bad}{{\mathsf{Bad}}}
\newcommand{\sss}{{\mathsf{Stop}}}
\def\Xint#1{\mathchoice
{\XXint\displaystyle\textstyle{#1}}%
{\XXint\textstyle\scriptstyle{#1}}%
{\XXint\scriptstyle\scriptscriptstyle{#1}}%
{\XXint\scriptscriptstyle\scriptscriptstyle{#1}}%
\!\int}
\def\XXint#1#2#3{{\setbox0=\hbox{$#1{#2#3}{\int}$ }
\vcenter{\hbox{$#2#3$ }}\kern-.58\wd0}}
\def\avint{\Xint-}
\newtheorem{theorem}{Theorem}[section]
\newtheorem{lemma}[theorem]{Lemma}
\newtheorem{mlemma}[theorem]{Main Lemma}
\newtheorem{keylemma}[theorem]{Key Lemma}
\newtheorem*{lemma*}{Lemma}
\newtheorem*{theorem*}{Theorem}
\theoremstyle{definition}
\theoremstyle{remark}
\newtheorem{rem}[theorem]{\bf Remark}
\numberwithin{equation}{section}
\newcommand{\brem}{\begin{rem}}
\newcommand{\erem}{\end{rem}}
\begin{document}

\begin{abstract}
In this paper we show that if $\mu$ is a Borel measure in $\R^{n+1}$ with growth of order $n$, so that the $n$-dimensional Riesz transform $\RR_\mu$ is bounded in $L^2(\mu)$, and $B\subset\R^{n+1}$ is a ball with $\mu(B)\approx r(B)^n$ such that:
\begin{itemize}
\item[(a)] there is some $n$-plane $L$ passing through the center of $B$ such that for some $\delta>0$ small enough, it holds 
$$\int_B \frac{\dist(x,L)}{r(B)}\,d\mu(x)\leq \delta\,\mu(B),$$

\item[(b)] for some constant $\ve>0$ small enough,
$$\int_{B} |\RR_\mu1(x) - m_{\mu,B}(\RR_\mu1)|^2\,d\mu(x) \leq \ve \,\mu(B),$$
where $m_{\mu,B}(\RR_\mu1)$ stands for the mean of $\RR_\mu1$ on $B$ with respect to $\mu$,
\end{itemize}
then there exists a uniformly $n$-rectifiable set $\Gamma$, with $\mu(\Gamma\cap B)\gtrsim \mu(B)$,  and so that $\mu|_\Gamma$ is absolutely
continuous with respect to $\HH^n|_\Gamma$. This result is an essential tool to solve an old question on a two phase problem for harmonic measure in subsequent papers by Azzam, Mourgoglou, Tolsa, and Volberg.
\end {abstract}

\title[The Riesz transform and rectifiability for general Radon measures]{The Riesz transform and quantitative rectifiability for general Radon measures}

\author{Daniel Girela-Sarri\'on}
\address{Departament de Matem\`atiques\\ Universitat Aut\`onoma de Barcelona\\ 
08193 Bellaterra (Barcelona, Catalonia) }
\email{ddgirela@gmail.com}

\author{Xavier Tolsa}
\address{ICREA,  Universitat Aut\`onoma de Barcelona and BGSMath \newline  Departament de Matem\`atiques\\
08193 Bellaterra (Barcelona, Catalonia) }
\email{xtolsa@mat.uab.cat}

\subjclass[2010]{42B20, 28A75, 28A78, 49Q20}
\thanks{The authors were supported by the ERC grant 320501 of the European Research Council (FP7/2007-2013), and also partially supported by 2014-SGR-75 (Catalonia),
 MTM2013-44304-P, 
 MTM-2016-77635-P,  MDM-2014-044 (MICINN, Spain),  and by Marie Curie ITN MAnET (FP7-607647).}

\maketitle

\tableofcontents

\section{Introduction}

In the work \cite{NToV} it was shown that, given and $n$-AD-regular measure in $\R^{n+1}$,
the $L^2(\mu)$ boundedness of the $n$-dimensional Riesz transform implies the uniform
$n$-rectifiability of $\mu$. In the codimension $1$ case, this result solved a long standing problem raised by David and Semmes
\cite{DS1}. In the present paper we obtain a related quantitative
result valid for general Radon measures in $\R^{n+1}$ with growth of order $n$. Our result turns out to be
an essential tool for the solution of an old question on harmonic measure obtained in 
recent works by Azzam, Mourgoglou, Tolsa \cite{AMT} and Azzam, Mourgoglou, Tolsa, Volberg \cite{AMTV}.

To state our main theorem in detail we need to introduce some notation and terminology.
Let $\mu$ be a Radon measure in $\R^{n+1}$. We say that $\mu$ has growth of order $n$ (with constant $C_0$) if
$$\mu(B(x,r))\leq C_0\,r^n\quad\mbox{ for all $x\in\R^{n+1}$ and all $r>0$.}$$
A measure $\mu$ is called $n$-AD-regular (or just AD-regular or Ahlfors-David regular) if there exists some
constant $C>0$ such that
$$C^{-1}r^n\leq \mu(B(x,r))\leq C\,r^n\quad \mbox{ for all $x\in
\supp(\mu)$ and $0<r\leq \diam(\supp(\mu))$.}$$

Given a signed Radon measure $\nu$ in $\R^{n+1}$ we consider the $n$-dimensional Riesz
transform
$$\RR\nu(x) = \int \frac{x-y}{|x-y|^{n+1}}\,d\nu(y),$$
whenever the integral makes sense. For $\ve>0$, its $\ve$-truncated version is given by 
$$\RR_\ve \nu(x) = \int_{|x-y|>\ve} \frac{x-y}{|x-y|^{n+1}}\,d\nu(y).$$
For a positive Radon measure $\mu$ and a function $f\in L^1_{loc}(\mu)$, we denote
$\RR_\mu f\equiv \RR(f\mu)$ and $\RR_{\mu,\ve} f\equiv \RR_\ve(f\mu)$.
We say that the Riesz transform $\RR_\mu$ is bounded in $L^2(\mu)$ if the truncated operators 
$\RR_{\mu,\ve}:L^2(\mu)\to L^2(\mu)$ are bounded uniformly on $\ve>0$.

If  $\mu$ is a measure with no point masses such that $\RR_\mu$ is bounded in $L^2(\mu)$ and 
$\nu$ is a finite Radon measure, the principal
value
$$\pv \RR\nu(x) = \lim_{\ve\to0}\RR_\ve\nu(x)$$
exists $\mu$-a.e. This follows easily from the results of \cite{NToV-pubmat}, arguing as in 
\cite[Chapter 8]{Tolsa-llibre} with the Cauchy transform replaced by the Riesz transform. See Section \ref{secbmo} for the more details.
Abusing notation, we also write $\RR\nu(x)$ instead of $\pv\RR\nu(x)$.

For $f\in L^1_{loc}(\mu)$ and $A\subset \R^{n+1}$ with $\mu(A)>0$, we consider the $\mu$-mean 
$$m_{\mu,A}(f) = \,\avint_A f\,d\mu = \frac1{\mu(A)}\int_A f\,d\mu.$$
Also, given a ball $B\subset\R^{n+1}$ and an $n$-plane $L$ in $\R^{n+1}$, we denote
$$\beta_{\mu,1}^L(B) = \frac1{r(B)^n}\int_B \frac{\dist(x,L)}{r(B)}\,d\mu(x),$$
where $r(B)$ stands for the radius of $B$.
In a sense, this coefficient measures how close the points from $\supp\mu$ are to the $n$-plane $L$ in the ball $B$.

We also set
$$\Theta_\mu(B) = \frac{\mu(B)}{r(B)^n},\qquad P_\mu(B) = \sum_{j\geq0} 2^{-j}\,\Theta_\mu(2^jB).$$
So $\Theta_\mu(B)$ is the $n$-dimensional density of $\mu$ on $B$ and $P_\mu(B)$ is some kind of smoothened version of this density.

\vv
Our main theorem is the following.

\begin{theorem}\label{teo0}
Let $\mu$ be a Radon measure on $\R^{n+1}$ and $B\subset \R^{n+1}$ a ball so that the following conditions
hold:
\begin{itemize}

\item[(a)] For some constant $C_0>0$, $C_0^{-1}r(B)^n\leq \mu(B)\leq C_0\,r(B)^n$.

\item[(b)]  $P_\mu(B) \leq C_0$, and $\mu(B(x,r))\leq C_0\,r^n$ for all $x\in B$ and $0<r\leq r(B)$.

\item[(c)] There is some $n$-plane $L$ passing through the center of $B$ such that for some $0<\delta\ll 1$, it holds $\beta_{\mu,1}^L(B)\leq \delta$.

\item[(d)] $\RR_{\mu|_B}$ is bounded in $L^2(\mu|_{B})$ with $\|\RR_{\mu|_B}\|_{L^2(\mu|_{B})\to L^2(\mu|_{B})}\leq C_1$, and $\RR (\chi_{2B}\mu)\in L^2(\mu|_B)$.

\item[(e)] For some constant $0<\ve\ll1$,
$$\int_{B} |\RR\mu(x) - m_{\mu,B}(\RR\mu)|^2\,d\mu(x) \leq \ve \,\mu(B).$$
\end{itemize}
Then there exists some constant $\tau>0$ such that if $\delta,\ve$ are small enough (depending on $C_0$ and $C_1$),
then there is a uniformly $n$-rectifiable set $\Gamma\subset\R^{n+1}$ such that
$$\mu(B\cap \Gamma)\geq \tau\,\mu(B).$$
The constant $\tau$ and the UR constants of $\Gamma$ depend on all the constants above.
\end{theorem}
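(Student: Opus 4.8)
The plan is to combine a stopping-time / corona decomposition argument with the variational estimates for the Riesz transform coming from hypothesis (e). First I would set up the stopping cubes: starting from a David–Mattila lattice adapted to $\mu|_B$, I would stop whenever the density $\Theta_\mu$ jumps up or down by a large factor, or whenever the $\beta_{\mu,1}^L$-type flatness fails at a controlled threshold. Hypotheses (a), (b), (c) guarantee that at the top level the density is $\approx 1$ and the measure lies in a thin neighborhood of the $n$-plane $L$, so the "low density" and "high density" stopping cubes together capture only a small fraction of $\mu(B)$ provided $\delta$ is small; this is where a Chebyshev-type argument on $\beta_{\mu,1}^L(B)\le\delta$ enters. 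The surviving cubes form a tree $\mathsf{Tree}$ on which $\mu$ has density $\approx 1$ and is uniformly close to $n$-planes, and the goal reduces to producing, on this tree, a Lipschitz graph $\Gamma$ (hence uniformly $n$-rectifiable) carrying a fixed proportion of the mass.

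The heart of the matter is to exploit (e). The idea is that $\RR\mu$ being almost constant in $L^2(\mu|_B)$ forces the measure to be rigid. Concretely, I would test the smallness of $\|\RR\mu - m_{\mu,B}(\RR\mu)\|_{L^2(\mu|_B)}$ against well-chosen functions: differences of the form $\RR\mu\cdot \varphi$ for smooth bumps $\varphi$ adapted to cubes of the tree, and in particular against the measure $\mu$ itself restricted to various scales, using the antisymmetry of the Riesz kernel to generate quantities like $\iint \frac{(x-y)}{|x-y|^{n+1}}\,(d\mu(x)\,d\sigma(y) - d\sigma(x)\,d\mu(y))$ where $\sigma$ is surface measure on $L$ (or on an approximating graph). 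A key algebraic identity, as in the work of Nazarov–Tolsa–Volberg and Eiderman–Nazarov–Volberg, lets one convert $L^2$-oscillation of $\RR\mu 1$ into a positive quadratic form in $\mu$ — morally a "curvature"-type or energy-type expression that is coercive: its smallness forces $\mu$ to be flat at most scales and locations in the tree. This yields that the $\beta$-numbers of $\mu$ satisfy a Carleson packing condition with small constant on $\mathsf{Tree}$, which is exactly the hypothesis needed to run a David–Semmes / Léger-type construction of a uniformly rectifiable graph.

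Once the packing estimate $\sum_{Q\in\mathsf{Tree}} \beta_{\mu,2}^L(Q)^2\,\mu(Q)\lesssim (\varepsilon + \delta)\,\mu(B)$ is in hand, I would invoke the standard machinery: construct an approximating Lipschitz graph $\Gamma$ over $L$ by a stopping-time scheme (choosing at each surviving cube the best approximating plane, controlling the angles between consecutive planes by the $\beta$'s), verify the Lipschitz bound and the lower mass estimate $\mu(\Gamma\cap B)\gtrsim\mu(B)$ from the fact that the "non-graph" cubes are a small-Carleson family, and conclude uniform $n$-rectifiability of $\Gamma$ together with $\mu|_\Gamma \ll \mathcal{H}^n|_\Gamma$ from the density bounds. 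The absolute continuity follows because on the good cubes $\mu$ has density comparable to $\mathcal H^n|_\Gamma$, while the total mass on bad cubes is controlled.

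The main obstacle I expect is the coercivity step: turning hypothesis (e) — an $L^2$ bound on $\RR\mu 1$ minus its mean, which a priori only controls the tangential behavior of the field — into a genuine lower bound forcing flatness. One must carefully choose the comparison measure $\sigma$ on $L$ so that $\RR\sigma$ is essentially constant (exploiting that the Riesz transform of a flat measure is well understood), then estimate $\|\RR\mu - \RR\sigma\|_{L^2(\mu)}$ using both (e) and the $L^2(\mu)$-boundedness from (d), and finally extract from the smallness of this difference a bound on $\beta$'s. The technical difficulties here — maximal principle / removing truncation errors, handling the tails via $P_\mu(B)\le C_0$, and the passage from a single-scale estimate to the Carleson packing — are where the bulk of the work lies, and they require the full strength of the $T1$-type theory for the Riesz transform on general growth measures.
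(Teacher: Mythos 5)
Your outline founders on the two steps you yourself flag as the "heart of the matter," and unfortunately neither can be carried out as described. First, the coercivity step: you propose to convert the smallness of $\|\RR\mu-m_{\mu,B}(\RR\mu)\|_{L^2(\mu|_B)}$ into a positive quadratic form ("curvature/energy-type") whose smallness gives a small-constant Carleson packing condition for $\beta$-numbers, and then to run a L\'eger/David--Semmes graph construction. For the codimension-one Riesz kernel with $n\geq 2$ there is no analogue of the Melnikov--Verdera symmetrization identity: the quantity you would obtain by symmetrizing the kernel is not a nonnegative integrand, and cancellations between scales are precisely why the problem is hard. This is the reason the actual proof does not attempt to control $\beta$-numbers from hypothesis (e) at all; instead it argues by contradiction, replaces $\mu$ on a suitable flat cube $Q_0$ by a periodized and smoothened measure $\eta$, and proves a lower bound $\int_{Q_0}|\RR\eta|^2\,d\eta\gtrsim\eta(Q_0)$ by a variational argument (perturbing a minimizer of a functional $J$, obtaining the pointwise inequality $|\RR\nu|^2+2\RR^*((\RR\nu)\nu)\lesssim\lambda$ $\nu$-a.e.) combined with a maximum principle in a strip. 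That lower bound, not a $\beta$-packing estimate, is the substitute for coercivity.

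Second, your claim that the low-density and high-density stopping cubes "capture only a small fraction of $\mu(B)$" by a Chebyshev argument on $\beta_{\mu,1}^L(B)\leq\delta$ is false, and it hides the main difficulty of the theorem. Flatness gives no lower density bound: a measure can lie in an arbitrarily thin neighborhood of $L$ and still have vanishing lower $n$-density $\mu$-a.e., in which case essentially all of $Q_0$ is covered by low-density cells and no graph carrying positive $\mu$-mass with $\mu\ll\HH^n$ can be produced by your scheme. Ruling this out is exactly the Key Lemma of the paper (the low-density cells cannot cover a proportion $1-\ve_0$ of $\mu(Q_0)$), and its proof consumes most of the work: the assumption that they do leads, after the periodization/smoothing, to $\int_{Q_0}|\RR\eta|^2\,d\eta\ll\eta(Q_0)$, contradicting the variational lower bound. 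Finally, even after the Key Lemma one does not build a Lipschitz graph directly; one gets a set $F$ with $\mu(B(x,r))\approx r^n$ at all scales, constructs an auxiliary AD-regular measure $\zeta$ (adding spheres on suitable doubling cells) with $\RR_\zeta$ bounded in $L^2(\zeta)$, and invokes the Nazarov--Tolsa--Volberg theorem to obtain the uniformly rectifiable set $\Gamma=\supp\zeta$. So the architecture you propose (corona + $\beta$-packing + graph construction) would need, as input, precisely the quantitative facts that are not available from (a)--(e) without the contradiction/variational machinery.
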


\vv

Some remarks are in order. First, we mention that the notion of uniform $n$-rectifiability 
will be introduced in Section \ref{secprelim}. For the moment, for the reader's convenience,
let us say that this a quantitative (and stronger) version of the notion of $n$-rectifiability.
The UR constants are just the constants involved in the definition of  uniform $n$-rectifiability.
We also remark that it is immediate to check that the condition (b) above holds, for example, if
$\mu$ has growth of order $n$ (with constant $\frac12C_0$). The statement in (b) which involves $P_\mu(B)$ is somewhat more general 
and it is more convenient for applications. Finally, we warn the reader that in the case that
$\mu$ is not a finite measure, the statement (e) should be understood in a BMO sense. The fact that $P_\mu(B)<\infty$ by the assumption (b) guarantees that $\RR\mu(x) - m_{\mu,B}(\RR\mu)$
is correctly defined. See again Section \ref{secbmo} for more details.

Note that, in particular, the theorem above ensures the existence of some piece of positive $\mu$-measure of $B$ where $\mu$ and the Hausdorff measure $\HH^n$ are mutually absolutely continuous on some subset of $\Gamma$. This fact, which at first sight may appear rather surprising, is one of the main difficulties for the proof of this result.

It is worth comparing Theorem \ref{teo0} to L\'eger's theorem on Menger curvature.
Given three  points $x,y,z\in\R^2$, their Menger curvature is
$$c(x,y,z) = \frac1{R(x,y,z)},$$
where $R(x,y,z)$ is the radius of the circumference passing through $x,y,z$ if they are pairwise different, and $c(x,y,z) =0$ otherwise. The curvature of $\mu$ is defined by
$$c^2(\mu) = \iiint c(x,y,z)^2\,d\mu(x)\,d\mu(y)\,d\mu(z).$$
This notion was first introduced by Melnikov \cite{Melnikov} when studying analytic capacity and, modulo an ``error term'', is comparable to the squared $L^2(\mu)$ norm of the Cauchy transform
of $\mu$ (see \cite{MV}). One of the main ingredients of the proof of Vitushkin's conjecture 
for removable singularities for bounded analytic functions by
Guy David \cite{David-vitus} is L\'eger's theorem \cite{Leger} (sometimes called also David-L\'eger theorem).
The quantitative version of this theorem asserts that if $\mu$ is a Radon measure in $\R^2$ with growth of order $1$ and $B$ a ball such
that $\mu(B)\approx r(B)$, and further $c^2(\mu|_B)\leq \ve\,\mu(B)$ for some 
$\ve>0$ small enough, then there exists some (possibly rotated) Lipschitz graph $\Gamma\subset\R^2$ 
such that $\mu(B\cap\Gamma)\geq \frac9{10}\mu(B)$. In particular, as in Theorem \ref{teo0}, it follows that a big piece of
$\mu|_B$ is mutually absolutely continuous with respect to $\HH^1$ on some subset of $\Gamma$.
In a sense, one can think that Theorem \ref{teo0} is an analogue
for Riesz transforms of the quantitative L\'eger theorem for Menger curvature.
Indeed, the role of the assumption (e) in Theorem \ref{teo0} is played by the condition $c^2(\mu|_B)\leq \ve\,\mu(B)$. Further,  it is not difficult to check that this condition implies 
that there exists some line $L$ such that $\beta_{\mu,1}^L(B)\leq \delta\,\mu(B)$, with $\delta=\delta(\ve)\to 0$ as $\ve\to0$, analogously to the assumption (c) of Theorem \ref{teo0}. 

On the other hand, from the theorem of L\'eger described above, it follows easily that if $\HH^1(E)<\infty$ and $c^2(\HH^1|_E)<\infty$, then $E$ is $1$-rectifiable. The analogous result 
in the codimension $1$ case in $\R^{n+1}$ (proved in \cite{NToV-pubmat}) asserts if $E\subset \R^{n+1}$, $\HH^n(E)<\infty$, $\HH^n|_E$ has growth of order $n$,
and
$\|\RR(\HH^n|_E)\|_{L^2(\HH^n|_E)}<\infty$, then $E$ is $n$-rectifiable. However, as far as we know, this cannot be proved easily using Theorem \ref{teo0}.

The proof of Theorem \ref{teo0} is substantially different from the one of L\'eger's theorem. When estimating
the $L^2(\mu)$ norm of $\RR\mu$ 
we are dealing with a singular integral and there may be cancellations among different scales. So the situation is more delicate than in the case of the curvature $c^2(\mu)$, which is defined by a non-negative integrand (namely, the squared Menger 
curvature of three points).

To prove Theorem \ref{teo0} we will apply some of the techniques developed in \cite{ENV} and
\cite{NToV}. In particular, by using a variational argument 
we will estimate from below the $L^2(\mu)$ norm of the Riesz transform of a
 suitable periodization of a smoothened version of the measure $\mu$ restricted to some appropriate 
cube $Q_0$. The assumption that 
$\beta_{\mu,1}^L(B)\leq \delta$ in (c) is necessary for technical reasons, and we do not know if
the theorem holds without this condition. For a more detailed scheme of the proof of the theorem we refer the reader 
to Section \ref{secscheme}.

\vv
Finally we are going to announce  the aforementioned result on harmonic measure 
whose proof uses
Theorem \ref{teo0} as an essential tool. 

\begin{theorem}\label{t:main}
For $n\geq 2$, let $\Omega_1,\Omega_2\subset \R^{n+1}$ be two domains and denote by 
$\omega_1$ and $\omega_2$ their respective harmonic measures. Let $E\subset \partial \Omega_1\cap
\partial \Omega_2$ 
 be a Borel set such that $\omega_{1}|_E\ll\omega_{2}|_E\ll\omega_{1}|_E$. Then $E$ contains an $n$-rectifiable subset $F$ with $\omega_1(E\setminus F)=0$ such that
$\omega_1|_F$ and $\omega_2|_F$ are mutually absolutely continuous with respect to the Hausdorff measure $\HH^{n}|_F$. 
\end{theorem}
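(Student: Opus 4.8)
The plan is to localize the problem and reduce it to an application of Theorem \ref{teo0} through the link between harmonic measure and the Riesz transform. First I would recall that for a domain $\Omega\subset\R^{n+1}$ with harmonic measure $\omega$, at $\omega$-a.e.\ point $\xi$ where the upper density $\Theta^{n,*}(\xi,\omega)$ is positive and finite, the measure $\omega$ behaves (at many scales) like an $n$-dimensional measure with controlled growth, and the gradient of the Green function produces, after normalization, a vector that plays the role of $\RR\omega$. The mutual absolute continuity $\omega_1|_E\ll\omega_2|_E\ll\omega_1|_E$ lets us pass to the common part; by standard differentiation theory it suffices to work on the set where both densities $\Theta^{n,*}(\xi,\omega_i)$ are positive and finite for $i=1,2$, since the complement of this set within $E$ is $\omega_1$-null (here one uses that $\omega_i$ cannot have positive density on a set where the other is null, together with the known upper bounds/lower bounds on harmonic measure density coming from, e.g., the work in \cite{AMT}, \cite{AMTV}).

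Next I would set up the main dichotomy/stopping-time decomposition. Fix a generic point $\xi$ in the good set $G\subset E$ and a small ball $B=B(\xi,r)$. Using the relation between $\nabla G_{\Omega_i}$ and $\omega_i$, the key geometric fact — proven in the companion works — is that on a large portion of $G$, at many scales, the harmonic measure $\omega_i|_B$ satisfies the hypotheses (a), (b), (d) of Theorem \ref{teo0}: the AD-regular-type density bound at scale $B$, the growth condition, and the $L^2(\mu)$-boundedness of the truncated Riesz transform with a uniform bound, the last coming from the fact that $\RR(\chi\,\omega_i)$ is essentially $c_n\nabla G_{\Omega_i}$ which is controlled via boundary Harnack and Caccioppoli-type estimates. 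The flatness hypothesis (c), i.e.\ $\beta_{\omega_i,1}^L(B)\le\delta$ for a suitable $n$-plane $L$, is obtained at a positive-density subset of scales from the fact that two distinct domains share the boundary piece $E$ and the harmonic measures are comparable: one shows that the blow-ups of $\omega_i$ at $\omega_1$-a.e.\ point of $G$ are flat (this is where the two-phase nature and the comparability $\omega_1\approx\omega_2$ are crucial), so $\beta_1$ is small at many scales. Finally, hypothesis (e) — that $\RR\omega_i$ has small oscillation in $L^2(\omega_i|_B)$ — follows because $\nabla G_{\Omega_i}$ is comparable to a constant vector on a Whitney region attached to $B$ whenever $\omega_i|_B$ is sufficiently flat and doubling; the oscillation is controlled by the flatness $\delta$.

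With these four verifications in hand, Theorem \ref{teo0} yields, for each such ball $B=B(\xi,r)$, a uniformly $n$-rectifiable set $\Gamma_B$ with $\omega_1(B\cap\Gamma_B)\ge\tau\,\omega_1(B)$, and moreover $\omega_1|_{\Gamma_B}\ll\HH^n|_{\Gamma_B}$. A Vitali-type covering argument applied to the good set $G$ then produces a countable family of such balls whose associated rectifiable pieces cover $\omega_1$-almost all of $G$; taking $F=\bigcup_k(\Gamma_{B_k}\cap G)$ (or rather the $n$-rectifiable set $\bigcup_k\Gamma_{B_k}$ intersected with $E$) gives an $n$-rectifiable subset of $E$ with $\omega_1(E\setminus F)=0$ and $\omega_1|_F\ll\HH^n|_F$. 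Symmetry between $\Omega_1$ and $\Omega_2$ — using $\omega_2|_E\ll\omega_1|_E$ to transport the conclusion, or by running the same argument with the roles swapped and then intersecting the two resulting sets — gives $\omega_2|_F\ll\HH^n|_F$ as well; combined with $\omega_1|_F\approx\omega_2|_F$ this yields mutual absolute continuity with respect to $\HH^n|_F$.

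The main obstacle is the verification of hypotheses (c), (d), (e) of Theorem \ref{teo0} for harmonic measure, i.e.\ establishing the quantitative control of $\nabla G_{\Omega_i}$ (hence of $\RR\omega_i$) in terms of the density and flatness of $\omega_i$ near the shared boundary, and in particular proving that at $\omega_1$-a.e.\ point of the good set the blow-ups of $\omega_i$ are flat. This requires the full machinery of the two-phase analysis — boundary Harnack inequalities, comparison of Green functions, Alt–Caffarelli–Friedman-type monotonicity or its substitutes, and the dimensional bounds on harmonic measure — developed precisely in \cite{AMT} and \cite{AMTV}; the present paper's contribution, Theorem \ref{teo0}, is the analytic engine that converts the smallness of the Riesz-transform oscillation into rectifiability, and the harmonic-measure input is the delicate part that feeds it.
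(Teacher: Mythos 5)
A preliminary remark on the comparison itself: this paper contains no proof of Theorem \ref{t:main}. The theorem is only announced here; as the text around it states, it was proved (in increasing generality) in \cite{AMT} and \cite{AMTV}, and the present paper's role is to supply Theorem \ref{teo0} as the analytic engine. So your proposal can only be judged as a sketch of the argument of those companion works, and at that level you have identified the intended route correctly: relate $\RR\omega_i$ to $\nabla G_{\Omega_i}$, verify the hypotheses of Theorem \ref{teo0} on suitably chosen balls centered on a good subset of $E$, extract big uniformly rectifiable pieces carrying a fixed proportion of $\omega_1$, and exhaust by a covering argument, transporting the conclusion to $\omega_2$ via $\omega_2|_E\ll\omega_1|_E$.

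As a proof, however, the proposal has genuine gaps, and they sit exactly where the paper says the difficulty lies. You reduce to the set where $0<\Theta^{n,*}(\xi,\omega_i)<\infty$ by ``standard differentiation theory'', but no such theory is available: nothing is assumed about local finiteness, or even $\sigma$-finiteness, of $\HH^n$ on $E$ or on the boundaries (the paper explicitly flags this as the main difficulty), so the claim that the complement of that set is $\omega_1$-null is not standard and is in fact one of the delicate outputs of the two-phase analysis (ACF-type monotonicity, comparison of Green functions) in \cite{AMT} and \cite{AMTV}. Likewise, your verification of hypotheses (c) and (e) of Theorem \ref{teo0} --- flatness of $\omega_i$ at many scales via blow-ups, and smallness of the BMO-type oscillation of $\RR\omega_i$ via the behavior of $\nabla G_{\Omega_i}$ on Whitney regions --- is asserted by appeal to ``the full machinery'' of the companion papers rather than argued; these are precisely the nontrivial steps, while the final covering/exhaustion and the transfer from $\omega_1$ to $\omega_2$ are routine. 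One smaller point: to obtain mutual absolute continuity of $\omega_i|_F$ with $\HH^n|_F$, and not merely $\omega_i|_F\ll\HH^n|_F$, one needs the two-sided density estimate of the type \rf{eqAD*} on the piece produced inside the proof of Theorem \ref{teo0}, not just the statement of Theorem \ref{teo0} as given; your last paragraph glosses over this. In short, the proposal is an accurate roadmap of the known proof, but every hard estimate is outsourced to \cite{AMT} and \cite{AMTV}, so it does not stand as a proof on its own.
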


This theorem has been proved first by
Azzam, Mourgoglou, and Tolsa in \cite{AMT} under the additional assumptions that $\partial\Omega^1=\partial\Omega^2$ and that both $\Omega_1$ and $\Omega_2$ satisfy the so-called capacity density condition. The final version stated above is due to 
Azzam, Mourgoglou, Tolsa and Volberg and has appeared in \cite{AMTV}. Note that Theorem \ref{t:main}  does not assume
that either the boundaries of $\Omega_1$, $\Omega_2$, or the set $E$ itself have locally finite Hausdorff measure 
$\HH^n$. This is the main difficulty in the proof of the theorem, which is solved by using the connection between harmonic measure and Riesz transforms and applying Theorem \ref{teo0}.

Up to now the result stated in Theorem \ref{t:main} was known only in the case when $\Omega_1$, $\Omega_2$ are planar
domains, by results of Bishop, Carleson, Garnett and Jones \cite{BCGJ} and Bishop \cite{Bishop}, and it was an open problem
to extend this to higher dimensions (see Conjecture 8 in \cite{Bishop-conjectures}). 
For a partial result for NTA domains in the higher dimensional case, see the nice work \cite{KPT} by 
Kenig, Preiss, and Toro.

\vvv


\section{Preliminaries}\label{secprelim}

\subsection{Notation}

In this paper  we will use the letters $c,C$ to denote
constants (quite often absolute constants, perhaps depending on $n$) which may change their values at different
occurrences. On the other hand, constants with subscripts, such as $C_1$, do not change their values
at different occurrences.

We will write $a\lesssim b$ if there is $C>0$ so that $a\leq Cb$ and $a\lesssim_{t} b$ if the constant $C$ depends on the parameter $t$. We write $a\approx b$ to mean $a\lesssim b\lesssim a$ and define $a\approx_{t}b$ similarly. 

We denote the open ball of radius $r$ centered at $x$ by $B(x,r)$. For a ball $B=B(x,r)$ and $a>0$ we write $r(B)$ for its radius and $a B=B(x,a r)$. The notation $A(x,r_1,r_2)$ stands
for an open annulus centered at $x$ with inner radius $r_1$ and outer radius $r_2$.

\subsection{Cubes and densities}

Given a cube $Q$, we denote by $\ell(Q)$ its side length.  Unless otherwise stated, we assume that its sides are parallel to
the coordinate axes of $\R^{n+1}$. By $a Q$ we denote a cube concentric with $Q$
with side length $a\ell(Q)$. We write
$$\Theta_\mu(Q) = \frac{\mu(Q)}{\ell(Q)^n},\qquad P_\mu(Q) = \sum_{j\geq0} 2^{-j}\,\Theta_\mu(2^jQ).$$

We also consider the pointwise lower and upper $n$-dimensional densities
$$\Theta_*^n(x,\mu) = \liminf_{r\to0+}\frac{\mu(B(x,r))}{(2r)^n},\qquad
\Theta^{n,*}(x,\mu) = \limsup_{r\to0+}\frac{\mu(B(x,r))}{(2r)^n}.$$

\subsection{Rectifiability and uniform rectifiability}

A set $E\subset \R^d$ is called $n$-rectifiable if there are Lipschitz maps
$f_i:\R^n\to\R^d$, $i=1,2,\ldots$, such that 
\begin{equation}\label{eq001}
\HH^n\biggl(E\setminus\bigcup_i f_i(\R^n)\biggr) = 0,
\end{equation}
where $\HH^n$ stands for the $n$-dimensional Hausdorff measure. 
Also, one says that 
a Radon measure $\mu$ on $\R^d$ is $n$-rectifiable if $\mu$ vanishes out of an $n$-rectifiable
set $E\subset\R^d$ and moreover $\mu$ is absolutely continuous with respect to $\HH^n|_E$.

A measure $\mu$ in $\R^d$ is  uniformly  $n$-rectifiable if it is 
$n$-AD-regular and
there exist $\theta, M >0$ such that for all $x \in \supp(\mu)$ and all $r>0$ 
there is a Lipschitz mapping $g$ from the ball $B_n(0,r)$ in $\R^{n}$ to $\R^d$ with $\text{Lip}(g) \leq M$ such that$$
\mu (B(x,r)\cap g(B_{n}(0,r)))\geq \theta r^{n}.$$
In the case $n=1$, it is known that $\mu$ is uniformly $1$-rectifiable if and only if $\supp(\mu)$ is contained in a rectifiable curve $\Gamma$ in $\R^d$ such that the arc length measure on $\Gamma$ is $1$-AD-regular.

A set $E\subset\R^d$ is called $n$-AD-regular if $\HH^n|_E$ is $n$-AD-regular, and it is called
uniformly $n$-rectifiable if $\HH^n|_E$ is uniformly  $n$-rectifiable.

\subsection{Riesz transforms}\label{secbmo}

We denote by $K$ the kernel of the $n$-dimensional Riesz transform. That is,
$$K(x) = \frac{x}{|x|^{n+1}}.$$

As mentioned in the Introduction, if $\mu$ is $n$-AD-regular and $\RR_\mu$ is bounded in $L^2(\mu)$, then
it has been shown in \cite{NToV} that $\mu$ must be uniformly $n$-rectifiable.
If we do not assume $\mu$ to be $n$-AD-regular and instead we just suppose that $\mu$ has no point masses, then the $L^2(\mu)$ boundedness of $\RR_\mu$ implies that
$\mu$ satisfies the growth condition
\begin{equation*}
\mu(B(x,r))\leq C_0\,r^n\quad\mbox{ for all $x\in\supp\mu$, $r>0$.}
\end{equation*}
See \cite[p.\ 56]{David-wavelets}, for example.
Also, in this case, it has been proved in \cite{NToV-pubmat} that 
$\mu$ is of the form
$$\mu = \mu_r + \mu_0,$$
where $\mu_r$ is $n$-rectifiable and $\mu_0$ has vanishing $n$-dimensional upper density $\mu_0$-a.e.\ (i.e., 
$\Theta^{n,*}(x,\mu_0)=0$ for $\mu_0$-a.e.\ $x\in\R^{n+1}$). This implies that,
for any finite Radon measure $\nu$,  the principal values $\RR\nu(x)\equiv \pv\RR\nu(x)$ exists 
for $\mu$-a.e.\ $x\in\R^{n+1}$. See for example the detailed arguments in
\cite[Chapter 8]{Tolsa-llibre} for the case of the Cauchy transform. 
 
\vv
Now let $\mu$ and $B$ be as in Theorem \ref{teo0}.
The fact that $P_\mu(B)<\infty$ in Theorem \ref{teo0} guaranties that $\RR\mu(x)- \RR\mu(y)$ is defined for $\mu$-a.e.\
$x,y\in B$ in a ``BMO sense''.  This means the following. We write, by definition,
\begin{equation}\label{eqbmo*1}
\RR\mu(x) -\RR\mu(y):= \RR(\chi_{2B}\mu)(x) - \RR(\chi_{2B}\mu)(y)  + 
\int_{\R^{n+1}\setminus 2B} \bigl[K(x-z) -K(y-z)\bigr]\,d\mu(z)
.
\end{equation}
The discussion in the previous paragraph ensures the existence of the principal values $\RR(\chi_{2B}\mu)(x)$, $\RR(\chi_{2B}\mu)(y)$ for $\mu$-a.e.\ $x,y\in B$,
because of the $L^2(\mu|_B)$ boundedness of $\RR_{\mu|_B}$. On the other hand,
by standard estimates, it is immediate to check that
\begin{equation}\label{eqbmo*1.5}
\int_{\R^{n+1}\setminus 2B} \bigl|K(x-z) -K(y-z)\bigr|\,d\mu(z)\lesssim P_\mu(B)<\infty,
\end{equation}
and thus the integral on the right hand side of \rf{eqbmo*1}
makes sense too. 

Given $x\in B$ and a subset $F\subset B$ with $\mu(F)>0$,  we write
\begin{equation}\label{eqbmo*2}
\RR\mu(x) - m_{\mu,F}(\RR\mu) := \frac1{\mu(F)}\int_F \bigl(\RR\mu(x) - \RR\mu(y)\bigl)\,d\mu(y).
\end{equation}
From the definition \rf{eqbmo*1} and the estimate \rf{eqbmo*1.5}, we deduce that 
$$\frac1{\mu(F)}\int_F \bigl|\RR\mu(x) - \RR\mu(y)\bigl|\,d\mu(y)
\leq |\RR(\chi_{2B}\mu)(x)| + \frac1{\mu(F)}\int_F \bigl|\RR(\chi_{2B}\mu)(y)\bigl|\,
d\mu(y) + C\,P_\mu(B).$$
Recall that $\RR (\chi_{2B}\mu)\in L^2(\mu|_B)$ by (e) in Theorem \ref{teo0}, and thus 
$\int_F \bigl|\RR(\chi_{2B}\mu)(y)\bigl|\,
d\mu(y)<\infty$. Hence, the integral on the right hand side of \rf{eqbmo*2} makes sense for all
$x$ for which the principal value $\RR (\chi_{2B}\mu)(x)$ exists (in particular, for $\mu$-a.e.\ 
$x\in B$). Further, the preceding estimate also shows that 
$$\RR\mu(x) - m_{\mu,F}(\RR\mu)\in L^2(\mu|_B)$$
for any set $F$ with $\mu(F)>0$, and thus the integral
$$\int_{B} |\RR\mu(x) - m_{\mu,B}(\RR\mu)|^2\,d\mu(x)$$
in the assumption (e) of Theorem \rf{teo0} is finite.
\vv

Finally we remark that the hypothesis that $\RR (\chi_{2B}\mu)\in L^2(\mu|_B)$ in (e) of Theorem \ref{teo0} is superfluous. Indeed, the assumption that $\RR_{\mu|_{B}}$ is bounded in $L^2(\mu|_{B})$
ensures that $\RR(\chi_B\mu)\in L^2(\mu|_{B})$. Also,
from the fact that $\mu(B(x,r))\leq C_0\,r^n$ for all $x\in B$, one can deduce
that 
$\RR_{\mu|_{2B\setminus B}}: L^2(\mu|_{2B\setminus B})\to L^2(\mu|_{B})$ is bounded
(see \cite[Lemma 3]{Verdera-Arkiv} for a related argument, for example), and thus 
$\RR(\chi_{2B\setminus B}\mu)\in L^2(\mu|_{B})$ too. However, to avoid technicalities we have preferred
to write the theorem as above and so we skip the detailed arguments.

\subsection{Scheme of the proof of Theorem \ref{teo0}}\label{secscheme}

In the first step of the proof we find a suitable cube $Q_0$ deep inside the ball $B$ which contains some significant 
portion 
of the measure $\mu$ and so that the $\mu$ is very flat in a big neighborhood of $Q_0$, with flatness  measured in terms of some coefficients $\alpha$ involving some variant of the Wasserstein distance $W_1$. 
This task is carried out in Section~\ref{sec3}.

In Section \ref{sec4} we obtain a localized version of the BMO type estimate in (e) of Theorem \ref{teo0} which is more handy
and will be necessary later. Namely, we show that
$\int_{Q_0}|\RR_\mu \chi_{AQ_0}|^2\,d\mu$ is very small if $A$ is big enough and $\mu$ is flat enough in $3AQ_0$.

In Section \ref{secdm} we recall the properties of the dyadic cells of the David-Mattila lattice, which we will associate to the measure $\mu|_{Q_0}$. The David-Mattila lattice is very appropriate for the study of Calder\'on-Zygmund theory and quantitative rectifiability for non-doubling measures. This was first introduced in \cite{David-Mattila} in connection with Vitushkin's conjecture 
for Lipschitz harmonic functions.

The main point of the proof of Theorem \ref{teo0} consists in showing that there exists a subset $F\subset Q_0$ with
a significant proportion of the measure $\mu$ of $Q_0$ (and thus of $B$) so that
\begin{equation}\label{eqAD*}
\mu(B(x,r))\approx r^n\quad\mbox{ for all $x\in F$, $0<r\leq \ell(Q_0)$.}
\end{equation}
In particular, this tells us that $\mu|_F = h\,\HH^n|_F$ for some function $h\approx1$. The existence of such set $F$ is ensured 
by the Key Lemma \ref{keylemma}, which is proved in Sections \ref{sec6}-\ref{sec9}.

We prove the Key Lemma by contradiction.
 So we assume  that there is a family of low density cells $Q\in \LD$ from the David-Mattila lattice (with $\mu(Q)\ll\ell(Q)^n$)
which cover all of $Q_0$ with the possible exception of a remaining set of measure $\mu$ smaller than $\ve_0\,\mu(Q_0)$.
There is a long and technical part of the arguments which, roughly speaking, consists in replacing the measure $\mu$ by a suitable periodized and smoothened measure $\eta$ which approximates $\mu$ on $Q_0$ and is absolutely continuous with respect to Lebesgue measure, and so that $\int_{Q_0}|\RR \eta|^2\,d\eta$ is very small. This is done in several steps in Section \ref{sec6}-\ref{sec8}.

The proof of the Key Lemma is completed in Section \ref{sec9} by estimating $\int_{Q_0}|\RR \eta|^2\,d\eta$ from below by means of a variational argument. Some related variational arguments have appeared in \cite{ENV} and \cite{NToV}. The 
argument in \cite{ENV} is applied to a compactly supported measure and does not require any periodization like the one in our paper. On the other hand, the argument in \cite{NToV} uses a reflection trick instead of  periodization. The reflection trick is appropriate when working with the components of $\RR$ which are parallel to the approximating plane $L$, but this does not work when dealing with all the components of $\RR$ as in our situation, as far as we know.

In the final Section \ref{sec10} we show that the set $F$ mentioned above has a big piece which is contained in a uniformly $n$-rectifiable set $\Gamma$. To this end, using \rf{eqAD*} and a suitable covering argument, we 
construct an $n$-AD-regular measure $\zeta$ which coincides with $\mu$ on a big piece $\wt F\subset F$, and so that
moreover $\RR_\zeta$ is bounded in $L^2(\zeta)$. Then, by \cite{NToV} it turns out that $\Gamma:=\supp\zeta$ is
uniformly $n$-rectifiable and the proof of Theorem \ref{teo0} is concluded.

\vvv


\section{The Main Lemma}\label{sec3}

In this section we show that, under the assumptions of Theorem \ref{teo0},  there is 
 a suitable cube $Q_0$ deep inside the ball $B$ with  $\mu(Q_0)\approx\mu(B)$
 and so that the $\mu$ is very flat in  $3AQ_0$, for some big $A\gg1$. We measure flatness in terms of some coefficients $\alpha$ involving some variant of the Wasserstein distance $W_1$. 
This allows us to reduce Theorem \ref{teo0} to the proof of the Main Lemma \ref{mainlemma} below.

\subsection{Preliminaries and statement of the Main Lemma}

Given two real Radon measure $\mu$ and $\sigma$ in $\R^{n+1}$ and a cube $Q\subset\R^{n+1}$, we set
\begin{equation}\label{eqalpha0}
d_Q(\mu,\sigma) = \sup_f \int f\,d(\mu-\sigma),
\end{equation}
where the supremum is taken over all the $1$-Lipschitz functions supported on $Q$. 
Given an $n$-plane $L$ in $\R^{n+1}$, then we denote 
$$\alpha_\mu^L(Q) = \frac1{\ell(Q)^{n+1}}\inf_{c\in\R} d_Q(\mu,c\,\HH^n|_L).$$
It is immediate to check that if $\mu$ is a positive measure, we can assume $c\geq0$ in the infimum
above.

Given $t>0$, we say that $Q$ has $t$-thin boundary if
$$\mu\left(\left\{x\in 2Q:\dist(x,\partial Q)\leq\lambda\,\ell(Q)\right\}\right) \leq t\,\lambda\,\mu(2Q)\quad
\mbox{ for all $\lambda>0$.}$$
It is well known that for any given cube $Q_0\subset\R^{n+1}$ and $a>1$, there exists 
another cube $Q$ with $t$-thin boundary such that $Q_0\subset Q\subset aQ_0$, with $t$ depending
just on $n$ and $a$.
For the proof, we refer the reader to Lemma 9.43 of \cite{Tolsa-llibre}, for example.

\vv

\begin{mlemma} \label{mainlemma}
Let $n\geq1$ and let $C_0,C_1>0$ be some arbitrary constants. There exists $A=A(C_0,C_1,n)>10$ big enough and $\ve=\ve(C_0,C_1,n)>0$ small enough such that if $\delta=\delta(A,C_0,C_1,n)>0$ is small enough, then the following holds.
Let $\mu$ be a Radon measure in $\R^{n+1}$ and $Q_0\subset \R^{n+1}$ a cube centered at the origin satisfying the following 
properties:\vu

\begin{itemize}
\item[(a)] $\mu(Q_0)=\ell(Q_0)^n$.\vu

\item[(b)] $P_\mu(AQ_0) \leq C_0$.\vu

\item[(c)] For all $x\in AQ_0$ and $0<r\leq A\ell(Q_0)$, $\Theta_\mu(B(x,r))\leq C_0$.\vu
 
 \item[(d)] $Q_0$ has $C_0$-thin boundary. \vu

\item[(e)] $\alpha_\mu^H(3AQ_0)\leq \delta$, where $H=\{x\in\R^{n+1}:x_{n+1}=0\}$.\vu

\item[(f)] $\RR_{\mu|_{2Q_0}}$ is bounded in $L^2(\mu|_{2Q_0})$ with $\|\RR_{\mu|_{2Q_0}}\|_{L^2(\mu|_{2Q_0})\to L^2(\mu|_{2Q_0})}\leq C_1$.\vu

\item[(g)] We have
$$\int_{Q_0} |\RR\mu(x) - m_{\mu,Q_0}(\RR\mu)|^2\,d\mu(x) \leq \ve \,\mu(Q_0).$$\vu

\end{itemize}
Then there exists some constant $\tau>0$ and a uniformly $n$-rectifiable set $\Gamma\subset\R^{n+1}$ such that
$$\mu(Q_0\cap \Gamma)\geq \tau\,\mu(Q_0).$$
The constant $\tau$ and the UR constants of $\Gamma$ depend on all the constants above.
\end{mlemma}

Note that the conditions (a) and (c) in the Main Lemma imply that $\mu(2Q_0)\lesssim C_0\,\mu(Q_0)$.

\vv

\subsection{Reduction of Theorem \ref{teo0} to the Main Lemma}

Assume that the Main Lemma is proved. Then in order to prove Theorem \ref{teo0} it is enough to show the following.

\begin{lemma}\label{lemreduc}
Let $\mu$ and $B\subset\R^{n+1}$ satisfy the assumptions of Theorem \ref{teo0} with constants $C_0$, $C_1$, $\delta$, and $\ve$. For all $A'\ge10$ and all
$\delta',\ve'>0$, if $\delta$ and $\ve$ are small enough, there exists a cube $Q_0$ satisfying:\vu

\begin{itemize}
\item[(a)] $3A'Q_0\subset B$ and $\dist(A'Q_0,\partial B) \geq C_0'^{-1}\,r(B)$, with $C_0'$ depending only on $C_0$ and $n$.\vu

\item[(b)] For some constant $\gamma=\gamma(\delta')>0$, $\gamma\,r(B)\leq \ell(Q_0)\leq A'^{-1}\,r(B)$.\vu

\item[(c)] $\mu(Q_0)\geq C_0'^{-1}\ell(Q_0)^n$.\vu

\item[(d)] $Q_0$ has $C_0'$-thin boundary. \vu

\item[(e)] $\alpha_\mu^L(3A'Q_0)\leq\delta'$, where $L$ is some $n$-plane that passes through the center of $Q_0$ and is parallel to one of the faces.\vu

\item[(f)] $\ds\int_{Q_0} |\RR\mu(x) - m_{\mu,Q_0}(\RR\mu)|^2\,d\mu(x) \leq \ve' \,\mu(Q_0)$.\vu

\end{itemize}
\end{lemma}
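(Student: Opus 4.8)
The plan is to extract $Q_0$ from the ball $B$ by a combination of a pigeonholing/stopping-time argument (to locate a region where the mean oscillation of $\RR\mu$ and the $\beta_1$-flatness are both small at a fixed scale comparable to $r(B)$) together with the standard thin-boundary regularization. First I would fix the parameters $A'$, $\delta'$, $\ve'$ and work at a scale $\ell \approx \gamma\, r(B)$ with $\gamma$ small depending on $\delta'$ (and on $A'$). Condition (c) of Theorem~\ref{teo0}, $\beta_{\mu,1}^L(B)\leq\delta$, says the total deviation $\int_B \dist(x,L)\,d\mu(x)$ is at most $\delta\, r(B)^{n+1}$. Since $\mu(B)\approx r(B)^n$, a Chebyshev/averaging argument over a grid of cubes of side $\ell$ tiling a definite sub-ball of $B$ shows that, except for cubes carrying a total mass $\lesssim \sqrt{\delta}\,\mu(B)$, each cube $Q$ has $\beta_{\mu,1}^{L_Q}(3A'Q)\lesssim \delta'$ with $L_Q$ the translate of $L$ through the center of $Q$ — here one uses that $\dist(x,L_Q)\le\dist(x,L)+\ell\lesssim\dist(x,L)+\gamma r(B)$ and chooses $\gamma\ll\delta'$, $\delta\ll(\delta')^2$. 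This gives (e) for most cubes.

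Next I would handle (f), the localized $L^2$ oscillation of $\RR\mu$. The global hypothesis (e) of Theorem~\ref{teo0} gives $\int_B |\RR\mu - m_{\mu,B}(\RR\mu)|^2\,d\mu \le \ve\,\mu(B)$. The point is that for a cube $Q\subset B$,
\begin{equation*}
\int_Q |\RR\mu - m_{\mu,Q}(\RR\mu)|^2\,d\mu \le \int_Q |\RR\mu - m_{\mu,B}(\RR\mu)|^2\,d\mu,
\end{equation*}
since the $\mu$-mean over $Q$ minimizes the $L^2(\mu|_Q)$ distance to constants. Summing over a family of disjoint cubes $Q$ of side $\ell$ inside $B$, the right-hand sides add up to at most $\ve\,\mu(B)$. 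Hence, by Chebyshev again, all but a set of cubes carrying total mass $\lesssim \sqrt{\ve}\,\mu(B)/\ve' $... more precisely, the number (weighted by $\mu$) of cubes $Q$ with $\int_Q|\RR\mu-m_{\mu,Q}(\RR\mu)|^2\,d\mu > \ve'\mu(Q)$ is controlled, so their total mass is $\lesssim (\ve/\ve')\,\mu(B)$. Choosing $\ve\ll\ve'$ makes this a small fraction. This yields (f) for most cubes.

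Then I would combine the two pigeonholing steps: the cubes $Q$ of side $\ell$ inside the definite sub-ball of $B$ which fail either (e) or (f) carry total $\mu$-mass at most a small fraction of $\mu(B)$, while the good cubes must carry, in aggregate, a mass $\gtrsim \mu(B) \approx \ell^n \cdot (\text{number of cubes})$, so at least one good cube $Q$ satisfies $\mu(Q)\gtrsim \ell^n$, giving (c); and one can also insist this cube lies well inside $B$, giving (a) and (b) with the stated $C_0'$ and $\gamma$. Finally, to get (d), I apply the standard thin-boundary lemma (Lemma~9.43 of \cite{Tolsa-llibre}, quoted in the text): replace the chosen cube $Q$ by a concentric cube $Q_0$ with $Q\subset Q_0\subset 2Q$ having $C_0'$-thin boundary; this only enlarges things by a bounded factor, so (a)--(c), (e), (f) are preserved (with slightly adjusted constants, absorbing the factor into $\delta'$, $\ve'$ by taking the originals a bit smaller). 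Conditions (b) and (c) of the Main Lemma for $AQ_0$ then follow from condition (b) of Theorem~\ref{teo0} ($P_\mu$ and growth), which are inherited by any sub-cube whose $A$-dilate stays within a bounded multiple of $B$.

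The main obstacle I anticipate is the bookkeeping in the flatness step: transferring the $L^1$-type flatness hypothesis $\beta_{\mu,1}^L(B)\le\delta$ (a statement about proximity to a \emph{single} plane $L$ through the center of $B$) into the $\alpha_\mu^{L}$ / $\beta_{\mu,1}^{L}$ smallness on $3A'Q_0$ for a plane $L$ through the center of $Q_0$ parallel to a face of $Q_0$, while simultaneously keeping $3A'Q_0\subset B$. One must carefully track how the error $\delta\, r(B)^{n+1}$ distributes among $\sim\gamma^{-(n+1)}$ cubes and verify that, after dividing by $\ell(Q)^{n+1}\approx\gamma^{n+1}r(B)^{n+1}$, the per-cube coefficient is still $\lesssim\delta/(\gamma^{n+1}\cdot\#\{\text{bad cubes tolerance}\})$, which forces the quantitative relation between $\delta$, $\gamma$, and $\delta'$; getting the plane through the \emph{center} of $Q_0$ rather than merely nearby requires noting that translating $L$ by at most $\mathrm{diam}(Q_0)$ changes $\int \dist(\cdot,L)\,d\mu$ over $3A'Q_0$ by at most $\mathrm{diam}(Q_0)\,\mu(3A'Q_0)\lesssim \ell^{n+1}$, which is absorbed. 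The $\RR\mu$ step is comparatively soft, the only subtlety being that one is free to compare to $m_{\mu,B}$ rather than to the optimal constant for $Q$, which is exactly what makes the localization lossless.
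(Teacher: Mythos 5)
There is a genuine gap, and it is at the heart of the lemma: conclusion (e) is a statement about the coefficient $\alpha_\mu^L(3A'Q_0)$, i.e.\ about the Wasserstein-type distance $d_{3A'Q_0}(\mu,c\,\HH^n|_L)$ to a \emph{constant multiple of Hausdorff measure on $L$}, whereas your flatness step only produces $\beta_{\mu,1}$-type smallness (proximity of the support of $\mu$ to a plane). Smallness of $\beta_{\mu,1}$ does not imply smallness of $\alpha$: a measure sitting exactly on $L$ but with density oscillating, say, between $1$ and $2$ at scale $\ell$ has $\beta_{\mu,1}^{L}=0$ on every cube while $\alpha_\mu^{L}\approx 1$ at that scale for every location, so no single-scale Chebyshev argument starting only from $\beta_{\mu,1}^L(B)\le\delta$ can yield (e). This is exactly why the paper's proof is genuinely multi-scale: it projects $\mu$ restricted to a thin strip $V_\delta$ onto $H$, invokes the square-function estimate $\sum_{Q\subset R}\alpha^{\R^n}_\sigma(3Q)^2\ell(Q)^n\lesssim \ell(R)^n$ for measures with bounded density (Lemmas \ref{lemf1}--\ref{lemf2}, resting on \cite{Tolsa-plms}), and then pigeonholes over the $\approx\log(1/\delta)$ dyadic generations between scale $\ell(R_0)/A'$ and $\delta^{1/(4n+1)}r_0$ to find a generation and a cube of at-least-average density with $\alpha^{\R^n}_\sigma(4A'Q)\le\delta'/5$, which is then lifted back to $\mu$ (this is also why the side length of $Q_0$ is only bounded below by $\gamma(\delta')\,r(B)$ rather than being fixable in advance). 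Your $\RR\mu$-step (the mean over $Q_0$ minimizes the $L^2$ distance to constants, then Chebyshev, then $\ve\ll\ve'$ with the loss $\mu(B)\lesssim_{\gamma}\mu(Q_0)$) is essentially the paper's and is fine.

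Two secondary quantitative points would also need repair even within your scheme. First, tiling a definite sub-ball of $B$ by cubes of side $\ell=\gamma r(B)$ gives $\approx\gamma^{-(n+1)}$ cubes, so the pigeonhole only yields a good cube with $\mu(Q)\gtrsim\gamma\,\ell^n$, which is not (c) (there $C_0'$ must depend only on $C_0,n$); you must restrict to the $\approx\gamma^{-n}$ cubes meeting a strip of width $\approx\ell$ around $L$ (which carries almost all the mass once $\delta\ll\gamma$), as the paper does when it selects $R_0$ inside the strip $V$. Second, your claim that translating $L$ by $\diam(Q_0)$ to pass through the center of $Q_0$ is ``absorbed'' is off: the induced change in the normalized coefficient on $3A'Q_0$ is of order $\diam(Q_0)\,\mu(3A'Q_0)/\ell(3A'Q_0)^{n+1}\approx 1/A'$, not $\lesssim\delta'$; the correct remedy is to center the cubes on $L$ from the start (in the paper, $\wh Q=Q\times[-\ell(Q)/2,\ell(Q)/2]$ is centered on $H$, so the plane passes through the center by construction).
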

\vv

Before proving this lemma, we show how this is used to reduce Theorem \ref{teo0} to the Main Lemma.\vv

\begin{proof}[\bf Proof of Theorem \ref{teo0} using Lemma \ref{lemreduc} and the Main Lemma \ref{mainlemma}]
Let $B\subset\R^{n+1}$ be some ball satisfying the assumptions of Theorem \ref{teo0} with constants $C_0$, $C_1$, $\delta$, and $\ve$. Let $Q_0$ be the cube given by Lemma \ref{lemreduc}, for some constants $A'\ge10$ and 
$\delta',\ve'>0$ to be fixed below.

We just have to check that the assumptions (a)-(g) of the Main Lemma are satisfied by the measure
$$\wt\mu = \frac{\ell(Q_0)^n}{\mu(Q_0)}\,\mu$$
if $A'$ is big enough and $\delta',\ve'$ are small enough.
Obviously, the assumption (a) from the Main Lemma is satisfied by $\wt\mu$.

To show that (b) in the Main Lemma holds (with a constant different from $C_0$), note first that
\begin{equation}\label{eqobv27}
\frac{\ell(Q_0)^n}{\mu(Q_0)}\approx 1,
\end{equation}
with the implicit constant depending on $C_0$ and $C_0'$. Indeed, from the assumption (c) in Theorem \ref{teo0},
$\mu(Q_0)\lesssim C_0 \ell(Q_0)^n$, and by (c) in Lemma \ref{lemreduc}, $\mu(Q_0)\geq C_0'^{-1}\ell(Q_0)^n$.
Then we have
$$P_{\wt\mu}(A'Q_0)\lesssim P_\mu(A'Q_0)= \sum_{j\geq0: 2^jA'Q_0\subset B }2^{-j}\Theta_\mu(2^jA'Q_0) + 
\sum_{j\geq0: 2^jA'Q_0\not\subset B }2^{-j}\Theta_\mu(2^jA'Q_0).$$
The first sum on the right hand side does not exceed $C\,C_0'$ because $\Theta_\mu(2^jA'Q_0)\lesssim C_0$ for all cubes
$2^jA'Q_0$ contained in $B$. Also, one can check that the last sum is bounded by $C\,P_\mu(B)$ because $\ell(2A'^jQ_0)\gtrsim
r(B)$ for all the $j$'s in this sum, taking into account that $\dist(A'Q_0,\partial B) \geq C_0'^{-1}\,r(B)$.

The condition (c) in the Main Lemma is a consequence of the facts that $3A'Q_0\subset B$, and
$\wt\mu(B(x,r))\leq C_0\,r^n$ for all $x\in B$ and $0<r\leq r(B)$ (by (b) in Theorem \ref{teo0} and \rf{eqobv27}).
So it suffices to take $A'$ big enough. 

The assumptions (d), (e), (g) in the Main Lemma are obviously satisfied too because of \rf{eqobv27} and the respective conditions (d), (e), (f) in Lemma \ref{lemreduc},
with somewhat different constants $C_0'',\delta'',\ve''$ replacing $C_0,\delta,\ve$. Finally, (f) in the Main Lemma is an immediate consequence of (d) in Theorem \ref{teo0} and the fact that 
$2Q_0\subset  3A'Q_0\subset B$.
\end{proof}

\vv

\subsection{The proof of Lemma \ref{lemreduc}}

Below we identify $\R^n$ with the horizontal $n$-plane $H=\{x\in\R^{n+1}:x_{n+1}=0\}$. Then, given a Radon measure $\sigma$
in $\R^n$ and a cube $Q\subset\R^n$, we denote
\begin{equation}\label{eqsigma99}
\alpha_\sigma^{\R^n}(Q) = \frac1{\ell(Q)^{n+1}}\inf_{c\geq0} d_Q(\sigma,c\,\HH^n|_{\R^n}),
\end{equation}
where $d_Q$ is defined as in \rf{eqalpha0} (although now $Q\subset\R^n$ instead of 
$Q\subset\R^{n+1}$) and 
the infimum runs over all constants $c>0$. Note that
$$\alpha_\sigma^{\R^n}(Q)\approx \alpha_\sigma^H(\wh Q),$$
where $\wh Q= Q\times[-\ell(Q)/2,\ell(Q)/2]$. This follows easily from the fact that any $1$-Lipschitz function supported in $Q$ can be extended to a $C$-Lipschitz function supported in $\wh Q$, with $C\lesssim1$.

We need a couple of auxiliary results. The first one is the following.

\begin{lemma} \label{lemf1}
Suppose that $\sigma$ is some finite Radon measure supported in $\R^n$ such that $d\sigma(x) = \rho(x)\,d\HH^n|_{\R^n}$,
with $\|\rho\|_\infty<\infty$.
Then, for every $R\in\DD(\R^n)$ we have
$$\sum_{Q\in\DD(\R^n):Q\subset R} \alpha^{\R^n}_\sigma(3Q)^2\ell(Q)^n \lesssim \|\rho\|_\infty^2 \ell(R)^n.$$
\end{lemma}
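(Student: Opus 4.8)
The plan is to prove the estimate by viewing it as a Carleson-type packing condition for the $\alpha$-coefficients of an absolutely continuous measure, in the spirit of the analogous results for the David–Semmes $\beta$-coefficients. First I would fix $R\in\DD(\R^n)$ and, for each dyadic $Q\subset R$, bound $\alpha^{\R^n}_\sigma(3Q)$ from above by comparing $\sigma$ on $3Q$ with the specific constant multiple of $\HH^n|_{\R^n}$ given by $c_Q=m_{3Q}(\rho)$, the average of $\rho$ over $3Q$ with respect to Lebesgue measure. Since the infimum defining $\alpha$ only makes this smaller, it suffices to estimate, for any $1$-Lipschitz $f$ supported on $3Q$,
$$\left|\int f\,(\rho - c_Q)\,d\HH^n|_{\R^n}\right|.$$
Using that $f$ is supported on $3Q$ and $\|f\|_\infty\lesssim \ell(Q)$ (after subtracting a constant, which is legitimate since $\int(\rho-c_Q)\,d\HH^n|_{3Q}=0$), this is controlled by $\ell(Q)\int_{3Q}|\rho - m_{3Q}(\rho)|\,d\HH^n|_{\R^n}$. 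Hence
$$\alpha^{\R^n}_\sigma(3Q) \lesssim \frac{1}{\ell(Q)^{n+1}}\cdot \ell(Q)\int_{3Q}|\rho - m_{3Q}(\rho)|\,d\HH^n = \frac{1}{\ell(Q)^n}\int_{3Q}|\rho - m_{3Q}(\rho)|\,d\HH^n.$$

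Next I would relate the right-hand side to a square function / oscillation of $\rho$ and sum. Writing $O(Q) := \frac{1}{\ell(Q)^n}\int_{3Q}|\rho - m_{3Q}(\rho)|\,d\HH^n$, by Cauchy–Schwarz (Jensen) we get $O(Q)^2 \lesssim \frac{1}{\ell(Q)^n}\int_{3Q}|\rho - m_{3Q}(\rho)|^2\,d\HH^n$, so that
$$\sum_{Q\in\DD(\R^n):Q\subset R}\alpha^{\R^n}_\sigma(3Q)^2\ell(Q)^n \lesssim \sum_{Q\subset R}\int_{3Q}|\rho - m_{3Q}(\rho)|^2\,d\HH^n.$$
Now the key point is a standard square-function estimate: for a fixed dilation factor $3$, the averages $m_{3Q}(\rho)$ over the dyadic tower above a point telescope, and $\sum_{Q\subset R}\int_{3Q}|\rho - m_{3Q}(\rho)|^2\,d\HH^n \lesssim \|\rho\|_{L^2(3R)}^2 \lesssim \|\rho\|_\infty^2\,\ell(R)^n$. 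The cleanest way I would argue this is to expand $\rho - m_{3Q}(\rho)$ in terms of the martingale differences associated to the (non-dyadic but finitely-overlapping) family $\{3Q\}$, or more simply to dominate $|\rho - m_{3Q}(\rho)|$ pointwise by the dyadic maximal function and use that $\int_{3Q}|\rho - m_{3Q}(\rho)|^2 \lesssim \int_{3Q} |\rho|^2 + \ell(Q)^n m_{3Q}(\rho)^2 \lesssim \ell(Q)^n\|\rho\|_\infty^2$ term by term — but that crude bound alone does not sum, so one genuinely needs the telescoping/orthogonality.

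The honest way to get the summability is: for each point $x\in R$ and each scale, $|\rho(x) - m_{3Q(x)}(\rho)|^2 \leq 2\sum_{Q'\supset Q(x), Q'\subset R}|m_{3Q'}(\rho) - m_{3\wh{Q'}}(\rho)|^2 + \text{tail}$ where $\wh{Q'}$ is the dyadic parent; after integrating in $x$ over $R$ and summing over $Q$, Fubini converts the double sum into $\sum_{Q'\subset R} \ell(Q')^n |m_{3Q'}(\rho)-m_{3\wh{Q'}}(\rho)|^2 \cdot (\text{number of descendants weighted by size})$, which is the classical Carleson/square-function computation and is $\lesssim \|\rho\|_{L^2(CR)}^2$. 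I would invoke this as a known fact (it is exactly the argument showing $\beta$-coefficients of nice measures satisfy a Carleson condition, e.g. as in Dorronsoro-type estimates), citing or mimicking the relevant lemma in \cite{Tolsa-llibre}. The main obstacle, and the only non-formal step, is precisely this orthogonality/telescoping estimate for the dilated cubes $3Q$ rather than the cubes $Q$ themselves: the factor $3$ destroys exact nestedness, but $\{3Q: Q\in\DD\}$ has bounded overlap at each scale, so one can either pass to a bounded number of shifted dyadic lattices covering all the $3Q$'s, or absorb the overlap into the implicit constant; I would handle it by the shifted-lattice trick, reducing everything to the exactly dyadic case where the martingale argument is textbook.
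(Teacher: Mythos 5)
Your first step already gives away too much, and the ``standard square-function estimate'' you then invoke is false. Bounding $\bigl|\int f\,(\rho-c_Q)\,d\HH^n\bigr|$ by $\|f\|_\infty\int_{3Q}|\rho-m_{3Q}(\rho)|\,d\HH^n\lesssim \ell(Q)\int_{3Q}|\rho-m_{3Q}(\rho)|\,d\HH^n$ discards exactly the cancellation that makes the lemma true: against a $1$-Lipschitz $f$, an oscillation of $\rho$ at scale $\ell(Q')\ll\ell(Q)$ contributes with an extra gain of order $\ell(Q')/\ell(Q)$ (since $\int f\,\Delta_{Q'}\rho\,d\HH^n=\int (f-m_{Q'}f)\,\Delta_{Q'}\rho\,d\HH^n$), i.e.\ $\alpha$ is a negative-smoothness ($W^{-1,1}$-type) quantity, not an $L^1$ oscillation of the density. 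Once this gain is thrown away, the quantity you are left with does not satisfy a Carleson packing condition: the claim $\sum_{Q\subset R}\int_{3Q}|\rho-m_{3Q}(\rho)|^2\,d\HH^n\lesssim\|\rho\|_{L^2(3R)}^2$ is simply wrong for bounded $\rho$. Indeed, $\int_{Q}|\rho-m_{Q}(\rho)|^2\,d\HH^n$ equals the sum of the squared martingale differences over \emph{all} descendants of $Q$, so summing over $Q\subset R$ counts each difference once per ancestor, producing a factor $\log_2\bigl(\ell(R)/\ell(Q')\bigr)$; taking $\rho=1+\tfrac12\sum_{j}2^{-j/2}h_{2^j}$, with $h_k$ a mean-zero $\pm1$-valued Haar-type function constant on cells of generation $k+1$, gives a bounded density for which $\sum_{Q\subset R}\int_{Q}|\rho-m_{Q}(\rho)|^2\,d\HH^n\approx \ell(R)^n\sum_j 2^{j}\cdot 2^{-j}=\infty$. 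Your ``telescoping/Fubini'' paragraph hides precisely this unbounded generation-counting factor (and also bounds the square of a telescoping sum by the sum of squares without any weight), so the orthogonality you appeal to ($\sum_{Q'}\|\Delta_{Q'}\rho\|_2^2\le\|\rho\|_2^2$) does not give what you need. The bounded-overlap/shifted-lattice remark about the dilation $3$ is a harmless technicality and not the problem.

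For comparison, the paper avoids all of this by a reduction: after normalizing $\|\rho\|_\infty=1$ it replaces $\sigma$ by $\wt\sigma=2\,\HH^n|_{\R^n}+\sigma$, observes that $\alpha^{\R^n}_\sigma(Q)=\alpha^{\R^n}_{\wt\sigma}(Q)$ (adding a multiple of $\HH^n$ only shifts the optimal constant $c$), and then quotes the Carleson packing of $\alpha$-numbers for AD-regular measures on Lipschitz graphs (Theorem 1.1 of \cite{Tolsa-plms}) applied to the flat graph. If you want a self-contained argument along your lines, you must keep the per-scale gain: expand $\rho-c$ in martingale differences and use $\bigl|\int f\,\Delta_{Q'}\rho\,d\HH^n\bigr|\lesssim \ell(Q')\,\|\Delta_{Q'}\rho\|_{L^1}$ to get $\alpha^{\R^n}_\sigma(3Q)\lesssim \ell(Q)^{-n-1}\sum_{Q'\subset CQ}\ell(Q')\|\Delta_{Q'}\rho\|_{L^1}$, and only then apply Cauchy--Schwarz with the geometric weight $\ell(Q')/\ell(Q)$ and orthogonality; that Dorronsoro-type bookkeeping is the missing idea in your proposal.
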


In this lemma, $\DD(\R^n)$ stands for the family of the usual dyadic cubes in $\R^n$.

We will derive this lemma from a related result for $n$-AD-regular measures supported on Lipschitz graphs
from \cite{Tolsa-plms},
although the lemma may be proved by more straightforward arguments.

\begin{proof}
Note first that, by replacing $\sigma$ by $\|\rho\|_\infty^{-1}\,\sigma$, we may assume $\|\rho\|_\infty=1$
in the lemma.

Consider now the auxiliary measure   measure 
$\wt\sigma = 2\,\HH^n|_{\R^n} + \sigma$. The advantage of the new measure $\wt\sigma$ is that 
this is $n$-AD-regular.
For any cube $Q\subset\R^n$ and any $c\in\R$,
$$d_Q(\sigma,c\,\HH^n|_{\R^n}) = d_Q(\sigma+2\HH^n|_{\R^n} ,\,(c+2)\HH^n|_{\R^n})
= d_Q\bigl(\wt\sigma,\,(c+2)\HH^n|_{\R^n}\bigr).$$
Thus, taking the infimum over $c\in\R$ we deduce that
$$\alpha^{\R^n}_\sigma(Q) = \alpha^{\R^n}_{\wt\sigma}(Q)$$ for any cube $Q\subset\R^n$.
Now Theorem 1.1 from \cite{Tolsa-plms} applied to the graph of the trivial function $x\mapsto A(x)=0$ (identifying
$\R^n$ with $\R^n\times\{0\}$, say), gives
$$\sum_{Q\in\DD(\R^n):Q\subset R} \alpha^{\R^n}_{\wt\sigma}(3Q)^2\ell(Q)^n \lesssim \ell(R)^n,$$
which proves the lemma.
\end{proof}

\vv
The second auxiliary result we need is the next one.

\begin{lemma} \label{lemf2}
Let $\sigma$ be some finite Radon measure in $\R^n$ and $R\in\DD(\R^n)$ such that
$$\sigma(Q)\leq C_2\ell(Q)^n$$
for all the cubes $Q\in\DD(\R^n)$ with $\ell(Q)\geq \ell_0$.
Then, for every $R\in\DD(\R^n)$ we have
$$\sum_{\substack{Q\in\DD(\R^n):Q\subset R\\ \ell(Q)\geq\ell_0}} \alpha^{\R^n}_\sigma(3Q)^2\ell(Q)^n \lesssim C_2^2\ell(R)^n.$$
\end{lemma}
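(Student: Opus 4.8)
The plan is to reduce Lemma \ref{lemf2} to Lemma \ref{lemf1} by a standard smoothing/truncation trick that replaces the (possibly very singular) measure $\sigma$ by an absolutely continuous measure with bounded density, without essentially changing the $\alpha$-coefficients at scales $\geq \ell_0$. Concretely, I would fix the scale $\ell_0$ and let $\sigma_0$ be the measure obtained by spreading, inside each cube $P\in\DD(\R^n)$ with $\ell(P)=\ell_0$ that meets $3R$, the mass $\sigma(P)$ uniformly over $P$; i.e. $\sigma_0 = \sum_{P}\frac{\sigma(P)}{\ell_0^n}\,\HH^n|_{P}$, the sum over $P\in\DD(\R^n)$, $\ell(P)=\ell_0$, $P\subset 3R$ (or $P\cap 3R\neq\emptyset$). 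By the hypothesis $\sigma(P)\leq C_2\ell(P)^n = C_2\ell_0^n$, the density of $\sigma_0$ is bounded by $C_2$, so Lemma \ref{lemf1} applies to $\sigma_0$ (after restricting attention to $3R$, which only costs a constant factor) and gives
$$\sum_{Q\in\DD(\R^n):Q\subset R}\alpha^{\R^n}_{\sigma_0}(3Q)^2\,\ell(Q)^n \lesssim C_2^2\,\ell(R)^n.$$

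Next I would show that for every cube $Q\in\DD(\R^n)$ with $Q\subset R$ and $\ell(Q)\geq\ell_0$ one has
$$\alpha^{\R^n}_\sigma(3Q) \lesssim \alpha^{\R^n}_{\sigma_0}(3Q).$$
This comes down to estimating $d_{3Q}(\sigma,\sigma_0)$, i.e. $\int f\,d(\sigma-\sigma_0)$ for a $1$-Lipschitz $f$ supported on $3Q$. Since $\sigma$ and $\sigma_0$ assign the same mass to each $\ell_0$-cube $P$ and $\sigma_0$ is obtained by redistributing that mass within $P$, we get $\bigl|\int f\,d(\sigma-\sigma_0)\bigr|\leq \sum_{P}\int_P |f-c_P|\,d(\sigma+\sigma_0)(x)$ where $c_P$ is the mean of $f$ on $P$; since $f$ is $1$-Lipschitz, $|f-c_P|\leq \sqrt{n}\,\ell_0$ on each $P$, and the number of relevant $\ell_0$-cubes inside $3Q$ is $\lesssim (\ell(Q)/\ell_0)^n$, so $d_{3Q}(\sigma,\sigma_0)\lesssim \ell_0\cdot(\ell(Q)/\ell_0)^n\cdot C_2\ell_0^n = C_2\,\ell_0\,\ell(Q)^n$. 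Hmm — this only yields $\alpha^{\R^n}_\sigma(3Q)\lesssim \alpha^{\R^n}_{\sigma_0}(3Q) + C_2\,\ell_0/\ell(Q)$, which is not summable against $\ell(Q)^n$ with the right bound unless handled more carefully; so the cleaner route is to compare $\alpha_\sigma$ and $\alpha_{\sigma_0}$ only through the quantity $d_{3Q}(\sigma,c\HH^n)$ directly and observe that the error term $d_{3Q}(\sigma,\sigma_0)$, after dividing by $\ell(3Q)^{n+1}$, contributes $\lesssim C_2\,\ell_0/\ell(Q)$, and $\sum_{Q\subset R,\,\ell(Q)\geq\ell_0}(\ell_0/\ell(Q))^2\,\ell(Q)^n = \ell_0^2\sum_{j}2^{-2j}\cdot(\text{number of cubes})$ — and here the count of cubes of side $2^j\ell_0$ inside $R$ is $(\ell(R)/(2^j\ell_0))^n$, so the sum is $\ell_0^2\,\ell(R)^n\sum_j 2^{-2j}(2^j\ell_0)^{n-2}=\ell(R)^n\,\ell_0^n\sum_j 2^{j(n-4)}$, which for $n\leq 3$ is $\lesssim \ell(R)^n\ell_0^n$ and this is $\leq \ell(R)^{n}$ only if $\ell_0\leq\ell(R)$; for general $n$ one must be more careful. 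This suggests the truly clean argument avoids the intermediate $\sigma_0$ altogether.

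The cleanest approach, which I would actually carry out, is to directly mimic the proof of Lemma \ref{lemf1}: set $\wt\sigma = 2\,\HH^n|_{3R} + \sigma$, which is $n$-AD-regular \emph{at scales between $\ell_0$ and $\ell(R)$} on $3R$ up to constant $C\approx C_2$, and invoke the $\alpha$-number estimate of \cite{Tolsa-plms} (Theorem 1.1 there) in its local form: a Carleson-type packing bound for the $\alpha$-coefficients holds for AD-regular measures, and the standard proof only uses AD-regularity at the relevant scales, so restricting the sum to $\ell(Q)\geq\ell_0$ makes the argument go through with the constant $C_2$ entering squared. As with Lemma \ref{lemf1}, one notes $\alpha^{\R^n}_\sigma(Q)=\alpha^{\R^n}_{\wt\sigma}(Q)$ for every cube $Q$ because adding the fixed multiple $2\,\HH^n|_{\R^n}$ of Lebesgue measure is absorbed by the infimum over $c$ (here one should restrict to cubes $Q\subset R$ so that $3Q\subset 3R$ and the localization $2\HH^n|_{3R}$ agrees with $2\HH^n|_{\R^n}$ on $3Q$). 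The main obstacle is therefore purely bookkeeping: checking that the quantitative $\alpha$-packing inequality from \cite{Tolsa-plms} can be localized to scales $\geq\ell_0$ with the AD-regularity constant replaced by $C_2$, i.e. that no scale below $\ell_0$ is needed in that proof, and tracking that the upper AD-regularity constant $C_2$ enters the final bound quadratically rather than to some other power. Everything else — the reduction, the invariance of $\alpha$ under adding Lebesgue measure, and the restriction from $\R^n$ to $3R$ — is routine.
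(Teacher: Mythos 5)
Your first route is in fact the paper's proof, and it works; you abandoned it only because of an arithmetic slip in the final summation. For a fixed dyadic side length $s=2^j\ell_0$ the cubes $Q\subset R$ with $\ell(Q)=s$ number $(\ell(R)/s)^n$, and each contributes $(\ell_0/s)^2 s^n$, so the scale-$s$ total is
$$\Bigl(\frac{\ell(R)}{s}\Bigr)^n\Bigl(\frac{\ell_0}{s}\Bigr)^2 s^n \;=\; \ell(R)^n\,\frac{\ell_0^2}{s^2},$$
and summing over the dyadic scales $\ell_0\leq s\leq\ell(R)$ gives a geometric series bounded by $C\,\ell(R)^n$ in every dimension $n$ (your expression $\ell_0^2\,\ell(R)^n\sum_j 2^{-2j}(2^j\ell_0)^{n-2}$ multiplied the per-cube value by $\ell(R)^n$ instead of by the number of cubes). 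With that corrected, the scheme you first proposed is exactly what the paper does: it regularizes $\sigma$ at scale $\ell_0$ (by convolving with $m(B(0,\ell_0))^{-1}\chi_{B(0,\ell_0)}$ rather than by dyadic redistribution, an immaterial difference), gets a density bounded by $C_2$ so that Lemma \ref{lemf1} applies, shows $d_{3Q}(\sigma,\nu)\lesssim C_2\ell_0\ell(Q)^n$ for $\ell(Q)\geq\ell_0$, hence $\alpha^{\R^n}_\sigma(3Q)\lesssim\alpha^{\R^n}_\nu(3Q)+C_2\ell_0/\ell(Q)$, and then sums the error exactly as above.

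The route you actually committed to has a genuine gap. The measure $\wt\sigma=2\,\HH^n|_{3R}+\sigma$ is only upper-regular at scales $\gtrsim\ell_0$: below $\ell_0$ the hypothesis gives no control on $\sigma$ (it may even have atoms), so $\wt\sigma$ is not $n$-AD-regular and Theorem 1.1 of \cite{Tolsa-plms} does not apply as stated. The assertion that its $\alpha$-packing inequality ``can be localized to scales $\geq\ell_0$'' with only the coarse-scale regularity constant is precisely the content of Lemma \ref{lemf2}, not bookkeeping: you would have to reopen the proof in \cite{Tolsa-plms} and verify scale by scale that no information below $\ell_0$ is used, which is not a routine check and is exactly what the smoothing trick is designed to avoid. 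So as written the proposal defers the whole difficulty to an unproven claim; the fix is simply to return to your first argument with the corrected summation.
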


\begin{proof}
Let $\vphi(x) = m(B(0,\ell_0))^{-1}\,\chi_{B(0,\ell_0)}(x)$. Consider the function $\rho = \vphi*\sigma$ and the measure $\nu=\rho\,dx$.
We have $\|\rho\|_\infty\lesssim C_2$, since for all $x\in\R^n$
$$\rho(x) = \frac1{m(B(0,\ell_0))} \int \vphi(x-y)\,d\sigma(y) = \frac{\sigma(B(x,\ell_0))}{m(B(x,\ell_0))} \lesssim C_2.$$

Let us see that
\begin{equation} \label{see1}
\dist_{3Q}(\nu,\sigma)\lesssim C_2\ell_0\ell(Q)^n\quad \mbox{ for any cube $Q$ with $\ell(Q)\geq\ell_0$.}
\end{equation}
For any function $1$-Lipschitz function $f$ supported on $3Q$, we have
$$\biggl| \int f\,d\nu - \int f\,d\sigma \biggr| = \biggl| \int f\,(\vphi*\sigma)\,dx - \int f\,d\sigma \biggr| = \biggl| \int f*\vphi\,d\sigma - 
\int f\,d\sigma \biggr|.
$$
Since $f$ is $1$-Lipschitz we have
$$|f(x) - f* \vphi(x)| = \biggl| \int_{y\in B(x,\ell_0)} \bigl(f(x) - f(y)\bigr) \vphi(x-y)\,dy\biggr| \leq 
\int \ell_0\, \vphi(x-y)\,dy = \ell_0.$$
Thus,
$$\biggl| \int f\,d\nu - \int f\,d\sigma \biggr|\lesssim \ell_0\sigma(6Q)\lesssim C_2\ell_0\ell(Q)^n,$$
since $\supp(f)\cup\supp(f* \vphi)\subset 6Q$, 
and so \rf{see1} holds.

From \rf{see1} we infer that
$$\alpha^{\R^n}_\sigma(3Q)\lesssim \alpha_\nu^{\R^n}(3Q) + C_2\,\frac{\ell_0}{\ell(Q)},$$
and thus
\begin{align*}
\sum_{\substack{Q\in\DD(\R^n):Q\subset R\\ \ell(Q)\geq\ell_0}} \alpha_\sigma^{\R^n}(3Q)^2\ell(Q)^n & \lesssim 
\sum_{\substack{Q\in\DD(\R^n):Q\subset R\\ \ell(Q)\geq\ell_0}} \Bigl(\alpha^{\R^n}_\nu(3Q)^2 + C_2^2\,\frac{\ell_0^2}{\ell(Q)^2}\Bigr) \ell(Q)^n .
\end{align*}
By Lemma \ref{lemf1} we have
$$\sum_{\substack{Q\in\DD(\R^n):Q\subset R\\ \ell(Q)\geq\ell_0}} \alpha^{\R^n}_\nu(3Q)^2 \ell(Q)^n  \lesssim C_2^2\,\ell(R)^n.$$
On the other hand,
\begin{align*}
\sum_{\substack{Q\in\DD(\R^n):Q\subset R\\ \ell(Q)\geq\ell_0}}  \frac{\ell_0^2}{\ell(Q)^2}\,\ell(Q)^n
& = \sum_{j\geq0:2^{-j}\ell(R)\geq \ell_0} \,
\sum_{\substack{Q\in\DD(\R^n):Q\subset R\\ \ell(Q)=2^{-j}\ell(R)}}  \frac{\ell_0^2}{(2^{-j}\ell(R))^2}\,\ell(Q)^n\\
& = \ell(R)^n\sum_{j\geq0:2^{-j}\ell(R)\geq \ell_0}  \frac{\ell_0^2}{(2^{-j}\ell(R))^2}\approx \ell(R)^n,
\end{align*}
and so the lemma follows.
\end{proof}

\vv

\begin{proof}[\bf Proof of Lemma \ref{lemreduc}]
Let $\mu$ and $B$ be as in Theorem \ref{teo0}.
By a suitable translation and rotation we may assume that the $n$-plane $L$ from Theorem \ref{teo0} 
coincides with 
the horizontal $n$-plane $H=\{x\in\R^{n+1}:x_{n+1}=0\}$ and that $B=B(0,r_0)$.

Our first objective consists in finding an auxiliary cube $R_0$ contained in $B$, 
centered in $H$, and
far from $\partial B$, so that $\mu(R_0)\approx\mu(B)$. 
 The cube $Q_0$, to be chosen later, will be an appropriate cube contained  in $R_0$.  
To find $R_0$, for some constant $0<d<1/10$
to be fixed below, we consider a grid $\QQ$ of $n$-dimensional cubes with side
length $2d\,r_0$ in $H$, so that they cover $H$ and  have disjoint interiors.
We also consider the family of $(n+1)$-dimensional cubes 
$$\wh\QQ = \{Q\times [-d \,r_0,d\,r_0]: Q\in\QQ\},$$
so that the union of the cubes from $\wh\QQ$ equals the strip
$$V=\{x\in\R^{n+1}:\dist(x,H)\leq d\,r_0\}.$$
For any constant $0<a<1$ we have
$$\mu\bigl(B\setminus (aB\cap V)\bigr) \leq \sum_{\substack{P\in\wh\QQ:\\P\cap (B\setminus  aB)\neq \varnothing} } \mu(P\cap B)+
\mu\biggl(B\setminus \bigcup_{P\in\wh\QQ} P\biggr) =: S_1 + S_2.$$
To bound $S_1$ we use the growth condition of order $n$ of $\mu|_B$:
$$S_1\lesssim_{C_0}\!\!\!
\sum_{\substack{P\in\wh\QQ:\\P\cap (B\setminus aB)\neq \varnothing} } \!\!\ell(P)^n 
\lesssim \HH^n\bigl(H\cap A\bigl(0,(a-n^{1/2}2d)r_0, (1+n^{1/2}2d)r_0\bigr)\bigr)\lesssim _{C_0}
(d+1-a)r_0^n$$
(the constant $4n^{1/2}$ mutipliying $d$ on the right hand side was absorbed by the symbol ``$\lesssim$'',
taking into account that $1-a>0$).
To estimate $S_2$ we use the fact that the points $x\in B\setminus \bigcup_{P\in\wh\QQ} P$
are at a distance from $H$ larger that $dr_0$ and apply Chebyshev:
$$S_2 \leq \int_B \frac{\dist(x,H)}{d\,r_0}\,d\mu(x) = \frac1d\,\beta_{\mu,1}^H(B)\,r_0^n.$$

So we obtain 
$$\mu\bigl(B\setminus (aB\cap V)\bigr) \leq C(C_0) \biggl( (d+1-a) +\frac1d\,\beta_{\mu,1}^H(B)\biggr)\,\mu(B).$$ 
We take $d$ and $a$ so that
$$10 (n+1)^{1/2}d = (1-a) = \frac1{10C(C_0)},$$
and we assume
$$\beta_{\mu,1}^H(B)\leq \delta\leq\frac d{10C(C_0)} = \frac1{10 (n+1)^{1/2}(10C(C_0))^2},$$
so that $\mu\bigl(B\setminus (aB\cap V)\bigr)\leq \frac3{10}\,\mu(B)$. Now we choose $R_0$ to be a cube from $\wh\QQ$
which intersects $aB\cap V$ and has maximal $\mu$-measure. Obviously, 
$$\mu(R_0)\approx_{C_0} \mu(aB\cap V)\approx_{C_0} \mu(B),$$
and since $\diam(R_0)= 2(n+1)^{1/2}d\,r_0 = \frac{1-a}{5}\,r_0$, it follows that
\begin{equation}\label{eq4r0}
\dist(4R_0,\partial B)\approx_{C_0} (1-a)r_0\approx_{C_0} r_0.
\end{equation}

The cube $Q_0$ we are looking for will be an appropriate cube contained in $R_0$. To find this, 
first we consider the thin strip
$$V_\delta = \bigl\{x\in\R^{n+1}:\dist(x,H)\leq \delta^{1/2}r_0\bigr\}.$$
Observe that
\begin{equation}\label{eqbvd1}
\mu(B\setminus V_\delta) \leq \int_B \frac{\dist(x,H)}{\delta^{1/2}r_0}\,d\mu(x)
=\frac{\beta_{\mu,1}^H(B)}{\delta^{1/2}}\,r_0^n\lesssim_{C_0} \delta^{1/2}\,\mu(B).
\end{equation}
Denote by $\Pi$ the orthogonal projection on $H$ and consider the measure
$\sigma=\Pi_\#(\mu|_{V_\delta})$. Since $V_\delta$ has width $2\delta^{1/2}$, from the growth
condition (b) in Theorem \ref{teo0}, it follows that
$\sigma(Q)\lesssim_{C_0} \ell(Q)^n$ for any cube $Q$ centered on $H$ with $\ell(Q)\geq
\delta^{1/2}\,r_0$. Assume without loss of generality that $R_0$ is a dyadic cube. Then, by Lemma \ref{lemf2},
$$\sum_{\substack{Q\in\DD(\R^n,R_0)\\ \ell(Q)\geq\delta^{1/2}r_0}} \alpha^{\R^n}_\sigma(3Q)^2\ell(Q)^n \lesssim_{C_0}\ell(R_0)^n,$$
where $\DD(\R^n,R_0)$ stands for the family of dyadic cubes in $\R^n$ contained in $R_0$. 
From this inequality, it easily follows that, for any constant $A'>10$, 
$$\sum_{\substack{Q\in\DD(\R^n,R_0):\ell(A'Q)\leq \ell(R_0)\\ \ell(Q)\geq\delta^{1/(4n+1)}r_0}} \alpha^{\R^n}_\sigma(4A'Q)^2\ell(Q)^n \lesssim_{C_0,A'}\ell(R_0)^n.$$
Note that we have used the fact that $\delta^{1/(4n+1)}>\delta^{1/2}$.
Since the number of dyadic generations between the largest cubes $Q\in\DD(\R^n)$ with $\ell(A'Q)\leq\ell(R_0)$ and the smallest ones with side length $\ell(Q)\geq\delta^{1/(4n+1)}r_0$ is
comparable to 
$$\log_2\frac{C(A')\ell(R_0)}{\delta^{1/(4n+1)}r_0}\approx \log_2\frac{C(A',C_0)}{\delta^{1/(4n+1)}},$$
we deduce that there exists some intermediate generation $j$ such that
$$\sum_{\substack{Q\in\DD_j(\R^n,R_0):\ell(A'Q)\leq \ell(R_0)}} \alpha^{\R^n}_\sigma(4A'Q)^2\ell(Q)^n \lesssim_{C_0,A'} \frac1{\log_2\frac{C(A',C_0)}{\delta^{1/(4n+1)}}} \ell(R_0)^n.$$
Thus, for any $\delta'>0$, if $\delta$ is small enough, we derive
\begin{align*}
\sum_{\substack{Q\in\DD_j(\R^n,R_0):\\ \ell(A'Q)\leq \ell(R_0)}} \alpha^{\R^n}_\sigma(4A'Q)^2\,\sigma(Q) &\leq C(C_0) 
\sum_{\substack{Q\in\DD_j(\R^n,R_0):\\
\ell(A'Q)\leq \ell(R_0)}} \alpha^{\R^n}_\sigma(4A'Q)^2\ell(Q)^n 
\leq \frac{\delta'^2}{50}\sigma(R_0).
\end{align*}
Denote by $\GZ$  the subfamily of  cubes from $\DD_j(\R^n,R_0)$ such that
$\Theta_\sigma(Q)\geq \frac12\,\Theta_\sigma(R_0)$. Observe that
$$\sum_{Q\in\DD_j(\R^n,R_0)\setminus \GZ}\sigma(Q)\leq \frac12\,\Theta_\sigma(R_0)
\sum_{Q\in\DD_j(\R^n,R_0)\setminus \GZ}\ell(Q)^n \leq
\frac12\,\Theta_\sigma(R_0)\,\ell(R_0)^n = 
\frac12\,\sigma(R_0).$$
Hence, $\sum_{Q\in \GZ}\sigma(Q)\geq \frac12\,\sigma(R_0)$, and so
$$\sum_{\substack{Q\in\GZ}} \alpha^{\R^n}_\sigma(4A'Q)^2\,\sigma(Q)\leq 
\frac{\delta'^2}{50}\sigma(R_0)\leq \frac{\delta'^2}{25}\,\sigma\biggl(\,\bigcup_{Q\in \GZ}Q\biggr).$$
Then we deduce that there exists some cube $Q\in\GZ$ such that 
\begin{equation}\label{eqq*231}
 \alpha^{\R^n}_\sigma(4A'Q)\leq
\frac{\delta'}{5}.
\end{equation}

Denote $\wh Q= Q\times[-\ell(Q)/2,\ell(Q)/2]$. We wish now to bound 
 $\alpha^H_\mu(4A'\wh Q)$ in terms of $ \alpha^{\R^n}_\sigma(4A'Q)$. Let $c_H$ be the constant
 that minimizes the infimum in the definition of $\alpha^{\R^n}_\sigma(4A'Q)$ in 
 \rf{eqsigma99}. 
 Given any $1$-Lipschitz function $f$ supported on $4A'\wh Q$ we have
 \begin{align*}
 \left|\int f\,d(\mu-c_H\,\HH^n|_H)\right| &\leq 
 \int_{4A'\wh Q\setminus V_\delta}|f|\,d\mu + \left|\int f\,d(\mu|_{V_\delta}- \sigma)\right| +
 \left|\int\! f\,d(\sigma-c_H\,\HH^n|_H)\right| \\ &=: I_1 + I_2+I_3.
 \end{align*}
By \rf{eqbvd1}, using also that $\ell(Q)\geq \delta^{1/(4n+4)}r_0$,
we have
$$I_1\leq \|f\|_\infty\,\mu(B\setminus V_\delta)\lesssim_{C_0} \delta^{1/2}\,\ell(4A'Q)\,\mu(B)
\lesssim_{C_0,A'} \delta^{1/2}\,\ell(Q)\,r_0^n\lesssim_{C_0,A'} \delta^{1/4}\,\ell(Q)^{n+1}.$$
Now we deal with $I_2$. By the definition of $\sigma$ and the Lipschitz condition on $f$, we get:
\begin{align*}
I_2 & = \left|\int_{4A'\wh Q} f(x)-f(\Pi(x))\,d\mu|_{V_\delta}(x) \right|\\
&\leq \int_{4A'\wh Q} \dist(x,H)\,d\mu|_{V_\delta}(x)\leq \beta_{\mu,1}(B)\,r_0^{n+1}\leq \delta \,r_0^{n+1}
\leq \delta^{3/4}\,\ell(Q)^{n+1}.
\end{align*}
Finally, concerning $I_3$, by \rf{eqq*231} we have
$$I_3 \leq \alpha^{\R^n}_\sigma(4A'Q) \,\ell(4A'Q)^{n+1}\leq
\frac{\delta'}{5}\, \,\ell(4A'Q)^{n+1}.$$
Gathering the estimates obtained for $I_1$, $I_2$, $I_3$ and choosing $\delta$ small enough we 
obtain
$$\left|\int f\,d(\mu-c_H\,\HH^n|_H)\right|\leq \frac{\delta'}2\, \,\ell(4A'Q)^{n+1},$$
and thus  $\alpha^H_\mu(4A'\wh Q)\leq \frac{\delta'}2$.

Finally, we choose $Q_0$ to be a cube with thin boundary such that
$\wh Q\subset Q_0\subset 1.1\wh Q$ (the existence of cubes with thin boundaries such as $Q_0$ has already been
mentioned just before the Main Lemma).
 Since $3A'Q_0\subset 4A'\wh Q\subset 4R_0$ and $\ell(3A'Q_0)
\approx \ell(4A'\wh Q)$, we deduce that $3A'Q_0\subset B$ (by \rf{eq4r0}) and 
that $\alpha^H_\mu(3A' Q_0)\lesssim \alpha^H_\mu(4A'\wh Q)\lesssim \delta'$. Then it is easy to check that $Q_0$ satisfies all the properties (a)-(e)
by construction. Regarding (f), we have
\begin{align*}
\int_{Q_0} |\RR\mu(x) - m_{\mu,Q_0}(\RR\mu)|^2\,d\mu(x) & \leq
2\int_{Q_0} |\RR\mu(x) - m_{\mu,B}(\RR\mu)|^2\,d\mu(x) \\
& \leq 2\,\ve\,\mu(B)\approx_{C_0,\delta} \ve\,\mu(Q_0).
\end{align*}
Thus if $\ve$ is small enough, (f) holds.
\end{proof}

\vvv


\section{The Localization Lemma}\label{sec4}

This and the remaining sections of this paper are devoted to the proof of the Main Lemma 
\ref{mainlemma}. 
We assume that the hypotheses of the Main Lemma \ref{mainlemma} hold. From now on, we allow all the constants denoted by $C$ and all the implicit constants in the relations ``$\lesssim$'' and ``$\approx$''
to depend on the constants $C_0$ and $C_1$ in the Main Lemma (but not on $A$, $\delta$ or $\ve$).

The main objective of this section is to obtain a local version of the BMO type estimate
in (f) of the Main Lemma \ref{mainlemma}, which will be useful in the next sections of the paper
and is more handy than the statement (f).

Recall that $H$ stands for the horizontal hyperplane $\{x\in\R^{n+1}:x_{n+1}=0\}$. Also we let $c_H$ be some constant that minimizes the infimum in the definition
of $\alpha^H(3AQ_0)$ and we denote $\LL_H=c_H\,\HH^n|_H$.

\vv
\begin{lemma}\label{lemtontu}
If $\delta$ is small enough  (depending on $A$), then we have $c_{H}\approx1$ and $\mu(AQ_0)\lesssim A^n\mu(Q_0)$.
\end{lemma}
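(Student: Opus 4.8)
The goal is to show that, under the smallness assumption on $\delta=\delta(A)$, the minimizing constant $c_H$ defining $\LL_H=c_H\,\HH^n|_H$ satisfies $c_H\approx 1$, and consequently $\mu(AQ_0)\lesssim A^n\mu(Q_0)$. Recall that $\mu(Q_0)=\ell(Q_0)^n$ by hypothesis (a), so the density $\Theta_\mu(Q_0)=1$; this normalization is what the conclusion $c_H\approx 1$ is implicitly referring to. The basic idea is to test the definition of $\alpha_\mu^H(3AQ_0)$ against suitable $1$-Lipschitz functions localized at the scale of $Q_0$ (which is a factor $\sim A^{-1}$ smaller than $3AQ_0$), and use that $\alpha_\mu^H(3AQ_0)\leq\delta$ to compare $\mu$ with $c_H\HH^n|_H$ on cubes comparable to $Q_0$.

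\textbf{Step 1: Upper bound $c_H\lesssim 1$.} First I would choose a $1$-Lipschitz bump $f$ supported on $3Q_0$ with $f\equiv 1$ on $2Q_0$ and $0\leq f\leq \ell(Q_0)$ — more precisely, take $f(x)=\max\{0,\ \ell(Q_0)-\dist(x,2Q_0)\}$ suitably scaled so it is genuinely $1$-Lipschitz and supported in $3Q_0\subset 3AQ_0$. Then
$$c_H\int f\,d\HH^n|_H = \int f\,d\LL_H \leq \int f\,d\mu + \ell(Q_0)^{n+1}\alpha_\mu^H(3AQ_0) \leq \mu(3Q_0) + \delta\,\ell(Q_0)^{n+1}.$$
By the density hypothesis (c) of the Main Lemma, $\mu(3Q_0)\lesssim C_0\,\ell(Q_0)^n$, while $\int f\,d\HH^n|_H \gtrsim \ell(Q_0)^{n+1}$ since $f\geq c\,\ell(Q_0)$ on a fixed fraction of $2Q_0\cap H$. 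Dividing through and using $\delta\leq\delta(A)\ll 1$ (so the error term is negligible) gives $c_H\lesssim 1$.

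\textbf{Step 2: Lower bound $c_H\gtrsim 1$.} Here the point is that $\mu(Q_0)=\ell(Q_0)^n$ is not small, so $\mu$ has substantial mass near $Q_0$, which forces $c_H$ to be bounded below. Take a $1$-Lipschitz $g$ supported on $2Q_0\subset 3AQ_0$ with $g\equiv \ell(Q_0)$ on $Q_0$ (again, $g(x)=\max\{0,\ell(Q_0)-\dist(x,Q_0)\}$ works). Then
$$c_H\int g\,d\HH^n|_H = \int g\,d\LL_H \geq \int g\,d\mu - \delta\,\ell(Q_0)^{n+1} \geq \ell(Q_0)\,\mu(Q_0) - \delta\,\ell(Q_0)^{n+1} = (1-\delta)\,\ell(Q_0)^{n+1}.$$
Since $\int g\,d\HH^n|_H \lesssim \ell(Q_0)\cdot\HH^n(2Q_0\cap H)\lesssim \ell(Q_0)^{n+1}$, dividing gives $c_H\gtrsim 1-\delta\gtrsim 1$ for $\delta$ small. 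Combined with Step 1, $c_H\approx 1$.

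\textbf{Step 3: Deduce $\mu(AQ_0)\lesssim A^n\mu(Q_0)$.} With $c_H\approx 1$ in hand, test $\alpha_\mu^H(3AQ_0)\leq\delta$ against a $1$-Lipschitz function $h$ supported on $3AQ_0$ with $h\equiv \ell(Q_0)$ on $AQ_0$ and $0\leq h\leq \ell(Q_0)$ — for instance $h(x)=\max\{0,\ell(Q_0)-\dist(x,AQ_0)\}$, which is $1$-Lipschitz and supported on $AQ_0 + B(0,\ell(Q_0))\subset 3AQ_0$ (as $A>10$). Then
$$\ell(Q_0)\,\mu(AQ_0)\leq \int h\,d\mu \leq c_H\int h\,d\HH^n|_H + \ell(3AQ_0)^{n+1}\alpha_\mu^H(3AQ_0)\lesssim \ell(Q_0)\cdot A^n\ell(Q_0)^n + \delta\,A^{n+1}\ell(Q_0)^{n+1},$$
using $c_H\lesssim 1$ and $\int h\,d\HH^n|_H\lesssim \ell(Q_0)\cdot\HH^n(AQ_0\cap H + \dots)\lesssim A^n\ell(Q_0)^{n+1}$. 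Dividing by $\ell(Q_0)$ and taking $\delta=\delta(A)$ small enough that $\delta A\lesssim 1$ yields $\mu(AQ_0)\lesssim A^n\ell(Q_0)^n = A^n\mu(Q_0)$.

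\textbf{Main obstacle.} The only genuine subtlety is bookkeeping the dependence of $\delta$ on $A$: the error terms in Steps 1 and 3 carry powers of $A$ (up to $A^{n+1}$), so one must demand $\delta\leq\delta(A)$ small enough to absorb them, which is exactly why the lemma's hypothesis reads ``$\delta$ small enough (depending on $A$)''. Everything else is routine: constructing the explicit $1$-Lipschitz test functions and invoking the density bound (c) of the Main Lemma together with $\mu(Q_0)=\ell(Q_0)^n$ from (a).
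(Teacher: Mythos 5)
Your argument is correct and, for the main claim $c_H\approx 1$, it is essentially the paper's approach: test the $\alpha$-coefficient against $1$-Lipschitz bumps at scale $\ell(Q_0)$ and use $\mu(Q_0)=\ell(Q_0)^n$ together with the growth bound to pin down $c_H$ (the paper does both bounds at once with a single bump $\vphi$, $\chi_{Q_0}\le\vphi\le\chi_{2Q_0}$, comparing $c_H$ with $c_1=\int\vphi\,d\mu/\int_H\vphi\,d\HH^n$). Two small remarks. First, in Steps 1 and 2 your error term is miswritten: since the supremum in $d_{3AQ_0}$ is over $1$-Lipschitz functions supported on $3AQ_0$ and the normalization is by $\ell(3AQ_0)^{n+1}$, testing with a function supported on $3Q_0$ still only gives the bound $\bigl|\int f\,d(\mu-\LL_H)\bigr|\le \alpha_\mu^H(3AQ_0)\,\ell(3AQ_0)^{n+1}\lesssim \delta\,A^{n+1}\ell(Q_0)^{n+1}$, not $\delta\,\ell(Q_0)^{n+1}$; this is harmless because $\delta$ is allowed to depend on $A$ (as you yourself note in your closing paragraph), but the exponent of $A$ should be tracked correctly. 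Second, your Step 3 is more roundabout than necessary: the bound $\mu(AQ_0)\lesssim A^n\mu(Q_0)$ follows immediately from hypothesis (b) or (c) of the Main Lemma (e.g.\ $\Theta_\mu(AQ_0)\le P_\mu(AQ_0)\le C_0$ gives $\mu(AQ_0)\le C_0 A^n\ell(Q_0)^n=C_0A^n\mu(Q_0)$), with no need for $c_H$ or the flatness condition, which is how the paper concludes; your route via the $\alpha$-coefficient is valid but imports an unnecessary smallness requirement on $\delta$.
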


\begin{proof}
Let $\vphi$ be a non-negative $C^1$ function supported on $2Q_0$ which equals $1$ on $Q_0$ and satisfies $\|\nabla\vphi\|_\infty\lesssim
1/\ell(Q_0)$.
Then we have
\begin{equation}\label{eqdl9}
\left|\int \vphi\,d(\mu-\LL_{H}) \right|\leq \|\nabla\vphi\|_\infty\,\ell(3AQ_0)^{n+1}\,\alpha^H_\mu(3AQ_0)
\lesssim A^{n+1}\,\delta\,\ell(Q_0)^n.
\end{equation}
Note that the left hand side above equals
$$
\left|\int \vphi\,d\mu-c_{H}\int_{H} \vphi\,d\HH^n \right| = \left|c_1 - c_{H}\right|\int_{H} \vphi \,d\HH^n
,$$
with $$c_1 = \frac{\int \vphi\,d\mu}{\int_{H} \vphi \,d\HH^n} \approx 1,$$
taking into account that 
$$\mu(Q_0)\leq\int \vphi\,d\mu\leq \mu(2Q_0)\lesssim C_0\,\ell(2Q_0)^n\lesssim C_0\,\mu(Q_0).$$
So from \rf{eqdl9} we deduce that
$$\left|c_1 - c_{H}\right| \lesssim  A^{n+1}\,\delta\,\frac{\ell(Q_0)^n}{\int_{H} \vphi \,d\HH^n}\lesssim A^{n+1}\,\delta.$$
The right hand side is $\ll 1\approx c_1$ if $\delta $ is small enough (depending on $A$), and so we infer that
$$c_{H}\approx c_1\approx 1.$$

The estimate $\mu(AQ_0)\lesssim  A^n\,\ell(Q_0)^n$ is an immediate consequence  of the
assumptions either (b) or (c) in the Main Lemma \ref{mainlemma}.
\end{proof}
\vv

\begin{lemma}[Localization Lemma] \label{lem1}
If $\delta$ is small enough  (depending on $A$), then we have
$$\int_{Q_0}|\RR_\mu \chi_{AQ_0}|^2\,d\mu\lesssim \left(\ve + \frac1{A^2} + A^{4n+2} \delta^{1/(4n+4)}\right)\,\mu(Q_0).$$
\end{lemma}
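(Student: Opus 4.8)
The goal is to upgrade the global BMO-type bound (g) of the Main Lemma, namely $\int_{Q_0}|\RR\mu - m_{\mu,Q_0}(\RR\mu)|^2\,d\mu\le\ve\,\mu(Q_0)$, into a genuinely local statement controlling the ``self-interaction'' $\int_{Q_0}|\RR_\mu\chi_{AQ_0}|^2\,d\mu$. The plan is to split $\RR\mu = \RR_\mu\chi_{AQ_0} + \RR_\mu\chi_{\R^{n+1}\setminus AQ_0}$ and to show that the far part is essentially a constant up to a small error on $Q_0$; then the near part inherits the smallness from (g).

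\textbf{Step 1: the far field is almost constant on $Q_0$.} For $x\in Q_0$ write
\begin{equation*}
\RR_\mu\chi_{\R^{n+1}\setminus AQ_0}(x) - \RR_\mu\chi_{\R^{n+1}\setminus AQ_0}(0) = \int_{\R^{n+1}\setminus AQ_0}\bigl(K(x-z)-K(-z)\bigr)\,d\mu(z).
\end{equation*}
Using the standard kernel estimate $|K(x-z)-K(-z)|\lesssim \ell(Q_0)/|z|^{n+1}$ for $z\notin AQ_0$, together with the growth bound (c) and $P_\mu(AQ_0)\le C_0$ from (b), one gets
\begin{equation*}
\bigl|\RR_\mu\chi_{\R^{n+1}\setminus AQ_0}(x) - \RR_\mu\chi_{\R^{n+1}\setminus AQ_0}(0)\bigr| \lesssim \frac1A\,P_\mu(AQ_0)\lesssim \frac1A\qquad\text{for all }x\in Q_0.
\end{equation*}
Hence, replacing $\RR_\mu\chi_{\R^{n+1}\setminus AQ_0}$ by the constant $b:=\RR_\mu\chi_{\R^{n+1}\setminus AQ_0}(0)$ costs only $O(A^{-2})\mu(Q_0)$ in the $L^2(\mu)$-norm squared on $Q_0$. (A minor point: $\RR_\mu\chi_{\R^{n+1}\setminus AQ_0}(0)$ must be interpreted in the BMO sense of Section~\ref{secbmo}; this is fine since $P_\mu(AQ_0)<\infty$ and $\RR(\chi_{2Q_0}\mu)\in L^2(\mu|_{Q_0})$.)

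\textbf{Step 2: transfer the global estimate.} From $\RR\mu = \RR_\mu\chi_{AQ_0} + \RR_\mu\chi_{\R^{n+1}\setminus AQ_0}$ and Step 1,
\begin{equation*}
\int_{Q_0}\bigl|\RR_\mu\chi_{AQ_0}(x) - (m_{\mu,Q_0}(\RR\mu)-b)\bigr|^2\,d\mu(x) \lesssim \int_{Q_0}|\RR\mu-m_{\mu,Q_0}(\RR\mu)|^2\,d\mu + \frac1{A^2}\mu(Q_0) \lesssim \Bigl(\ve+\frac1{A^2}\Bigr)\mu(Q_0),
\end{equation*}
using (g). So $\RR_\mu\chi_{AQ_0}$ is within $(\ve+A^{-2})\mu(Q_0)$ of a constant $c_0:=m_{\mu,Q_0}(\RR\mu)-b$ on $Q_0$. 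It remains to prove that this constant $c_0$ is itself small, more precisely $|c_0|^2\mu(Q_0)\lesssim(\ve + A^{-2} + A^{4n+2}\delta^{1/(4n+4)})\mu(Q_0)$.

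\textbf{Step 3: the constant is small — the main obstacle.} This is the crux. The idea is to pair $\RR_\mu\chi_{AQ_0}$ against a smooth test vector field adapted to the flat approximating plane $H$ and exploit an (approximate) antisymmetry/cancellation. Concretely, let $\psi$ be a smooth bump, $\psi\equiv1$ on $Q_0$, supported on $2Q_0$, and consider $\int \vec v\cdot \RR_\mu\chi_{AQ_0}\,\psi\,d\mu$ for a suitable smooth $\R^{n+1}$-valued $\vec v$; by the (weak) antisymmetry of the kernel $K$, a quantity like $\int\int K(x-y)\cdot(\text{stuff})\,d\mu(x)\,d\mu(y)$ is controlled, and on the other hand it equals $\int \vec v\cdot\RR_\mu\chi_{AQ_0}\,\psi\,d\mu$ up to errors. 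Since $c_0$ is the $\mu|_{Q_0}$-average of $\RR_\mu\chi_{AQ_0}$ up to $(\ve+A^{-2})^{1/2}$, testing against $\vec v\equiv e_{n+1}$ (the normal to $H$) and using that $\mu$ is $\delta$-close in the $\alpha$-sense to $c_H\HH^n|_H$ on $3AQ_0$ — which forces the mass of $\mu$ to lie within $\delta^{1/(4n+4)}$-thin neighborhoods of $H$, as in the computations of Lemma~\ref{lemreduc}, and makes the $(n{+}1)$-component of $\RR_\mu\chi_{AQ_0}$ nearly antisymmetric — one extracts $|c_0|\lesssim (\ve+A^{-2})^{1/2} + A^{2n+1}\delta^{1/(8n+8)}$, i.e.\ $|c_0|^2\lesssim \ve + A^{-2} + A^{4n+2}\delta^{1/(4n+4)}$. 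Combining with Step 2 and $\int_{Q_0}|\RR_\mu\chi_{AQ_0}|^2\,d\mu \le 2\int_{Q_0}|\RR_\mu\chi_{AQ_0}-c_0|^2\,d\mu + 2|c_0|^2\mu(Q_0)$ yields the claimed bound. I expect the delicate bookkeeping to be in Step 3: correctly quantifying, via the Wasserstein-type $\alpha$-coefficient and the thin-boundary/thin-strip estimates already used in the proof of Lemma~\ref{lemreduc}, how the flatness hypothesis (e) forces the dangerous ``mean'' of the Riesz transform to vanish up to a power of $\delta$, while carefully tracking the powers of $A$ that appear when passing between $Q_0$ and $AQ_0$ and when estimating tails through $P_\mu(AQ_0)$.
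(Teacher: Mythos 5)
Your Steps 1 and 2 coincide with the paper's argument: the far field $\RR_\mu\chi_{(AQ_0)^c}$ oscillates by at most $C/A$ on $Q_0$ thanks to $P_\mu(AQ_0)\leq C_0$, so hypothesis (g) transfers to $\int_{Q_0}|\RR_\mu\chi_{AQ_0}-m_{\mu,Q_0}(\RR_\mu\chi_{AQ_0})|^2\,d\mu\lesssim(\ve+A^{-2})\mu(Q_0)$, and everything reduces to showing that the constant $|m_{\mu,Q_0}(\RR_\mu\chi_{AQ_0})|$ is small. But Step 3, which you yourself identify as the crux, is only a sketch and contains a genuine gap. The mean $c_0$ is a vector in $\R^{n+1}$, and testing against the single direction $e_{n+1}$ can at best control its $(n{+}1)$-st component; it says nothing about the $n$ tangential components. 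In fact the normal component is the harmless one for the flat model: if $x,y\in H$ then $K(x-y)$ has vanishing $(n{+}1)$-st coordinate, so for $\LL_H=c_H\HH^n|_H$ the normal part of the mean is automatically negligible. The components that genuinely require a cancellation are the tangential ones, and the mechanism is not ``the mass of $\mu$ lies in a thin slab around $H$'' but rather a symmetry argument: after reducing (by antisymmetry of $K$, which kills $m_{\mu,Q_0}(\RR_\mu\chi_{Q_0})$) to the annulus $AQ_0\setminus Q_0$ and replacing the indicator of the annulus by an \emph{even} smooth cutoff $\vphi$, one replaces $\mu$ by $\LL_H$ in \emph{both} slots and uses that $\int_{Q_0}\RR(\vphi\LL_H)\,d\LL_H=0$ for every component, because $Q_0$ is centered at the origin, $H$ passes through it, and $\vphi$ is even. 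Your sketch never identifies this cancellation, so the claimed bound $|c_0|\lesssim A^{2n+1}\delta^{1/(8n+8)}$ is not justified for the tangential part.

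A second, quantitative issue: the two replacements $\mu\to\LL_H$ must be paid for with the $\alpha$-coefficient, which requires Lipschitz test functions; neither $\chi_{Q_0}\,\RR_\mu\vphi$ nor $\chi_{AQ_0\setminus Q_0}$ is Lipschitz. The paper handles this with three smooth cutoffs (an even $\vphi$ at scale $\kappa$ separating $Q_0$ from the annulus, a truncation $\wh\vphi$ of $\chi_{Q_0}$ at scale $\wh\kappa$, and a cutoff $\wt\vphi$ near $\partial(AQ_0)$), using the thin boundary of $Q_0$ together with the $L^2(\mu|_{2Q_0})$ boundedness of $\RR_\mu$ to absorb the collar $(1+\kappa)Q_0\setminus Q_0$, and using $\alpha^H_\mu(3AQ_0)\leq\delta$ to bound $\mu(AQ_0\setminus(1-\kappa)AQ_0)\lesssim(A^n\delta/\kappa+\kappa A^n)\ell(Q_0)^n$. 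The Lipschitz norms of these test functions are what produce the factors $A^{2n+1}\kappa^{-n-1}$, and only after optimizing $\kappa=\delta^{1/(4n+4)}$, $\wh\kappa=\delta^{1/2}$ does the stated power $A^{4n+2}\delta^{1/(4n+4)}$ emerge. Your proposal gestures at ``thin-boundary/thin-strip estimates'' but does not set up this decomposition, so as written it neither closes the tangential components nor accounts for the specific powers of $A$ and $\delta$ in the statement.
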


\begin{proof}
Note first that, by standard estimates, for $x,y\in Q_0$,
\begin{align*}
\bigl|\RR_\mu\chi_{(AQ_0)^c} (x) - \RR_\mu\chi_{(AQ_0)^c} (y)\bigr|&\lesssim 
\int_{(AQ_0)^c} \frac{|x-y|}{|x-z|^{n+1}}\,d\mu(z)\\
&\lesssim
\frac{|x-y|}{\ell(AQ_0)}\,P_\mu(AQ_0)
\lesssim \frac1A\,P_\mu(AQ_0)\lesssim \frac1A,
\end{align*}
taking into account the assumption (b) of the Main Lemma for the last inequality.
As a consequence,
$$\bigl|\RR_\mu\chi_{(AQ_0)^c} (x) - m_{\mu,Q_0}(\RR_\mu\chi_{(AQ_0)^c})\bigr|\lesssim\frac1A,$$
and so
$$\int_{Q_0}\bigl|\RR_\mu\chi_{(AQ_0)^c} (x) - m_{\mu,Q_0}(\RR_\mu\chi_{(AQ_0)^c})\bigr|^2d\mu(x)\lesssim \frac1{A^2}\,\mu(Q_0).$$
Together with the assumption (g) in the Main Lemma this gives
\begin{align*}
\int_{Q_0}|\RR_\mu \chi_{AQ_0}- m_{\mu,Q_0}(\RR_\mu \chi_{AQ_0})|^2\,d\mu & \leq 
2\int_{Q_0}\bigl|\RR\mu  - m_{\mu,Q_0}(\RR\mu)\bigr|^2d\mu\\
&\quad +
2\int_{Q_0}\bigl|\RR_\mu\chi_{(AQ_0)^c} - m_{\mu,Q_0}(\RR_\mu\chi_{(AQ_0)^c})\bigr|^2d\mu
\\
&\lesssim
\ve\,\mu(Q_0)+\frac1{A^2}\,\mu(Q_0).
\end{align*}
Hence, to conclude the proof of the lemma it suffices to show that 
$$\bigl|m_{\mu,Q_0}(\RR_\mu \chi_{AQ_0})\bigr|\lesssim A^{2n+1} \delta^{1/(8n+8)}\,\mu(Q_0)
.$$ 
By the antisymmetry of the Riesz kernel we have $m_{\mu,Q_0}(\RR_\mu \chi_{Q_0})=0$, and so the preceding estimate is equivalent
to
\begin{equation}\label{eq10}
\bigl|m_{\mu,Q_0}(\RR_\mu \chi_{AQ_0\setminus Q_0})\bigr|\leq A^{2n+1} \delta^{1/(8n+8)}\,\mu(Q_0).
\end{equation}

To prove \rf{eq10} first we take some small constant
 $0<\kappa<1/10$ to be fixed below. We let $\vphi$ be some $C^1$ function
which equals $1$ on $(1-\kappa)AQ_0\setminus (1+\kappa)Q_0$ and vanishes out of $AQ_0\setminus (1+\frac\kappa2)Q_0$, so that $\vphi$ is even and further
$\|\nabla\vphi\|_\infty\lesssim (\kappa\,\ell(Q_0))^{-1}$.
Then we split
\begin{equation}\label{eq10.5}
\left|\int_{Q_0} \RR_\mu \chi_{AQ_0\setminus Q_0}\,d\mu \right|
\leq \int_{Q_0} \bigl|\RR_\mu (\chi_{AQ_0\setminus Q_0}-\vphi)\bigr|\,d\mu 
+ \left|\int_{Q_0} \RR_\mu \vphi\,d\mu \right|.
\end{equation}
To bound the first integral on the right hand side note that
$\chi_{AQ_0\setminus Q_0}-\vphi=\psi_1 + \psi_2$, with
$$|\psi_1|\leq \chi_{AQ_0\setminus (1-\kappa)AQ_0} \quad\mbox{ and }\quad
|\psi_2|\leq \chi_{(1+\kappa)Q_0\setminus Q_0}.$$
Then we have
\begin{align*}
\int_{Q_0} \bigl|\RR_\mu (\chi_{AQ_0\setminus Q_0}-\vphi)\bigr|\,d\mu & \leq
\int_{Q_0} \bigl|\RR_\mu \psi_1\bigr|\,d\mu + \int_{Q_0} \bigl|\RR_\mu \psi_2\bigr|\,d\mu\\
&\leq \|\RR_\mu \psi_1\|_{L^\infty(\mu|_{Q_0})}\,\mu(Q_0) + 
\|\RR_\mu \psi_2\|_{L^2(\mu|_{Q_0})}\,\mu(Q_0)^{1/2}.
\end{align*}
Since $\dist(\supp\psi_1,Q_0)\approx A\ell(Q_0)$, we have
$$\|\RR_\mu \psi_1\|_{L^\infty(\mu|_{Q_0})} \lesssim \frac1{(A\ell(Q_0))^n} \,\|\psi_1\|_{L^1(\mu)}
\leq \frac1{(A\ell(Q_0))^n} \,\mu(AQ_0\setminus (1-\kappa)AQ_0).$$
On the other hand, since $\RR_\mu$ is bounded in $L^2(\mu|_{(1+\kappa)Q_0})$ and by the thin boundary property of $Q_0$ (in combination with the fact
that $\mu(2Q_0)\approx\mu(Q_0)$), we get
$$\|\RR_\mu \psi_2\|_{L^2(\mu|_{Q_0})}\leq C_1\|\psi_2\|_{L^2(\mu)}\leq 
C_1\,\mu((1+\kappa)Q_0\setminus Q_0)^{1/2}\leq C(C_0,C_1)\,\kappa^{1/2}\,\mu(Q_0)^{1/2}.$$
Therefore,
\begin{equation}\label{eq11}
\int_{Q_0} \bigl|\RR_\mu (\chi_{AQ_0\setminus Q_0}-\vphi)\bigr|\,d\mu\lesssim
\frac1{A^n} \,\mu(AQ_0\setminus (1-\kappa)AQ_0) + \kappa^{1/2}\mu(Q_0).
\end{equation}

To estimate $\mu(AQ_0\setminus (1-\kappa)AQ_0)$ we will use the fact that $\alpha^H_\mu(3AQ_0)\leq \delta$.
To this end, first we consider a function $\wt\vphi$ supported on $A(1+\kappa)Q_0\setminus (1-2\kappa)AQ_0$
which equals $1$ on $AQ_0\setminus (1-\kappa)AQ_0$, with $\|\nabla\wt\vphi\|_\infty\lesssim 1/(A\kappa\ell(Q_0))$.
Then we have
\begin{align}\label{eq29}
\mu(AQ_0\setminus (1-\kappa)AQ_0) &\leq \int \wt\vphi\,d\mu\\
& \leq \left|\int \wt\vphi\,d(\mu-\LL_{H})\right|
+ \int \wt\vphi\,d\LL_{H}\nonumber \\
& \leq \|\nabla\wt\vphi\|_\infty\,\ell(3AQ_0)^{n+1}\,\alpha^H_\mu(3AQ_0) 
+ \LL_{H}\bigl((1+\kappa)AQ_0\setminus (1-2\kappa)AQ_0\bigr)\nonumber\\
& \lesssim \left(\frac{A^{n}}\kappa\,\delta\, + \kappa\,A^{n}\right)\,\ell(Q_0)^n,\nonumber
\end{align}
where we used the estimate for $c_H$ in Lemma \ref{lemtontu} for the last inequality.
Hence, plugging this estimate into \rf{eq11} we obtain
\begin{equation}\label{eq14}
\int_{Q_0} \bigl|\RR_\mu (\chi_{AQ_0\setminus Q_0}-\vphi)\bigr|\,d\mu
\lesssim \left(
\frac 1\kappa \,\delta + \kappa + \kappa^{1/2}\right)\mu(Q_0) \lesssim
\left(
\frac \delta\kappa  + \kappa^{1/2}\right)\mu(Q_0)
.
\end{equation}

It remains to estimate the last summand in the inequality  \rf{eq10.5}. To this end we write
\begin{align}\label{eqdk3}
\left|\int_{Q_0} \RR_\mu \vphi\,d\mu \right| & \leq 
\left|\int_{Q_0} \RR_\mu \vphi\,d(\mu-\LL_{H}) \right|+
\left|\int_{Q_0} \RR (\vphi\mu - \vphi\LL_{H})\,d\LL_{H} \right|\\
&\quad  + 
\left|\int_{Q_0} \RR (\vphi\LL_{H})\,d\LL_{H} \right| \nonumber\\
& = T_1 + T_2 + T_3.\nonumber
\end{align}
Since $\vphi$ is even, by the antisymmetry of the Riesz kernel it follows easily that $T_3=0$.

To deal with $T_1$, consider another auxiliary function $\wh\vphi$ supported on $Q_0$ which equals
$1$ on $(1-\wh\kappa)Q_0$, for some small constant $0<\wh\kappa<\kappa$, so that $\|\nabla\wh\vphi\|_\infty\lesssim1/(\wh\kappa\ell(Q_0))$.
Then we write
$$T_1 \leq 
\left|\int \wh\vphi\,\RR_\mu \vphi\,d(\mu-\LL_{H})  \right| +
\left|\int (\chi_{Q_0}-\wh\vphi)\,\RR_\mu \vphi\, d(\mu-\LL_{H})\right| = T_{1,a} + T_{1,b}.$$
To estimate $T_{1,a}$ we set
$$T_{1,a} \leq \|\nabla(\wh\vphi\,\RR_\mu \vphi)\|_\infty \ell(3AQ_0)^{n+1} \,\alpha^H(3AQ_0),$$
by the definition of $\alpha^H(3AQ_0)$ and considering the $1$-Lipschitz function $\|\nabla(\wh\vphi\,\RR_\mu \vphi)\|_\infty^{-1}\,\wh\vphi\,\RR_\mu \vphi$.
We have
$$\|\nabla(\wh\vphi\,\RR_\mu \vphi)\|_\infty \leq 
\|\nabla(\RR_\mu \vphi)\|_{\infty,Q_0} + \|\nabla\wh\vphi\|_\infty\,\|\RR_\mu \vphi)\|_{\infty,Q_0}.
$$
Since $\dist(\supp\vphi,Q_0)\geq \frac\kappa2\ell(Q_0)$ and $\mu(AQ_0)\lesssim \ell(AQ_0)^n$
(by Lemma \ref{lemtontu}), we have
$$\|\RR_\mu \vphi\|_{\infty,Q_0}\lesssim \frac{\mu(AQ_0)}{(\kappa\ell(Q_0))^n} \lesssim \frac{A^n}{\kappa^n},$$
and, anagously,
$$\|\nabla(\RR_\mu \vphi)\|_{\infty,Q_0}\lesssim 
\frac{\mu(AQ_0)}{(\kappa\ell(Q_0))^{n+1}} \lesssim \frac{A^n}{\kappa^{n+1}\ell(Q_0)}.$$
Hence,
$$\|\nabla(\wh\vphi\,\RR_\mu \vphi)\|_\infty \lesssim \frac{A^n}{\kappa^{n+1}\ell(Q_0)} + 
\frac{A^n}{\wh\kappa\kappa^{n}\ell(Q_0)} \lesssim \frac{A^n}{\wh\kappa\,\kappa^{n}\ell(Q_0)}.$$
So we have
$$T_{1,a} \lesssim \frac{A^{2n+1}}{\wh \kappa\,\kappa^{n}}\,\delta\,\mu(Q_0).
$$
Now we consider the term $T_{1,b}$. We write
$$T_{1,b} \leq \|\chi_{Q_0}-\wh\vphi\|_{L^1(\mu + \LL_{H})} \,\|\RR_\mu \vphi\|_{\infty,Q_0}.$$
Recall that $\|\RR_\mu \vphi\|_{\infty,Q_0}\lesssim \frac{A^n}{\kappa^n}$. Also, by the construction of $\wh\vphi$ and the thin boundary of $Q_0$,
$$\|\chi_{Q_0}-\wh\vphi\|_{L^1(\mu + \LL_{H})}\lesssim \mu(Q_0\setminus (1-\wh\kappa)Q_0) 
+ \LL_H(Q_0\setminus (1-\wh\kappa)Q_0)\lesssim
\wh\kappa \,\mu(Q_0).$$
So we obtain
$$T_{1,b} \leq \frac{A^n}{\kappa^n}\,\wh\kappa\,\mu(Q_0).$$
Thus,
$$T_1\lesssim \left(\frac{A^{2n+1}}{\kappa^{n}\,\wh \kappa}\,\delta + \frac{A^n}{\kappa^n}\,\wh\kappa\right)
\mu(Q_0).$$
Choosing $\wh\kappa=\delta^{1/2}$, say, we get
$$T_1\lesssim \left(\frac{A^{2n+1}}{\kappa^n} + \frac{A^n}{\kappa^n}\right)\delta^{1/2}\,\mu(Q_0) \leq \frac{A^{2n+1}}{\kappa^n}\,\delta^{1/2}\,\mu(Q_0).$$

Finally, we turn our attention to $T_2$. We denote by $\RR_i$ the $i$-th component of $\RR$. By the antisymmetry of each $\RR_i$
we obtain
\begin{align*}
T_2=\left(\sum_{i=1}^{n+1} 
\left|\int_{Q_0} \RR_i (\vphi\mu - \vphi\LL_{H})\,d\LL_{H} \right|^2\right)^{1/2} &=\left(
\sum_{i=1}^{n+1} \left|\int \RR_i (\chi_{Q_0}\LL_{H})\,\vphi\,d(\mu - \LL_{H})\right|^2\right)^{1/2} \\
&\lesssim \|\nabla\bigl( \RR (\chi_{Q_0}\LL_{H})\,\vphi\bigr)\|_\infty\,\ell(3AQ_0)^{n+1} \,\alpha^H(3AQ_0).
\end{align*}
Observe that
$$\|\nabla\bigl( \RR (\chi_{Q_0}\LL_{H})\,\vphi\bigr)\|_\infty
\leq \|\nabla\bigl( \RR (\chi_{Q_0}\LL_{H})\|_{\infty,\supp\vphi} + 
\|\RR (\chi_{Q_0}\LL_{H})\bigr)\|_{\infty,\supp\vphi} \|\nabla\vphi\|_\infty.$$
Using that $\dist(\supp\vphi,Q_0)\geq \frac\kappa2\ell(Q_0)$ and that
$\LL_{H}(Q_0)= c_H\,\HH^n(Q_0\cap H)\approx \ell(Q_0)^n$
, we derive
$$\|\RR (\chi_{Q_0}\LL_{H})\bigr)\|_{\infty,\supp\vphi}\lesssim \frac{\LL_{H}(Q_0)}{(\kappa\ell(Q_0))^n} \lesssim \frac1{\kappa^n}$$
and
$$\|\nabla\bigl( \RR (\chi_{Q_0}\LL_{H})\|_{\infty,\supp\vphi} \lesssim
\frac{\LL_{H}(Q_0)}{(\kappa\ell(Q_0))^{n+1}} \lesssim \frac1{\kappa^{n+1}\ell(Q_0)}.$$
So we obtain
$$T_2\lesssim \frac{A^{n+1}}{\kappa^{n+1}}\,\delta	\,\mu(Q_0).$$

Gathering the estimates for $T_1$ and $T_2$, by \rf{eqdk3} we deduce
$$\left|\int_{Q_0} \RR_\mu \vphi\,d\mu \right| \lesssim \frac{A^{2n+1}}{\kappa^n}\,\delta^{1/2}\,\mu(Q_0) +
\frac{A^{n+1}}{\kappa^{n+1}}\,\delta	\,\mu(Q_0) \lesssim \frac{A^{2n+1}}{\kappa^{n+1}}\,\delta^{1/2}\,\mu(Q_0).$$
Plugging this estimate and \rf{eq14} into \rf{eq10.5}, we obtain
\begin{align*}
\left|\int_{Q_0} \RR_\mu \chi_{AQ_0\setminus Q_0}\,d\mu \right| & \lesssim \left(
\frac \delta\kappa  + \kappa^{1/2}\right)\mu(Q_0)
 +
\frac{A^{2n+1}}{\kappa^{n+1}}\,\delta^{1/2}\,\mu(Q_0)	\\
&\lesssim \left(\frac{A^{2n+1}}{\kappa^{n+1}}\,\delta^{1/2}	+ \kappa^{1/2}\right)\,\mu(Q_0).
\end{align*}
So if we choose $\kappa = \delta^{1/(4n+4)}$, we get
$$\left|\int_{Q_0} \RR_\mu \chi_{AQ_0\setminus Q_0}\,d\mu \right|
\lesssim \left(A^{2n+1}\,\delta^{1/4}	+ \delta^{1/(8n+8)}\right)\,\mu(Q_0)\lesssim 
	A^{2n+1} \delta^{1/(8n+8)}\,\mu(Q_0),$$
which yields \rf{eq10} and finishes the proof of the lemma.
\end{proof}

\vv
From now on we will assume that $\delta$ is small enough, depending on $A$, so that the conclusion in the preceding lemma
holds.

\vvv

\section{The dyadic lattice of David and Mattila}\label{secdm}

We will use the dyadic lattice of cells
with small boundaries of David-Mattila associated with a Radon measure $\sigma$ \cite[Theorem 3.2]{David-Mattila}. This lattice is very appropriate for the stopping time arguments in the next  section.
The properties of this lattice are summarized in the next lemma.

\begin{lemma}[David, Mattila]
\label{lemcubs}
Let $\sigma$ be a compactly supported Radon measure in $\R^{n+1}$.
Consider two constants $K_0>1$ and $A_0>5000\,K_0$ and denote $S_\sigma=\supp\sigma$. 
Then there exists a sequence of partitions
of $S_\sigma$ into Borel subsets $Q$, $Q\in \DD_{\sigma,k}$,
 which we will refer to as cells, with the following properties:
\begin{itemize}
\item For each integer $k\geq0$, $S_\sigma$ is the disjoint union of the cells $Q$, $Q\in\DD_{\sigma,k}$, and
if $k<l$, $Q\in\DD_{\sigma,l}$, and $R\in\DD_{\sigma,k}$, then either $Q\cap R=\varnothing$ or else $Q\subset R$.
\vv

\item The general position of the cells $Q$ can be described as follows. For each $k\geq0$ and each cell $Q\in\DD_{\sigma,k}$, there is a ball $B(Q)=B(z_Q,r(Q))$ such that
$$z_Q\in S_\sigma, \qquad A_0^{-k}\leq r(Q)\leq K_0\,A_0^{-k},$$
$$S_\sigma\cap B(Q)\subset Q\subset S_\sigma\cap 28\,B(Q)=S_\sigma \cap B(z_Q,28r(Q)),$$
and
$$\mbox{the balls\, $5B(Q)$, $Q\in\DD_{\sigma,k}$, are disjoint.}$$

\vv
\item The cells $Q\in\DD_{\sigma,k}$ have small boundaries. That is, for each $Q\in\DD_{\sigma,k}$ and each
integer $l\geq0$, set
$$N_l^{ext}(Q)= \{x\in S_\sigma\setminus Q:\,\dist(x,Q)< A_0^{-k-l}\},$$
$$N_l^{int}(Q)= \{x\in Q:\,\dist(x,S_\sigma\setminus Q)< A_0^{-k-l}\},$$
and
$$N_l(Q)= N_l^{ext}(Q) \cup N_l^{int}(Q).$$
Then
\begin{equation}\label{eqsmb2}
\sigma(N_l(Q))\leq (C^{-1}K_0^{-3(n+1)-1}A_0)^{-l}\,\sigma(90B(Q)).
\end{equation}
\vv

\item Denote by $\DD_{\sigma,k}^{db}$ the family of cells $Q\in\DD_{\sigma,k}$ for which
\begin{equation}\label{eqdob22}
\sigma(100B(Q))\leq K_0\,\sigma(B(Q)).
\end{equation}
We have that $r(Q)=A_0^{-k}$ when $Q\in\DD_{\sigma,k}\setminus \DD_{\sigma,k}^{db}$
and
\begin{equation}\label{eqdob23}
\sigma(100B(Q))\leq K_0^{-l}\,\sigma(100^{l+1}B(Q))\quad
\mbox{for all $l\geq1$ with $100^l\leq K_0$ and $Q\in\DD_{\sigma,k}\setminus \DD_{\sigma,k}^{db}$.}
\end{equation}
\end{itemize}
\end{lemma}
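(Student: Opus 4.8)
Since this lemma is precisely Theorem 3.2 of \cite{David-Mattila}, the natural course is to invoke it directly; let me nevertheless indicate the construction one would carry out. The point is that for a non-doubling $\sigma$ the usual Euclidean dyadic cubes are useless, so one builds ``cells'' intrinsically adapted to $S_\sigma=\supp\sigma$, generation by generation: the partition $\DD_{\sigma,k}$ at scale $A_0^{-k}$ is refined into $\DD_{\sigma,k+1}$ at scale $A_0^{-k-1}$, each cell $Q$ coming equipped with a center $z_Q\in S_\sigma$ and a radius $r(Q)\approx A_0^{-k}$, and one assigns a point $x\in S_\sigma$ to its generation-$(k+1)$ cell by a Voronoi-type rule relative to the generation-$(k+1)$ centers lying in the generation-$k$ cell of $x$, with a fixed deterministic tie-break. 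The hierarchical nature of this assignment forces exact nestedness, and a suitable choice of the net of centers (well separated, so that the dilates $5B(Q)$ are disjoint, yet dense enough that the $28B(Q)$ cover $S_\sigma$) yields the containments $S_\sigma\cap B(Q)\subset Q\subset S_\sigma\cap 28B(Q)$ together with $A_0^{-k}\leq r(Q)\leq K_0A_0^{-k}$.

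The technical heart is the choice of the radii $r(Q)$ so that thin shells around $\partial Q$ carry little mass. Fix a prospective center $z$ at generation $k$ and let $I$ be an interval of admissible radii of length $\approx A_0^{-k}$. For each sub-scale $l\geq1$, Fubini gives
$$\int_I \sigma\bigl(\{x: t-A_0^{-k-l}<|x-z|<t+A_0^{-k-l}\}\bigr)\,dt \;\lesssim\; A_0^{-k-l}\,\sigma\bigl(B(z,2K_0A_0^{-k})\bigr),$$
so by Chebyshev the set of ``bad'' radii $t\in I$ at sub-scale $l$ — those for which this shell exceeds $\lambda_l\,A_0^{-l}\,\sigma\bigl(B(z,2K_0A_0^{-k})\bigr)$ — has one-dimensional measure $\lesssim\lambda_l^{-1}|I|$. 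Choosing summable weights, say $\lambda_l=2^{l+2}$, and discarding the bad radii for every $l\geq1$ leaves a set of admissible radii of measure $\geq\frac12|I|>0$; for any $r(Q)$ in it one gets $\sigma(N_l(Q))\lesssim 2^{l}A_0^{-l}\,\sigma\bigl(90B(Q)\bigr)$, which is \rf{eqsmb2} provided $A_0$ is taken large enough — this is where $A_0\gg K_0$ is used — to absorb both the geometric factor $2^l$ and the powers of $K_0$ coming from the bounded overlap of the balls $B(Q)$ within a generation and from comparing $\sigma(B(z,2K_0A_0^{-k}))$ with $\sigma(90B(Q))$. Passing from the thin shells of the chosen radii to the sets $N_l(Q)$ is routine, since a point of $S_\sigma$ lies within $A_0^{-k-l}$ of $\partial Q$ only if it lies in one of boundedly many such shells (of $Q$ itself and of its neighbors).

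Finally, one declares $Q\in\DD_{\sigma,k}^{db}$ exactly when $\sigma(100B(Q))\leq K_0\sigma(B(Q))$, and arranges the construction so that a cell fails to be doubling only when $r(Q)$ has been pinned to its minimal value $A_0^{-k}$ — intuitively, when the mass around $z_Q$ is concentrating so fast that no admissible enlargement of the radius stays under the doubling threshold. For such a cell the radius cannot be increased within $[A_0^{-k},K_0A_0^{-k}]$, so the $\sigma$-mass must genuinely grow under each further dilation by $100$, which yields the iterated estimate \rf{eqdob23} for every $l\geq1$ with $100^l\leq K_0$.

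The main obstacle — and the reason the construction of \cite{David-Mattila} is delicate rather than routine — is reconciling \emph{exact} nestedness across \emph{all} scales with the \emph{uniform-in-$l$} geometric decay of \rf{eqsmb2}: the thin-shell selection at generation $k$ must be performed so that the boundary regions it makes thin are not crossed wastefully by the generation-$(k+1)$ refinement, which is precisely why the finer nets are chosen inside the coarser cells, the assignment rule is hierarchical, and the radii are allowed to range over a genuine interval $[A_0^{-k},K_0A_0^{-k}]$ rather than sit at a single value. Carrying out this simultaneous bookkeeping for a completely general Radon measure, together with the doubling/non-doubling dichotomy, is exactly the content of \cite[Theorem 3.2]{David-Mattila}, which is what we invoke here.
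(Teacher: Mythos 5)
Your proposal is correct and takes essentially the same approach as the paper: the paper does not prove this lemma at all but simply quotes it as \cite[Theorem 3.2]{David-Mattila}, exactly as you do. Your sketch of the underlying construction (Voronoi-type hierarchical assignment plus a Fubini--Chebyshev selection of radii with thin shells) is a reasonable outline of David--Mattila's argument, but it is not required here since the result is invoked wholesale from that reference.
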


\vv

We use the notation $\DD_\sigma=\bigcup_{k\geq0}\DD_{\sigma,k}$. Observe that the families $\DD_{\sigma,k}$ are only defined for $k\geq0$. So the diameters of the cells from $\DD$ are uniformly
bounded from above. 

\vv
\begin{rem}\label{remfac1}
Any two disjoint cells $Q,Q'\in\DD_\sigma$ satisfy $\frac12B(Q)\cap \frac12B(Q')=\varnothing$. This holds with $\frac12$
replaced by $5$ by the statements in the lemma above in the case that $Q,Q'$ are of the
same generation $\DD_{\sigma,k}$. If $Q\in\DD_{\sigma,j}$ and $Q'\in\DD_{\sigma,k}$ with $j\neq k$, this follows easily too. Indeed, assume $j<k$, and suppose that $\frac12B(Q)\cap \frac12B(Q')\neq\varnothing$. Since $r(Q')\ll r(Q)$ (by choosing $A_0$ big enough), this implies that $B(Q')\subset B(Q)$, and so
$$B(Q')\cap S_\sigma\subset B(Q)\cap S_\sigma\subset Q,$$
which implies that $Q'\cap Q\neq\varnothing$ and gives a contradiction.
\end{rem}
\vv

Given $Q\in\DD_{\sigma,k}$, we denote $J(Q)=k$. 
We set
$\ell(Q)= 56\,K_0\,A_0^{-k}$ and we call it the side length of $Q$. Note that 
$$\frac1{28}\,K_0^{-1}\ell(Q)\leq \diam(28B(Q))\leq\ell(Q).$$
Observe that $r(Q)\approx\diam(Q)\approx\ell(Q)$.
Also we call $z_Q$ the center of $Q$, and the cell $Q'\in \DD_{\sigma,k-1}$ such that $Q'\supset Q$ the parent of $Q$.
 We set
$B_Q=28 B(Q)=B(z_Q,28\,r(Q))$, so that 
$$S_\sigma\cap \tfrac1{28}B_Q\subset Q\subset B_Q.$$

We assume $A_0$ to be big enough so that the constant $C^{-1}K_0^{-3(n+1)-1}A_0$ in 
\rf{eqsmb2} satisfies 
$$C^{-1}K_0^{-3(n+1)-1}A_0>A_0^{1/2}>10.$$
Then we deduce that, for all $0<\lambda\leq1$,
\begin{align}\label{eqfk490}\nonumber
\sigma\bigl(\{x\in Q:\dist(x,S_\sigma\setminus Q)\leq \lambda\,\ell(Q)\}\bigr) + 
\sigma\bigl(\bigl\{x\in 3.5B_Q:\dist&(x,Q)\leq \lambda\,\ell(Q)\}\bigr)\\
&\leq
c\,\lambda^{1/2}\,\sigma(3.5B_Q).
\end{align}

We denote
$\DD_\sigma^{db}=\bigcup_{k\geq0}\DD_{\sigma,k}^{db}$.
Note that, in particular, from \rf{eqdob22} it follows that
\begin{equation}\label{eqdob*}
\sigma(3.5B_{Q})\leq \sigma(100B(Q))\leq K_0\,\sigma(Q)\qquad\mbox{if $Q\in\DD_\sigma^{db}.$}
\end{equation}
For this reason we will call the cells from $\DD_\sigma^{db}$ doubling. 
Given $Q\in\DD_\sigma$, we denote by $\DD_\sigma(Q)$
the family of cells from $\DD_\sigma$ which are contained in $Q$. Analogously,
we write $\DD_\sigma^{db}(Q) = \DD^{db}_\sigma\cap\DD(Q)$.

As shown in \cite[Lemma 5.28]{David-Mattila}, every cell $R\in\DD_\sigma$ can be covered $\sigma$-a.e.\
by a family of doubling cells:
\vv

\begin{lemma}\label{lemcobdob}
Let $R\in\DD_\sigma$. Suppose that the constants $A_0$ and $K_0$ in Lemma \ref{lemcubs} are
chosen suitably. Then there exists a family of
doubling cells $\{Q_i\}_{i\in I}\subset \DD_\sigma^{db}$, with
$Q_i\subset R$ for all $i$, such that their union covers $\sigma$-almost all $R$.
\end{lemma}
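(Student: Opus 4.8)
The statement asserts that every cell $R\in\DD_\sigma$ is covered $\sigma$-a.e.\ by doubling subcells. Since this is quoted from \cite[Lemma 5.28]{David-Mattila}, my plan is to reproduce the short stopping-time argument behind it, using only the properties of the David--Mattila lattice listed in Lemma \ref{lemcubs}. Fix $R\in\DD_\sigma$ and run the following stopping-time construction: declare a cell $Q\subsetneq R$ to be \emph{stopping} if $Q\in\DD_\sigma^{db}$ (i.e.\ $\sigma(100B(Q))\leq K_0\,\sigma(B(Q))$), and if $Q$ is not stopping, subdivide it into its children in $\DD_\sigma$ and continue. Let $\{Q_i\}_{i\in I}$ be the resulting maximal family of stopping cells; by maximality they are pairwise disjoint, all contained in $R$, and all doubling, so it only remains to show $\sigma\bigl(R\setminus\bigcup_i Q_i\bigr)=0$.

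The point $x\in R$ lies outside $\bigcup_i Q_i$ precisely when every cell $Q\in\DD_\sigma(R)$ containing $x$ (with $Q\neq R$) fails to be doubling. For such a cell the lemma gives $r(Q)=A_0^{-J(Q)}$ and, more importantly, the superdoubling estimate \rf{eqdob23}: $\sigma(100B(Q))\leq K_0^{-l}\,\sigma(100^{l+1}B(Q))$ whenever $100^l\leq K_0$. Iterating this a bounded number of times (up to the largest $l$ with $100^l\leq K_0$), or simply using \rf{eqdob23} with the maximal admissible $l$, one obtains a genuine gain: passing from a non-doubling cell $Q$ to an ancestor $\widehat Q\supsetneq Q$ whose ball is comparable to a fixed dilate of $100^l B(Q)$, one gets $\sigma(B(Q))\lesssim K_0^{-c}\,\sigma(B(\widehat Q))$ for some $c=c(K_0)>0$ — i.e.\ the $\sigma$-measure of the relevant balls decays geometrically along any chain of consecutive non-doubling cells. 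Hence along the (infinite) decreasing chain of cells of $\DD_\sigma(R)$ containing a fixed $x\notin\bigcup_i Q_i$, we would have $\sigma(B(Q))\to 0$ as $J(Q)\to\infty$; combined with $Q\subset B_Q$ and $S_\sigma\cap\frac1{28}B_Q\subset Q$, this forces $x$ to be an atom-free point of vanishing local density, and a standard covering/Vitali argument shows the set of such $x$ in $R$ has $\sigma$-measure zero.

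More precisely, for the last step I would argue by contradiction: if $\sigma(R\setminus\bigcup_i Q_i)=\theta>0$, cover this set at generation $k$ by the cells of $\DD_{\sigma,k}$ meeting it, all of which are non-doubling (being ancestors of no stopping cell, or rather having only non-doubling ancestors up to $R$); using the disjointness of the balls $5B(Q)$ within a generation together with the geometric decay of $\sigma(B(Q))$ accumulated from $R$ down to generation $k$, one bounds $\sigma(R\setminus\bigcup_i Q_i)\leq \sum_{Q\in\DD_{\sigma,k},\,Q\cap(R\setminus\bigcup_i Q_i)\neq\varnothing}\sigma(Q)$ by a quantity of the form $C\,K_0^{-c k}\,\sigma(100 B(R))$, which tends to $0$ as $k\to\infty$, contradicting $\theta>0$. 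This yields $\sigma(R\setminus\bigcup_i Q_i)=0$, as desired.

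The main obstacle is the bookkeeping in the decay estimate: the superdoubling inequality \rf{eqdob23} is only available for the limited range $100^l\leq K_0$, so a single application does not by itself give summable decay along a chain of non-doubling cells of unbounded length. The resolution is the observation — implicit in \cite[Lemma 5.28]{David-Mattila} — that a maximal \emph{run} of consecutive non-doubling cells has length bounded in terms of $K_0$ and $A_0$ (because $r(Q)=A_0^{-J(Q)}$ for non-doubling $Q$ pins down the generation exactly, while a doubling ancestor must appear before the balls shrink past the threshold where \rf{eqdob23} stops helping), so that the ``non-doubling $\Rightarrow$ definite measure loss passing to a suitable ancestor'' mechanism can be applied again and again, accumulating geometric decay. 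Once this structural fact is in hand, the measure-zero conclusion follows from the routine covering argument sketched above, and no further input beyond Lemma \ref{lemcubs} is needed.
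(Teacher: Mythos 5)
The paper itself offers no proof of this lemma: it is quoted verbatim from \cite[Lemma 5.28]{David-Mattila}, so what you are really doing is reconstructing David--Mattila's argument from the properties listed in Lemma \ref{lemcubs}. Your stopping-time set-up (maximal doubling subcells of $R$, residual set $Z$ of points all of whose subcells of $R$ are non-doubling) is the right frame, but the two claims that carry the proof are not correct as stated. First, the ``structural fact'' that a maximal run of consecutive non-doubling cells has length bounded in terms of $A_0,K_0$ is false: nothing in Lemma \ref{lemcubs} forces a doubling ancestor to appear, and indeed the whole difficulty of the lemma is that the chain above a point of $Z$ is non-doubling \emph{at every generation}; this is exactly why Lemma \ref{lemcad22} is formulated for an arbitrary gap $J(Q)-J(R)$. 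The observation that $r(Q)=A_0^{-J(Q)}$ for non-doubling $Q$ pins down the radius, but it gives no bound on how many consecutive generations can be non-doubling. Second, the closing covering bound does not follow. From your mechanism (iterating \rf{eqdob23}) each generation-$k$ cell $Q$ meeting $Z$ gains at best a fixed power of $K_0^{-1}$ per generation, so $\sigma(Q)\leq K_0^{-c(k-J(R))}\sigma(100B(R))$; but the number of generation-$k$ cells contained in $R$ can be of order $A_0^{(n+1)(k-J(R))}$ (the disjointness of the balls $5B(Q)$ only yields this packing count, it does not control the sum of their measures), and since the hypotheses only bound $A_0$ from \emph{below} in terms of $K_0$ (e.g.\ $A_0>5000K_0$, and later $A_0^{1/2}>CK_0^{3n+4}$), the factor $A_0^{(n+1)k}$ swamps $K_0^{-ck}$. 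So the asserted bound $\sigma(Z)\leq CK_0^{-ck}\sigma(100B(R))\to0$ is not justified, and the argument is circular if instead you try to bound $\sum\sigma(Q)$ by $\sigma$ of the residual region.

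The gap is repaired by using the much stronger decay of Lemma \ref{lemcad22} (David--Mattila's Lemma 5.31, stated in the paper immediately after this one): along an all-non-doubling chain, $\sigma(100B(Q))\leq A_0^{-10n(J(Q)-J(R)-1)}\sigma(100B(R))$, i.e.\ a gain of $A_0^{-10n}$ per generation. With this, either your counting argument closes, because $A_0^{(n+1)(k-J(R))}\cdot A_0^{-10n(k-J(R)-1)}\to0$ as $k\to\infty$ (here $10n>n+1$ is the point), or one argues \`a la Frostman: for $x\in Z$ and small $r$ one gets $\sigma(B(x,r))\lesssim_R r^{10n}$, hence $\sigma(Z)\lesssim \HH^{10n}_\infty(Z)=0$ since $Z\subset\R^{n+1}$ and $10n>n+1$. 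Be aware, though, that \rf{eqdk88} is not a routine consequence of \rf{eqdob23} alone; its proof in David--Mattila depends on how $K_0$ is calibrated against $A_0$, which is precisely why the present paper quotes both Lemma \ref{lemcobdob} and Lemma \ref{lemcad22} rather than proving them. If you want a self-contained proof within this paper's framework, take Lemma \ref{lemcad22} as the input and run the covering/Frostman step above; as written, your derivation of the decay and the final summation both fail.
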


The following result is proved in \cite[Lemma 5.31]{David-Mattila}.
\vv

\begin{lemma}\label{lemcad22}
Let $R\in\DD_\sigma$ and let $Q\subset R$ be a cell such that all the intermediate cells $S$,
$Q\subsetneq S\subsetneq R$ are non-doubling (i.e.\ belong to $\DD_\sigma\setminus \DD_\sigma^{db}$).
Then
\begin{equation}\label{eqdk88}
\sigma(100B(Q))\leq A_0^{-10n(J(Q)-J(R)-1)}\sigma(100B(R)).
\end{equation}
\end{lemma}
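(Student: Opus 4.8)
The plan is to iterate along the chain of dyadic ancestors of $Q$ inside $R$, using two features of the lattice in Lemma~\ref{lemcubs}: \emph{(i)} a non-doubling cell $S\in\DD_{\sigma,j}\setminus\DD_{\sigma,j}^{db}$ has radius \emph{exactly} $r(S)=A_0^{-j}$, so that radii grow by a factor $A_0$ per generation along a chain of non-doubling cells; and \emph{(ii)} the spreading estimate \rf{eqdob23}, i.e.\ $\sigma(100B(S))\le K_0^{-l}\sigma(100^{l+1}B(S))$ for every non-doubling $S$ and every integer $l\ge1$ with $100^l\le K_0$. Fix such an $l$ once and for all (the largest one, so that $l\sim\log_{100}K_0$). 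Write $k=J(Q)$ and $m=J(Q)-J(R)$, and let $Q=S_0\subsetneq S_1\subsetneq\dots\subsetneq S_m=R$ be the chain of ancestors, with $S_j\in\DD_{\sigma,k-j}$; by hypothesis $S_1,\dots,S_{m-1}$ are non-doubling. The cases $m=0$ (then $Q=R$) and $m=1$ are trivial, the latter because one checks directly that $100B(Q)\subset 100B(R)$ (see below), so assume $m\ge2$.

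The heart of the proof is the chain of ball inclusions, valid for $1\le j\le m-1$:
$$100^{l+1}B(S_j)\ \subset\ 100\,B(S_{j+1}).$$
Indeed, $z_{S_j}\in S_j\subset S_{j+1}\subset 28B(S_{j+1})$, so $|z_{S_j}-z_{S_{j+1}}|<28\,r(S_{j+1})$; moreover $r(S_j)=A_0^{-(k-j)}$ since $S_j$ is non-doubling, while $r(S_{j+1})\ge A_0^{-(k-j-1)}=A_0\,r(S_j)$, and $100^{l+1}\le 100\,K_0$. Hence, for $x\in 100^{l+1}B(S_j)$,
$$|x-z_{S_{j+1}}|\ <\ 100^{l+1}r(S_j)+28\,r(S_{j+1})\ \le\ \bigl(100\,K_0A_0^{-1}+28\bigr)\,r(S_{j+1})\ <\ 100\,r(S_{j+1}),$$
the last step using $A_0>5000K_0$. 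The same type of estimate — now without the $100^{l+1}$ dilation, using only $r(Q)\le K_0A_0^{-k}$ — gives the ``free'' inclusion $100B(Q)\subset 100B(S_1)$; and, using $r(S_{m-1})=A_0^{-(J(R)+1)}\le A_0^{-1}r(R)$, it gives $100^{l+1}B(S_{m-1})\subset 100B(R)$.

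Now combine these inclusions with \rf{eqdob23}: for each $j=1,\dots,m-1$, applying \rf{eqdob23} to the non-doubling cell $S_j$ and then the inclusion above,
$$\sigma(100B(S_j))\ \le\ K_0^{-l}\,\sigma(100^{l+1}B(S_j))\ \le\ K_0^{-l}\,\sigma(100B(S_{j+1})).$$
Iterating from $j=1$ to $j=m-1$, and using $\sigma(100B(Q))\le\sigma(100B(S_1))$, yields
$$\sigma(100B(Q))\ \le\ K_0^{-(m-1)l}\,\sigma(100B(R)),\qquad m-1=J(Q)-J(R)-1.$$
To conclude one only needs $K_0^{l}\ge A_0^{10n}$, and this can be arranged at the outset: $l$ may be taken of order $\log_{100}K_0$ while $A_0$ is a fixed polynomial power of $K_0$ (as already forced by the other normalizations of the lattice), so taking $K_0$ large enough guarantees $K_0^{l}\ge A_0^{10n}$; then $K_0^{-(m-1)l}\le A_0^{-10n(m-1)}$, which is exactly \rf{eqdk88}. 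The only mildly delicate points I anticipate are the bookkeeping of the center displacements $z_{S_j}\mapsto z_{S_{j+1}}$ (which cost only a factor $28$, harmless once $A_0\gg K_0$) and checking that the choice $K_0^l\ge A_0^{10n}$ is compatible with all the other constraints relating $K_0$ and $A_0$ in Lemma~\ref{lemcubs}; the rest is a routine iteration.
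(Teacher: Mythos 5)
You are right that the paper offers no argument of its own here — it simply cites \cite[Lemma 5.31]{David-Mattila} — so yours is a self-contained derivation from the properties quoted in Lemma \ref{lemcubs}, and its mechanism is exactly the standard David--Mattila iteration: the ancestor chain $Q=S_0\subsetneq\dots\subsetneq S_m=R$, the exact radii $r(S_j)=A_0^{-(k-j)}$ for the non-doubling intermediate cells, the inclusions $100B(Q)\subset 100B(S_1)$ and $100^{l+1}B(S_j)\subset 100B(S_{j+1})$ (your center/radius bookkeeping, $|z_{S_j}-z_{S_{j+1}}|<28\,r(S_{j+1})$ together with $100^{l+1}r(S_j)\le 100K_0A_0^{-1}r(S_{j+1})<r(S_{j+1})/50$, is correct), one application of \rf{eqdob23} per intermediate generation, and the trivial cases $m=0,1$. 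That part I would accept as is.

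The one substantive issue is the final step, and your justification of it is stated backwards. From the properties listed in Lemma \ref{lemcubs} alone the conclusion is simply not available for arbitrary admissible $(K_0,A_0)$: if $K_0<100$ then \rf{eqdob23} is vacuous and nothing forces any decay, so a relation of the type $K_0^{\lfloor\log_{100}K_0\rfloor}\ge A_0^{10n}$ is genuinely an extra requirement on the lattice constants, not something ``already forced by the other normalizations''. In fact the paper's normalization imposed right after Lemma \ref{lemcubs}, namely $C^{-1}K_0^{-3(n+1)-1}A_0>A_0^{1/2}$, forces $A_0\gtrsim K_0^{6n+8}$, i.e.\ it is a \emph{lower} bound on $A_0$ in terms of $K_0$ and pushes against the inequality you need. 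What saves the argument is that one is free to choose $A_0$ comparable to this minimal admissible size and then $K_0$ large depending only on $n$: with $A_0\approx K_0^{6n+8}$ your condition becomes $\lfloor\log_{100}K_0\rfloor\ge 10n(6n+8)+O(1)$, which holds once $K_0\ge 100^{10n(6n+8)+C}$, and this is compatible with $A_0>5000K_0$ and with the thin-boundary normalization. Since the paper explicitly allows the constants $A_0,K_0$ to be ``chosen suitably'' (cf.\ Lemma \ref{lemcobdob}) and lets all subsequent implicit constants depend on them, this is legitimate — but you should make this explicit choice of $(K_0,A_0)$ part of the proof rather than asserting the compatibility in passing, because it is the only point where the argument could actually fail.
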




From the preceding lemma we deduce:

\vv
\begin{lemma}\label{lemcad23}
Let $Q,R\in\DD_\sigma$ be as in Lemma \ref{lemcad22}.
Then
$$\Theta_\sigma(100B(Q))\leq K_0\,A_0^{-9n(J(Q)-J(R)-1)}\,\Theta_\sigma(100B(R))$$
and
$$\sum_{S\in\DD_\sigma:Q\subset S\subset R}\Theta_\sigma(100B(S))\leq C\,\Theta_\sigma(100B(R)),$$
with $C$ depending on $K_0$ and $A_0$.
\end{lemma}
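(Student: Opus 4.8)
The plan is to deduce both inequalities from Lemma \ref{lemcad22}, which gives the decay estimate $\sigma(100B(Q))\leq A_0^{-10n(J(Q)-J(R)-1)}\,\sigma(100B(R))$. For the first inequality, I would convert the measures $\sigma(100B(\cdot))$ into densities $\Theta_\sigma(100B(\cdot))=\sigma(100B(\cdot))/r(100B(\cdot))^n$. The key point is that the radii $r(Q)$ and $r(R)$ are controlled: since $Q,R\in\DD_\sigma$ with $J(Q)\geq J(R)$ and every cell $S$ with $Q\subsetneq S\subsetneq R$ is non-doubling, the non-doubling cells have $r(S)=A_0^{-J(S)}$ exactly (by the last bullet of Lemma \ref{lemcubs}), while in general $A_0^{-J(Q)}\leq r(Q)\leq K_0A_0^{-J(Q)}$. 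Thus $r(R)/r(Q)\leq K_0 A_0^{J(Q)-J(R)}$, so $(r(R)/r(Q))^n\leq K_0^n A_0^{n(J(Q)-J(R))}$. Combining,
$$\Theta_\sigma(100B(Q)) = \frac{\sigma(100B(Q))}{(100r(Q))^n}\leq A_0^{-10n(J(Q)-J(R)-1)}\,\frac{\sigma(100B(R))}{(100r(R))^n}\cdot\Bigl(\frac{r(R)}{r(Q)}\Bigr)^n\leq K_0^n\,A_0^{-10n(J(Q)-J(R)-1)+n(J(Q)-J(R))}\,\Theta_\sigma(100B(R)),$$
and since $-10n(j-1)+nj = -9nj+10n\leq -9n(j-1)+n$ for $j=J(Q)-J(R)\geq1$, absorbing the bounded factor $A_0^{n}$ (say, into $K_0$, or by slightly adjusting $A_0$) yields $\Theta_\sigma(100B(Q))\leq K_0\,A_0^{-9n(J(Q)-J(R)-1)}\,\Theta_\sigma(100B(R))$. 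I should double-check the exact power bookkeeping and whether the constant should be $K_0$ or $CK_0$; since the statement allows $C$ depending on $K_0,A_0$ in the second inequality, a harmless absolute or dimensional factor is fine.

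For the second inequality I would sum the first estimate over the chain of cells $S$ with $Q\subset S\subset R$. Each such $S$ is either $R$ itself, or satisfies the hypothesis of Lemma \ref{lemcad22} relative to $R$ (all intermediate cells between $S$ and $R$ being among the intermediate cells between $Q$ and $R$, hence non-doubling), so the first inequality applies to the pair $(S,R)$:
$$\sum_{S\in\DD_\sigma:\,Q\subset S\subset R}\Theta_\sigma(100B(S)) \leq \Theta_\sigma(100B(R)) + \sum_{S:\,Q\subsetneq S\subsetneq R}K_0\,A_0^{-9n(J(S)-J(R)-1)}\,\Theta_\sigma(100B(R)) + \Theta_\sigma(100B(Q)).$$
Since the generations $J(S)$ for $Q\subseteq S\subseteq R$ are distinct integers in the range $[J(R),J(Q)]$, there is at most one cell of each generation in the chain (the chain is totally ordered by inclusion), so the sum over $S$ is dominated by $\sum_{j\geq1}K_0\,A_0^{-9n(j-1)}\,\Theta_\sigma(100B(R))$, a convergent geometric series whose value is $\leq C(K_0,A_0)\,\Theta_\sigma(100B(R))$. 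The final term $\Theta_\sigma(100B(Q))$ is also bounded by $C(K_0,A_0)\Theta_\sigma(100B(R))$ by the first inequality. This gives the claimed bound.

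The only mildly delicate point is the radius comparison $r(R)\lesssim K_0 A_0^{J(Q)-J(R)}r(Q)$: one must use that non-doubling cells have radius exactly $A_0^{-k}$, so that along the non-doubling portion of the chain the radii shrink by exactly $A_0$ per generation, with the possible loss of a single factor $K_0$ coming only from the two endpoints $Q$ and $R$ (which may be doubling). I do not anticipate any real obstacle here — it is essentially a geometric-series computation once Lemma \ref{lemcad22} and the structural facts from Lemma \ref{lemcubs} are in hand — but care is needed to track the exponents so that the decay rate degrades cleanly from $A_0^{-10n(\cdot)}$ to $A_0^{-9n(\cdot)}$ while absorbing the density normalization.
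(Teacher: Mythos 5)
Your proof is correct and is essentially the standard argument behind this lemma (the paper itself only cites \cite[Lemma 4.4]{Tolsa-memo} for it): use Lemma \ref{lemcad22} to control the measures, convert to densities via the radius bounds $A_0^{-k}\leq r(\cdot)\leq K_0A_0^{-k}$ from Lemma \ref{lemcubs}, and then sum a geometric series along the totally ordered chain $Q\subset S\subset R$ (noting, as you do, that each intermediate $S$ again satisfies the hypotheses of Lemma \ref{lemcad22} relative to $R$). One bookkeeping caveat: your computation gives the constant $K_0^nA_0^n$ in front of $A_0^{-9n(J(Q)-J(R)-1)}$, and this factor cannot literally be ``absorbed into $K_0$'' since $A_0>5000K_0$; however, this is immaterial, because every place the lemma is used (e.g.\ Lemma \ref{lempoisson} and Remark \ref{remqi}) only requires the estimate with a constant depending on $n$, $K_0$ and $A_0$, and the second inequality, which is what is actually applied, is unaffected.
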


For the easy proof, see
 \cite[Lemma 4.4]{Tolsa-memo}, for example.

\vvv


\section{The low density cells and the stopping cells}\label{sec6}

We consider the measure $$\sigma = \mu|_{Q_0}$$ and the associated dyadic lattice $\DD_\sigma$ introduced in Section \ref{secdm}, re-scaled appropriately, so that we can assume that $Q_0$ is a cell from $\DD_\sigma$. 
Below we allow all the constants denoted by $C$ and all the implicit constants in the relations ``$\lesssim$'' and ``$\approx$''
to depend on the constants $A_0$ and $K_0$ from the construction of the lattice $\DD_\sigma$.
 
Let $0<\theta_0\ll1$ be a very small constant to be fixed later. We denote by $\LD$ the family of those cells from 
$\DD_\sigma$ such that $\Theta_\sigma(3.5B_Q)\leq \theta_0$ and have maximal side length.

The main difficulty for the proof of the Main Lemma \ref{mainlemma} consists in showing that the following holds.

\begin{keylemma}\label{keylemma}
There exists some constant $\ve_0$ such that if $A$ is big enough and $\theta_0,\delta,\ve$ are small enough (with $\delta$ possibly depending on $A$), 
then
$$\mu\Biggl(\bigcup_{Q\in \LD} Q\Biggr)\leq (1-\ve_0)\,\mu(Q_0).$$
\end{keylemma}

\vv

Consider the set
\begin{equation*}
F = Q_0\cap\supp\mu\setminus \bigcup_{Q\in \LD} Q.
\end{equation*}
From the definition of the family $\LD$ and the properties of the lattice $\DD_\sigma$, it follows that
\begin{equation}\label{eqAD**}
\mu(B(x,r))\approx r^n\quad\mbox{ for all $x\in F$, $0<r\leq \ell(Q_0)$.}
\end{equation}
Indeed, given any ball $B(x,r)$, with $x\in F$, $0<r\leq \ell(Q_0)$, 
the upper growth condition $\mu(B(x,r))\lesssim r^n$ is a consequence of the assumption (c)
of the Main Lemma \ref{mainlemma}. For the converse estimate,
consider the largest cell
$Q\in\DD_\sigma$ containing $x$ and such that $3.5B_Q\subset B(x,r)$, so that $r(B_Q)\approx r$. Since $Q\not\in\LD$ and $Q$ is not contained in any
other cell from $\LD$, then 
$$\Theta_\mu(B(x,r))\geq\Theta_\sigma(B(x,r))\gtrsim \Theta_\sigma(3.5B_Q)\geq\theta_0.$$

The estimate \rf{eqAD**} implies that $\mu|_F=h\,\HH^n|_F$, for some function $h\approx1$. So the Key Lemma ensures that
there is a significant portion of the measure $\mu$ of $Q_0$ which is absolutely continuous with respect to
$\HH^n$, which is one of the main points in the proof of the Main Lemma \ref{mainlemma}.

\vv
The proof of the Key Lemma will be carried out along the next sections of this paper. In what follows
{\em we will assume that 
\begin{equation}\label{assu1}
\mu\Biggl(\bigcup_{Q\in \LD} Q\Biggr)> (1-\ve_0)\,\mu(Q_0)
\end{equation}
and we will get a contradiction for $\ve_0$ small enough}.
To this end, first we need to construct another family of stopping cells which we will denote by $\sss$. This is
defined as follows. For each $Q\in\LD$ we consider the family of maximal cells contained in $Q$ 
from $\DD_\sigma^{db}$ (so they are doubling)
with side length at most $t\,\ell(Q)$, where $0<t<1$ is some small parameter which will be fixed below. We denote this family by 
$\sss(Q)$.
Then we define
$$\sss = \bigcup_{Q\in\LD} \sss(Q).$$

 Note that, by Lemma \ref{lemcobdob}, it is immediate that, for each $Q\in\LD$, the cells from $\sss(Q)$ cover $\mu$-almost all $Q$.
 So the assumption \rf{assu1} is equivalent to 
$$\mu\Biggl(\bigcup_{Q\in \sss} Q\Biggr)> (1-\ve_0)\,\mu(Q_0)
$$

In the remainder of this section we will prove some auxiliary results involving mainly the stopping cells, and another auxiliary measure $\mu_0$ that we will introduce below.

\begin{lemma}\label{lempoisson}
If we choose $t=\theta_0^{1/(n+1)}$, then we have:
$$\Theta_\mu(2B_Q)\leq P_\mu(2B_Q)\lesssim\theta_0^{1/(n+1)}\quad \mbox{ for all $Q\in\sss$.}$$
\end{lemma}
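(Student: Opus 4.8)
The plan is to exploit two facts about the stopping cells $Q\in\sss$: first, $Q$ is contained in some low-density cell $\wt Q\in\LD$ with $\Theta_\sigma(3.5B_{\wt Q})\leq\theta_0$ and $\ell(Q)\leq t\,\ell(\wt Q)$; second, $Q$ is a doubling cell, so $\sigma(3.5B_Q)\leq K_0\,\sigma(Q)$. The estimate $\Theta_\mu(2B_Q)\leq P_\mu(2B_Q)$ is immediate from the definition of $P_\mu$, since the $j=0$ term of the sum is exactly $\Theta_\mu(2B_Q)$ and all terms are non-negative, so the whole content of the lemma is the bound $P_\mu(2B_Q)\lesssim\theta_0^{1/(n+1)}$. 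First I would split $P_\mu(2B_Q)=\sum_{j\geq0}2^{-j}\Theta_\mu(2^{j+1}B_Q)$ into the range of scales $j$ with $2^{j}r(B_Q)\lesssim\ell(\wt Q)$ (the ``inner'' scales, still inside a fixed dilate of $\wt Q$) and the range with $2^{j}r(B_Q)\gtrsim\ell(\wt Q)$ (the ``outer'' scales).

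For the inner range, I would compare the density of $\mu$ on $2^{j+1}B_Q$ with that of $\sigma=\mu|_{Q_0}$ on a comparable ball, and then with $\Theta_\sigma(3.5B_{\wt Q})$. Concretely, for $j$ such that $2^{j+1}B_Q\subset C\,B_{\wt Q}$ for a fixed dimensional constant, one has $\mu(2^{j+1}B_Q)\leq\sigma(C\,B_{\wt Q})\lesssim\sigma(3.5B_{\wt Q})\lesssim\theta_0\,\ell(\wt Q)^n$ using the doubling-type bound \rf{eqdob*} together with Lemma \ref{lemcad23} (or directly since $\wt Q\in\LD$, possibly after enlarging $B_{\wt Q}$ using assumption (c) of the Main Lemma to control $\sigma$ on a bounded dilate). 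Summing $\sum_j 2^{-j}\,\theta_0\,\ell(\wt Q)^n/(2^jr(B_Q))^n$ over the inner scales and using $r(B_Q)\approx\ell(Q)\leq t\,\ell(\wt Q)=\theta_0^{1/(n+1)}\ell(\wt Q)$, the geometric-type sum in $j$ (where the largest term dominates, at the top scale $2^jr(B_Q)\approx\ell(\wt Q)$) produces a factor $\big(\ell(Q)/\ell(\wt Q)\big)$ times $\theta_0$ — wait, more carefully: the sum $\sum 2^{-j}(2^jr(B_Q))^{-n}$ is comparable to $r(B_Q)^{-n}$ times $\sum 2^{-j(n+1)}\approx r(B_Q)^{-n}$, so the inner contribution is $\lesssim\theta_0\,\ell(\wt Q)^n/\ell(Q)^n\geq\theta_0\,t^{-n}=\theta_0^{1/(n+1)}$. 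So the inner range already gives exactly the bound $\theta_0^{1/(n+1)}$, provided the summation is instead organized so that each dyadic scale $2^jB_Q$ is handled with $\mu(2^jB_Q)\lesssim\theta_0\,\min\{(2^jr(B_Q))^n,\ell(\wt Q)^n\}$ and then $2^{-j}$ times the density telescopes; I would present this as: for inner scales, $\Theta_\mu(2^{j+1}B_Q)\lesssim\theta_0\,(\ell(\wt Q)/2^jr(B_Q))^n$ when $2^jr(B_Q)\leq\ell(\wt Q)$, and $2^{-j}$ makes $\sum_j 2^{-j}(\ell(\wt Q)/2^j r(B_Q))^n\lesssim(\ell(\wt Q)/r(B_Q))^n\cdot\max_j 2^{-j(n+1)}$, but the maximum is at the smallest admissible $2^{-j}$, namely $2^{-j}\approx r(B_Q)/\ell(\wt Q)$, giving $(\ell(\wt Q)/r(B_Q))^n\cdot(r(B_Q)/\ell(\wt Q))^{n+1}=r(B_Q)/\ell(\wt Q)=\ell(Q)/\ell(\wt Q)\leq t=\theta_0^{1/(n+1)}$.

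For the outer range, where $2^jr(B_Q)\gtrsim\ell(\wt Q)$, I would simply bound $\Theta_\mu(2^{j+1}B_Q)\lesssim C_0$ using assumption (b)/(c) of the Main Lemma (the growth condition $\mu(B(x,r))\leq C_0 r^n$ for $x\in AQ_0$, valid once $A$ is large enough that all these balls lie in $AQ_0$, together with $P_\mu(AQ_0)\leq C_0$ for the truly large scales), and the geometric factor $2^{-j}$ over $j\geq j_0$ with $2^{j_0}\approx\ell(\wt Q)/r(B_Q)$ contributes $\sum_{j\geq j_0}2^{-j}\approx 2^{-j_0}\approx r(B_Q)/\ell(\wt Q)=\ell(Q)/\ell(\wt Q)\leq t=\theta_0^{1/(n+1)}$, so the outer contribution is $\lesssim C_0\,\theta_0^{1/(n+1)}\lesssim\theta_0^{1/(n+1)}$. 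Adding the two ranges gives $P_\mu(2B_Q)\lesssim\theta_0^{1/(n+1)}$, as claimed. The main obstacle I anticipate is the bookkeeping at the transition scale $2^jr(B_Q)\approx\ell(\wt Q)$ — making sure the density is controlled by $\theta_0$ (using that $\wt Q\in\LD$ and the properties of the David–Mattila lattice, specifically \rf{eqdob*} and Lemma \ref{lemcad23}) on precisely the right dilate of $B_{\wt Q}$, and ensuring that going from the cell $\wt Q$ to the ball $2^jB_Q$ does not lose more than a bounded factor; once this comparison is set up cleanly the geometric summation is routine. The choice $t=\theta_0^{1/(n+1)}$ is exactly what balances the factor $\ell(Q)/\ell(\wt Q)\leq t$ against the gain $\theta_0$ so that both ranges land on the same power $\theta_0^{1/(n+1)}$.
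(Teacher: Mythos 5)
There is a genuine gap, and it sits at the \emph{bottom} scales (those comparable to $\ell(Q)$), not at the transition scale $2^jr(B_Q)\approx\ell(\wt Q)$ that you single out as the main obstacle. On the inner range your argument needs a density bound at the scales $\ell(Q)\lesssim 2^jr(B_Q)\lesssim t\,\ell(\wt Q)$, and neither of the two bounds you propose closes it. The bound you can actually justify from $\wt Q\in\LD$, namely $\mu(2^{j+1}B_Q)\lesssim\theta_0\,\ell(\wt Q)^n$, gives after a correct summation $\sum_{j}2^{-j}\Theta_\mu(2^{j+1}B_Q)\lesssim\theta_0\,\bigl(\ell(\wt Q)/\ell(Q)\bigr)^n$: the sum $\sum_j2^{-j(n+1)}$ is dominated by its smallest index $j=0$, so your statement that ``the maximum is at the smallest admissible $2^{-j}$'' is exactly backwards, and the factor $\ell(Q)/\ell(\wt Q)\leq t$ you extract does not appear. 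Since the cells of $\sss(\wt Q)$ are only \emph{maximal doubling} cells of side length at most $t\ell(\wt Q)$, one may well have $\ell(Q)\ll t\,\ell(\wt Q)$ (long chains of non-doubling cells), in which case $\theta_0(\ell(\wt Q)/\ell(Q))^n\gg\theta_0 t^{-n}=\theta_0^{1/(n+1)}$ and the estimate fails. Your alternative bound $\mu(2^{j}B_Q)\lesssim\theta_0\,(2^jr(B_Q))^n$, i.e. $\Theta_\mu(2^{j}B_Q)\lesssim\theta_0$ at all inner scales, is false: nothing in the construction makes $Q$ itself low density — if $\mu|_{\wt Q}$ has mass $\approx\theta_0\ell(\wt Q)^n$ concentrated near a cube of side $t\ell(\wt Q)$, the density at that scale is $\approx\theta_0/t^n=\theta_0^{1/(n+1)}\gg\theta_0$, which is precisely why the lemma's conclusion is $\theta_0^{1/(n+1)}$ and not $\theta_0$.

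The missing ingredient is the structural fact that the paper exploits for its sum $S_1$: because $Q$ is a \emph{maximal} doubling cell of side length at most $t\ell(R)$ (with $R\in\LD$ your $\wt Q$), every intermediate cell $P\in\DD_\sigma$ with $Q\subsetneq P\subset R'$, where $R'$ is the ancestor of $Q$ with $\ell(R')\approx t\ell(R)$, is non-doubling; Lemma \ref{lemcad22}/Lemma \ref{lemcad23} then give $\Theta_\mu(2B_P)\lesssim\Theta_\mu(2B_{R'})\lesssim t^{-n}\,\Theta_\mu(2B_R)\lesssim\theta_0/t^n$ uniformly on all those scales, and summing against the geometric weights $\ell(Q)/\ell(P)$ yields $\lesssim\theta_0/t^n=\theta_0^{1/(n+1)}$. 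Your proposal never uses maximality or the non-doubling chain (you record that $Q$ is doubling but never use it, and you invoke Lemma \ref{lemcad23} only to enlarge $B_{\wt Q}$), so the scales between $\ell(Q)$ and $t\ell(\wt Q)$ are left uncontrolled. The rest of your scheme is sound and parallels the paper: the range $t\ell(\wt Q)\lesssim 2^jr(B_Q)\lesssim\ell(\wt Q)$ is fine with your bound (this is the paper's $S_2$), and your outer range, using $\Theta_\mu\lesssim C_0$ inside $AQ_0$ and $P_\mu(AQ_0)\leq C_0$ for the very large scales together with $2^{-j_0}\approx\ell(Q)/\ell(\wt Q)\leq t$, matches the paper's $S_3$ and $S_4$.
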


\begin{proof}
Let $Q\in\sss$ and $R\in\LD$ such that $Q\subset R$. The first inequality in the lemma is trivial and so we only have to prove the second one.
Let $R'\in\DD_\sigma$ the maximal cell such that $Q\subset R'
\subset R$ with $\ell(R')\leq t\,\ell(R)$, so that $\ell(R')\approx t\,\ell(R)$.
Then we write
\begin{align*}
P_\mu(2B_Q) & \lesssim \sum_{P\in\DD_\sigma:Q\subset P\subset R'} \Theta_\mu(2B_P)\,\frac{\ell(Q)}{\ell(P)} +
\sum_{P\in\DD_\sigma:R'\subset P\subset R} \Theta_\mu(2B_P)\,\frac{\ell(Q)}{\ell(P)} \\
& +
\sum_{P\in\DD_\sigma:R\subset P\subset Q_0} \Theta_\mu(2B_P)\,\frac{\ell(Q)}{\ell(P)}+
\sum_{k\geq1} \Theta_\mu(2^k Q_0)\,\frac{\ell(Q)}{\ell(2^kQ_0)}\\
& = S_1 + S_2+ S_3+S_4.
\end{align*}
To deal with the sums $S_1$ and $S_2$, note that for all $P\subset R$, since $2B_P\subset 2B_R$
(assuming $A_0$ to be big enough), we have
$$\Theta_\mu(2B_P) = \frac{\mu(2B_P)}{r(2B_P)^n}\leq \frac{\mu(2B_R)}{r(2B_P)^n} = 
\Theta_\mu(2B_R)\,\frac{r(B_R)^n}{r(B_P)^n} \approx\Theta_\mu(2B_R)\,\frac{\ell(R)^n}{\ell(P)^n}.$$
Therefore, since $\Theta_\mu(2B_R)\lesssim\theta_0$ and all the cells $P$ appearing in $S_2$
satisfy $\ell(P)\geq t\,\ell(R)$, we deduce that all such cells satisfy 
$\Theta_\mu(2B_P)\lesssim \frac1{t^n}\,\Theta_\mu(2B_R)\lesssim\frac{\theta_0}{t^n}$, and thus
$$S_2\lesssim \frac{\theta_0}{t^n}
\sum_{P\in\DD_\sigma:R'\subset P\subset R} \frac{\ell(Q)}{\ell(P)}
\lesssim
\frac{\theta_0}{t^n}.$$
Also, since there are no $\mu$-doubling cells between $R'$ and $Q$, from Lemma \ref{lemcad23}
 we deduce that the cells $P$ in the sum $S_1$ satisfy
 $$\Theta_\mu(2B_P)\lesssim \Theta_\mu(2B_{R'})\lesssim\frac1{t^n}\,\Theta_\mu(2B_R)\lesssim\frac{\theta_0}{t^n},$$
 and therefore we also get
$$S_1\lesssim \frac{\theta_0}{t^n}\sum_{P\in\DD_\sigma:Q\subset P\subset R'}\frac{\ell(Q)}{\ell(P)}\lesssim \frac{\theta_0}{t^n}
.$$ 
For the cells $P$ in the sum $S_3$ we just take into account that $\Theta_\mu(2B_P)\lesssim 1$,
and thus
$$S_3\lesssim \sum_{P\in\DD_\sigma:R\subset P\subset Q_0} \frac{\ell(Q)}{\ell(P)}\lesssim
\frac{\ell(Q)}{\ell(R)}\lesssim t.$$
Regarding the sum $S_4$, note that
$$S_4  = \sum_{1\leq k<\log_2A} \Theta_\mu(2^k Q_0)\,\frac{\ell(Q)}{\ell(2^kQ_0)}
+ \sum_{k:2^k\geq A} \Theta_\mu(2^k Q_0)\,\frac{\ell(Q)}{\ell(2^kQ_0)}= S_{4,a} + S_{4,b}.$$
For the indices $k$ in $S_{4,a}$ we use the fact that $\Theta_\mu(2^k Q_0)\lesssim C_0$, and so we get 
$$S_{4,a}\lesssim \sum_{1\leq k<\log_2A}\frac{\ell(Q)}{\ell(2^kQ_0)} \lesssim \frac{\ell(Q)}{\ell(Q_0)}\leq t.$$
For $S_{4,b}$ we write
$$S_{4,b} \lesssim \frac{\ell(Q)}{\ell(AQ_0)}\sum_{j\geq0} \Theta_\mu(2^j AQ_0)\,\frac{\ell(AQ_0)}{\ell(2^jAQ_0)}\lesssim
\frac{\ell(Q)}{\ell(AQ_0)}\,P_\mu(AQ_0)\lesssim \frac{\ell(Q)}{\ell(AQ_0)}\leq t.
$$
Hence,
$$P_\mu(2B_Q)\lesssim \frac{\theta_0}{t^n} + t \approx \theta_0^{1/(n+1)},$$
recalling that $t=\theta_0^{1/(n+1)}$.
\end{proof}
\vv

From now on we assume that we have chosen $t=\theta_0^{1/(n+1)}$, so that the conclusion of the preceding lemma holds. 

\vv
The family $\sss$ may consist of an infinite number of cells.
For technical reasons, it is convenient to consider a finite subfamily of $\sss$ which contains a very big proportion of the $\mu$ measure of $\sss$. So we let $\sss_0$ be a {\em finite} subfamily of $\sss$
such that
\begin{equation}\label{stop00}
\mu\Biggl(\bigcup_{Q\in \sss_0} Q\Biggr)> (1-2\ve_0)\,\mu(Q_0).
\end{equation}

We denote by $\bad$ the family of the cells $P\in\sss$ such that $1.1B_P\cap\partial Q_0\neq \varnothing$.

\begin{lemma}\label{lempocbad}
We have
$$\mu\Biggl(\bigcup_{Q\in \bad} Q\Biggr)\lesssim \theta_0^{1/(n+1)}\,\mu(Q_0).$$
\end{lemma}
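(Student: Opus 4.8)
The claim is that the cells in $\bad$ — those stopping cells $P\in\sss$ whose dilate $1.1B_P$ meets $\partial Q_0$ — carry only a small fraction of the mass of $Q_0$. The natural approach is to exploit the thin boundary of $Q_0$ (assumption (d) of the Main Lemma) together with the fact that every $P\in\sss$ has controlled size relative to the cell $R\in\LD$ that contains it, and hence relative to $Q_0$. The plan is as follows. First I would observe that if $P\in\bad$, then $P\subset R$ for some $R\in\LD$, and $\ell(P)\leq t\,\ell(R)\leq t\,\ell(Q_0)=\theta_0^{1/(n+1)}\ell(Q_0)$ by the definition of $\sss$ and the normalization $t=\theta_0^{1/(n+1)}$. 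Since $1.1B_P$ meets $\partial Q_0$ and $r(B_P)\approx\ell(P)$, every point of $P$ lies within distance $C\ell(P)\leq C\theta_0^{1/(n+1)}\ell(Q_0)$ of $\partial Q_0$; in particular
$$\bigcup_{P\in\bad} P\subset \bigl\{x\in Q_0:\dist(x,\partial Q_0)\leq C\,\theta_0^{1/(n+1)}\,\ell(Q_0)\bigr\}.$$

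Then I would simply apply the $C_0$-thin boundary property of $Q_0$: with $\lambda = C\,\theta_0^{1/(n+1)}$ this gives
$$\mu\Biggl(\bigcup_{P\in\bad} P\Biggr)\leq \mu\bigl(\{x\in 2Q_0:\dist(x,\partial Q_0)\leq\lambda\,\ell(Q_0)\}\bigr)\leq C_0\,\lambda\,\mu(2Q_0)\lesssim \theta_0^{1/(n+1)}\,\mu(2Q_0),$$
and since $\mu(2Q_0)\lesssim\mu(Q_0)$ (as noted right after the Main Lemma, and also using the growth assumption (c)), this yields the desired bound $\mu\bigl(\bigcup_{Q\in\bad}Q\bigr)\lesssim\theta_0^{1/(n+1)}\mu(Q_0)$.

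The argument is short, and the only point requiring a little care is the geometric step: verifying that $1.1B_P\cap\partial Q_0\neq\varnothing$ together with $P\subset B_P$ and $r(B_P)\approx\ell(P)$ really does force all of $P$ into a $C\ell(P)$-neighborhood of $\partial Q_0$, with a constant $C$ that is harmless (depending only on $A_0,K_0,n$). One must also make sure that, for $P\in\sss$, the bound $\ell(P)\leq t\,\ell(R)$ with $R\in\LD$ and $R\subset Q_0$ indeed gives $\ell(P)\leq t\,\ell(Q_0)$, which is immediate since $R\subset Q_0$ implies $\ell(R)\leq\ell(Q_0)$. I do not expect any real obstacle here: the whole content is packaged into the thin boundary hypothesis, and this lemma is essentially a bookkeeping step preparing for the periodization construction in the next sections.
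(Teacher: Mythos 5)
Your argument is correct, and it reaches the stated bound, but it is a genuinely different proof from the one in the paper. You get the factor $\theta_0^{1/(n+1)}$ from the \emph{smallness} of the stopping cells: every $P\in\sss$ satisfies $\ell(P)\leq t\,\ell(R)\leq t\,\ell(Q_0)$ with $t=\theta_0^{1/(n+1)}$, so all bad cells sit inside a band of width $\approx t\,\ell(Q_0)$ around $\partial Q_0$, and then the $C_0$-thin boundary hypothesis (d) of the Main Lemma together with $\mu(2Q_0)\lesssim\mu(Q_0)$ finishes the job; the only points needing care (that $1.1B_P\cap\partial Q_0\neq\varnothing$ forces $P\subset B_P$ into a $C\ell(P)$-neighborhood of $\partial Q_0$, and that $\ell(R)\leq\ell(Q_0)$ for $R\in\LD$) are fine, with constants depending only on the lattice parameters, which is allowed. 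The paper instead gets the factor from the \emph{low density} of the stopping cells: it uses $\Theta_\mu(2B_Q)\lesssim\theta_0^{1/(n+1)}$ (Lemma \ref{lempoisson}), a Vitali covering of the balls $1.15B_Q$, the observation that $1.1B_Q\cap\partial Q_0\neq\varnothing$ gives $\HH^n(1.15B_Q\cap\partial Q_0)\gtrsim r(B_Q)^n$, and then sums against $\HH^n(\partial Q_0)\approx\mu(Q_0)$; it never invokes the thin boundary of $Q_0$ nor the bound $\ell(P)\leq t\,\ell(Q_0)$. The trade-off: the paper's route would still work if the stopping scale were not taken small relative to $\ell(Q_0)$ (only the density bound matters), while yours would still work without any density information, relying only on the choice $t=\theta_0^{1/(n+1)}$ and the thin-boundary assumption; since that choice of $t$ is fixed just before this lemma, both arguments are legitimate here.
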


\begin{proof}
Let $I\subset \bad$ an arbitrary finite family of bad cells. 
We apply the covering theorem of triple balls of Vitali to the family $\{1.15B_Q\}_{Q\in I}$, so that
we get a subfamily $J\subset I$ satisfying
\begin{itemize}
\item $1.15B_P\cap 1.15B_Q=\varnothing$ for different cells $P,Q\in J$, and
\item $\bigcup_{P\in I}1.15 B_P\subset \bigcup_{Q\in J}3.45 B_Q$.
\end{itemize}
Then, using that
$$\mu(3.45B_Q)\leq \mu(3.5B_Q)\lesssim \mu(B_Q)\lesssim \theta_0^{1/(n+1)} \,r(B_Q)^n
\quad\mbox{ for all $Q\in J$,}$$
 we get
$$\mu\biggl(\bigcup_{P\in I} B_P\biggr) \leq \sum_{Q\in J} \mu(3.45B_Q)\lesssim \theta_0^{1/(n+1)}\,\sum_{Q\in J}r(B_Q)^n.
$$
For each $Q\in J$ we have $1.1B_Q\cap \partial Q_0\neq\varnothing$ and so we deduce that
$$\HH^n(1.15B_Q\cap\partial Q_0)\gtrsim r(B_Q)^n.$$
Thus, using also that the balls $1.15B_Q$, $Q\in J$, are pairwise disjoint,
$$\mu\biggl(\bigcup_{P\in I} P\biggr) \lesssim \theta_0^{1/(n+1)}\sum_{Q\in J} \HH^n(1.15B_Q\cap\partial Q_0) 
\leq \theta_0^{1/(n+1)}\, \HH^n(\partial Q_0) \approx \theta_0^{1/(n+1)}\,\mu(Q_0),$$
and the lemma follows.
\end{proof}

\vv

We will now define an auxiliary measure $\mu_0$. First, given a small constant $0<\kappa_0\ll1$ (to be fixed below) and $Q\in\DD_\sigma$, we denote
\begin{equation}\label{eqik00}
I_{\kappa_0}(Q) = \{x\in Q:\dist(x,\supp\sigma\setminus Q)\geq \kappa_0\ell(Q)\}.
\end{equation}
So $I_{\kappa_0}(Q)$ is some kind of inner subset of $Q$. We set
\begin{equation}\label{eqdefmu0}
\mu_0 = \mu|_{Q_0^c} + \sum_{Q\in\sss_0\setminus\bad} \mu|_{I_{\kappa_0}(Q)}.
\end{equation}
Observe that, by the doubling and small boundary condition \rf{eqfk490} of $Q\in\sss_0$, we have
$$\mu(Q\setminus I_{\kappa_0}(Q))\lesssim \kappa_0^{1/2}\,\mu(3.5B_Q) \lesssim \kappa_0^{1/2}\,\mu(Q).$$
Combining this estimate with \rf{stop00} and Lemma \ref{lempocbad}, and taking into account the definition of $\mu_0$ in 
\rf{eqdefmu0}, we get the following estimate for the total variation of 
$\mu-\mu_0$:
\begin{align}\label{eqfac99}
\|\mu-\mu_0\| & =
\mu(Q_0) - \mu_0(Q_0)\\ & = \mu(Q_0) - \sum_{Q\in \sss_0\setminus \bad}\mu(I_{\kappa_0}(Q))\nonumber \\
& =
\mu\Biggl(Q_0\setminus \bigcup_{Q\in \sss_0} Q\Biggr) + \sum_{Q\in \bad}\mu(Q)+ \sum_{Q\in \sss_0\setminus \bad}\mu(Q\setminus I_{\kappa_0}(Q)) \nonumber\\
& \leq 2\ve_0\,
\mu(Q_0) + C\theta_0^{1/(n+1)}\,\mu(Q_0) + C\kappa_0^{1/2}\,\mu(Q_0).\nonumber
\end{align}
Together with Lemma \ref{lem1} this yields
the following.

\begin{lemma}\label{lemfac93}
If $\delta$ is small enough (depending on $A$), then we have
$$\int_{Q_0}|\RR(\chi_{AQ_0}\mu_0)|^2\,d\mu_0\lesssim \left(\ve + \frac1{A^2} + 
\delta^{1/(8n+8)}
+\ve_0+\theta_0^{1/(n+1)} + \kappa_0^{1/2}
\right)\,\mu(Q_0).$$
\end{lemma}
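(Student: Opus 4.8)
The plan is to compare the two quadratic forms $\int_{Q_0}|\RR(\chi_{AQ_0}\mu)|^2\,d\mu$ and $\int_{Q_0}|\RR(\chi_{AQ_0}\mu_0)|^2\,d\mu_0$, and to bound their difference by a multiple of $\|\mu-\mu_0\|$ (up to harmless scale-invariant factors), so that the conclusion follows from the Localization Lemma \ref{lem1} together with the total variation estimate \rf{eqfac99}. More precisely, I would write
$$\int_{Q_0}|\RR(\chi_{AQ_0}\mu_0)|^2\,d\mu_0 - \int_{Q_0}|\RR(\chi_{AQ_0}\mu)|^2\,d\mu = \int |\RR(\chi_{AQ_0}\mu_0)|^2\,d(\mu_0-\mu) + \int_{Q_0}\Bigl(|\RR(\chi_{AQ_0}\mu_0)|^2 - |\RR(\chi_{AQ_0}\mu)|^2\Bigr)\,d\mu,$$
and estimate the two pieces separately. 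For the second piece, the algebraic identity $|a|^2-|b|^2=(a-b)\cdot(a+b)$ reduces matters to controlling $\RR(\chi_{AQ_0}(\mu-\mu_0))$ in an appropriate norm and pairing it against $\RR(\chi_{AQ_0}\mu)+\RR(\chi_{AQ_0}\mu_0)$; for the first piece one needs $\RR(\chi_{AQ_0}\mu_0)\in L^2(|\mu-\mu_0|)$ with the right-hand bound.

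The key technical input is that $\RR_{\mu|_{2Q_0}}$ is bounded in $L^2(\mu|_{2Q_0})$ (assumption (f) of the Main Lemma), that $\mu$ has $n$-growth on $AQ_0$ (assumption (c)), and that $\mu_0\leq\mu$ with $\mu_0$ still enjoying $n$-growth on the relevant scales (it is a restriction of $\mu$ to a union of inner subsets of cubes, so growth is inherited). Since $\mu-\mu_0$ is a positive measure supported in $Q_0$ with $\|\mu-\mu_0\|$ small, and since the cells $Q\in\sss_0$ are doubling with small boundaries, one can use a standard Calderón–Zygmund / $T1$-type argument: the $L^2(\mu|_{2Q_0})$-boundedness of $\RR_{\mu|_{2Q_0}}$, the growth condition on $AQ_0$, and an interpolation or a direct Schur-test argument give that $\RR$ maps $L^2$ of one $n$-growth measure to $L^2$ of another comparable one with a norm controlled by the ambient constants. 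In particular $\|\RR(\chi_{AQ_0}\mu)\|_{L^2(\mu)}\lesssim\mu(Q_0)^{1/2}$, $\|\RR(\chi_{AQ_0}\mu_0)\|_{L^2(\mu_0)}\lesssim\mu(Q_0)^{1/2}$, and cross terms of the form $\int |\RR(\chi_{AQ_0}\nu_1)|^2\,d\nu_2$ for $\nu_1,\nu_2\in\{\mu-\mu_0,\mu,\mu_0\}$ are bounded by $C\,\|\mu-\mu_0\|$ whenever at least one of $\nu_1,\nu_2$ equals $\mu-\mu_0$. Combining with \rf{eqfac99}, all such error terms are at most
$$C\bigl(\ve_0+\theta_0^{1/(n+1)}+\kappa_0^{1/2}\bigr)\,\mu(Q_0).$$

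Putting the pieces together: by Lemma \ref{lem1} we have $\int_{Q_0}|\RR(\chi_{AQ_0}\mu)|^2\,d\mu\lesssim(\ve+A^{-2}+A^{4n+2}\delta^{1/(4n+4)})\mu(Q_0)$, which is $\lesssim(\ve+A^{-2}+\delta^{1/(8n+8)})\mu(Q_0)$ once $\delta$ is small enough depending on $A$. Adding the difference estimate, we obtain
$$\int_{Q_0}|\RR(\chi_{AQ_0}\mu_0)|^2\,d\mu_0\lesssim\Bigl(\ve+\frac1{A^2}+\delta^{1/(8n+8)}+\ve_0+\theta_0^{1/(n+1)}+\kappa_0^{1/2}\Bigr)\,\mu(Q_0),$$
as claimed. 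The main obstacle I expect is the rigorous justification that $\RR(\chi_{AQ_0}\mu_0)$ and $\RR(\chi_{AQ_0}\mu)$ belong to $L^2$ of the comparison measures with the stated bounds — i.e.\ that the boundedness hypothesis on $\RR_{\mu|_{2Q_0}}$, which is an a priori statement about the truncated operators on $L^2(\mu|_{2Q_0})$, can be transferred to estimates involving the positive measure $\mu-\mu_0$ and the measure $\mu_0$, using only the $n$-growth and the finiteness of $P_\mu(AQ_0)$; this requires invoking the non-homogeneous Calderón–Zygmund machinery (e.g.\ the arguments of \cite{NToV-pubmat} or \cite{Tolsa-llibre}) rather than a one-line estimate, and care is needed because $\mu$ is non-doubling on $AQ_0$ in general.
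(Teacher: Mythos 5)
Your high-level inputs are the right ones (Lemma \ref{lem1}, the total variation bound \rf{eqfac99}, and the $L^2$ boundedness from (f) of the Main Lemma), but the execution has a genuine gap. The blanket claim that every cross term $\int |\RR(\chi_{AQ_0}\nu_1)|^2\,d\nu_2$ with one of $\nu_1,\nu_2$ equal to $\mu-\mu_0$ is bounded by $C\,\|\mu-\mu_0\|$ is unjustified in the cases where $\nu_2=\mu-\mu_0$: to bound $\int |\RR(\chi_{AQ_0}\mu)|^2\,d(\mu-\mu_0)$ or $\int |\RR(\chi_{AQ_0}\mu_0)|^2\,d(\mu-\mu_0)$ by $\|\mu-\mu_0\|$ you would need an $L^\infty$ bound for these Riesz transforms on $\supp(\mu-\mu_0)$, and no such pointwise control is available — all you have is $L^2(\mu|_{Q_0})$ information. (Those terms are in fact harmless, but for a different reason: $\mu-\mu_0\leq \mu|_{Q_0}$, so they are dominated by the full integrals; also your first piece $\int|\RR(\chi_{AQ_0}\mu_0)|^2\,d(\mu_0-\mu)$ is nonpositive and can simply be discarded.) Moreover, the polarization step $|a|^2-|b|^2=(a-b)\cdot(a+b)$ followed by Cauchy--Schwarz only yields a bound of order $\|\mu-\mu_0\|^{1/2}\,\mu(Q_0)^{1/2}$, i.e. $\bigl(\ve_0+\theta_0^{1/(n+1)}+\kappa_0^{1/2}\bigr)^{1/2}\mu(Q_0)$, which is strictly weaker than the bound stated in the lemma (square-root losses in $\ve_0,\theta_0,\kappa_0$). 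Finally, the ``main obstacle'' you single out — transferring the boundedness of $\RR_{\mu|_{2Q_0}}$ to the measures $\mu_0$ and $\mu-\mu_0$ via non-homogeneous Calder\'on--Zygmund machinery — is a red herring: no such transfer is needed.

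The paper's proof is essentially three lines and avoids all of this. Observe that on $Q_0$ one has $\mu_0\leq\mu$, and that $\mu-\mu_0=\chi_E\,\mu$ for a set $E\subset Q_0$ (namely $E=Q_0\setminus\bigcup_{Q\in\sss_0\setminus\bad}I_{\kappa_0}(Q)$), so that $\RR(\chi_{AQ_0}(\mu-\mu_0))=\RR_\mu\chi_E$ with $\chi_E$ supported in $Q_0$. Hence
\begin{equation*}
\int_{Q_0}|\RR(\chi_{AQ_0}\mu_0)|^2\,d\mu_0 \leq 2\int_{Q_0}|\RR(\chi_{AQ_0}\mu)|^2\,d\mu + 2\int_{Q_0}|\RR_\mu\chi_E|^2\,d\mu,
\end{equation*}
where the first term is controlled by Lemma \ref{lem1} (and $A^{4n+2}\delta^{1/(4n+4)}\leq\delta^{1/(8n+8)}$ once $\delta$ is small depending on $A$), and the second is at most $C_1^2\,\mu(E)=C_1^2\,\|\mu-\mu_0\|\lesssim(\ve_0+\theta_0^{1/(n+1)}+\kappa_0^{1/2})\,\mu(Q_0)$ directly from assumption (f) applied to the indicator $\chi_E$ and \rf{eqfac99}. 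Using the crude inequality $|a+b|^2\leq 2|a|^2+2|b|^2$ instead of polarization is exactly what preserves the stated powers of $\ve_0,\theta_0,\kappa_0$; your scheme, even after repairing the unjustified cross-term claims, would only recover the weaker square-root version.
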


\begin{proof}
We have
\begin{align*}
\int_{Q_0}|\RR(\chi_{AQ_0}\mu_0)|^2\,d\mu_0 & \leq 
2\int_{Q_0}|\RR(\chi_{AQ_0}\mu)|^2\,d\mu + 2\int_{Q_0}|\RR(\chi_{AQ_0}(\mu-\mu_0))|^2\,d\mu\\
&\lesssim
\left(\ve + \frac1{A^2}+ A^{4n+2} \delta^{1/(4n+4)}
+\ve_0+\theta_0^{1/(n+1)} + \kappa_0^{1/2}
\right)\,\mu(Q_0),
\end{align*}
by Lemma \ref{lem1}, the $L^2(\mu|_{Q_0})$ boundedness of $\RR_{\mu|_{Q_0}}$, and \rf{eqfac99}.
\end{proof}

\vvv


\section{The periodic measure $\wt\mu$}\label{secmutilde}

To prove the Key Lemma \ref{keylemma}, in Section \ref{sec9} we will apply a variational argument to derive a contradiction.
The application of this variational argument requires to replace the measure $\mu$ by a suitable periodic
version of $\mu$, which we will denote by $\wt\mu$.
In this section we introduce $\wt\mu$,  we show that this is very flat in $3AQ_0$ (i.e., $\alpha_{\wt\mu}^H(3AQ_0)\ll1$), and we estimate $\int_{Q_0}|\RR(\chi_{AQ_0}\wt\mu)|^2\,d\wt\mu$. We also
prove other technical results involving $\wt\mu$.

Let $\M$ be the lattice of cubes in $\R^{n+1}$ obtained by translating $Q_0$ in directions parallel to $H$, so that $H$ coincides with the union of the $n$-dimensional cubes from the family $\{P\cap H\}_{P\in \M}$ and the cubes have disjoint interiors.
For each $P\in\M$, denote by $z_P$ the center of $P$ and consider the translation $T_P:x\to x+z_P$,
so that $P=T_P(Q_0)$. Note that $\{z_P:P\in\M\}$ coincides with the set $(\ell(Q_0)\Z^n)\times\{0\}$.
We define
$$\wt \mu = \sum_{P\in\M} (T_P)_\#(\mu_0|_{Q_0}).$$
That is,
$$
\wt\mu(E) = \sum_{P\in\M} \mu_0(Q_0\cap T_P^{-1}(E)) = \sum_{P\in\M} \mu_0(Q_0\cap (E-z_P)).
$$
It is easy to check that:
\begin{enumerate}[(i)]
\item $\wt \mu$ is periodic with respect to $\M$, that is, for all $P\in\M$ and all $E\subset\R^{n+1}$, $\wt\mu(E+z_P)=\wt\mu(E)$.
\item $\chi_{Q_0}\wt\mu = \mu_0$.
\end{enumerate}
The latter property holds because $\mu_0(\partial Q_0)=0$.

For simplicity, from now on we will assume that $A$ is a big enough odd natural number.

\begin{lemma}\label{lemalfa}
We have
$$\alpha^H_{\wt\mu}(3AQ_0)\leq C_3\,A^{n+1}\,\biggl(\ve_0 + \theta_0^{1/(n+1)} + \kappa_0^{1/2} + \delta^{1/2}\biggr).$$
In fact, 
$$\dist_{3AQ_0}(\wt\mu,\LL_{H})\leq C_3
A^{n+1} \,
\biggl(\ve_0 + \theta_0^{1/(n+1)} + \kappa_0^{1/2} + \delta^{1/2}\biggr)
\,\ell(3AQ_0)^{n+1},$$
where $\LL_{H}$ is the same minimizing measure as the one for $\alpha^H_{\mu}(3AQ_0)$.
\end{lemma}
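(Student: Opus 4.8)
The plan is to bound $\dist_{3AQ_0}(\wt\mu,\LL_H)$ directly, since the $\alpha$ estimate follows by dividing by $\ell(3AQ_0)^{n+1}$. Fix a $1$-Lipschitz function $f$ supported on $3AQ_0$; I want to estimate $\left|\int f\,d(\wt\mu - \LL_H)\right|$. The first move is to decompose $3AQ_0$ into the cubes $P\in\M$ that it meets (there are $\lesssim A^{n+1}$ of them, each a translate of $Q_0$), and to write $\wt\mu = \sum_P (T_P)_\#(\mu_0|_{Q_0})$ and $\LL_H = c_H\HH^n|_H$ similarly as a sum over those $P$ of pieces $\LL_H|_P$, each of which is the $T_P$-pushforward of $\LL_H|_{Q_0}$ because $\LL_H$ is itself $\M$-periodic (being a constant multiple of $\HH^n|_H$). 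So, up to controlling the cubes $P$ that only partially meet $3AQ_0$ — a boundary layer of $\lesssim A^n$ cubes — the problem reduces to estimating $\left|\int (f\circ T_P)\,d(\mu_0|_{Q_0} - \LL_H|_{Q_0})\right|$ for each relevant $P$, where $f\circ T_P$ is again $1$-Lipschitz. The boundary-layer cubes contribute at most $\lesssim A^n\cdot\bigl(\mu_0(Q_0) + \LL_H(Q_0)\bigr)\lesssim A^n\,\ell(Q_0)^{n+1}$ times the Lipschitz oscillation, which is absorbed into the $A^{n+1}$ factor; but this naive bound is too lossy unless one is careful, so instead I would note that on those cubes $f$ is supported in a set of the right size and use that $f$ is $1$-Lipschitz and $3AQ_0$ has diameter $\approx A\ell(Q_0)$, giving the clean bound $\lesssim A^{n+1}\ell(Q_0)^{n+1}$ for that part only after multiplying by the relevant small parameter — see below.

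The core estimate is therefore: for a single translate $P$ and a $1$-Lipschitz $g$ supported on $Q_0$ (or on a fixed dilate),
$$\left|\int g\,d(\mu_0 - \LL_H)\right| \lesssim \bigl(\ve_0 + \theta_0^{1/(n+1)} + \kappa_0^{1/2} + \delta^{1/2}\bigr)\,\ell(Q_0)^{n+1}.$$
To get this, I would insert $\mu$ as an intermediate measure: $\left|\int g\,d(\mu_0-\LL_H)\right| \leq \left|\int g\,d(\mu_0 - \mu|_{Q_0})\right| + \left|\int g\,d(\mu|_{Q_0} - \LL_H)\right|$. The first term is bounded by $\|g\|_\infty\,\|\mu|_{Q_0} - \mu_0\| \lesssim \ell(Q_0)\cdot(\ve_0 + \theta_0^{1/(n+1)} + \kappa_0^{1/2})\,\mu(Q_0)$ by the total variation estimate \rf{eqfac99}, and since $\mu(Q_0)=\ell(Q_0)^n$ this is exactly $(\ve_0 + \theta_0^{1/(n+1)} + \kappa_0^{1/2})\,\ell(Q_0)^{n+1}$. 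For the second term, extend $g$ (originally supported on $Q_0$) to a $C$-Lipschitz function supported on, say, $\tfrac32 Q_0 \subset 3AQ_0$; then $\left|\int g\,d(\mu|_{Q_0} - \LL_H)\right| \leq \left|\int g\,d(\mu - \LL_H)\right| + \left|\int g\,d(\mu|_{(Q_0)^c})\right|$. The first piece here is controlled by $\alpha_\mu^H(3AQ_0)\leq\delta$ times $\ell(3AQ_0)^{n+1}$, hence $\lesssim A^{n+1}\delta\,\ell(Q_0)^{n+1}$; the second piece requires knowing that $\mu$ does not put much mass near $\partial Q_0$ just outside $Q_0$, which follows from the thin-boundary property (d) of $Q_0$ in the Main Lemma together with $g$ vanishing on $\partial Q_0$ (so $|g|\lesssim \dist(\cdot,\partial Q_0)$ there), giving a bound like $\delta^{1/2}\,\ell(Q_0)^{n+1}$ or better. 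Actually, the cleanest route for that outside-$Q_0$ piece is to keep $g$ supported exactly on $Q_0$ when possible and only pay the thin-boundary cost; I would organize the extension so that the error lives in a $\lambda\ell(Q_0)$-neighborhood of $\partial Q_0$ and optimize $\lambda$, but since $\delta^{1/2}$ already appears in the target this is comfortably within budget.

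Summing over the $\lesssim A^{n+1}$ cubes $P$ meeting $3AQ_0$ multiplies each per-cube bound by $A^{n+1}$, which accounts precisely for the $C_3 A^{n+1}$ prefactor in the statement, and the $\delta$ term is dominated by $\delta^{1/2}$ for $\delta$ small; note also that the $A^{n+1}\delta$ contribution from the single $\alpha_\mu^H$ application per cube, after summing, gives $A^{2n+2}\delta$, which is still $\lesssim A^{n+1}\delta^{1/2}$ once $\delta$ is chosen small enough depending on $A$ (as is permitted). The main obstacle I anticipate is purely bookkeeping: making the extension of Lipschitz functions from $Q_0$ (resp. from partial translates $P\cap 3AQ_0$) to functions supported on a controlled dilate while keeping the Lipschitz constant $\lesssim 1$ and tracking how the support meets $\partial Q_0$, so that the thin-boundary hypothesis can be applied cleanly; the periodicity of both $\wt\mu$ and $\LL_H$ makes the reduction to a single cube essentially automatic, and no genuinely new idea beyond \rf{eqfac99}, condition (e), and the thin boundary of $Q_0$ is needed.
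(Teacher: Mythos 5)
Your overall skeleton is the same as the paper's (periodic reduction to one cube, comparing $\mu_0$ with $\mu$ via \rf{eqfac99} and $\mu$ with $\LL_H$ via hypothesis (e), thin boundary of $Q_0$, summation over $\approx A^n$ translates), but there is a genuine gap at exactly the delicate point. After the periodic reduction, the per-cube quantity is $\int (f\circ T_P)\,d(\mu_0|_{Q_0}-\LL_H|_{Q_0})$, and the function $f\circ T_P$ is $1$-Lipschitz but is \emph{not} supported on $Q_0$: it does not vanish on $\partial Q_0$ and its sup norm can be of order $A\,\ell(Q_0)$. Your ``core estimate'' is formulated for a $1$-Lipschitz $g$ supported on $Q_0$, and the subsequent steps lean on that premise ($\|g\|_\infty\lesssim\ell(Q_0)$; ``$g$ vanishing on $\partial Q_0$, so $|g|\lesssim\dist(\cdot,\partial Q_0)$'', hence only a thin-boundary cost just outside $Q_0$); all of these are false for the actual function $f\circ T_P$. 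The sharp truncation $f\chi_P$ is not Lipschitz, so condition (e) cannot be applied to it directly. The missing ingredient is the smooth cutoff: one must take $\vphi_P$ supported in $P$, equal to $1$ on $(1-\kappa)P$, with $\|\nabla\vphi_P\|_\infty\lesssim(\kappa\ell(Q_0))^{-1}$, bound the layer term $(\chi_P-\vphi_P)f$ against $\wt\mu+\LL_H$ by the thin boundary of $Q_0$ (cost $\lesssim\kappa$ per cube, up to the harmless factor $\|f\|_\infty\lesssim A\ell(Q_0)$), and apply (e) to $\vphi_P\,f(\cdot+z_P)$, whose Lipschitz constant is $\approx A/\kappa$, giving a cost $\lesssim (A/\kappa)\,\delta\,\ell(3AQ_0)^{n+1}$ per cube. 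Balancing $\kappa$ against $\delta/\kappa$ by taking $\kappa=\delta^{1/2}$ is precisely what produces the $\delta^{1/2}$ in the statement; in your write-up this term is never actually produced — you assert the boundary piece is ``$\delta^{1/2}$ or better'' and ``within budget'', but the thin-boundary property alone yields a cost proportional to the layer width, and nothing in your argument ties that width to $\delta$. Relatedly, your fallback of absorbing $A^{2n+2}\delta$ into $A^{n+1}\delta^{1/2}$ by taking $\delta$ small depending on $A$ is not needed once the cutoff argument is run, and it quietly weakens the lemma as stated.

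Two minor points. Since $A$ is an odd integer, the cubes of $\M$ tile $3AQ_0$ exactly along $H$ (and $\supp\wt\mu$ and $\LL_H$ live inside their union), so there are no partially-meeting cubes and your ``boundary layer of cubes'' worry is moot; also their number is $\approx A^n$, not $A^{n+1}$ — the extra factor of $A$ in the final bound comes from $\|f\|_\infty\lesssim A\,\ell(Q_0)$ and from the gradient of the cutoff, not from the count of cubes. These miscounts do not affect the direction of the inequality because of the slack in $\ell(3AQ_0)^{n+1}$, but the support/truncation issue above is where the proof as proposed would fail and where the paper's cutoff-and-optimize step is indispensable.
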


\begin{proof}
Let $f$ be a Lipschitz function supported on $3AQ_0$ with Lipschitz constant at most $1$. Denote by $\M_0$
the family of cubes from $\M$ which are contained in $3AQ_0$. Let $\kappa>0$ be some small parameter to be fixed below.
Consider a $C^1$ function $\vphi$ supported on $Q_0$ which equals $1$ on $(1-\kappa)Q_0$, with
$\|\nabla\vphi\|_\infty\lesssim 1/(\kappa\ell(Q_0))$ and denote $\vphi_P(x)=\vphi(x-z_P)$.
Then we write
\begin{align}\label{eqa91}
\left|\int f\,d(\wt\mu-\LL_{H})\right| & \leq \sum_{P\in \M_0} \left|\int_P f\,d(\wt\mu-\LL_{H})\right|\\
& \leq \sum_{P\in \M_0} \left|\int \vphi_P\,f\,d(\wt\mu-\LL_{H})\right|+
\sum_{P\in \M_0} \int \bigl|(\chi_P-\vphi_P)\,f\bigr|\,d(\wt\mu+\LL_{H}).\nonumber
\end{align}

Let us estimate the first sum on the right hand side. Since $\wt\mu|_P = (T_P)_\#\mu_0|_{Q_0}$
and $\LL_{H} = (T_P)_\#\LL_{H}$, we have
\begin{align*}
\left|\int \vphi_P\,f\,d(\wt\mu-\LL_{H})\right| & = 
\left|\int \vphi(x)\,f(x+z_P)\,d(\mu_0-\LL_{H})\right| \\
& \leq \left|\int \vphi(x)\,f(x+z_P)\,d(\mu_0-\mu)\right|
+ \left|\int \vphi(x)\,f(x+z_P)\,d(\mu-\LL_{H})\right|\\
& = I_1+ I_2.
\end{align*}
To estimate $I_1$ we use \rf{eqfac99} and the fact that, by the mean value theorem, $\|\vphi\,f(\cdot+z_P)\|_\infty\lesssim \ell(3AQ_0)$:
$$I_1\leq \left|\int \vphi(x)\,f(x+z_P)\,d(\mu_0-\mu)\right|
\lesssim \bigl(\ve_0 + \theta_0^{1/(n+1)} + \kappa_0^{1/2}\bigr)\,\ell(3AQ_0)^{n+1}.$$

Concerning $I_2$, we write
$$I_2\lesssim \|\nabla(\vphi f(\cdot+z_P))\|_\infty
\ell(3AQ_0)^{n+1}\,\alpha^H_\mu(3A_Q).$$
Note that
$$\|\nabla(\vphi_P f)\|_\infty \leq \|\nabla  f\|_\infty + \|f\|_\infty\|\nabla\vphi_P)\|_\infty
\lesssim 1 + C\,A\,\ell(Q_0) \,\frac1{\kappa\,\ell(Q_0)} \lesssim \frac A\kappa.$$
Thus,
$$I_2\lesssim A \,\frac{\delta}{\kappa}\,\ell(3AQ_0)^{n+1}.$$
Hence,
$$\left|\int \vphi_P\,f\,d(\wt\mu-\LL_{H})\right|\lesssim A\,\biggl(\ve_0 + \theta_0^{1/(n+1)} + \kappa_0^{1/2} + \frac{\delta}{\kappa}\biggr)
\,\ell(3AQ_0)^{n+1}.$$

To deal with the last sum on the right hand side of \rf{eqa91} we write
\begin{align*}
\int \bigl|(\chi_P-\vphi_P)\,f\bigr|\,d(\wt\mu+\LL_{H}) &\leq
\|\chi_P-\vphi_P\|_{L^1(\wt\mu+ \LL_{H})} \,\|f\|_\infty \\
& \lesssim (\mu+\LL_{H})\bigl(Q_0\setminus (1-\kappa)Q_0\bigr)
\,\ell(3AQ_0).
\end{align*}
By the thin boundary condition on $Q_0$, 
$$\mu\bigl(Q_0\setminus (1-\kappa)Q_0\bigr)\lesssim\kappa\,\mu(Q_0)=\kappa\,\ell(Q_0)^n.$$
Clearly, the same estimate holds replacing $\mu$ by $\LL_{H}$. So we deduce
$$\int \bigl|(\chi_P-\vphi_P)\,f\bigr|\,d(\wt\mu+\LL_{H})\lesssim \kappa\,\ell(Q_0)^{n+1}.$$

Taking into account that the number of cubes $P\in\M_0$ is comparable to $A^n$, we get
$$\left|\int f\,d(\wt\mu-\LL_{H})\right|\lesssim A^{n+1} \,
\biggl(\ve_0 + \theta_0^{1/(n+1)} + \kappa_0^{1/2} + \frac{\delta}{\kappa} + \kappa\biggr)
\,\ell(3AQ_0)^{n+1}.$$
Choosing $\kappa=\delta^{1/2}$, the lemma follows.
\end{proof}

\vv
From now on, to simplify notation we will denote
\begin{equation}\label{eqdeltatilde}
\wt\delta = C_3\,A^{n+1} \,
\biggl(\ve_0 + \theta_0^{1/(n+1)} + \kappa_0^{1/2} + \delta^{1/2}\biggr).
\end{equation}
So the preceding lemma ensures that $\alpha^H_{\wt\mu}(3AQ_0)\leq \wt\delta$. We assume that the parameters
$\ve_0$,  $\theta_0$, $\kappa_0$, and $\delta$ are small enough so that $\wt\delta\ll1$.
\vv

\begin{lemma}\label{lem100}
We have
\begin{equation}\label{eqdki3}
\int_{Q_0}|\RR(\chi_{AQ_0}\wt\mu)|^2\,d\wt\mu\leq C_4\left(\ve + \frac1{A^2} + A^{4n+2} \,\delta^{\frac1{4n+4}}
+\ve_0+\theta_0^{\frac1{n+1}} + \kappa_0^{\frac12} + A^{2n+2}\,\wt\delta^{\frac2{4n+5}}
\right)\,\wt\mu(Q_0).
\end{equation}
\end{lemma}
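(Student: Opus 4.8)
The plan is to transfer the estimate for $\int_{Q_0}|\RR(\chi_{AQ_0}\mu_0)|^2\,d\mu_0$ from Lemma \ref{lemfac93} to the periodic measure $\wt\mu$, using the fact that $\wt\mu$ restricted to $Q_0$ coincides with $\mu_0$ and that $\wt\mu$ is very flat in $3AQ_0$ by Lemma \ref{lemalfa}. First I would write $\RR(\chi_{AQ_0}\wt\mu) = \RR(\chi_{AQ_0}\mu_0) + \RR(\chi_{AQ_0}(\wt\mu-\mu_0))$ on $Q_0$, so that
$$\int_{Q_0}|\RR(\chi_{AQ_0}\wt\mu)|^2\,d\wt\mu \leq 2\int_{Q_0}|\RR(\chi_{AQ_0}\mu_0)|^2\,d\mu_0 + 2\int_{Q_0}|\RR(\chi_{AQ_0}(\wt\mu-\mu_0))|^2\,d\mu_0,$$
using $\chi_{Q_0}\wt\mu=\mu_0$ for the measure of integration. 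The first term is controlled by Lemma \ref{lemfac93}, so the whole problem reduces to bounding the second term, i.e.\ the Riesz transform on $Q_0$ of the "tails" $\chi_{AQ_0}(\wt\mu-\mu_0) = \sum_{P\in\M_0, P\neq Q_0} \chi_{AQ_0}\,(T_P)_\#(\mu_0|_{Q_0})$, which is a sum of pieces living in the cubes $P\neq Q_0$ of the periodic lattice that meet $AQ_0$.

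The key point for the second term is that for $x\in Q_0$ and $z$ lying in a neighbouring cube $P\in\M_0$, the kernel $K(x-z)$ is smooth (no singularity, since $\dist(Q_0,P)\gtrsim\ell(Q_0)$), so $\RR(\chi_P\wt\mu)(x)$ is essentially a smooth, slowly varying function on $Q_0$ of size controlled by $P_\mu(2B_P)$-type quantities. The natural strategy is to compare $\RR(\chi_{AQ_0}(\wt\mu-\mu_0))(x)$ with a constant: by antisymmetry and flatness one expects $\RR(\chi_{AQ_0}\wt\mu)$ to be close on $Q_0$ to $\RR(\chi_{AQ_0}\LL_H)$ up to an error governed by $\alpha^H_{\wt\mu}(3AQ_0)\leq\wt\delta$, and similarly for the genuinely periodic part one exploits that $\wt\mu$ near $H$ is a small perturbation of the flat measure $\LL_H$. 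Concretely I would estimate $\|\RR(\chi_{AQ_0}(\wt\mu-\mu_0)) - c\|_{L^\infty(Q_0)}$ for a suitable constant $c$, writing $\wt\mu-\mu_0$ (inside $AQ_0$) as $(\wt\mu-\LL_H)$ plus $(\LL_H-\mu_0)$-type contributions, applying the $\alpha$-estimate of Lemma \ref{lemalfa} against the Lipschitz test function $x\mapsto$ (smooth truncation of) $K(x-\cdot)$ restricted to $Q_0$, and using that the mean of this $L^\infty$ error against $d\mu_0$ over $Q_0$ can be absorbed exactly as in the Localization Lemma \ref{lem1}. The gradient of the relevant test function on $3AQ_0$ scales like $A^{\,c}/\ell(Q_0)$, which after multiplication by $\ell(3AQ_0)^{n+1}\,\alpha^H_{\wt\mu}(3AQ_0)$ produces a factor of the form $A^{2n+2}\,\wt\delta$; optimizing a small-cube/large-cube split (exactly as in the passage from $\delta$ to $\delta^{1/(8n+8)}$ in Lemma \ref{lem1}) replaces $\wt\delta$ by $\wt\delta^{2/(4n+5)}$, yielding the claimed bound with constant $C_4$.

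The main obstacle I anticipate is the bookkeeping in controlling $\int_{Q_0}|\RR(\chi_{AQ_0}(\wt\mu-\mu_0))|^2\,d\mu_0$ uniformly in $A$ while keeping the powers of $A$ no worse than $A^{2n+2}$ and the power of $\wt\delta$ positive: the periodic tail $\wt\mu-\mu_0$ in $AQ_0$ has total mass $\approx A^n\ell(Q_0)^n$, so a naive $L^\infty$ bound on its Riesz transform already costs a power of $A$, and one must genuinely use cancellation (antisymmetry of $K$ together with the near-symmetry of the periodic configuration about $Q_0$, plus the flatness $\alpha^H_{\wt\mu}(3AQ_0)\leq\wt\delta$) to gain back the smallness $\wt\delta^{2/(4n+5)}$. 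The other technical nuisance is handling the boundary layers $Q\setminus I_{\kappa_0}(Q)$ and the $\chi_P-\vphi_P$ cutoff errors that appear when one localizes the test functions to individual cubes $P\in\M_0$; these are dealt with by the thin-boundary property of $Q_0$ and the estimate $\mu(Q\setminus I_{\kappa_0}(Q))\lesssim\kappa_0^{1/2}\mu(Q)$, contributing precisely the $\ve_0$, $\theta_0^{1/(n+1)}$ and $\kappa_0^{1/2}$ terms already present on the right-hand side of \eqref{eqdki3}, so they cost nothing new.
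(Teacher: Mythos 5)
Your global architecture is exactly the paper's: the same initial splitting
$$\int_{Q_0}|\RR(\chi_{AQ_0}\wt\mu)|^2\,d\wt\mu\leq 2\int_{Q_0}|\RR(\chi_{AQ_0}\mu_0)|^2\,d\mu_0+2\int_{Q_0}|\RR(\chi_{AQ_0}(\wt\mu-\mu_0))|^2\,d\mu_0,$$
Lemma \ref{lemfac93} for the first term, and, for the second, the comparison of both $\mu$ and $\wt\mu$ with $\LL_H$ at the scale of $3AQ_0$ (Lemma \ref{lemalfa}) tested against Lipschitz functions of the form $K(x-\cdot)\vphi$, followed by an optimization in the cutoff parameter that yields the exponent $2/(4n+5)$. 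The gap is in your treatment of the part of the periodic tail sitting next to $Q_0$. Your premise that for $x\in Q_0$ and $z$ in a neighbouring cube $P\in\M$ one has $\dist(Q_0,P)\gtrsim\ell(Q_0)$ is false: the cubes of $\M$ adjacent to $Q_0$ touch $\partial Q_0$, and translated copies of stopping cells accumulate on $\partial Q_0$ from outside (the $\bad$-cell removal only discards a collar of width comparable to the individual cell size), so $K(x-z)$ is genuinely singular there and no $L^\infty$ or $\alpha$-type bound applies to that piece. The paper removes it with a cutoff $\vphi$ vanishing on $(1+\frac{\kappa}{2})Q_0$ and treats the leftover $\psi_2$, with $|\psi_2|\leq\chi_{(1+\kappa)Q_0\setminus Q_0}$, by an operator bound: $\RR(\psi_2(\wt\mu-\mu))$ is estimated in $L^4$ on $Q_0$ using the $L^2$ (hence $L^4$) boundedness of $\RR_\mu$ on $\mu|_{2Q_0}$ and, crucially, of $\RR_{\wt\mu}$ on $L^2(\wt\mu|_{2Q_0})$; the latter is not free and is deduced from the fact that a finite sum of measures with growth of order $n$, each with bounded Riesz transform, again has bounded Riesz transform (finitely many translates of $\mu_0|_{Q_0}$, cf.\ Proposition 2.25 of \cite{Tolsa-llibre}). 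Combined with the thin boundary of $Q_0$ this yields the $\kappa^{1/4}$ contribution which is then balanced against $A^{n+1}\kappa^{-(n+1)}\wt\delta$ by choosing $\kappa=\wt\delta^{4/(4n+5)}$.

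Relatedly, your claim that the boundary layers cost ``nothing new'' because of the thin boundary and $\mu(Q\setminus I_{\kappa_0}(Q))\lesssim\kappa_0^{1/2}\mu(Q)$ conflates two different sources of error: the terms $\ve_0$, $\theta_0^{1/(n+1)}$, $\kappa_0^{1/2}$ in \rf{eqdki3} come from the comparison of $\mu$ with $\mu_0$ inside Lemma \ref{lemfac93} (via \rf{eqfac99}), whereas the collar of the adjacent periodic cubes is a separate singular contribution that the $I_{\kappa_0}$ trimming (done cell by cell, at scales $\ll\ell(Q_0)$) does not remove. Also, the mechanism producing the smallness of the smooth part is not ``near-symmetry of the periodic configuration about $Q_0$'' but simply that $\vphi(\wt\mu-\mu)$ is a difference of two measures both within distance $\wt\delta\,\ell(3AQ_0)^{n+1}$, resp.\ $\delta\,\ell(3AQ_0)^{n+1}$, of the same flat measure $\LL_H$; and the mass bound for the outer annulus $AQ_0\setminus(1-\kappa)AQ_0$ must be proved for $\wt\mu$ as well as for $\mu$, again via Lemma \ref{lemalfa}. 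Once the adjacent-cube piece is handled as above, your argument becomes the paper's proof.
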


\begin{proof}
Since $\wt\mu|_{Q_0} = \mu_0|_{Q_0}$, we have
\begin{equation}\label{eqsum42}
\int_{Q_0}|\RR(\chi_{AQ_0}\wt\mu)|^2\,d\wt\mu  \leq 
2\int_{Q_0}|\RR(\chi_{AQ_0}\mu_0)|^2\,d\mu_0 +
2\int_{Q_0}|\RR(\chi_{AQ_0}(\wt\mu-\mu_0)|^2\,d\mu_0.
\end{equation}
The first integral on the right hand side has been estimated in Lemma \ref{lemfac93}.
So we only have to deal with the second one. The arguments that we will use will be similar to some of the
ones in Lemma \ref{lem1}.

First, note that, using again that 
$\wt\mu|_{Q_0} = (\mu_0)|_{Q_0}$ and that $(\mu_0)|_{Q_0^c} = \mu|_{Q_0^c}$, we have
$$\RR\bigl(\chi_{AQ_0}(\wt\mu-\mu_0)\bigr) = \RR\bigl(\chi_{AQ_0\setminus Q_0}(\wt\mu-\mu)\bigr).$$
Let
 $0<\kappa<1/10$ be some small constant to be fixed below. Let $\vphi$ be a $C^1$ function
which equals $1$ on $(1-\kappa)AQ_0\setminus (1+\kappa)Q_0$ and vanishes out of $AQ_0\setminus (1+\frac\kappa2)Q_0$, with
$\|\nabla\vphi\|_\infty\lesssim (\kappa\,\ell(Q_0))^{-1}$.
We split
\begin{equation}\label{eq100.5}
\int_{Q_0}|\RR(\chi_{AQ_0}(\wt\mu-\mu_0)|^2\,d\mu_0
\leq 2\int_{Q_0}\!|\RR\bigl((\chi_{AQ_0\setminus Q_0}-\vphi)(\wt\mu-\mu)\bigr)|^2\,d\mu_0
+ 2\int_{Q_0}\!|\RR\bigl(\vphi(\wt\mu-\mu)\bigr)|^2\,d\mu_0.
\end{equation}
Concerning the first integral on the right hand side note that
$\chi_{AQ_0\setminus Q_0}-\vphi=\psi_1 + \psi_2$, with
$$|\psi_1|\leq \chi_{AQ_0\setminus (1-\kappa)AQ_0} \quad\mbox{ and }\quad
|\psi_2|\leq \chi_{(1+\kappa)Q_0\setminus Q_0}.$$
So we have
\begin{align*}
\int_{Q_0}|\RR\bigl((\chi_{AQ_0}-\vphi)(\wt\mu-\mu)\bigr)|^2\,d\mu_0 & \lesssim
\int_{Q_0}|\RR\bigl(\psi_1(\wt\mu-\mu)\bigr)|^2\,d\wt\mu+
\int_{Q_0}|\RR\bigl(\psi_2(\wt\mu-\mu)\bigr)|^2\,d\wt\mu\\
&\leq \|\RR\bigl(\psi_1(\wt\mu-\mu)\bigr)\|_{L^\infty(\wt\mu|_{Q_0})}^2\,\mu(Q_0) \\
&\quad+ 
\|\RR\bigl(\psi_2(\wt\mu-\mu)\bigr)\|_{L^4(\wt \mu|_{Q_0})}^2\,\mu(Q_0)^{1/2}.
\end{align*}
Since $\dist(\supp\psi_1,Q_0)\approx A\ell(Q_0)$, we get
$$\|\RR\bigl(\psi_1(\wt\mu-\mu)\bigr)\|_{L^\infty(\wt\mu|_{Q_0})}\lesssim \frac1{(A\ell(Q_0))^n} \,\|\psi_1\|_{L^1(\wt\mu+\mu)}
\leq \frac1{(A\ell(Q_0))^n} \,(\wt\mu+\mu)(AQ_0\setminus (1-\kappa)AQ_0).$$
Recall that in \rf{eq29} it has been shown that
$$\mu(AQ_0\setminus (1-\kappa)AQ_0)  \lesssim \left(\frac{A^{n}}\kappa\,\delta\, + \kappa\,A^{n}\right)\,\ell(Q_0)^n.
$$
To prove this we used the fact that $\alpha^H_\mu(3AQ_0)\leq\delta$, or more precisely, that
$\dist_{3AQ_0}(\mu,\LL_{H})\leq \delta$.
The same inequality holds replacing $\mu$ by $\wt\mu$ and $\delta$ by $\wt\delta$, as shown in Lemma
\ref{lemalfa}. So by arguments analogous to the ones in \rf{eq29} it follows that
$$\wt\mu(AQ_0\setminus (1-\kappa)AQ_0)  \lesssim \left(\frac{A^{n}}\kappa\,\wt\delta\, + \kappa\,A^{n}\right)\,\ell(Q_0)^n.
$$
So we deduce that
$$\|\RR\bigl(\psi_1(\wt\mu-\mu)\bigr)\|_{L^\infty(\wt\mu|_{Q_0})}\lesssim \frac{\delta+\wt\delta}\kappa + \kappa\lesssim \frac{\wt\delta}\kappa + \kappa,$$
taking into account that $\delta\leq\wt\delta$ for the last inequality.

Next we will estimate $\|\RR\bigl(\psi_2(\wt\mu-\mu)\bigr)\|_{L^4(\wt \mu|_{Q_0})}$. By the triangle inequality, we have
$$\|\RR\bigl(\psi_2(\wt\mu-\mu)\bigr)\|_{L^4(\wt\mu|_{Q_0})}\leq \|\RR_\mu\psi_2\|_{L^4(\mu|_{Q_0})}+\|
\RR_{\wt\mu} \psi_2\|_{L^4(\wt\mu|_{Q_0})}.$$
Recall that $\RR_\mu$ is bounded in $L^2(\mu|_{2Q_0})$, and so in $L^4(\mu|_{2Q_0})$, and that $\supp\psi_2\subset (1+\kappa Q_0)\setminus Q_0\subset 2Q_0$. Hence, using also the thin boundary 
property of $Q_0$, we obtain
$$\|\RR_\mu\psi_2\|_{L^4(\mu|_{Q_0})}^4\lesssim \|\psi_2\|_{L^4(\mu|_{2Q_0})}^4 \lesssim \mu((1+\kappa Q_0)\setminus Q_0) \lesssim \kappa\,\mu(Q_0).$$
We can apply the same argument to estimate $\|
\RR_{\wt\mu} \psi_2\|_{L^4(\wt\mu|_{Q_0})}$. This is due to the fact that $\RR_{\wt\mu}$ is bounded
in $L^2(\wt\mu|_{2Q_0})$. This is an easy consequence of the fact that, given two measures $\mu_1$ and
$\mu_2$ with growth of order $n$ such that, for $i=1,2$,  
$\RR_{\mu_i}
$ is bounded in $L^2(\mu_i)$, then $\RR_{\mu_1+\mu_2}$ is bounded in $L^2(\mu_1+\mu_2)$. For the proof, see Proposition 2.25 of \cite{Tolsa-llibre}, for example. So applying this result to a finite number of
translated copies of $\mu|_{Q_0}$, we deduce that $\RR_{\wt\mu}$ is bounded in $L^2(\wt\mu|_{2Q_0})$ and
so in $L^4(\wt\mu|_{2Q_0})$.
Thus we also have
$$\|\RR_{\wt\mu}\psi_2\|_{L^4(\wt\mu|_{Q_0})}^4 \lesssim \kappa\,\wt\mu(Q_0)\leq \kappa\,\mu(Q_0).$$
Gathering the estimates above, it turns out that the first integral on the right side of \rf{eq100.5}
satisfies the following:
\begin{equation}\label{eqpart1}
\int_{Q_0}|\RR(\chi_{AQ_0}(\wt\mu-\mu_0)|^2\,d\mu_0\lesssim
\left(\frac{\wt\delta}\kappa + \kappa\right)^2\mu(Q_0) + \kappa^{1/2} \,\mu(Q_0)\lesssim 
\left(\frac{\wt\delta}\kappa + \kappa^{1/4}\right)^2\mu(Q_0).
\end{equation}

It remains to estimate the second integral on the right hand side of \rf{eq100.5}.
To this end for any $x\in Q_0$ we set
\begin{align*}
|\RR\bigl(\vphi(\wt\mu-\mu_0)(x)\bigr)|& = \left|\int K(x-y)\,\vphi(y)\,d(\wt\mu-\mu)(y)\right|\\
& \leq \left|\int K(x-y)\,\vphi(y)\,d(\wt\mu-\LL_{H})(y)\right|
\\
&\quad +
\left|\int K(x-y)\,\vphi(y)\,d(\mu-\LL_{H})(y)\right|\\
& \leq \|\nabla(K(x-\cdot)\,\vphi)\|_\infty \,
\bigl[d_{3AQ_0}(\wt\mu,\LL_{H}) +
d_{3AQ_0}(\mu,\LL_{H})\bigr],
\end{align*}
where in the first identity we used the fact that $\mu_0$ coincides with $\mu$ on the support of $\vphi$.
Taking into account that $\dist(x,\supp\vphi)\gtrsim\kappa\ell(Q_0)$, we obtain
\begin{align*}
\|\nabla(K(x-\cdot)\,\vphi)\|_\infty & \leq \|\nabla K(x-\cdot)\|_{\infty,\supp\vphi} + 
\|K(x-\cdot)\|_{\infty,\supp\vphi}\,\|\nabla\vphi\|_\infty
\lesssim \frac1{(\kappa\,\ell(Q_0))^{n+1}}.
\end{align*}
By Lemma \ref{lemalfa}, $\dist_{3AQ_0}(\wt\mu,\LL_{H})\leq 
\wt\delta
\,\ell(3AQ_0)^{n+1}$ and, by the assumption (e) in the Main Lemma, $\dist_{3AQ_0}(\mu,\LL_{H})\leq 
\delta
\,\ell(3AQ_0)^{n+1}$. Therefore,
$$|\RR\bigl(\vphi(\wt\mu-\mu_0)(x)\bigr)|\lesssim \frac1{(\kappa\,\ell(Q_0))^{n+1}} (\delta+\wt\delta)
\,\ell(3AQ_0)^{n+1}\lesssim \frac{A^{n+1}}{\kappa^{n+1}}\,\wt\delta.$$
So the last integral on the right hand side of \rf{eq100.5} satisfies
\begin{equation}\label{eqpart2}
\int_{Q_0}|\RR(\chi_{AQ_0}(\wt\mu-\mu_0)|^2\,d\wt\mu\lesssim \left(\frac{A^{n+1}}{\kappa^{n+1}}\,\wt\delta\right)^2\mu(Q_0).
\end{equation}

From \rf{eq100.5}, \rf{eqpart1} and \rf{eqpart2} we deduce that
\begin{align*}
\int_{Q_0}|\RR(\chi_{AQ_0}(\wt\mu-\mu_0)|^2\,d\mu_0 &
\lesssim 
\biggl(\frac{\wt\delta}\kappa + \kappa^{1/4} + \frac{A^{n+1}}{\kappa^{n+1}}\,\wt\delta\biggr)^2\mu(Q_0) \\
&\lesssim \biggl(\kappa^{1/4} + \frac{A^{n+1}}{\kappa^{n+1}}\,\wt\delta\biggr)^2\mu(Q_0).
\end{align*}
Choosing $\kappa=\wt\delta^{\frac4{4n+5}}$, the right hand side above equals
$$\biggl(\wt\delta^{\frac1{4n+5}}+ A^{n+1}\,\wt\delta^{\frac1{4n+5}}\biggr)^2\mu(Q_0)
\lesssim A^{2n+2}\,\wt\delta^{\frac2{4n+5}}\,\mu(Q_0).$$
 Together with \rf{eqsum42} and Lemma \ref{lem100}, this yields \rf{eqdki3}.
\end{proof}
\vv

To simplify notation we will write
\begin{equation}\label{epsilontilde}
\wt\ve =C_4 \left(\ve + \frac1{A^2} +  A^{4n+2} \,\delta^{\frac1{4n+4}}
+\ve_0+\theta_0^{\frac1{n+1}} + \kappa_0^{\frac12} + A^{2n+2}\,\wt\delta^{\frac2{4n+5}}
\right),
\end{equation}
so that the preceding lemma guarantees that
$$\int_{Q_0}|\RR(\chi_{AQ_0}\wt\mu)|^2\,d\wt\mu\leq \wt\ve\,\wt\mu(Q_0).$$
\vv

We will also need the following auxiliary result below.

\begin{lemma}\label{lemaux23}
For all $Q\in\sss_0\setminus\bad$, we have
$$ \int_{1.1B_Q\setminus Q}\int_Q
\frac1{|x-y|^n}\,d\wt\mu(x)\,d\wt\mu(y)\lesssim 
\theta_0^{\frac1{2(n+1)^2}}
\,\wt\mu(Q). $$
\end{lemma}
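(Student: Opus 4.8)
The key structural fact is that $Q\in\sss_0\setminus\bad$ is a doubling cell with $\Theta_\mu(2B_Q)\lesssim\theta_0^{1/(n+1)}$ (by Lemma \ref{lempoisson}) and with small boundary in the sense of \rf{eqfk490}, and that by construction $1.1B_Q\cap\partial Q_0=\varnothing$, so $\wt\mu$ agrees with the periodic copies of $\mu_0$ without interference from the cell boundary; in particular $\wt\mu(1.1B_Q)\lesssim\wt\mu(3.5B_Q)\lesssim\mu(Q)\approx\wt\mu(Q)$ with growth constant $\lesssim\theta_0^{1/(n+1)}$. First I would decompose the outer integration region $1.1B_Q\setminus Q$ into dyadic annular shells according to the distance to $Q$: for $j\ge 0$ set $U_j=\{y\in 1.1B_Q\setminus Q:\ 2^{-j-1}\ell(Q)<\dist(y,Q)\le 2^{-j}\ell(Q)\}$, together with a final piece where $\dist(y,Q)\le\lambda_0\ell(Q)$ for a threshold $\lambda_0=\theta_0^{c}$ to be optimized. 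This is the natural move because the kernel $|x-y|^{-n}$ is only mildly singular near $\partial Q$ and the small-boundary estimate \rf{eqfk490} controls exactly how much mass of $\wt\mu$ sits near $\partial Q$.

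Next, for $y\in U_j$ with $s_j:=2^{-j}\ell(Q)$, I would split the inner integral $\int_Q |x-y|^{-n}\,d\wt\mu(x)$ into the part over $x\in Q$ with $|x-y|\le 2s_j$ and the part with $|x-y|>2s_j$. For the near part, since $|x-y|\ge\dist(y,Q)\approx s_j$ on a set of $x$'s contained in a ball of radius $\lesssim s_j$ around the near point, the growth bound gives $\int_{Q\cap B(y,2s_j)}|x-y|^{-n}d\wt\mu(x)\lesssim \Theta_{\wt\mu}(B(y,2s_j)) \lesssim\theta_0^{1/(n+1)}$ (using a standard annular summation of the kernel against a measure of growth $\lesssim\theta_0^{1/(n+1)}$). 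For the far part, bounding $|x-y|^{-n}\le (2s_j)^{-n}\cdot(|x-y|/(2s_j))^{-n}$ and summing again dyadically, one gets $\int_{Q\setminus B(y,2s_j)}|x-y|^{-n}d\wt\mu(x)\lesssim \theta_0^{1/(n+1)}\log\!\big(2\ell(Q)/s_j\big)\lesssim \theta_0^{1/(n+1)}(1+j)$. Hence $\int_Q|x-y|^{-n}d\wt\mu(x)\lesssim \theta_0^{1/(n+1)}(1+j)$ for every $y\in U_j$. Then
\[
\int_{1.1B_Q\setminus Q}\!\!\int_Q\frac{d\wt\mu(x)\,d\wt\mu(y)}{|x-y|^n}
\lesssim \theta_0^{1/(n+1)}\sum_{j\ge 0}(1+j)\,\wt\mu(U_j)
+\ (\text{threshold term}),
\]
and by \rf{eqfk490} (applied to $\wt\mu$, which is legitimate since $1.1B_Q$ does not meet $\partial Q_0$ and the small-boundary property of $Q\in\sss_0$ transfers to $\wt\mu$ on $3.5B_Q$) we have $\wt\mu(U_j)\lesssim (2^{-j})^{1/2}\wt\mu(3.5B_Q)\lesssim 2^{-j/2}\,\theta_0^{1/(n+1)}r(B_Q)^n$, so the sum $\sum_j (1+j)2^{-j/2}$ converges.

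Putting this together yields a bound $\lesssim \theta_0^{2/(n+1)} r(B_Q)^n$, which is already better than $\theta_0^{1/(2(n+1)^2)}\wt\mu(Q)$ once one divides through by $\wt\mu(Q)\approx \Theta_{\wt\mu}(B_Q)r(B_Q)^n$; here one must be slightly careful, since $\Theta_{\wt\mu}(B_Q)$ could in principle be as small as a constant times $\theta_0^{1/(n+1)}$ (the cells of $\sss$ are doubling but we only have the \emph{upper} density bound $\lesssim\theta_0^{1/(n+1)}$, not a matching lower one), so dividing by $\wt\mu(Q)$ may cost one factor of $\theta_0^{-1/(n+1)}$, leaving $\lesssim \theta_0^{1/(n+1)}\wt\mu(Q)$. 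Either way the exponent $\tfrac1{2(n+1)^2}$ claimed in the statement is generous, so any of these crude estimates suffices; the safe, lossy choice of the threshold $\lambda_0$ and the crude handling of the $\log$ factor are deliberately chosen to stay well inside the target exponent. The main obstacle, and the one point that deserves care, is making precise that the small-boundary inequality \rf{eqfk490} — originally stated for $\sigma=\mu|_{Q_0}$ and its David–Mattila cells — applies to the periodized measure $\wt\mu$ on balls like $1.1B_Q$ and $3.5B_Q$: this is where the hypotheses $Q\notin\bad$ (so $1.1B_Q$ avoids $\partial Q_0$, hence $\wt\mu$ near $Q$ is a single translated copy of $\mu_0\le\mu$) and $Q\in\sss_0$ (doubling, with the small-boundary estimate for $Q$ itself) are both used, together with $\mu_0\le\mu$; everything else is routine kernel–against–growth-measure bookkeeping.
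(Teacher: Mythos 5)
There is a genuine gap at the central step. You bound the inner integral for $y$ in the shell $U_j$ by $\theta_0^{1/(n+1)}(1+j)$, relying on the claim that $\Theta_{\wt\mu}(B(y,2s_j))\lesssim\theta_0^{1/(n+1)}$ and, for the far part, that this same density bound is valid at every intermediate scale between $s_j$ and $\ell(Q)$. That is not justified: the low-density information for $Q\in\sss$ comes from Lemma \ref{lempoisson}, i.e. $P_{\mu}(2B_Q)\lesssim\theta_0^{1/(n+1)}$, which controls densities only at scales comparable to or larger than $r(B_Q)$. For balls of radius $r\ll\ell(Q)$ inside $2B_Q$ the only available bound is the global growth condition $\Theta_{\wt\mu}(B(y,r))\lesssim C_0\approx 1$ (assumption (c) of the Main Lemma); nothing in the construction of the stopping cells prevents $\mu$ from having density of order $1$ at small scales inside a low-density cell. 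With the correct order-one constant your inner bound becomes $(1+j)$, and then $\sum_j(1+j)\,\wt\mu(U_j)\lesssim\sum_j(1+j)2^{-j/2}\mu(Q)\lesssim\mu(Q)$, which carries no positive power of $\theta_0$, so the lemma is not proved. The fallback you mention, absorbing a loss by dividing by $\wt\mu(Q)$ at a cost of $\theta_0^{-1/(n+1)}$, is also unjustified: it presupposes a lower density bound $\wt\mu(Q)\gtrsim\theta_0^{1/(n+1)}\ell(Q)^n$, and the cells of $\sss$ carry no lower density bound at all (they are merely maximal doubling cells inside cells of $\LD$).

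The way to close the argument is the paper's: use the low density only at the top scale and the small-boundary estimate only for a thin layer, separated by one threshold $\kappa$. Split the pairs according to whether $|x-y|>\kappa\ell(Q)$ or not. For the far pairs the trivial bound $(\kappa\ell(Q))^{-n}\,\wt\mu(1.1B_Q)\,\wt\mu(Q)\lesssim\theta_0^{1/(n+1)}\kappa^{-n}\,\wt\mu(Q)$ uses $\Theta_{\wt\mu}(1.1B_Q)\lesssim\theta_0^{1/(n+1)}$, which is a legitimate top-scale estimate; note that one mass factor is kept as $\wt\mu(Q)$, so no lower density bound is ever needed. For the near pairs the inner integral is $\lesssim\log\bigl(2+\kappa\ell(Q)/\dist(x,Q)\bigr)$ with order-one constant, and integrating this over the layers $\{x:\dist(x,Q)\le 2^{-j}\kappa\ell(Q)\}$, whose mass is $\lesssim(2^{-j}\kappa)^{1/2}\mu(Q)$ by \rf{eqfk490} and the doubling of $Q$, gives $\lesssim\kappa^{1/2}\,\wt\mu(Q)$. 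Choosing $\kappa=\theta_0^{1/(n+1)^2}$ balances the two terms and yields the stated exponent. Your dyadic-shell scheme could be repaired in the same spirit by replacing the small-scale density by $\min\bigl(1,\theta_0^{1/(n+1)}(\ell(Q)/r)^n\bigr)$ and optimizing the crossover scale, but as written the proof does not go through.
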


\begin{proof}
Since for any $Q\in \sss_0\setminus\bad$ the ball $1.1B_Q$  is contained in $Q_0$,
we have that $\wt\mu$ coincides with $\mu_0$ in the above domain of integration.

Let $0<\kappa<1$ be some small constant to be fixed below. Then we split
\begin{align}\label{eqii89}
\int_{1.1B_Q\setminus Q}\int_Q
\frac1{|x-y|^n}\,d\wt\mu(y)\,d\wt\mu(x) & =
\int_{x\in1.1B_Q\setminus Q}\int_{y\in Q:|x-y|>\kappa \ell(Q)}
\frac1{|x-y|^n}\,d\wt\mu(y)\,d\wt\mu(x)\\
&\quad +\int_{x\in1.1B_Q\setminus Q}\int_{y\in Q:|x-y|\leq\kappa \ell(Q)}
\frac1{|x-y|^n}\,d\wt\mu(y)\,d\wt\mu(x).\nonumber
\end{align}
First we deal with the first integral on the right hand side:
\begin{align*}
\int_{x\in1.1B_Q\setminus Q}\int_{y\in Q:|x-y|>\kappa \ell(Q)}
\frac1{|x-y|^n}\,d\wt\mu(y)\,d\wt\mu(x)& \leq \frac1{\kappa^n\ell(Q)^n}\,\wt\mu(1.1B_Q)\,\wt\mu(Q)\\
&
\lesssim \frac1{\kappa^n}\,\Theta_{\wt\mu}(1.1B_Q)\,\wt\mu(Q)\lesssim \frac{\theta_0^{\frac1{n+1}}}{\kappa^n}\,\mu(Q),
\end{align*}
by Lemma \ref{lempoisson}.

Let us turn our attention to the last integral in \rf{eqii89}. To estimate this
we take into account the fact that given $x\in 1.1B_Q\setminus Q$, if $y\in Q$, then 
$|x-y|\geq \dist(x, Q)$. Then by the growth of order $n$ of $\mu|_{Q_0}$ and standard estimates, we get
$$\int_{y\in Q:|x-y|\leq\kappa \ell(Q)}
\frac1{|x-y|^n}\,d\wt\mu(y) \lesssim \log\biggl(2 + \frac{\kappa\,\ell(Q)}{\dist(x, Q)}\biggr)
\quad\mbox{ for all $x\in 1.1B_Q\setminus Q$.}
$$
For each $j\geq0$, denote
$$U_j = \bigl\{x\in 1.1B_Q\setminus Q:\dist(x,Q)\leq 2^{-j}\,\kappa\,\ell(Q)\bigr\}.$$
By the small boundary property of $Q$ and the fact that $Q$ is doubling,
$$\mu(U_j)\lesssim (2^{-j}\,\kappa)^{1/2}\,\mu(3.5B_Q)\lesssim (2^{-j}\,\kappa)^{1/2}\,\mu(Q).$$
Then we obtain
\begin{align*}
\int_{x\in1.1B_Q\setminus Q}\int_{y\in Q:|x-y|\leq\kappa \ell(Q)}
\frac1{|x-y|^n}\,d\wt\mu(y)\,d\wt\mu(x)& \leq 
\sum_{j\geq0} \int_{U_j\setminus U_{j+1}}
\log\biggl(2 + \frac{\kappa\,\ell(Q)}{\dist(x, Q)}\biggr)\,d\mu(x)\\
& \lesssim \sum_{j\geq0} \log\biggl(2 + \frac{\kappa\,\ell(Q)}{2^{-j-1}\kappa\ell(Q)}\biggr)
\mu(U_j)\\
& \lesssim \sum_{j\geq0} (j+1)
(2^{-j}\,\kappa)^{1/2}\,\mu(Q)\\
&\lesssim \kappa^{1/2}\,\mu(Q).
\end{align*}

Thus we have
$$ \int_{1.1B_Q\setminus Q}\int_Q
\frac1{|x-y|^n}\,d\wt\mu(x)\,d\wt\mu(y)\lesssim
\bigl(\theta_0^{1/(n+1)}
\,\kappa^{-n} + \kappa^{1/2}\bigr)\,\mu(Q).$$
Choosing $\kappa=\theta_0^{\frac1{(n+1)^2}}$, the lemma follows.
\end{proof}

\vv

It is easy to check that 
\begin{equation}\label{eqgrowth589}
\wt\mu(B(x,r))\lesssim r^n\quad\mbox{ for all $x\in\R^{n+1}$ and all $r>0$.}
\end{equation}
This follows from the analogous estimate for $\mu|_{Q_0}$ and the periodicity of $\wt\mu$,
and is left for the reader.
On the other hand, in general, we cannot guarantee that the estimates for the coefficients $P_\mu(2B_Q)$ in Lemma \ref{lempoisson}
also hold with $\mu$ replaced by $\wt\mu$. However, we have following substitute. 

\begin{lemma}\label{lempmu0}
The function
$$p _{\wt\mu}(x) = \sum_{Q\in \sss_0\setminus\bad}\chi_Q\,  P_{\wt\mu}(2B_Q)$$
satisfies
$$\int_{Q_0}p_{\wt\mu}^2\,\,d\wt\mu \lesssim \theta_0^{\frac2{3(n+1)}}\,\wt\mu(Q_0).$$
\end{lemma}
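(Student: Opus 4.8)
The plan is to reduce the statement to a pointwise bound on $P_{\wt\mu}(2B_Q)$ for each $Q\in\sss_0\setminus\bad$, and then to sum over such $Q$ exploiting the thin boundary of $Q_0$. First I would fix $Q\in\sss_0\setminus\bad$ and write $d_Q=\dist(z_Q,\partial Q_0)$; since $Q\notin\bad$ we have $d_Q>1.1\,r(B_Q)$. Splitting
$$P_{\wt\mu}(2B_Q)=\sum_{j\ge0}2^{-j}\,\Theta_{\wt\mu}(2^{j+1}B_Q)$$
according to whether $2^{j+1}r(B_Q)\le d_Q$ or not, for the scales in the first range convexity of $Q_0$ gives $2^{j+1}B_Q\subset Q_0$, where $\wt\mu$ coincides with $\mu_0\le\mu$, so $\Theta_{\wt\mu}(2^{j+1}B_Q)\le\Theta_\mu(2^{j+1}B_Q)$ and the corresponding partial sum is at most $P_\mu(2B_Q)\lesssim\theta_0^{1/(n+1)}$ by Lemma~\ref{lempoisson}. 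For the scales in the second range I would just use the growth of order $n$ of $\wt\mu$ (inequality \rf{eqgrowth589}), which gives $\Theta_{\wt\mu}(2^{j+1}B_Q)\lesssim1$, so that the remaining partial sum is bounded by $\sum_{j:\,2^{j+1}r(B_Q)>d_Q}2^{-j}\lesssim r(B_Q)/d_Q$. Hence $P_{\wt\mu}(2B_Q)\lesssim\theta_0^{1/(n+1)}+r(B_Q)/d_Q$. Since $Q\in\sss(R)$ for some $R\in\LD$ with $R\subset Q_0$, one has $r(B_Q)\approx\ell(Q)\le t\,\ell(R)\le t\,\ell(Q_0)=\theta_0^{1/(n+1)}\ell(Q_0)$, and since $d_Q\le\ell(Q_0)$ and also $P_{\wt\mu}(2B_Q)\lesssim1$ by the growth of $\wt\mu$, this can be rewritten as
$$P_{\wt\mu}(2B_Q)\lesssim\min\!\Bigl(1,\ \theta_0^{1/(n+1)}\,\tfrac{\ell(Q_0)}{d_Q}\Bigr).$$

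Next I would record the distribution of the stopping cells near $\partial Q_0$: for $0<\lambda<1$, every $Q\in\sss_0\setminus\bad$ with $d_Q<\lambda\,\ell(Q_0)$ satisfies $Q\subset\{x\in Q_0:\dist(x,\partial Q_0)<2d_Q\}\subset\{x\in Q_0:\dist(x,\partial Q_0)<2\lambda\,\ell(Q_0)\}$, because $Q\subset B_Q$ and $r(B_Q)<d_Q$. As the cells of $\sss_0\setminus\bad$ are pairwise disjoint and $\wt\mu\le\mu$ on $Q_0$, the thin boundary property of $Q_0$ (assumption (d) of the Main Lemma, together with $\mu(2Q_0)\lesssim\mu(Q_0)$) yields
$$\sum_{\substack{Q\in\sss_0\setminus\bad\\ d_Q<\lambda\,\ell(Q_0)}}\wt\mu(Q)\ \le\ \mu\bigl(\{x\in Q_0:\dist(x,\partial Q_0)<2\lambda\,\ell(Q_0)\}\bigr)\ \lesssim\ \lambda\,\mu(Q_0).$$

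For the summation I would use that, the cells of $\sss_0\setminus\bad$ being pairwise disjoint, contained in $Q_0$, and satisfying $\wt\mu(Q)=\mu_0(Q)$,
$$\int_{Q_0}p_{\wt\mu}^2\,d\wt\mu=\sum_{Q\in\sss_0\setminus\bad}P_{\wt\mu}(2B_Q)^2\,\wt\mu(Q).$$
Setting $\lambda_0=\theta_0^{2/(3(n+1))}$, I split this sum according to $d_Q\ge\lambda_0\,\ell(Q_0)$ or $d_Q<\lambda_0\,\ell(Q_0)$. For the first group the pointwise estimate gives $P_{\wt\mu}(2B_Q)\lesssim\theta_0^{1/(n+1)}+t/\lambda_0=\theta_0^{1/(n+1)}+\theta_0^{1/(3(n+1))}\lesssim\theta_0^{1/(3(n+1))}$, so this part is $\lesssim\theta_0^{2/(3(n+1))}\sum_Q\wt\mu(Q)\le\theta_0^{2/(3(n+1))}\mu(Q_0)$; for the second group $P_{\wt\mu}(2B_Q)\lesssim1$ and the previous paragraph (with $\lambda=\lambda_0$) bounds the contribution by $\lambda_0\,\mu(Q_0)=\theta_0^{2/(3(n+1))}\mu(Q_0)$. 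Finally $\mu(Q_0)\approx\mu_0(Q_0)=\wt\mu(Q_0)$ by \rf{eqfac99} (recall $\ve_0,\theta_0,\kappa_0$ are small), which gives the desired bound.

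The hard part will be the pointwise estimate: the key realization is that for the scales at which $2^{j+1}B_Q$ sticks out of $Q_0$ it suffices to invoke the $n$-growth of the periodic measure $\wt\mu$, and that the total weight of those scales is $\lesssim r(B_Q)/d_Q$, which is genuinely small because stopping cells are much smaller than $Q_0$. The other mildly delicate point is the bookkeeping in the last step, where one balances the ``far'' cells (small $P_{\wt\mu}$, but possibly many of them) against the ``near'' cells ($P_{\wt\mu}\lesssim1$, but small total mass by the thin boundary); the choice $\lambda_0=\theta_0^{2/(3(n+1))}$ is exactly the one equalizing the two contributions.
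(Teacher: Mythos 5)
Your proposal is correct and follows essentially the same route as the paper: you split the scales of $P_{\wt\mu}(2B_Q)$ according to whether $2^{j}B_Q$ stays inside $Q_0$ (using Lemma \ref{lempoisson} there) or reaches $\partial Q_0$ (using the growth \rf{eqgrowth589}), and then balance the near-boundary cells, controlled via the thin boundary of $Q_0$, against the interior ones with the same threshold $\theta_0^{2/(3(n+1))}$. The only difference is cosmetic bookkeeping (a cell-wise bound in terms of $d_Q$ with a $\min$, rather than the paper's pointwise dichotomy in $\dist(x,\partial Q_0)$ with $\kappa\gg t$), so the argument is fine as written.
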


\begin{proof}
Once more, let $0<\kappa<1$ be some small constant to be fixed below. We split
\begin{equation}\label{eqpois589}
\int_{Q_0}p_{\wt\mu}^2\,d\wt\mu =\int_{x\in Q_0:\dist(x,\partial Q_0)\leq \kappa\,\ell(Q_0)}p_{\wt\mu}(x)^2\,d\wt\mu(x) 
+ \int_{x\in Q_0:\dist(x,\partial Q_0)> \kappa\,\ell(Q_0)}p_{\wt\mu}(x)^2\,d\wt\mu(x)
.\end{equation}
For the first integral on the right hand side we just take into account that $p_{\wt\mu}(x)\lesssim 1$ by \rf{eqgrowth589}, and thus
\begin{align*}
\int_{x\in Q_0:\dist(x,\partial Q_0)\leq \kappa\,\ell(Q_0)}p_{\wt\mu}(x)^2\,d\wt\mu(x)
&\lesssim \mu\bigl(\bigl\{x\in Q_0:\dist(x,\partial Q_0)\leq \kappa\,\ell(Q_0)\bigr\}\bigr) \\
&\lesssim \kappa\,\mu(Q_0)\approx \kappa
\,\wt\mu(Q_0).
\end{align*}

Let us deal with the the last integral on the right hand side of \rf{eqpois589}. Consider $x\in Q\in\sss_0$ such that
$\dist(x,\partial Q_0)> \kappa\,\ell(Q_0)$. We assume that $\kappa\gg t = \theta_0^{\frac1{(n+1)}}$. Since
$\ell(Q)\leq t\,\ell(Q_0)$, 
$$\dist(x,\partial Q_0)\approx \dist(2B_Q,\partial Q_0)\gtrsim \kappa\,\ell(Q_0).$$ 
Then,  we can write
$$p_{\wt\mu}(x) \lesssim P_\mu(2B_Q) + \sum_{j\geq1:2^jB_Q\cap\partial Q_0\neq\varnothing} 2^{-j}\,\Theta_{\wt\mu}(2^jB_Q)
\lesssim \theta_0^{\frac1{n+1}} + \sum_{j\geq1:2^jB_Q\cap\partial Q_0\neq\varnothing} 2^{-j},
$$
by Lemma \ref{lempoisson}. For the last sum we have 
$$\sum_{j\geq1:2^jB_Q\cap\partial Q_0\neq\varnothing} 2^{-j}\approx \frac{\ell(Q)}{\dist(x,\partial Q_0)} 
\lesssim \frac{t\,\ell(Q_0)}{\kappa\,\ell(Q_0)} = \frac{\theta_0^{\frac1{n+1}}}{\kappa},$$
and so we obtain 
$$p_{\wt\mu}(x) 
\lesssim \theta_0^{\frac1{n+1}} + \frac{\theta_0^{\frac1{n+1}}}{\kappa} \approx \frac{\theta_0^{\frac1{n+1}}}{\kappa}.
$$
Therefore,
$$\int_{x\in Q_0:\dist(x,\partial Q_0)\geq \kappa\,\ell(Q_0)}p_{\wt\mu}(x)^2\,d\wt\mu(x)\lesssim \frac{\theta_0^{\frac2{n+1}}}{\kappa^2}\,\wt\mu(Q_0).$$

Gathering the estimates above, we obtain
$$\int_{Q_0}p_{\wt\mu}^2\,d\wt\mu \lesssim \biggl(\kappa + \frac{\theta_0^{\frac2{n+1}}}{\kappa^2}\,\biggr)\,\wt\mu(Q_0).$$
Choosing $\kappa=\theta_0^{\frac2{3(n+1)}}$, the lemma follows.
\end{proof}

\vvv

\section{The approximating measure $\eta$}\label{sec8}

In this section we replace the measure $\wt\mu$ by a better behaved measure $\eta$. This new measure
is also periodic, and further it is absolutely continuous with respect to Lebesgue measure.
Roughly speaking, the fact that $\wt\mu$ is supported in a union of low density cells ensures that 
$\|\chi_{Q_0}\RR(\chi_{AQ_0}\eta)\|_{L^2(\eta)}$ is very similar to $\|\chi_{Q_0}\RR(\chi_{AQ_0}\wt\mu)\|_{L^2(\wt\mu)}$ and thus is very small. We will show that $\RR\eta$ is a well defined periodic function, because
of the periodicity of $\eta$. We will prove $\|\chi_{Q_0}\RR\eta\|_{L^2(\eta)}$ is also very small, besides
other related technical estimates which will be useful later in Section \ref{sec9}.

The main advantage of $\eta$ over $\wt\mu$ is that $\RR\eta$ is a continuous function, while
$\RR\wt\mu$ may fail to be continuous. The continuity of $\RR\eta$ will allow us to apply
a key maximum principle in the variational argument in Section \ref{sec9}. 

First we consider the measure
$$\eta_0 = \sum_{Q\in\sss_0\setminus\bad} \mu_0(Q)\,\frac{\HH^{n+1}|_{\tfrac14B  (Q)}}{\HH^{n+1}\bigl(\tfrac14B  (Q)\bigr)}.$$
So, in a sense, $\eta_0$ can be considered as an approximation of $\mu_0|_{Q_0}$ which is absolutely continuous with respect to $\HH^{n+1}$. 
Further, since the family $\sss_0$ is finite, the density of $\eta$ with respect to $\HH^{n+1}$ is
bounded.

Recall
that, by Remark \ref{remfac1}, the balls $\frac12B(Q)$, $Q\in\DD_{\sigma}$, are pairwise disjoint.
So the balls $\tfrac14B  (Q)$ in the sum above satisfy
$$\dist(\tfrac14B  (Q),\tfrac14B  (Q'))\geq \frac14\,\bigl[r(B(Q))+ r(B(Q'))\bigr]\quad
\mbox{ if $Q\neq Q'$}.$$

Now we  define the following periodic version of $\eta_0$.
Let $\M$ be the lattice of cubes from $\R^{n+1}$ introduced in Section \ref{secmutilde}.
Recall that, for $P\in\M$, $z_P$ stands for the center of $P$ and $T_P$ is the translation $T_P(x)=x+z_P$.
We define
$$\eta = \sum_{P\in\M} (T_P)_\#\eta_0.$$
In this way, $\eta$ can be considered as a kind of approximation of $\wt\mu$.

The following result should be compared to Lemma \ref{lem100}.

\begin{lemma}\label{lem200}
We have
$$\int_{Q_0}|\RR(\chi_{AQ_0}\eta)|^2\,d\eta\lesssim \ve'\,\eta(Q_0),$$
where
$\ve'= \wt\ve + A^n\,\kappa_0^{-2n-2}\,\theta_0^{\frac1{(n+1)^2}}$.
\end{lemma}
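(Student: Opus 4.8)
The plan is to compare $\int_{Q_0}|\RR(\chi_{AQ_0}\eta)|^2\,d\eta$ with $\int_{Q_0}|\RR(\chi_{AQ_0}\wt\mu)|^2\,d\wt\mu$, which by the definition of $\wt\ve$ in \rf{epsilontilde} is at most $\wt\ve\,\wt\mu(Q_0)\approx\wt\ve\,\eta(Q_0)$ (note $\eta(Q_0)=\eta_0(Q_0)=\mu_0(Q_0)=\wt\mu(Q_0)$, up to the contributions of cells whose $\frac14 B(Q)$ slightly protrudes, which one checks is negligible). The key point is that $\wt\mu$ and $\eta$ are built from the same ``masses'' $\mu_0(Q)$ distributed over the same stopping cells $Q\in\sss_0\setminus\bad$, but in $\eta$ the mass $\mu_0(Q)$ sits on the small ball $\frac14 B(Q)$ instead of being spread over $Q$. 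First I would define, for each $Q\in\sss_0\setminus\bad$, a transport map sending $\chi_Q\wt\mu$ to the corresponding piece of $\eta_0$, so that points are moved by at most $C\,r(B(Q))\approx C\,\ell(Q)$. Then I would write
$$\RR(\chi_{AQ_0}\eta)(x) - \RR(\chi_{AQ_0}\wt\mu)(x') = \int \bigl[K(x-y)-K(x'-y')\bigr]\,(\text{transport})(y,y'),$$
split the kernel difference into a ``source'' part (varying $x$, $y$ fixed) and a ``target'' part, and control each using the growth bound \rf{eqgrowth589} for $\wt\mu$ (hence for $\eta$) and the separation $\dist(\tfrac14B(Q),\tfrac14B(Q'))\gtrsim r(B(Q))+r(B(Q'))$ recorded just before the lemma. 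The contribution of pairs $(y,y')$ lying in the same cell $Q$ is an ``error-term'' integral of the type $\int_{1.1B_Q\setminus Q}\int_Q |x-y|^{-n}\,d\wt\mu$, which is exactly what Lemma \ref{lemaux23} bounds by $\theta_0^{1/(2(n+1)^2)}\wt\mu(Q)$, and the self-interaction within $\frac14 B(Q)$ is controlled using that $\eta$ restricted there has bounded density and the ball has radius $\approx\ell(Q)$, giving a factor $\Theta_{\wt\mu}(2B_Q)\lesssim\theta_0^{1/(n+1)}$ by Lemma \ref{lempoisson}.

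Concretely, I expect the comparison to reduce to showing
$$\int_{Q_0}\bigl|\RR(\chi_{AQ_0}\eta) - \RR(\chi_{AQ_0}\wt\mu)\circ(\text{transport})\bigr|^2\,d\eta \lesssim A^n\,\kappa_0^{-2n-2}\,\theta_0^{\frac1{(n+1)^2}}\,\eta(Q_0),$$
where the $\kappa_0^{-2n-2}$ arises because $\wt\mu$ (being $\mu_0$ on $Q_0$) only charges the inner sets $I_{\kappa_0}(Q)$, so for $x$ in a cell $Q$ and the relevant $y\in Q'$ one has $|x-y|\gtrsim\kappa_0\,\max(\ell(Q),\ell(Q'))$ whenever $x$, $y$ lie in different cells, producing the polynomial loss in $\kappa_0$ when one sums $\sum_{Q'} (\text{mass of }Q')\,(\ell(Q)+\ell(Q'))/|z_Q-z_{Q'}|^{n+1}$ against the density bound. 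The factor $A^n$ comes from the number of translated copies $P\in\M_0$ of $Q_0$ inside $AQ_0$ that can interact, exactly as in the proof of Lemma \ref{lemalfa}. Once the pointwise/$L^2$ estimate for the difference is in hand, the triangle inequality in $L^2(\eta)$ combined with $\int_{Q_0}|\RR(\chi_{AQ_0}\wt\mu)|^2\,d\wt\mu\le\wt\ve\,\wt\mu(Q_0)$ yields the claim with $\ve'=\wt\ve+A^n\kappa_0^{-2n-2}\theta_0^{1/(n+1)^2}$, after absorbing constants.

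The main obstacle I anticipate is bookkeeping the kernel-difference estimate cleanly across scales: the stopping cells $Q\in\sss_0\setminus\bad$ have widely varying sizes, and a point $x\in Q$ interacts with cells $Q'$ that may be much larger or much smaller, so one must organize the sum $\sum_{Q'}$ by the ratio $\ell(Q')/\dist(Q,Q')$ and use \rf{eqgrowth589} together with the disjointness of the $5B(Q')$ (Remark \ref{remfac1}) to get a convergent geometric-type series. The ``close'' cells (those with $2B_{Q'}$ near $x$) are where Lemmas \ref{lempoisson} and \ref{lemaux23} do the work and contribute the small powers of $\theta_0$; the ``far'' cells contribute a Poisson-type sum controlled by $P_{\wt\mu}(2B_Q)$, for which the substitute estimate Lemma \ref{lempmu0} (bounding $\int p_{\wt\mu}^2\,d\wt\mu$) is the right tool, since we cannot assert a pointwise bound on $P_{\wt\mu}(2B_Q)$ near $\partial Q_0$. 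Handling the interface between these regimes, and making sure the $\kappa_0$-losses only enter polynomially (not exponentially), is the delicate part; everything else is a routine, if lengthy, splitting argument modeled on the proof of Lemma \ref{lem100}.
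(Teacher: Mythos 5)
Your proposal follows essentially the same route as the paper's proof: a cell-by-cell comparison of $\RR(\chi_{AQ_0}\eta)$ on $\tfrac14B(Q)$ with $\RR(\chi_{AQ_0}\wt\mu)$ on $Q$ (the paper uses the means $m_{\wt\mu,Q}(\RR(\chi_{AQ_0}\wt\mu))$ rather than a transport map, which is an inessential difference), with the self-interaction controlled by the low-density bound of Lemma \ref{lempoisson}, the near-cell error by Lemma \ref{lemaux23}, the cross-cell kernel comparison via the separation $|x-y|\gtrsim\kappa_0 D(Q,R)$ yielding the $\kappa_0^{-2n-2}$ loss, and the resulting Poisson-type sum bounded through $p_{\wt\mu}$, Lemma \ref{lempmu0}, and the $\M$-periodicity giving the factor $A^n$. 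The ingredients, loss factors, and final combination all match the paper's argument.
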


\begin{proof}
To simplify notation, we denote $\SSS=\sss_0\setminus \bad$.
We consider the function
$$f = \sum_{Q\in \SSS} m_{\wt\mu,Q}(\RR(\chi_{AQ_0}\wt\mu))\,\chi_Q.$$
It is clear that 
\begin{equation}\label{eqsk592}
\|f\|_{L^2(\wt\mu)}^2\leq \|\RR(\chi_{AQ_0}\wt\mu)\|_{L^2(\wt\mu|_{Q_0})}^2\leq \wt\ve\,
 \wt\mu(Q_0)=  \wt\ve\,\eta(Q_0).
 \end{equation}
 
For all $x\in\tfrac14B  (Q)$, $Q\in\SSS$, we write
\begin{align}\label{eqt123*}
\bigl|\RR(\chi_{AQ_0}\eta)(x)\bigr| & \leq \bigl|\RR(\chi_{\tfrac14B  (Q)}\eta)(x)\bigr| 
+  \bigl|\RR(\chi_{AQ_0\setminus \tfrac14B  (Q)}\eta)(x) - \RR(\chi_{AQ_0\setminus Q}\wt\mu))(x)\bigr|\\
&\quad + \bigl|\RR(\chi_{AQ_0\setminus Q}\wt\mu)(x)- m_{\wt\mu,Q}(\RR(\chi_{AQ_0}\wt\mu))\bigr|
+\bigl|m_{\wt\mu,Q}(\RR(\chi_{AQ_0}\wt\mu))\bigr|\nonumber\\
& =: T_1 + T_2 + T_3 + \bigl|m_{\wt\mu,Q}(\RR(\chi_{AQ_0}\wt\mu))\bigr|.\nonumber
\end{align}

Using that $$\eta|_{\tfrac14B  (Q)} = \wt\mu(Q)\,\frac{\HH^{n+1}|_{\tfrac14B  (Q)}}{\HH^{n+1}\bigl(\tfrac14B  (Q)\bigr)},$$
it follows easily that
$$T_1=\bigl|\RR(\chi_{\tfrac14B  (Q)}\eta)(x)\bigr|\lesssim \frac{\wt\mu(Q)}{r(B(Q))^n}\lesssim \theta_0^{1/(n+1)}.$$

Next we will deal with the term $T_3$ in \rf{eqt123*}.
To this end, for $x\in\tfrac14B  (Q)$  we set
\begin{align}\label{eqsd539}
\bigl|\RR(&\chi_{AQ_0\setminus Q}\wt\mu)(x) - m_{\wt\mu,Q}(\RR(\chi_{AQ_0}\wt\mu))\bigr|\\
& \leq \bigl|\RR(\chi_{1.1B_Q\setminus Q}\wt\mu)(x)\bigr| +
\bigl|\RR(\chi_{AQ_0\setminus 1.1B_Q}\wt\mu)(x) - m_{\wt\mu,Q}(\RR(\chi_{AQ_0\setminus 1.1B_Q}\wt\mu))\bigr| \nonumber\\
&\quad+ \bigl|m_{\wt\mu,Q}(\RR(\chi_{1.1B_Q\setminus Q}\wt\mu))\bigr|,\nonumber
\end{align}
taking into account that $m_{\wt\mu,Q}(\RR(\chi_Q\wt\mu))=0$, by the antisymmetry of the Riesz kernel. 
The first term on the right hand side satisfies
$$\bigl|\RR(\chi_{1.1B_Q\setminus Q}\wt\mu)(x)\bigr|\leq \int_{1.1B_Q\setminus Q}\frac1{|x-y|^n}\,d\wt\mu(y)
\lesssim \frac{\wt\mu(1.1B_Q)}{r(B(Q))^n}\lesssim \theta_0^{1/(n+1)},$$
recalling that $x\in \frac14B(Q)$ and that $\Theta_\mu(1.1B_Q)\lesssim \theta_0^{1/(n+1)}$ for the last estimate.

Now we turn our attention to the second term on the right hand side of \rf{eqsd539}. For $x'\in Q\in\SSS$, 
we have
\begin{align*}
\bigl|\RR(\chi_{AQ_0\setminus 1.1B_Q}\wt\mu)(x) -\RR(\chi_{AQ_0\setminus 1.1B_Q}\wt\mu)(x')\bigr|
&\leq \int_{AQ_0\setminus 1.1B_Q} \bigl|K(x-y)-K(x'-y)\bigr|\,d\wt\mu(y)\\
& \lesssim P_{\wt\mu}(2B_Q).
\end{align*}
taking into account that the distance  both from $x$ and $x'$ to $(1.1B_Q)^c$ is larger than $c\,r(B_Q)$. 
Averaging on $x'\in Q$ with respect to $\wt\mu$ we get
$$\bigl|\RR(\chi_{AQ_0\setminus 1.1B_Q}\wt\mu)(x) - m_{\wt\mu,Q}(\RR(\chi_{AQ_0\setminus 1.1B_Q}\wt\mu))\bigr|
\lesssim P_{\wt\mu}(2B_Q).$$

To estimate the last term in \rf{eqsd539} we just apply Lemma \ref{lemaux23}:
$$ \bigl|m_{\wt\mu,Q}(\RR(\chi_{1.1B_Q\setminus Q}\wt\mu))\bigr|\leq \frac1{\wt\mu(Q)} \int_{1.1B_Q\setminus Q}\int_Q
\frac1{|x-y|^n}\,d\wt\mu(x)\,d\wt\mu(y)\lesssim \theta_0^{\frac1{2(n+1)^2}}.$$
Thus, we obtain
\begin{equation}\label{eq1934}
T_3=
\bigl|\RR(\chi_{AQ_0\setminus Q}\wt\mu)(x) - m_{\wt\mu,Q}(\RR(\chi_{AQ_0}\wt\mu))\bigr|\lesssim \theta_0^{\frac1{n+1}} + 
 P_{\wt\mu}(2B_Q) + \theta_0^{\frac1{2(n+1)^2}}\lesssim \theta_0^{\frac1{2(n+1)^2}} + P_{\wt\mu}(2B_Q).
\end{equation}

To deal with the term $T_2$ in \rf{eqt123*} we need to introduce some additional notation. We set
$$J=\bigcup_{P\in\M} \bigl\{T_P(R'):R'\in \sss_0\setminus \bad\bigr\}.$$
For $R\in J$ such that $R=T_P(R')$, $R'\in \sss_0\setminus\bad$, we set $B(R)=T_P(B(R'))$ and
$B_{R} = T_P(B_{R'})$. Also, we denote by $J_A$ the family of cells $R\in J$ which are contained in $AQ_0$.
In this way, we have
$$\chi_{AQ_0}\wt\mu=\sum_{R\in J_A} \wt\mu|_R,\quad\mbox{ and }\quad
\chi_{AQ_0}\eta=\sum_{R\in J_A} \wt\mu(R)\,\frac{\HH^{n+1}|_{\tfrac14B  (R)}}{\HH^{n+1}\bigl(\tfrac14B  (R)\bigr)}$$
(recall that we assume $A$ to be a big odd natural number).
Note that the cells $R\in J$ are pairwise disjoint. Further, by the definition of the family $\bad$, if $R\in J$ is contained in some cube $T_P(Q_0)$, then
the ball $1.1B_R$ is also contained in $T_P(Q_0)$. Together with the doubling property of the cells from 
$\sss_0$ this guarantees that, for all $R\in J$,
$$\wt\mu(1.1B_R)\lesssim C_0\,\wt\mu(R).$$
Now for $x\in\tfrac14B  (Q)$ we write
\begin{align}\label{eqt2***}
T_2 & = \bigl|\RR(\chi_{AQ_0\setminus \tfrac14B  (Q)}\eta)(x) - \RR(\chi_{AQ_0\setminus Q}\wt\mu))(x)\bigr| \\
& \leq \sum_{R\in J_A:R\neq Q} \left|\int K(x-y)\,d(\eta_{\frac14B  (R)} - \wt\mu|_R)\right|\nonumber\\
& \leq
\sum_{R\in J_A:R\neq Q} \int |K(x-y) - K(x-z_R)|\,d(\eta_{\frac14B  (R)} + \wt\mu|_R),\nonumber
\end{align}
using that $\eta(\tfrac14B  (R)) = \wt\mu(R)$ for the last inequality.

We claim that, for $x\in \tfrac14B  (Q)$ and $y\in\tfrac14B  (R)\cup \supp(\wt\mu|_R)$,
\begin{equation}\label{eqdqr1}
|K(x-y) - K(x-z_R)|\lesssim \frac{\ell(R)}{\kappa_0^{n+1}\,D(Q,R)^{n+1}},
\end{equation}
where 
$$D(Q,R)=\ell(Q) + \ell(R) + \dist(Q,R).$$
To show \rf{eqdqr1} note first that
\begin{equation}\label{eq19050}
x\in \tfrac14B  (Q),\;\;z_R\in \tfrac14B  (R)\quad\Rightarrow \quad \quad|x-z_R|\gtrsim D(Q,R),
\end{equation}
since $\frac12B(Q)\cap \frac12B(R)=\varnothing$. Analogously, because of the same reason,
\begin{equation}\label{eq1905}
x\in \tfrac14B  (Q),\;\;y\in \tfrac14B  (R)\quad\Rightarrow \quad|x-y|\gtrsim D(Q,R).
\end{equation}
 Also,
\begin{equation}\label{eq1906}
x\in \frac14B(Q),\;\;y\in \supp(\wt\mu|_R)\quad\Rightarrow \quad|x-y|\gtrsim \kappa_0 D(Q,R),
\end{equation}
To prove this, note that
\begin{equation}\label{eq1903}
y\in \supp(\wt\mu|_R) = I_{\kappa_0}(R)\subset R,
\end{equation}
which implies that $y\not\in B(Q)$ and thus $|x-y|\geq \frac34 r(B(Q))\approx\ell(Q)$. In the case
$r(B(Q))\geq 2\kappa_0\ell(R)$
 this implies that
$$|x-y|\gtrsim \ell(Q) + \kappa_0\ell(R).$$
Otherwise, from \rf{eq1903}, since $z_Q\in\supp\wt\mu$ and $y\in I_{\kappa_0}(R)$, by the definition of $I_{\kappa_0}(R)$,
$$|z_Q-y|\geq \kappa_0\ell(R),$$
and then, as $|z_Q-x|\leq \frac14r(B(Q))\leq \frac12\kappa_0\ell(R)$, we infer that
$$|x-y|\geq |z_Q-y| - |z_Q-x|\geq \frac{\kappa_0}2\ell(R).$$
So in any case we have $|x-y|\gtrsim \kappa_0(\ell(Q) + \ell(R))$. It is easy to deduce
\rf{eq1906} from this estimate. We leave the details for the reader.

From \rf{eq19050}, \rf{eq1905}, and \rf{eq1906}, and the fact that $K(\cdot)$ is a standard
Calder\'on-Zygmund kernel, we get \rf{eqdqr1}.
Plugging this estimate into \rf{eqt2***}, we obtain
$$T_2\lesssim \frac1{\kappa_0^{n+1}}
\sum_{R\in J_A} \frac{\ell(R)\,\wt\mu(R)}{D(Q,R)^{n+1}}.$$

So from \rf{eqt123*} and the estimates for the terms $T_1$, $T_2$ and $T_3$, we infer that
for all $x\in\tfrac14B  (Q)$ with $Q\in\SSS$
\begin{equation}\label{eqnos58}
\bigl|\RR(\chi_{AQ_0}\eta)(x)\bigr|  \lesssim  \bigl|m_{\wt\mu,Q}(\RR(\chi_{AQ_0}\wt\mu))\bigr| +
\theta_0^{\frac1{2(n+1)^2}} + P_{\wt\mu}(2B_Q)+
\frac1{\kappa_0^{n+1}}
\sum_{R\in J_A} \frac{\ell(R)\,\wt\mu(R)}{D(Q,R)^{n+1}}.
\end{equation}
Denote 
$$ \wt p _{\wt\mu}(x) = \sum_{Q\in J}\chi_{\frac14B(Q)}\,  P_{\wt\mu}(2B_Q) \quad\mbox{ and }\quad
\wt g(x) = \sum_{Q\in\SSS}
\sum_{R\in J_A} \frac{\ell(R)}{D(Q,R)^{n+1}} \,\wt\mu(R)\,\chi_{\frac14B(Q)}(x).$$ 
Squaring and integrating \rf{eqnos58} with respect to $\eta$ on $Q_0$, we get
\begin{align}\label{eqff593}
\bigl\|\RR(\chi_{AQ_0}\eta)\bigr\|_{L^2(\eta|_{Q_0})}^2  & \lesssim  
\sum_{Q\in\SSS} \bigl|m_{\wt\mu,Q}(\RR(\chi_{AQ_0}\wt\mu))\bigr|^2\,\eta(\tfrac14B(Q)) \\ &\quad +
\theta_0^{\frac1{(n+1)^2}}\,\eta(Q_0) + \|\wt p_{\wt\mu}\|_{L^2(\eta|_{Q_0})}^2+ \frac1{\kappa_0^{2n+2}}\|\wt g\|_{L^2(\eta|_{Q_0})}^2,\nonumber
\end{align}
Note that, since $\eta(\tfrac14B(Q))=\wt\mu(Q)$, the first sum on the right hand side of \rf{eqff593} 
equals $\|f\|_{L^2(\wt\mu)}^2$, which does not exceed $\wt\ve\,\eta(Q_0)$, by \rf{eqsk592}.
By an analogous argument we 
deduce that $\|\wt p_{\wt\mu}\|_{L^2(\eta|_{Q_0})}^2 = \| p_{\wt\mu}\|_{L^2(\wt\mu|_{Q_0})}^2$ 
and $\|\wt g\|_{L^2(\eta|_{Q_0})}^2 = \| g\|_{L^2(\wt\mu|_{Q_0})}^2$, where
$$
 p _{\wt\mu}(x) = \sum_{Q\in J}\chi_Q\,  P_{\wt\mu}(2B_Q) \quad\mbox{ and }\quad g(x) = \sum_{Q\in\SSS}
\sum_{R\in J_A} \frac{\ell(R)}{D(Q,R)^{n+1}} \,\wt\mu(R)\,\chi_Q(x).
$$

We will estimate $\|g\|_{L^2(\wt\mu|_{Q_0})}$ by duality: for any non-negative function $h\in L^2(\wt\mu|_{Q_0})$,
we set
\begin{equation}\label{eqgh57}
\int g\,h\,d\wt\mu = 
\sum_{Q\in\SSS}
\sum_{R\in J_A} \frac{\ell(R)}{D(Q,R)^{n+1}} \,\wt\mu(R)\,\int_Q h\,d\wt\mu
= \sum_{R\in J_A} \wt\mu(R) \sum_{Q\in\SSS}
 \frac{\ell(R)}{D(Q,R)^{n+1}} \,\int_Q h\,d\wt\mu.
 \end{equation}
 For each $z\in R\in J_A$ we have 
\begin{align*}
\sum_{Q\in\SSS} \frac{\ell(R)}{D(Q,R)^{n+1}} \,\int_Q h\,d\wt\mu &\lesssim
\int \frac{\ell(R)\,h(y)}{\bigl(\ell(R)+|z-y|\bigr)^{n+1}}\,d\wt\mu(y)\\
& = \int_{|z-y|\leq \ell(R)}\cdots + \sum_{j\geq1}\int_{2^{j-1}\ell(R)<|z-y|\leq 2^{j-1}\ell(R)}\cdots \\
& \lesssim 
\sum_{j\geq0}\; \avint_{B(z,2^{j}\ell(R))}h\,d\wt\mu \;\,\frac{2^{-j}\,\wt\mu(B(z,2^j\ell(R)))}{\bigl(2^j\ell(R)
\bigr)^n} \\
&\lesssim M_{\wt\mu}h(z)\,P_{\wt\mu}\bigl(B(z,\ell(R))\bigr),
\end{align*}
where $M_{\wt\mu}$ stands for the centered maximal Hardy-Littlewood operator with respect to $\wt\mu$. Then, by \rf{eqgh57},
\begin{align*}
\int g\,h\,d\wt\mu &\lesssim
 \sum_{R\in J_A} \inf_{z\in R} \big[M_{\wt\mu}h(z)\,P_{\wt\mu}\bigl(B(z,\ell(R))\bigr)\bigl]\,
 \wt\mu(R) \leq \int_{AQ_0}M_{\wt\mu}h\,\,p_{\wt\mu} \,d\wt\mu
 \\ 
 & \lesssim \|M_{\wt\mu}h\|_{L^2(\wt\mu)}\,\| p_{\wt\mu} \|_{L^2(\wt\mu|_{AQ_0})}
 \lesssim \|h\|_{L^2(\wt\mu)}\,\| p_{\wt\mu} \|_{L^2(\wt\mu|_{AQ_0})}
 .
 \end{align*}
Thus, by Lemma \ref{lempmu0} and recalling that $\wt\mu$ is $\M$-periodic,
$$
\|g\|_{L^2(\wt\mu|_{Q_0})}^2\lesssim\|p_{\wt\mu} \|_{L^2(\wt\mu|_{AQ_0})}^2 = A^n\,\|p_{\wt\mu} \|_{L^2(\wt\mu|_{Q_0})}^2 \lesssim A^n\,\theta_0^{\frac2{3(n+1)}}\,\wt\mu(Q_0).$$
Plugging this estimate into \rf{eqff593} and recalling that $\|g\|_{L^2(\eta|_{Q_0})}
=\|g\|_{L^2(\wt\mu)}$, we obtain
$$\bigl\|\RR(\chi_{AQ_0}\eta)\bigr\|_{L^2(\eta|_{Q_0})}^2  \lesssim 
\biggl(\wt\ve +\theta_0^{\frac1{(n+1)^2}}+ \frac{A^n}{\kappa_0^{2n+2}}\,\theta_0^{\frac2{3(n+1)}}\biggr)
\,\eta(Q_0) \lesssim \biggl(\wt\ve + \frac{A^n}{\kappa_0^{2n+2}}\,\theta_0^{\frac1{(n+1)^2}}\biggr)
\,\eta(Q_0),$$
as wished.
\end{proof}

\vv

Note that the Riesz kernel is locally integrable with respect to $\eta$ (recall that the number of cells
from $\sss_0$ is finite). So for any bounded function $f$ with compact support the integral
$\int K(x-y)\,f(y)\,d\eta(y)$ is absolutely convergent for all $x\in\R^{n+1}$. 

Now we wish to extend the definition of $\RR_\eta f(x)$ to $\M$-periodic functions $f\in L^\infty(\eta)$ in a pointwise way
(not only in a $BMO$ sense, say). We consider a non-negative radial $C^1$ function $\phi$ supported on $B(0,2)$ which equals $1$ on 
$B(0,1)$, and we set $\phi_r(x)=\phi\left(\frac {x}{r}\right)$ for $r>0$. We denote $\wt K_r(x-y) = K(x-y)\phi_r(x-y)$ and we define
$$
\wt \RR_{\eta,r} f(x) = \wt \RR_{r} (f\eta)(x) = \int \wt K_r(x-y)\,f(y)\,d\eta(y),
$$
and
\begin{equation}\label{eq1001}
\pv\RR_\eta f(x) = {\rm pv}\RR (f\eta)(x) = \lim_{r\to\infty} \wt\RR_{\eta,r}f(x),
\end{equation}
whenever the limit exists. Let us remark that one may also define the principal value in a more typical way by
\begin{equation}\label{eq1002}
\lim_{r\to\infty} \int_{|x-y|<r} K(x-y)\,f(y)\,d\eta(y).
\end{equation}
However, because of the smoothness of the kernels $\wt K_r$, 
 the definition \rf{eq1001} has the advantage that some of the estimates below involving the truncated operators $\RR_r$ 
 are easier than the analogous ones with the kernels $\wt K_r(x)$ replaced by $K(x)\,\chi_{|x|< r}$.
 Nevertheless, one can show
that both definitions \rf{eq1001} and \rf{eq1002} coincide, at least for the $\M$-periodic functions $f\in L^\infty(\eta)$
(we will not prove this fact because this will be not needed below).

\vv

\begin{lemma}\label{lemtyp}
Let $f\in L^\infty(\eta)$ be $\M$-periodic, that is, $f(x+z_P)=f(x)$ for all $x\in\R^{n+1}$ and all $P\in\M$. Then:
\begin{itemize}
\item $\pv\RR_\eta f(x)$ exists for {\em all} $x\in\R^{n+1}$ and $\wt\RR_{\eta,r}f\to \pv\RR_\eta f$ as $r\to\infty$ uniformly in compact subsets of $\R^{n+1}$. The convergence is also uniform on $\supp\eta$.
Further, given any compact set $F\subset\R^{n+1}$, there is $r_0=r_0(F)>0$ such that for $s> r\geq r_0$,
\begin{equation}\label{eqdif94}
\bigr\|\wt\RR_s(f\eta) - \wt\RR_r(f\eta)\bigl\|_{\infty,F}\,\lesssim  \frac{c_{F}}r\,\|f\|_\infty,
\end{equation}
where $c_{F}$ is some constant depending on $F$.

\item The function $\pv\RR_\eta f$ is $\M$-periodic and continuous in $\R^{n+1}$, and harmonic in $\R^{n+1}\setminus \supp (f\eta)$.
\end{itemize}
\end{lemma}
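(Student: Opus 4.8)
The plan is to prove the two bullet points in order, exploiting the $\M$-periodicity of $f$ and $\eta$ together with the growth bound $\eta(B(x,r))\lesssim r^n$ from \rf{eqgrowth589} (which holds for $\eta$ by the same argument as for $\wt\mu$, since $\eta$ is built from the bounded-density pieces $\wt\mu(Q)\,\HH^{n+1}|_{\frac14B(Q)}/\HH^{n+1}(\frac14 B(Q))$ and is $\M$-periodic). First I would establish the Cauchy criterion \rf{eqdif94}. For $s>r$, the difference $\wt K_s(x-y)-\wt K_r(x-y)$ is supported in the annulus $r\le |x-y|\le 2s$ and is bounded by $C/|x-y|^n$ there; moreover on the region $|x-y|\ge 2r$ it equals $\wt K_s - \wt K_r$ where both kernels have the same ``tail'' $K$, and the key point is that for $\M$-periodic $f$ the integral $\int (\wt K_s - \wt K_r)(x-y) f(y)\,d\eta(y)$ enjoys a cancellation coming from averaging the (odd) Riesz kernel over a period lattice. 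Concretely, since $f$ is $\M$-periodic, for each fixed $x$ one can rewrite $\int \wt K_\rho(x-y)f(y)\,d\eta(y)$ by summing over the cubes $P\in\M$ and using that $\eta$ and $f$ are invariant under $T_P$; pairing each cube $P$ with its reflection about $x$ (using that $A$ is chosen so that the lattice $\M$ is symmetric enough, or directly pairing $z\mapsto 2x-z$) produces a gain of one derivative in the kernel, so that the effective kernel decays like $|x-y|^{-(n+1)}$ rather than $|x-y|^{-n}$. Summing $\sum_{j} 2^{-j}$ over the dyadic annuli $2^j r\le |x-y|\le 2^{j+1}r$ and using $\eta(B(x,2^jr))\lesssim (2^jr)^n$ then yields a bound $\lesssim r^{-1}\|f\|_\infty$ on any compact set $F$, with the constant depending on $F$ through how far $F$ reaches relative to a fundamental domain. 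This gives \rf{eqdif94}, hence the uniform (on compacts, and on $\supp\eta$) convergence and the existence of $\pv\RR_\eta f(x)$ for every $x$.

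For the second bullet, $\M$-periodicity of $\pv\RR_\eta f$ is immediate: for $P\in\M$, a change of variables $y\mapsto y - z_P$ combined with $f(y+z_P)=f(y)$ and $\eta(E+z_P)=\eta(E)$ gives $\wt\RR_{\eta,r}f(x+z_P) = \wt\RR_{\eta,r}f(x)$ exactly (the cutoff $\phi_r$ is translation-invariant in the difference variable), and then one passes to the limit $r\to\infty$. Continuity of $\pv\RR_\eta f$ follows from the uniform-on-compacts convergence in the first bullet together with the continuity of each $\wt\RR_{\eta,r}f$: each $\wt K_r$ is a bounded $C^1$ kernel (it is $K\phi_r$, which is $C^1$ since $\phi$ vanishes near the origin... wait, $\phi\equiv 1$ near $0$, so $\wt K_r$ has the singularity of $K$ at $0$), so I would argue continuity of $\wt\RR_{\eta,r}f$ directly from the local integrability of the Riesz kernel with respect to $\eta$ (the density of $\eta$ with respect to $\HH^{n+1}$ is bounded, so $K\in L^1_{loc}(\eta)$ and dominated convergence gives continuity of $x\mapsto\int \wt K_r(x-y)f(y)\,d\eta(y)$). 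A uniform limit of continuous functions is continuous, so $\pv\RR_\eta f$ is continuous on all of $\R^{n+1}$. Finally, harmonicity off $\supp(f\eta)$: on $\R^{n+1}\setminus\supp(f\eta)$ each $\wt\RR_{\eta,r}f$ eventually (for $r$ large depending on the compact set) coincides with $\RR(f\eta)$, which is a genuine (non-principal-value) integral of the kernel $K(x-\cdot) = c_n\nabla_x \frac{1}{|x-\cdot|^{n-1}}$ against $f\eta$; since $|x-y|^{-(n-1)}$ is harmonic in $x$ away from $y$, and we can differentiate under the integral sign on the open set where $x$ stays at positive distance from $\supp(f\eta)$ (the growth of $\eta$ makes the integrals and their $x$-derivatives converge locally uniformly), $\RR(f\eta)$ is a gradient of a harmonic function, hence harmonic, componentwise.

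The main obstacle is the first bullet, specifically producing the quantitative decay $r^{-1}\|f\|_\infty$ in \rf{eqdif94}: one cannot simply estimate $\int |\wt K_s - \wt K_r|\,d\eta$ since that is only $O(\log(s/r))$ or worse and does not converge, so the periodicity-induced cancellation of the Riesz kernel is essential and must be set up carefully, pairing lattice translates (or points symmetric about $x$) so that kernel differences $K(x-y)-K(x-y')$ with $|y-y'|$ of size one period length appear, gaining the extra power of decay. Handling the boundary terms where this pairing is incomplete near $\partial F$ (whence the constant $c_F$) and checking uniformity on $\supp\eta$ (where $x$ ranges over a periodic unbounded set, but by periodicity of everything it suffices to treat $x$ in one fundamental domain) are the delicate bookkeeping points; once the $r^{-1}$ decay is in hand the rest is routine.
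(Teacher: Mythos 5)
Your plan for the first bullet is essentially the paper's argument: decompose $f\eta$ into the lattice translates of $\chi_{Q_0}f\eta$, and exploit the oddness of the Riesz kernel together with the central symmetry of the lattice ($P\in\M$ iff $-P\in\M$) to pair the translate by $z_P$ with the translate by $-z_P$; this turns the tail of $\wt\RR_s-\wt\RR_r$ into sums of kernel differences $\wt K_{r,s}(z_P+(x-y))-\wt K_{r,s}(z_P-(x-y))$, bounded by $c_F/|z_P|^{n+1}$, and $\sum_{|z_P|\geq r}\ell(P)^n|z_P|^{-(n+1)}\lesssim 1/r$ gives \rf{eqdif94}. One caution: the pairing must be the one just described (lattice symmetry about the origin plus antisymmetry of the kernel); a literal ``reflection about $x$'' pairing, which you also float, would require the measure $f\eta$ to be symmetric about the arbitrary point $x$, which it is not. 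Your treatment of periodicity and of continuity (uniform limit of continuous truncations, using that $\eta$ has bounded density) also matches the paper.

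The genuine gap is in the harmonicity step. You claim that on $\R^{n+1}\setminus\supp(f\eta)$ each $\wt\RR_{\eta,r}f$ eventually coincides with $\RR(f\eta)$ ``as a genuine (non-principal-value) integral.'' This is false: $\supp(f\eta)$ is an unbounded $\M$-periodic set, so the cutoff $\phi_r(x-\cdot)$ always discards mass at distance $>2r$, and moreover the full integral $\int K(x-y)f(y)\,d\eta(y)$ is never absolutely convergent, since $\eta(B(x,R))\approx R^n$ while $|K|\approx R^{-n}$ on the annulus of radius $R$, so $\int|K(x-y)|\,d\eta(y)=\infty$; the limit exists only through the same periodic cancellation as in the first bullet. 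Nor can you get harmonicity directly from the truncations $\wt\RR_{\eta,r}f$, because the kernels $K\phi_r$ are not harmonic away from the origin (the cutoff destroys harmonicity), so differentiating under the integral sign in the truncated expressions proves nothing. The standard repair, and the one the paper uses, is to truncate the \emph{measure} rather than the kernel: show that $\|\RR(f\phi_r\eta)-\wt\RR_r(f\eta)\|_{\infty,\overline B(0,r_1)}\lesssim (r_1/r)\|f\|_\infty$, so that the functions $\RR(f\phi_r\eta)$ — which are honest Riesz transforms of compactly supported finite measures and hence harmonic off $\supp(f\eta)$ — converge locally uniformly to $\pv\RR_\eta f$, whose harmonicity then follows. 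Without this (or an equivalent device) the second bullet's harmonicity claim is not proved.
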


The arguments to prove the lemma are standard. However, for the reader's convenience we will show the details.

\begin{proof}
By the $\M$-periodicity of the measure $\nu:=f\,\eta$, it is immediate that the functions 
$\wt\RR_r(f\eta)$, $r>0$, are $\M$-periodic too. On the other hand, 
using that
$\eta$ is absolutely continuous with respect
to Lebesgue measure with a uniformly bounded density,
it is straightforward to check that each $\wt\RR_r(f\eta)$ is also continuous and bounded in $\R^{n+1}$. Then, except 
harmonicity in $\R^{n+1}\setminus \supp (f\eta)$, 
all the statements in the lemma follow if we show that the family of functions $\{\wt\RR_r(f\eta)\}_{r>0}$
satisfies \rf{eqdif94} for any compact subset  $F\subset\R^{n+1}$. 
Indeed this clearly implies  the uniform
convergence on compact subsets and on $\supp\eta$, since $\supp\eta$ is $\M$-periodic, 

Let $s>r\geq r_0$ and, denote $\wt K_{r,s}(x-y)=\wt K_{s}(x-y) - \wt K_{r}(x-y)$. 
Notice that $\wt K_{r,s}$ is a standard Calder\'on-Zygmund kernel (with constants independent of
$r$ and $s$).
We write
$$\nu=\sum_{P\in\M} (T_P)_\#(\chi_{Q_0}\nu),$$
so that
$$\wt\RR_s(f\eta)(x) - \wt\RR_r(f\eta)(x) = \int\wt K_{r,s}(x-y)\,d\Bigl(\sum_{P\in\M}(T_P)_\#(\chi_{Q_0}\nu)\Bigr)(y).$$
Since the support of $\wt K_{r,s}(x-y)$ is compact, the last sum only has a finite number of non-zero
terms, and so we can change the order of summation and integration:
\begin{align}\label{eqak55}
\wt\RR_s(f\eta)(x) - \wt\RR_r(f\eta)(x) & = \sum_{P\in\M}\int\wt K_{r,s}(x-y)\,d\bigl[(T_P)_\#(\chi_{Q_0}\nu)\bigr](y)
\\
& = \sum_{P\in\M}\int_{Q_0}\wt K_{r,s}(x-y-z_P)\,d\nu(y).\nonumber
\end{align}
By the antisymmetry of the kernel $\wt K_{r,s}$, from the last equation we derive
$$\wt\RR_s(f\eta)(x) - \wt\RR_r(f\eta)(x) = -\sum_{P\in\M}\int_{Q_0}\wt K_{r,s}(z_P-(x-y))\,d\nu(y).$$
Also, by the definition of $\M$, it is clear that $P\in\M$ if and only if $-P\in\M$. So replacing
$z_P$ by $-z_P$ does not change the last sum in \rf{eqak55}. Hence we have
$$\wt\RR_s(f\eta)(x) - \wt\RR_r(f\eta)(x) = 
\sum_{P\in\M}\int_{Q_0}\wt K_{r,s}(z_P+(x-y))\,d\nu(y).$$
Averaging the last two equations we get
\begin{equation}\label{eqsk96}
\wt\RR_s(f\eta)(x) - \wt\RR_r(f\eta)(x) = \frac12
\sum_{P\in\M} \int_{Q_0}\bigl[\wt K_{r,s}(z_P+(x-y)) - \wt K_{r,s}(z_P-(x-y))\bigr]\,d\nu(y).
\end{equation}

Note that if $x$ belongs to a compact set $F\subset\R^{n+1}$ and $y\in Q_0$, then both 
$(x-y)$ and $-(x-y)$ lie in some compact set $\wt F$.
Observe also that $\wt K_{r,s}$ vanishes in $B(0,r)$.
So if we assume $r_0\geq 2\,\diam(\wt F)$, say, then both 
$\wt K_{r,s}(z_P+(x-y))$ and $\wt K_{r,s}(z_P-(x-y))$ vanish unless $|z_P|\geq r$.
For such $x,y$ we have $|x-y|\leq \diam(\wt F)\leq \frac 12\,r\leq\frac 12\,|z_P|$ and so
$$|z_P+(x-y)|\approx |z_P+(x-y)|\approx |z_P|\geq r.$$
Then we obtain
$$\bigl|\wt K_{r,s}(z_P+(x-y)) - \wt K_{r,s}(z_P-(x-y))\bigr|\lesssim \frac{|x-y|}{|z_P|^{n+1}}
\lesssim \frac{\diam(\wt F)}{|z_P|^{n+1}}.$$
Plugging this estimate into \rf{eqsk96} we obtain
$$
\bigl|\wt\RR_s(f\eta)(x) - \wt\RR_r(f\eta)(x)\bigr|\lesssim
\sum_{P\in\M:|z_P|\geq r} \frac{\diam(\wt F)}{|z_P|^{n+1}}\,|\nu|(Q_0)
\leq \sum_{P\in\M:|z_P|\geq r} \frac{\diam(\wt F)}{|z_P|^{n+1}}\,\ell(P)^n\,\|f\|_\infty.$$
It is easy to check that 
$$\sum_{P\in\M:|z_P|\geq r} \frac{\ell(P)^n}{|z_P|^{n+1}}\lesssim \frac1r.$$
So we deduce 
$$\bigr\|\wt\RR_s(f\eta) - \wt\RR_r(f\eta)\bigl\|_{\infty,F}\lesssim  \frac{\diam(\wt F)}r\,\|f\|_\infty\to 0\quad \mbox{ as $r\to\infty$},$$
as wished.
\vv

It remains to prove that $\pv\RR_\nu f$ is harmonic in $\R^{n+1}\setminus \supp (f\eta)$.
Consider a closed ball $\overline B(0,r_1)$ and  $x\in \overline B(0,r_1)$. Then we have
$$\RR(f\phi_r\eta)(x) - \wt\RR_r(f\eta)(x) = \int K(x-y)\,\bigl(\phi_r(y)-\phi_r(x-y)\bigr)\,f(y)\,d\eta(y).$$
We write
$$|\phi_r(y)-\phi_r(x-y)|\lesssim \|\nabla\phi_r\|_\infty\,|x|\lesssim \frac{|x|}r.$$
For $r\geq 4\,r_1$, it is easy to check that
$\phi_r(y)-\phi_r(x-y)=0$ unless $|x-y|\approx|y|\approx r$. Thus
\begin{align*}
\bigl|\RR(f\phi_r\eta)(x) - \wt\RR_r(f\eta)(x)\bigr| &\lesssim 
\int_{\begin{subarray}{l}|y|\leq Cr\\
C^{-1}r\leq|x-y|\leq Cr \end{subarray}}
\frac1{|x-y|^n}\,\frac{|x|}r\,|f(y)|\,d\eta(y)\\
&\lesssim \frac{|x|}{r^{n+1}}\,\|f\|_\infty\,\eta(B(0,Cr))\lesssim 
\frac{r_1}{r}\,\|f\|_\infty.
\end{align*}
That is to say,
$$\bigl\|\RR(f\phi_r\eta) - \wt\RR_r(f\eta)\bigr\|_{\infty,\overline B(0,r_1)}\lesssim 
\frac{r_1}{r}\,\|f\|_\infty\to 0\quad \mbox{ as $r\to\infty$.}$$
Since $\wt\RR_r(f\eta)$ converges uniformly to $\pv\RR_\eta f$ in $F$ as $r\to\infty$, it follows
that $\RR(f\phi_r\eta)$ also converges uniformly to $\pv\RR_\eta f$ in $\overline B(0,r_1)$. 

Note now that $\RR(f\phi_r\eta)$ is harmonic out of $\supp(f\eta)$ because $f\phi_r\eta$ has compact
support, and so by its local uniform convergence to $\pv\RR_\eta f$, we deduce that $\pv\RR_\eta f$ is harmonic out of $\supp(f\eta)$ too.
\end{proof}
\vv

From now on, to simplify notation we will denote $\pv\RR_\eta f$ just by $\RR_\eta f$.
\vv
\begin{lemma}\label{lem28}
Let $L_\M^\infty(\eta)$ denote the Banach space of the $\M$-periodic functions which belong to 
$L^\infty(\eta)$ equipped with the norm $\|\cdot\|_{L^\infty(\eta)}$. The map
$\RR_\eta:L_\M^\infty(\eta)\to L_\M^\infty(\eta)$ is bounded. Further, for all $f\in L_\M^\infty(\eta)$
and $r>0$ big enough we have
\begin{equation}\label{eqa901}
\|\RR(f\eta)-\wt\RR_r(f\eta)\|_{L^\infty(\eta)} \lesssim \frac{\|f\|_{L^\infty(\eta)}}r.
\end{equation}
\end{lemma}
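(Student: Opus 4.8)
The plan is to deduce everything from the convergence estimate \rf{eqdif94} already established in Lemma \ref{lemtyp}, together with a single ``base-point'' bound $\|\RR_\eta f\|_{L^\infty(\eta)}\lesssim\|f\|_{L^\infty(\eta)}$. For the latter, fix $x\in\supp\eta$, say $x\in\frac14B(Q)$ for some $Q\in\SSS$ (after a periodic translation we may assume $x$ lies in a fixed cube, but the bound will not depend on which one). Split the integral defining $\wt\RR_{\eta,r}(f\eta)(x)$ into the local part, over $2B_Q$, and the far part. For the local part use that $\eta$ is absolutely continuous with bounded density on $\frac14B(Q)$ together with the growth bound \rf{eqgrowth589}: $|\wt\RR_{\eta,r}(f\chi_{2B_Q}\eta)(x)|\lesssim \|f\|_\infty\,\Theta_{\wt\mu}(2B_Q)\lesssim\|f\|_\infty$ (here one can also use $\theta_0^{1/(n+1)}\lesssim1$). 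For the far part, group the mass of $\eta$ into the cells $R$ of the lattice $J$ (the periodized stopping cells) and the intervening annuli, exactly as in the proof of Lemma \ref{lem200}: using $\wt\mu(1.1B_R)\lesssim\wt\mu(R)$ and the standard kernel estimates one gets $|\wt\RR_{\eta,r}(f\chi_{(2B_Q)^c}\eta)(x)|\lesssim \|f\|_\infty\,P_{\wt\mu}(B(x,\ell(Q)))+ \|f\|_\infty\lesssim\|f\|_\infty$, where the constant depends only on the growth constant of $\wt\mu$ (equivalently of $\eta$) and is in particular independent of $r$. Letting $r\to\infty$ via \rf{eq1001} yields $|\RR_\eta f(x)|\lesssim\|f\|_\infty$ for all $x\in\supp\eta$, hence $\|\RR_\eta f\|_{L^\infty(\eta)}\lesssim\|f\|_{L^\infty(\eta)}$.

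Next, $\M$-periodicity of $\RR_\eta f$ is already contained in Lemma \ref{lemtyp}, so $\RR_\eta f\in L^\infty_\M(\eta)$ and the map $\RR_\eta:L^\infty_\M(\eta)\to L^\infty_\M(\eta)$ is bounded. Finally, to prove the quantitative estimate \rf{eqa901}, I would invoke \rf{eqdif94} with $F$ a fundamental domain for $\M$ inside $\supp\eta$ — e.g. $F=\overline{Q_0}\cap\supp\eta$ — which is compact; then for $s>r\geq r_0(F)$,
$$\|\wt\RR_s(f\eta)-\wt\RR_r(f\eta)\|_{\infty,F}\lesssim \frac{c_F}{r}\,\|f\|_\infty.$$
Letting $s\to\infty$ and using that $\wt\RR_s(f\eta)\to\RR_\eta f$ uniformly on $F$ (again Lemma \ref{lemtyp}) gives $\|\RR(f\eta)-\wt\RR_r(f\eta)\|_{\infty,F}\lesssim\|f\|_\infty/r$. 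Since both $\RR_\eta f$ and $\wt\RR_r(f\eta)$ are $\M$-periodic, their difference is $\M$-periodic, and $\supp\eta$ is $\M$-periodic, the sup over $F$ equals the sup over all of $\supp\eta$; that is exactly the $L^\infty(\eta)$ norm, so \rf{eqa901} follows. The constant $c_F$ is, by the proof of Lemma \ref{lemtyp}, comparable to $\diam(\wt F)$ where $\wt F$ is a compact set depending only on $Q_0$, hence absorbed into the implicit constant.

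The only mildly delicate point — the ``main obstacle'' — is making sure the base-point bound $\|\RR_\eta f\|_{L^\infty(\eta)}\lesssim\|f\|_\infty$ has a constant that is genuinely uniform: independent of $r$, of the particular cell $Q$, and of the (finite but unbounded) number of cells in $\sss_0$. This is handled precisely as in Lemma \ref{lem200}: the far-field sum $\sum_{R\in J}\ell(R)\,\wt\mu(R)/D(Q,R)^{n+1}$ was already shown there to be controlled by $M_{\wt\mu}(\mathbf 1)(z)\,P_{\wt\mu}(B(z,\ell(Q)))$ for $z\in R$, and by \rf{eqgrowth589} this is $\lesssim1$ with an absolute constant; crucially one does \emph{not} need the pointwise Poisson bound of Lemma \ref{lempoisson} for $\wt\mu$ here, only the uniform growth \rf{eqgrowth589}. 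Everything else is a routine repackaging of estimates already proved.
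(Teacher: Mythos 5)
Your derivation of \rf{eqa901} is correct and coincides with the paper's argument: apply \rf{eqdif94} on a fundamental domain, let $s\to\infty$ using the uniform convergence from Lemma \ref{lemtyp}, and use $\M$-periodicity of $\RR(f\eta)-\wt\RR_r(f\eta)$ and of $\supp\eta$ to pass from the fundamental domain to the full $L^\infty(\eta)$ norm. The gap is in your ``base-point'' bound $\|\RR_\eta f\|_{L^\infty(\eta)}\lesssim\|f\|_{L^\infty(\eta)}$ with a constant depending only on growth constants. The far-field estimate you propose does not hold: for $x\in\tfrac14B(Q)$, splitting $(2B_Q)^c\cap B(x,Cr)$ into dyadic annuli and using only $|\wt K_r(x-y)|\lesssim |x-y|^{-n}$ together with \rf{eqgrowth589} gives a contribution $\lesssim \Theta_\eta\bigl(B(x,2^k\ell(Q))\bigr)\lesssim 1$ \emph{per annulus}, with no decaying factor, hence a bound of order $\log\bigl(r/\ell(Q)\bigr)\,\|f\|_\infty$ that blows up as $r\to\infty$. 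The factors $2^{-j}$ producing $P_{\wt\mu}$-type sums in Lemma \ref{lem200} come from kernel differences $K(x-y)-K(x'-y)$ (the terms $T_2$, $T_3$ there), not from the kernel itself; the genuine far field in that lemma is hidden in $m_{\wt\mu,Q}(\RR(\chi_{AQ_0}\wt\mu))$, which is controlled only in $L^2$, not pointwise, and beyond scale $\ell(Q_0)$ one must use the periodic cancellation (the pairing of $z_P$ with $-z_P$ in Lemmas \ref{lemtyp} and \ref{lem83}), which no absolute-value estimate can detect. Moreover, the paper's remark just before the lemma states that the norm of $\RR_\eta$ on $L_\M^\infty(\eta)$ is finite only because $\sss_0$ is finite and may depend strongly on the construction of $\eta$; so the uniformity you claim is both unestablished by your argument and stronger than what is asserted (or needed).

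The repair is short and is precisely the paper's proof: having \rf{eqa901}, fix one value $r_0$ (the one provided by \rf{eqdif94} for the fundamental domain) and write $\|\RR_\eta f\|_{L^\infty(\eta)}\leq \|\wt\RR_{\eta,r_0}f\|_{L^\infty(\eta)} + C\,\|f\|_{L^\infty(\eta)}/r_0$. The operator $\wt\RR_{\eta,r_0}$ is bounded on $L_\M^\infty(\eta)$ for the crude reason that $\wt K_{r_0}$ is compactly supported and $\eta$ is absolutely continuous with a bounded density with respect to Lebesgue measure (bounded because $\sss_0$ is finite); this bound is allowed to depend on $\eta$, and that is all the lemma claims. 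In other words: prove \rf{eqa901} first and deduce the boundedness from it at a single truncation level, rather than attempting a uniform-in-$r$ absolute-value bound of the truncated transforms.
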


We remark that the bound on the norm of $\RR_\eta$ from  $L_\M^\infty(\eta)$ to 
$L_\M^\infty(\eta)$ depends strongly on the construction of $\eta$. This is finite due to the fact that the number of cells from $\sss_0$ is finite, but it may explode as  this number grows. The precise value of the norm will not play any role in the estimates below, we just need to know that this is finite.

\begin{proof}
Since $f$ is $\M$-periodic, from \rf{eqdif94} we infer that for $s> r\geq r_0=r_0(Q_0)$,
$$
\bigr\|\wt\RR_s(f\eta) - \wt\RR_r(f\eta)\bigl\|_{\infty,F}\lesssim  \frac{c_{F}}r\,\|f\|_\infty,
$$
Letting $s\to\infty$, $\wt\RR_s(f\eta)$ converges uniformly to $\RR\nu$ and so 
we get \rf{eqa901}.

To prove the boundedness of  $\RR_\eta:L_\M^\infty(\eta)\to L_\M^\infty(\eta)$, note first that
 $\wt K_{r_0}$ is compactly supported and $\eta$ is absolutely continuous with respect
to Lebesgue measure on a compact set with a uniformly bounded density. Hence we deduce that $\wt\RR_{\eta,r_0}:L_\M^\infty(\eta)\to L_\M^\infty(\eta)$ is bounded, which together with \rf{eqa901} applied to $\wt\RR_{r_0}$ implies that 
$\RR_\eta:L_\M^\infty(\eta)\to L_\M^\infty(\eta)$ is bounded.
\end{proof}

\vv

From now on, given $x\in\R^{n+1}$, we denote 
$$x_H =(x_1,\cdots,x_n),$$ 
so that $x=(x_H,x_{n+1})$. Also, we write
$$\RR^H = (\RR_1,\ldots,\RR_n),$$
where $\RR_j$ stands for the $j$-th component of $\RR$,
so that $\RR=(\RR^H,\RR_{n+1})$.

For simplicity, in the arguments below we will assume that the function $\phi$ defined slightly above \rf{eq1001} is of the form $\phi(x)=\wt\phi(|x|^2)$, for some even $C^1$ function $\wt\phi$ which 
equals $1$ on $[0,1]$ and vanishes out of $[0,2^{1/2}]$.

\vv

\begin{lemma}\label{lem83}
Let $f\in L^1_{loc}(\eta)$ be $\M$-periodic. Then,
\begin{itemize}
\item[(a)] Let $\wt A\geq5$ be some odd natural number. For all $x\in 2Q_0$,
$$\bigl|\RR(\chi_{(\wt AQ_0)^c}f\eta)(x)\bigr|\lesssim  \frac1{\wt A\,\ell(Q_0)^n}\,\int_{Q_0} |f|\,d\eta.$$

\item[(b)] For all $x\in\R^{n+1}$ such that $\dist(x,H)\geq \ell(Q_0)$,
\begin{equation}\label{eqb11}
\bigl|\RR(f\eta)(x)\bigr|\lesssim  \frac1{\ell(Q_0)^n}\,\int_{Q_0} |f|\,d\eta
\end{equation}
and
\begin{equation}\label{eqb12}
\bigl|\RR^H(f\eta)(x)\bigr|\lesssim  \frac{1}{\dist(x,H)\,\ell(Q_0)^{n-1}}\,\int_{Q_0} |f|\,d\eta
\end{equation}
\end{itemize}
\end{lemma}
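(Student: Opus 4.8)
The plan is to exploit the $\M$-periodicity of $f\eta$ together with the antisymmetry of the Riesz kernel, in the spirit of the computation \rf{eqsk96}. Set $\nu_0=\chi_{Q_0}\,f\eta$, a finite measure supported in $Q_0$ with $\|\nu_0\|=\int_{Q_0}|f|\,d\eta$, so that $f\eta=\sum_{P\in\M}(T_P)_\#\nu_0$. Since $\wt A$ is odd, $\wt AQ_0$ is the disjoint union of the cubes $P\in\M$ contained in it, and both this index set and its complement $\{P:P\not\subset\wt AQ_0\}$ are invariant under $P\mapsto-P$. In part (b), using the $\M$-periodicity of $\RR(f\eta)$ from Lemma \ref{lemtyp} and the invariance of $\dist(\cdot,H)$ and of $\|\nu_0\|$ under the translations $T_P$, we may assume the horizontal projection $x_H$ lies over $Q_0$; in particular $x\notin\supp\eta$, as $\supp\eta\subset\{|x_{n+1}|<\ell(Q_0)\}$. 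Throughout I work with the smooth truncations, so the quantity to bound is $\int_{Q_0}K^{\mathrm{per}}_r(x-y)\,d\nu_0(y)$ with $K^{\mathrm{per}}_r(w)=\sum_P\wt K_r(w-z_P)$ a \emph{finite} sum over the relevant index set; it suffices to bound $|K^{\mathrm{per}}_r(w)|$ (and separately its horizontal part) uniformly in $r$ for $w=x-y$ in the relevant range, the passage $r\to\infty$ being as in Lemma \ref{lemtyp} (the $\wt\RR_{\eta,r}$ are Cauchy in $r$ by the same tail estimate, with $\ell(P)^n\|f\|_\infty$ replaced by $\int_{Q_0}|f|\,d\eta$).

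For part (a) and the first inequality of (b) the key identity is the antisymmetrization
\[
\sum_P\wt K_r(w-z_P)=\tfrac12\sum_P\bigl[\wt K_r(z_P+w)-\wt K_r(z_P-w)\bigr],
\]
valid because $\wt K_r$ is odd and the index set is symmetric under $P\mapsto-P$. One splits according to whether $|z_P|\ge 2|w|$ or not. When $|z_P|\ge 2|w|$, the fact that $\wt K_r$ is a standard Calder\'on--Zygmund kernel with constants independent of $r$ gives $|\wt K_r(z_P+w)-\wt K_r(z_P-w)|\lesssim |w|/|z_P|^{n+1}$, and $\sum_{|z_P|\gtrsim\rho}|z_P|^{-n-1}\approx(\ell(Q_0)^n\rho)^{-1}$ yields a bound $\lesssim|w|/(\ell(Q_0)^n\rho)$; here $\rho\approx\wt A\,\ell(Q_0)$ and $|w|\lesssim\ell(Q_0)$ in case (a) (producing the factor $\wt A^{-1}$), while $\rho\approx|w|\approx\dist(x,H)$ in case (b) (producing $\lesssim\ell(Q_0)^{-n}$). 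The finitely many remaining (``near'') indices are treated termwise: in case (a), $\dist(2Q_0,P)\gtrsim\wt A\,\ell(Q_0)$ for all $P\not\subset\wt AQ_0$, so $|\wt K_r(z_P\pm w)|\lesssim(\wt A\,\ell(Q_0))^{-n}$ and there are $O_n(1)$ of them; in case (b), $|z_P\pm w|\ge|w_{n+1}|\approx\dist(x,H)$, so each term is $\lesssim\dist(x,H)^{-n}$ while their number is $O_n\bigl((\dist(x,H)/\ell(Q_0))^n\bigr)$, for a total $\lesssim\ell(Q_0)^{-n}$. Integrating against $d|\nu_0|$ and letting $r\to\infty$ gives the claimed bounds.

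For the second inequality of (b) I add the observation that a flat sheet has vanishing horizontal Riesz transform: $\int_{\R^n}K^H\bigl((v,t)\bigr)\,dv=0$ for every $t\ne0$, where $K^H(u)=u_H/|u|^{n+1}$. Fix $y\in Q_0$, put $w=x-y$, so $|w_{n+1}|\approx\dist(x,H)$ and $|w_H|\lesssim\ell(Q_0)$; since $z_{P,H}$ runs over $\ell(Q_0)\Z^n$ as $P$ runs over $\M$, the horizontal periodic sum is $\sum_{v\in\ell(Q_0)\Z^n}g(v)$ with $g(u)=K^H\bigl((w_H-u,w_{n+1})\bigr)$ (momentarily dropping the harmless truncation). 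With a threshold $\rho\approx\dist(x,H)$, bound $\sum_{|v|\le\rho}g(v)$ by comparison with $\ell(Q_0)^{-n}\int_{|u|\le\rho}g$: the lattice-cell Riemann-sum error is $\lesssim\ell(Q_0)\sup|\nabla g|$, and $\sum_v\ell(Q_0)\bigl(\dist(x,H)+|w_H-v|\bigr)^{-(n+1)}\lesssim\ell(Q_0)^{1-n}\dist(x,H)^{-1}$, while $\int_{|u|\le\rho}g=-\int_{|u|>\rho}g$ (using $\int_{\R^n}g=0$), which is $O(|w_H|\rho^{-1})$ since $g(u)$ differs from the odd kernel $-u\,(|u|^2+w_{n+1}^2)^{-(n+1)/2}$ by $O(|w_H|\,|u|^{-(n+1)})$ on $|u|>2|w_H|$. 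Finally $\sum_{|v|>\rho}g(v)$ is bounded the same way, its leading odd part summing to zero by the symmetry $v\mapsto-v$ of the lattice. All three pieces are $\lesssim\bigl(\dist(x,H)\,\ell(Q_0)^{n-1}\bigr)^{-1}$ uniformly in $y$ (and, once the truncation is reinstated, in $r$); integrating against $d|\nu_0|$ and letting $r\to\infty$ concludes.

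The main obstacle is that none of these periodic kernel sums converges absolutely --- the ``sheet'' contribution to the vertical component of $\RR$ is genuinely only conditionally convergent --- so the whole argument must be run with the smooth truncations $\wt K_r$, and one must verify that both the Calder\'on--Zygmund estimate and the ``sum versus integral'' comparison hold with constants independent of $r$; this is the case because $\wt K_r$ is a CZ kernel with uniform constants and $\phi_r\uparrow1$, after which one passes to the limit via Lemma \ref{lemtyp}. A secondary, purely bookkeeping point is that the implicit constants may depend on $n$, which is what makes it painless to dispose of the $O_n(1)$ near lattice points and, in case (a), of the range $5\le\wt A\lesssim\sqrt n$ where $\wt A\approx_n1$.
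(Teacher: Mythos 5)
Your proof is correct. Parts (a) and the estimate \rf{eqb11} follow essentially the paper's own route: periodize, use the symmetry $P\mapsto -P$ of the index set together with the antisymmetry of $\wt K_r$ to pass to the averaged sum $\frac12\sum_P[\wt K_r(z_P+w)-\wt K_r(z_P-w)]$, and then apply the Calder\'on--Zygmund smoothness bound for $|z_P|\geq 2|w|$ and crude size bounds (with $|z_P\pm w|\gtrsim \wt A\ell(Q_0)$, resp.\ $\gtrsim\dist(x,H)$) for the finitely many remaining terms; your dyadic counting of lattice points reproduces the paper's bounds $\sum_P\ell(Q_0)/|z_P|^{n+1}\lesssim(\wt A\ell(Q_0)^n)^{-1}$ and $\sum_P\ell(P)^n(\dist(x,H)+|z_P|)^{-n-1}\lesssim\dist(x,H)^{-1}$. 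Where you genuinely diverge is \rf{eqb12}: the paper stays with the paired sum and proves the refined pairwise estimate \rf{claim64'}, writing $\wt K_r^H(z)=z_H\,\psi_r(|z|^2)$ and exploiting that $z_P$ is horizontal, so that $\bigl||z_P-(x-y)|^2-|z_P+(x-y)|^2\bigr|=4|z_{P,H}\cdot(x_H-y_H)|\leq 4|z_P|\ell(Q_0)$, which converts the numerator $\dist(x,H)$ into $\ell(Q_0)$; you instead invoke the vanishing of the horizontal Riesz transform of a flat sheet and compare the lattice sum with the sheet integral via a Riemann-sum estimate (gradient error $\lesssim\ell(Q_0)\sup|\nabla g|$ per cell) plus odd-symmetry cancellations on $\{|u|\le\rho\}$, $\{|u|>\rho\}$ and on the lattice. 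Both give $\lesssim\bigl(\dist(x,H)\ell(Q_0)^{n-1}\bigr)^{-1}$; the paper's pairing argument is shorter and entirely pointwise, while yours is more conceptual (the gain of one factor $\ell(Q_0)/\dist(x,H)$ is read off as ``first-order discretization error of a kernel with zero sheet average'') and would extend to other odd kernels with the same cancellation, at the price of the conditional-convergence care you rightly flag and resolve by running everything with the truncations $\wt K_r$, which are odd, compactly supported and CZ with constants uniform in $r$. Two bookkeeping points you should make explicit when writing it up: the Riemann-sum comparison also produces a boundary mismatch along $|u|\approx\rho$ (an annulus of width $\approx\ell(Q_0)$ where $|g|\approx\rho^{-n}$), which contributes the same quantity $\lesssim\ell(Q_0)^{1-n}\dist(x,H)^{-1}$; and the threshold $\rho$ must be chosen $\geq C_n\dist(x,H)$ with $C_n\gtrsim\sqrt n$ so that $|u|>\rho$ indeed forces $|u|>2|w_H|$. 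Neither affects the validity of the argument.
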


\begin{proof}We denote $\nu = f\eta$.
The arguments to prove the estimate in (a) are quite similar to the ones used  in the proof of 
Lemma \ref{lemtyp}. Since we are assuming that $\wt A$ is some odd number, there is a subset
$\M_{\wt A}\subset\M$ such that 
$$\chi_{({\wt A}Q_0)^c}\nu = \sum_{P\in\M_{\wt A}} (T_P)_\#(\chi_{Q_0}\nu).$$
Further the cubes $P\in\M_{\wt A}$ satisfy $|z_P|\gtrsim {\wt A}\ell(Q_0)$.
So for any $x\in Q_0$ and all $r>0$ we have
$$\wt\RR_r(\chi_{({\wt A}Q_0)^c}\nu)(x) = 
\int\wt K_r(x-y)\,d\Bigl(\sum_{P\in\M_{\wt A}}(T_P)_\#(\chi_{Q_0}\nu)\Bigr)(y).$$
Since the support of $\wt K_{r}(x-y)$ is compact, the last sum only has a finite number of non-zero
terms, and so we can change the order of summation and integration, and thus
\begin{align}\label{eqak556}
\wt\RR_r(\chi_{({\wt A}Q_0)^c}\nu)(x) & = \sum_{P\in\M_{\wt A}}\int\wt K_{r}(x-y)\,d\bigl[(T_P)_\#(\chi_{Q_0}\nu)\bigr](y)
\\
& = \sum_{P\in\M_{\wt A}}\int_{Q_0}\wt K_{r}(x-y-z_P)\,d\nu(y).\nonumber
\end{align}
By the antisymmetry of the kernel $\wt K_{r}$, from the last equation we get
$$ \wt\RR_r(\chi_{({\wt A}Q_0)^c}\nu)(x) = -\sum_{P\in\M_{\wt A}}\int_{Q_0}\wt K_{r}(z_P-(x-y))\,d\nu(y).$$
 Also, by the definition of $\M_{\wt A}$, it follows that $P\in\M_{\wt A}$ if and only if $-P\in\M_{\wt A}$. So replacing
$z_P$ by $-z_P$ does not change the last sum in \rf{eqak556}, and then we have
$$\wt\RR_r(\chi_{({\wt A}Q_0)^c}\nu)(x) = 
\sum_{P\in\M_{\wt A}}\int_{Q_0}\wt K_{r}(z_P+(x-y))\,d\nu(y).$$
Averaging the last two equations we get
\begin{equation}\label{eqsk96'}
\wt\RR_r(\chi_{({\wt A}Q_0)^c}\nu)(x) = \frac12
\sum_{P\in\M_{\wt A}} \int_{Q_0}\bigl[\wt K_{r}(z_P+(x-y)) - \wt K_{r}(z_P-(x-y))\bigr]\,d\nu(y).
\end{equation}

Note now that $x\in2Q_0$, $y\in Q_0$ and, recalling that $|z_P|\gtrsim {\wt A}\ell(Q_0)$ for $P\in \M_{\wt A}$, 
 we have
$$|z_P+(x-y)| \approx |z_P-(x-y)|\approx|z_P|.$$
Thus,
$$\bigl|\wt K_{r}(z_P+(x-y)) - \wt K_{r}(z_P-(x-y))\bigr| \lesssim \frac{|x-y|}{|z_P|^{n+1}}\lesssim
\frac{\ell(Q_0)}{|z_P|^{n+1}}.$$
Then, from this estimate and \rf{eqsk96'} we deduce that
$$
\bigl|\wt\RR_r(\chi_{({\wt A}Q_0)^c}\nu)(x)\bigr|\lesssim
\sum_{P\in\M:|z_P|\geq C^{-1}{\wt A}\ell(Q_0)} \frac{\ell(Q_0)}{|z_P|^{n+1}}\,|\nu|(Q_0) \lesssim 
\frac{|\nu|(Q_0)}{{\wt A}\,\ell(Q_0)^n}.$$
as wished.

\vv
To prove the first estimate in (b), let $x\in\R^{n+1}$ be such that $\dist(x,H)\geq \ell(Q_0)$. Since 
$\RR\nu$ is $\M$-periodic, we may assume that $x_H\in Q_0\cap H$. 
 As in \rf{eqsk96'}, for any $r>0$ we have
\begin{equation}\label{eqsj54}
\wt\RR_r\nu(x) = \frac12
\sum_{P\in\M} \int_{Q_0}\bigl[\wt K_{r}(z_P+(x-y)) - \wt K_{r}(z_P-(x-y))\bigr]\,d\nu(y).
\end{equation}
We claim that for $x$ as above and $y\in Q_0$,
\begin{equation}\label{claim64}
\bigl|\wt K_{r}(z_P+(x-y)) - \wt K_{r}(z_P-(x-y))\bigr|\lesssim \frac{\dist(x,H)}{
\bigl(\dist(x,H) + |z_P|\bigr)^{n+1}}.
\end{equation}
Indeed, if $|z_P|\geq 2|x-y|$, then $\dist(x,H) + |x-z_P|\approx |x-y| +|z_P|\approx |z_P|$, and thus
$$\bigl|\wt K_{r}(z_P+(x-y)) - \wt K_{r}(z_P-(x-y))\bigr|\lesssim 
\frac{|x-y|}{|z_P|^{n+1}}\approx \frac{\dist(x,H)}{|z_P|^{n+1}}\approx \frac{\dist(x,H)}{
\bigl(\dist(x,H) + |z_P|\bigr)^{n+1}},
$$
which shows that \rf{claim64} holds in this case.

On the other hand, if $|z_P|< 2|x-y|$, then
$$\bigl|\wt K_{r}(z_P+(x-y)) - \wt K_{r}(z_P-(x-y))\bigr|\lesssim 
\frac 1{\bigl|(z_P - y)+ x|^{n}} + \frac 1{\bigl|(z_P+y)-x|^{n}}.$$ 
It is immediate to check that $\dist(x,y-z_P)\approx\dist(x,z_P+y)\approx\dist(x,H)\approx |x-y|$,
and so we deduce that
\begin{align*}
\bigl|\wt K_{r}(z_P+(x-y)) - \wt K_{r}(z_P-(x-y))\bigr| &\lesssim \frac1{|x-y|^n}.
 \end{align*}
Further from the condition $|z_P|< 2|x-y|$ we infer that 
$$|x-y|\approx |x-y| + |z_P|
\approx \dist(x,H)+|z_P|,$$
and thus
\begin{align*}
\bigl|\wt K_{r}(z_P+(x-y)) - \wt K_{r}(z_P-(x-y))\bigr| &\lesssim 
\frac{|x-y|}{\bigl(|x-y| + |z_P|\bigr)^{n+1}}\approx
\frac{\dist(x,H)}{
\bigl(\dist(x,H) + |z_P|\bigr)^{n+1}},
\end{align*}
which completes the proof of \rf{claim64}.

From \rf{eqsj54} and \rf{claim64} we deduce
\begin{align*}
\bigl|\wt\RR_r\nu(x)\bigr| & \lesssim 
\sum_{P\in\M} \int_{Q_0}\frac{\dist(x,H)}{
\bigl(\dist(x,H) + |z_P|\bigr)^{n+1}}\,d|\nu|(y)\\
&=|\nu|(Q_0)\,
 \frac{\dist(x,H)}{\ell(P)^n} \sum_{P\in\M} \frac{\ell(P)^n}
{
\bigl(\dist(x,H) + |z_P|\bigr)^{n+1}}
\end{align*}
It is straightforward to check that
$$\sum_{P\in\M} \frac{\ell(P)^n}
{
\bigl(\dist(x,H) + |z_P|\bigr)^{n+1}}\lesssim \frac1{\dist(x,H)},$$
and thus \rf{eqb11} follows.

\vv
We turn now our attention to the last estimate in (b). 
Again let $x\in\R^{n+1}$ be such that $\dist(x,H)\geq \ell(Q_0)$ and $x_H\in Q_0\cap H$, so that
the identity \rf{eqsj54} is still valid.
We claim that for $y\in Q_0$ and $r$ big enough,
\begin{equation}\label{claim64'}
\bigl|\wt K_{r}^H(z_P+(x-y)) - \wt K_{r}^H(z_P-(x-y))\bigr|\lesssim \frac{\ell(Q_0)}{
\bigl(\dist(x,H) + |z_P|\bigr)^{n+1}},
\end{equation}
where $\wt K_{r}^H$ is the kernel of $\wt\RR_r^H$.
To prove this observe that
$$\wt K_{r}^H(z) = z_H\,\psi_r(|z|^2),\quad \mbox{ 
with\;
$\psi_r(t) = \dfrac{\wt\phi_r(t)}{t^{\frac{n+1}2}}$. }$$
Then we have
\begin{align*}
\bigl|&\wt K_{r}^H(z_P+(x-y)) - \wt K_{r}^H(z_P-(x-y))\bigr| \\
&= \bigl| ((z_{P,H}+(x_H-y_H))\,\psi_r\bigl(|z_P+(x-y)|^2\bigr) - (z_{P,H}-(x_H-y_H))\,\psi_r\bigl(|z_P-(x-y)|^2\bigr)\bigr|\\
& \leq 2\,\bigl|x_H-y_H\bigr|\,\psi_r\bigl(|z_P+(x-y)|^2\bigr) \\
&\quad
+ \bigl|z_{P,H}-(x_H-y_H)\bigr|\,\,\bigl|\psi_r\bigl(|z_P-(x-y)|^2\bigr)
-\psi_r\bigl(|z_P+(x-y)|^2\bigr)\bigr|\\
& =: T_1+ T_2.
\end{align*}

To deal with $T_1$ we write
$$T_1\leq \frac{2\,\bigl|x_H-y_H\bigr|}{|z_P+ (x-y)|^{n+1}}.$$
Note then that $\bigl|x_H-y_H\bigr|\leq\ell(Q_0)$, while $|x-y|\approx\dist(x,H)$. Further, it
is easy to check that 
\begin{equation}\label{equa98}
|z_P+ (x-y)|\approx |z_P- (x-y)|\approx|z_P|+\dist(x,H),
\end{equation}
which implies that
$$T_1\lesssim \frac{\ell(Q_0)}{
\bigl(\dist(x,H) + |z_P|\bigr)^{n+1}}.$$

Now we will estimate $T_2$. To this end we intend to apply the mean value theorem. It is easy to
check that for all $r,t>0$,
$$|\psi_r'(t)|\lesssim \frac1{t^{\frac{n+3}2}},$$
and then, by \rf{equa98},
$$\bigl|\psi_r\bigl(|z_P-(x-y)|^2\bigr)
-\psi_r\bigl(|z_P+(x-y)|^2\bigr)\bigr|\lesssim \frac{\bigl||z_P-(x-y)|^2 - |z_P+(x-y)|^2\bigr|}
{\bigl(\dist(x,H)+|z_P|\bigr)^{n+3}}.$$
Now we have
\begin{align*}
\bigl||z_P-(x-y)|^2 - |z_P+(x-y)|^2\bigr| & = 
\bigl|\bigl[(z_{P,H}-(x_H-y_H))^2 + (x_{n+1}-y_{n+1})^2\bigr] \\
& \quad- \bigl[(z_{P,H}+(x_H-y_H))^2 
+ (x_{n+1}-y_{n+1})^2\bigr]\bigr|\\ 
& = 4\,
\bigl|z_{P,H}\cdot(x_H-y_H)\bigr| \leq 4\,|z_P|\,\ell(Q_0).
\end{align*}
Thus we infer that
$$T_2\lesssim \frac{\bigl|z_{P,H}-(x_H-y_H)\bigr|\,|z_P|\,\ell(Q_0)}
{\bigl(\dist(x,H)+|z_P|\bigr)^{n+3}}\lesssim \frac{\ell(Q_0)}
{\bigl(\dist(x,H)+|z_P|\bigr)^{n+1}}.
$$
Together with the estimate above for $T_1$ this yields \rf{claim64'}.

From \rf{eqsj54} and \rf{claim64'} we obtain
$$\bigl|
\wt\RR_r\nu(x)\bigr| \lesssim |\nu|(Q_0)
\sum_{P\in\M} \frac{\ell(Q_0)}
{\bigl(\dist(x,H)+|z_P|\bigr)^{n+1}}.
$$
It is easy to check that
$$\sum_{P\in\M} \frac{\ell(Q_0)^{n+1}}
{\bigl(\dist(x,H)+|z_P|\bigr)^{n+1}}\lesssim \frac{\ell(Q_0)}{\dist(x,H)},$$
and then \rf{eqb12} follows.
\end{proof}

\vv
\begin{lemma}\label{lem8.5}
We have
$$\int_{Q_0}|\RR\eta|^2\,d\eta\lesssim \left(\ve'+ \frac1{A^{2}}\right)\,\eta(Q_0).$$
\end{lemma}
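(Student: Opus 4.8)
The plan is to deduce Lemma \ref{lem8.5} from Lemma \ref{lem200} by controlling the difference between $\RR\eta$ and its truncation $\RR(\chi_{AQ_0}\eta)$ on $Q_0$. First I would write, for $x\in Q_0$,
$$\RR\eta(x) = \RR(\chi_{AQ_0}\eta)(x) + \RR(\chi_{(AQ_0)^c}\eta)(x),$$
where the principal value on the left is the one constructed in Lemma \ref{lemtyp} (taking $f\equiv 1$, which is trivially $\M$-periodic and in $L^\infty(\eta)$). Then
$$\int_{Q_0}|\RR\eta|^2\,d\eta \leq 2\int_{Q_0}|\RR(\chi_{AQ_0}\eta)|^2\,d\eta + 2\int_{Q_0}|\RR(\chi_{(AQ_0)^c}\eta)|^2\,d\eta,$$
and the first term is bounded by $C\,\ve'\,\eta(Q_0)$ by Lemma \ref{lem200}. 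So the whole point is to estimate the tail term.

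For the tail, I would apply Lemma \ref{lem83}(a) with $f\equiv1$ and $\wt A = A$ (recall $A$ is assumed to be a big odd natural number, so the hypothesis of part (a) is met). This gives, for all $x\in 2Q_0 \supset Q_0$,
$$\bigl|\RR(\chi_{(AQ_0)^c}\eta)(x)\bigr| \lesssim \frac{1}{A\,\ell(Q_0)^n}\,\eta(Q_0) \approx \frac{1}{A},$$
using that $\eta(Q_0) = \mu_0(Q_0)\leq \mu(Q_0) = \ell(Q_0)^n$. Squaring and integrating over $Q_0$ against $\eta$ yields
$$\int_{Q_0}\bigl|\RR(\chi_{(AQ_0)^c}\eta)\bigr|^2\,d\eta \lesssim \frac{1}{A^2}\,\eta(Q_0).$$
Combining the two pieces gives
$$\int_{Q_0}|\RR\eta|^2\,d\eta \lesssim \Bigl(\ve' + \frac{1}{A^2}\Bigr)\,\eta(Q_0),$$
which is exactly the claim.

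I do not expect any serious obstacle here: both of the ingredients (Lemma \ref{lem200} for the truncated operator and Lemma \ref{lem83}(a) for the far tail) are already in hand, and the argument is just a triangle inequality plus the elementary bound $\eta(Q_0)\le \ell(Q_0)^n$. The only point requiring a little care is making sure the principal value $\RR\eta$ appearing in the statement is the $\M$-periodic continuous function from Lemma \ref{lemtyp}, and that splitting it as the sum of $\RR(\chi_{AQ_0}\eta)$ (an absolutely convergent integral, since $\eta$ has bounded density and compact pieces) and $\RR(\chi_{(AQ_0)^c}\eta)$ (the periodic tail, handled by the symmetrization argument underlying Lemma \ref{lem83}) is legitimate; this is immediate from the definitions in \rf{eq1001} and the proof of Lemma \ref{lem83}.
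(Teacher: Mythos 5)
Your proposal is correct and is essentially the paper's own argument: split $\RR\eta$ into $\RR(\chi_{AQ_0}\eta)$ plus the periodic tail, control the first term by Lemma \ref{lem200}, and bound the tail pointwise by $C/A$ on $Q_0$ via Lemma \ref{lem83}(a) with $f\equiv1$. The extra details you supply (oddness of $A$, $\eta(Q_0)\leq\ell(Q_0)^n$, and the legitimacy of the splitting via Lemma \ref{lemtyp}) are exactly the points the paper leaves implicit.
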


\begin{proof}
By Lemma \ref{lem200} it is enough to show that
$$\int_{Q_0}|\RR(\chi_{(AQ_0)^c}\eta)|^2\,d\eta\lesssim \frac1{A^2}\,\eta(Q_0),$$
which is an immediate consequence
of Lemma \ref{lem83} (a).
\end{proof} 

\vv

\begin{rem}\label{remparameters}
By taking $A$ big enough and $\delta,\ve$ small enough in the assumptions of the Main Lemma~\ref{mainlemma}, and then choosing appropriately the parameters $\ve_0,\kappa_0,\theta_0$, it follows that
\begin{equation}\label{eqpetita1}
\int_{Q_0}|\RR\eta|^2\,d\eta\ll \eta(Q_0).
\end{equation}
Indeed, the preceding lemma asserts that 
$$\int_{Q_0}|\RR\eta|^2\,d\eta\lesssim \left(\ve'+ \frac1{A^{2}}\right)\,\eta(Q_0),$$
with $\ve'$ given in Lemma \ref{lem200} by
$$\ve'= \wt\ve + A^n\,\kappa_0^{-2n-2}\,\theta_0^{\frac1{(n+1)^2}},$$
where $\wt\ve$ is defined in \rf{epsilontilde} by
$$\wt\ve =C_4 \left(\ve + \frac1{A^2} +  A^{4n+2} \,\delta^{\frac1{4n+4}}+\ve_0+\theta_0^{\frac1{n+1}} + \kappa_0^{\frac12} + A^{2n+2}\,\wt\delta^{\frac2{4n+5}}
\right),$$
and $\wt\delta$ in \rf{eqdeltatilde} by
$$\wt\delta = C_3\,A^{n+1} \,
\biggl(\ve_0 + \theta_0^{1/(n+1)} + \kappa_0^{1/2} + \delta^{1/2}\biggr).
$$
Hence if we take first $A$ big enough and then $\ve_0,\kappa_0,\delta,\theta_0$ small enough (depending on $A$), so that moreover
$\theta_0\ll \kappa_0$ (to ensure that $A^n\kappa_0^{-2n-2}\,\theta_0^{\frac1{(n+1)^2}}\ll1$), then \rf{eqpetita1} follows.
\end{rem}

\vvv

\section{Proof of the Key Lemma by contradiction}\label{sec9}

Recall that we are trying to prove the Key Lemma \ref{keylemma} by contradiction, and so we are assuming that
\begin{equation}\label{eqcvo992}
\mu\Biggl(\bigcup_{Q\in \LD} Q\Biggr)> (1-\ve_0)\,\mu(Q_0).
\end{equation}
 This assumption has allowed us 
to prove that $\|\chi_{Q_0}\RR\eta\|_{L^2(\eta)}^2\ll \eta(Q_0)$ in Lemma \ref{lem8.5}. In this section,  by means of a variational argument, we will show that $\|\chi_{Q_0}\RR\eta\|_{L^2(\eta)}^2\gtrsim \eta(Q_0)$. This will give
us the desired contradiction.

\vv

\subsection{A variational argument and an almost everywhere inequality}

\begin{lemma}
\label{Le:DesigualdadAERiNu}
Suppose that, for some $0<\lambda\leq1$, the inequality
\begin{equation*}
\int_{Q_0} |\RR\eta|^2 d\eta \leq \lambda \,\eta(Q_0)
\end{equation*}
holds. Then, there is a 
function $b\in L^{\infty}(\eta)$ such that
\begin{enumerate}[(i)]
\item\label{le:3-1} $0\leq b\leq 2$,
\item\label{le:3-2} $b$ is $\M$-periodic,
\item\label{le:3-3} $\displaystyle{\int_{Q_0} b\,d\eta = \eta(Q_0})$,
\end{enumerate}
and such that the measure $\nu = b\eta$ satisfies
\begin{equation}\label{eqa331}
\int_{Q_0} |\RR\nu|^2 d\nu \leq \lambda \,\nu(Q_0)
\end{equation}
and
\begin{equation}
\label{eq:DesCTPNu}
|\RR\nu(x)|^2 + 2\RR^*((\RR\nu)\nu)(x) \leq 6\lambda \quad\mbox{ for $\nu$-a.e. $x\in\R^{n+1}$.}
\end{equation}

\end{lemma}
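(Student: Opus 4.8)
The idea is to minimize a suitable functional over an admissible class of densities and extract the Euler–Lagrange inequality. Concretely, let $\mathcal{A}$ be the class of $\M$-periodic functions $b \in L^\infty(\eta)$ with $0 \le b \le 2$ and $\int_{Q_0} b\,d\eta = \eta(Q_0)$, and consider the functional
\[
\Phi(b) = \int_{Q_0} |\RR(b\eta)|^2\,d(b\eta).
\]
One first checks that $\Phi$ attains a minimum on $\mathcal{A}$: the class is convex and, in the appropriate weak-$*$ topology on $L^\infty(\eta|_{Q_0})$, compact; and $\Phi$ is weak-$*$ lower semicontinuous because $\RR_\eta$ maps $L^\infty_\M(\eta)$ boundedly into $L^\infty_\M(\eta)$ (Lemma \ref{lem28}) and the kernel is, after the smooth truncation, bounded and continuous, so quadratic expressions like $\int \RR(b\eta)\,\overline{\RR(b\eta)}\,b\,d\eta$ behave continuously enough under the relevant convergence (one approximates $\RR$ by $\wt\RR_r$ uniformly, using \rf{eqa901}). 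Since the constant function $b \equiv 1$ lies in $\mathcal{A}$ and gives $\Phi(1) = \int_{Q_0}|\RR\eta|^2\,d\eta \le \lambda\,\eta(Q_0)$, the minimizer $b$ satisfies $\Phi(b) \le \lambda\,\eta(Q_0)$; setting $\nu = b\eta$ this is precisely \rf{eqa331}, and (i)–(iii) hold by construction.

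**The Euler–Lagrange step.** To get \rf{eq:DesCTPNu}, I would perturb $b$ along admissible directions. Fix $x_0 \in \supp\eta$ a Lebesgue point (with respect to $\eta$) of the relevant densities, and for small $\rho > 0$ let $b_t = b + t(\chi_{B(x_0,\rho)} - c_t \chi_{Q_0})\cdot(\text{suitable periodization})$, choosing $c_t$ so the mean constraint (iii) is preserved; for $b$ not already saturated near $x_0$ this stays in $[0,2]$ for $|t|$ small. Expanding $\Phi(b_t)$ to first order in $t$ and using the symmetry of the bilinear form $\langle \RR(f\eta), \RR(g\eta)\rangle$ together with the identity $\int \RR(f\eta)\,g\,d\eta = -\int f\,\RR(g\eta)\,d\eta$ (antisymmetry of $K$, plus the $\M$-periodic pv calculus of Section \ref{sec8}), the derivative at $t=0$ produces the pointwise combination $|\RR\nu(x)|^2 + 2\RR^*((\RR\nu)\nu)(x)$ integrated against $\chi_{B(x_0,\rho)}\eta$, minus a constant (coming from the $\chi_{Q_0}$ correction term) times $\nu(Q_0)^{-1}\int_{Q_0}(\cdots)$. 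Optimality ($\Phi(b_t) \ge \Phi(b)$ for $t \ge 0$) forces the averaged quantity over $B(x_0,\rho)$ to be bounded above by that constant; letting $\rho \to 0$ at Lebesgue points and estimating the constant crudely by $\int_{Q_0}|\RR\nu|^2\,d\nu \,/\,\nu(Q_0) \le \lambda$ plus lower-order contributions from the cross terms yields the bound $6\lambda$ (the precise numerical constant absorbs the factor from the mean-zero correction and the bound $b \le 2$).

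**Main obstacle.** The delicate point is justifying the differentiation of $\Phi$ and, relatedly, that the perturbation $b_t$ genuinely stays admissible: one must control $\RR^*((\RR\nu)\nu)$ pointwise $\nu$-a.e., which requires knowing $\RR\nu \in L^\infty(\eta)$ (available from Lemma \ref{lem28}) and that $\RR^*$ applied to an $L^\infty_\M(\eta)$ function is again well-defined pointwise as an $\M$-periodic function — this is where the continuity and periodicity results of Lemma \ref{lemtyp} and Lemma \ref{lem28} are essential, and where the smooth truncation $\wt K_r$ rather than the sharp one pays off, since it makes all the truncated bilinear forms manifestly continuous in $b$. A secondary technical nuisance is that near points where $b$ already equals $0$ or $2$ only one-sided perturbations are allowed, but since there the sign of the first variation is automatically consistent with \rf{eq:DesCTPNu}, this causes no real difficulty. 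I would handle the admissibility of $b_t$ by working with the reduced constraint only on $Q_0$ (exploiting $\M$-periodicity to extend), and handle the differentiability by first establishing the identity for the smoothly truncated operators $\wt\RR_r$ and then passing to the limit $r \to \infty$ uniformly, using \rf{eqa901}.
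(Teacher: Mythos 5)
Your overall skeleton — minimize over periodic admissible densities, extract a first–variation inequality, use the periodization identity to rewrite the cross term as $\int_B \RR^*((\RR\nu)\nu)\,d\nu$, then apply Lebesgue differentiation — is the same as the paper's, and your existence-of-minimizer step and the derivation of \rf{eqa331} from $\Phi(b)\le\Phi(1)$ are fine. The genuine gap is in the Euler--Lagrange step, and it comes precisely from imposing the hard constraint $0\le b\le 2$ and minimizing the plain energy $\Phi(b)=\int_{Q_0}|\RR(b\eta)|^2\,d(b\eta)$. To get \rf{eq:DesCTPNu} at a point you must perturb $b$ \emph{down} near that point and compensate by an increase elsewhere; under the hard constraint the increase is only allowed on $\{b<2\}$, so optimality yields $F(x):=|\RR\nu(x)|^2+2\RR^*((\RR\nu)\nu)(x)\le c$ $\nu$-a.e., where $c$ is (a weighted average of) $F$ over the non-saturated set, i.e.\ the Lagrange multiplier of the mean constraint. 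Your claim that $c$ can be estimated crudely by $\int_{Q_0}|\RR\nu|^2\,d\nu/\nu(Q_0)\le\lambda$ is not justified: integrating $F$ against $\nu$ over all of $Q_0$ (the analogue of \rf{eqafi84} with $B=Q_0$) only gives $\int_{Q_0}F\,d\nu=3\int_{Q_0}|\RR\nu|^2\,d\nu\le3\lambda\,\nu(Q_0)$, which controls the average of $F$ over $Q_0$, not over $\{b<2\}$. On the saturation set $\{b=2\}$ you only know $F\le c$, and $F$ has no lower bound of order $\lambda$ there ($\RR^*((\RR\nu)\nu)$ can be negative, and its sup norm is controlled only by the uncontrolled operator norm in Lemma \ref{lem28}); moreover $\nu(\{b<2\})$ can a priori be an arbitrarily small fraction of $\nu(Q_0)$ (e.g.\ $b=2\chi_E$ with $\eta(E)=\eta(Q_0)/2$ satisfies all your constraints, and then $\{0<b<2\}$ is empty). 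So the multiplier cannot be bounded by $C\lambda$ along the lines you sketch, and the statement "near $b=0$ or $b=2$ the sign of the first variation is automatically consistent, so this causes no real difficulty" hides exactly the missing estimate. A secondary, fixable flaw: the additive correction $-c_t\chi_{Q_0}$ is itself inadmissible wherever $b$ is small, since it violates $b_t\ge0$.

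This is what the paper's formulation is designed to avoid: it minimizes the \emph{penalized} functional $J(a)=\lambda\|a\|_{L^\infty(\eta)}\eta(Q_0)+\int_{Q_0}|\RR(a\eta)|^2a\,d\eta$ over the one-sided class $\{a\ge0,\ a\ \M\text{-periodic},\ \int_{Q_0}a\,d\eta=\eta(Q_0)\}$, with no upper bound imposed. The bound $b\le2$ then comes out a posteriori from $\lambda\|b\|_\infty\eta(Q_0)\le J(b)\le J(1)\le2\lambda\,\eta(Q_0)$, and one can use the multiplicative perturbation $b_t=(1-t\chi_{P_{\M}(B)})b+t\tfrac{\nu(B)}{\nu(Q_0)}\,b$ (where $P_{\M}(B)$ is the $\M$-periodic extension of $B$), which is admissible for every ball $B$ because no upper constraint is ever active. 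The cost of the compensating global increase is then explicit: $\lambda\tfrac{\nu(B)}{\nu(Q_0)}\|b\|_\infty\eta(Q_0)+3\tfrac{\nu(B)}{\nu(Q_0)}\int_{Q_0}|\RR\nu|^2\,d\nu\le 6\lambda\,\nu(B)$, and dividing by $\nu(B)$ and shrinking $B$ gives \rf{eq:DesCTPNu} with the stated constant. To repair your argument you would either adopt this soft-constraint penalization or supply a genuine bound of order $\lambda$ for the multiplier despite the possible saturation set; your current sketch provides neither.
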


\begin{proof}
In order to find such a function $b$, we consider the following class of admissible functions
\begin{equation}
\mathcal{A} = \Bigl\{ a\in L^{\infty}(\eta): \,a\geq0,\,\text{ $a$ is $\M$-periodic, and\, $\textstyle\int_{Q_0}a\,d\eta=\eta(Q_0)$} \Bigr\}
\end{equation}
and we define a functional $J$ on $\mathcal{A}$ by
\begin{equation}
J(a) = \lambda \|a\|_{L^\infty(\eta)}\,\eta(Q_0) + \int_{Q_0} |\RR(a\eta)|^2 a\, d\eta.
\end{equation}

Observe that $1\in \mathcal{A}$ and 
$$
 J(1) = \lambda \,\eta(Q_0) + \int_{Q_0} |\RR\eta|^2\, d\eta \leq 2\lambda \,\eta(Q_0),
$$
Thus 
$$\inf_{a\in\AZ} J(a)\leq2\lambda \,\eta(Q_0).$$
Since $J(a)\geq \lambda \|a\|_{L^\infty(\eta)}\,\eta(Q_0)$,
it is clear that
$$\inf_{a\in\AZ} J(a) = \inf_{a\in\AZ:\|a\|_{L^\infty(\eta)}\leq 2} J(a).$$
We claim that $J$ attains a global minimum on $\mathcal{A}$, i.e., there is a function $b\in \mathcal{A}$ such that $J(b)\leq J(a)$ for all $a\in \mathcal{A}$. 
Indeed, by the Banach-Alaoglu theorem there exists a sequence $\{a_k\}_k\subset \AZ$, with 
$J(a_k)\to \inf_{a\in\AZ} J(a)$, $\|a_k\|_{L^\infty(\eta)}\leq 2$, 
so that $a_k$ converges weakly * in $L^\infty(\eta)$ to some function
$b\in \AZ$. 
It is clear that $b$ satisfies \rf{le:3-1}, \rf{le:3-2} and \rf{le:3-3}.
Also, since $y\mapsto \frac{x-y}{|x-y|^{n+1}}$ belongs to $L^1_{loc}(\eta)$
(recall that $\eta$ has bounded density with respect to Lebesgue measure), it follows that
for all $x\in Q_0$ and all $r>0$, $\wt\RR_r(a_k\eta)(x)\to\wt\RR_r(b\eta)(x)$ as
$k\to\infty$. To see that for all $x\in Q_0$,
$\RR(a_k\eta)(x)\to\RR(b\eta)(x)$,  we write for any $k,r>0$ big enough
\begin{align*}
\bigl|\RR(a_k\eta)(x) - \RR(b\eta)(x)\bigr| & \leq 
\bigl|\RR(a_k\eta)(x) - \wt\RR_r(a_k\eta)(x)\bigr| +
\bigl|\wt\RR_r(a_k\eta)(x) - \wt\RR_r(b\eta)(x)\bigr| \\
&\quad +
\bigl|\wt\RR_r(b\eta)(x) - \RR(b\eta)(x)\bigr| \\
& \leq C\,\frac{\|a_k\|_{L^\infty(\eta)} + \|b\|_{L^\infty(\eta)}}r +
\bigl|\wt\RR_r(a_k\eta)(x) - \wt\RR_r(b\eta)(x)\bigr|,
\end{align*}
appliying \rf{eqa901} for the last inequality. Taking limits in $k$ in both sides we obtain
$$\limsup_{k\to\infty} \bigl|\RR(a_k\eta)(x) - \RR(b\eta)(x)\bigr|
\leq \frac{C}r + \limsup_{k\to\infty}
\bigl|\wt\RR_r(a_k\eta)(x) - \wt\RR_r(b\eta)(x)\bigr| = \frac{C}r.$$
Since this holds for all $r>0$ big enough, we infer that $\RR(a_k\eta)(x)\to\RR(b\eta)(x)$
as $k\to\infty$,
as wished. Taking into account that
$$|\RR(a_k\eta)(x)|\leq \frac Cr + |\wt\RR_r(a_k\eta)(x)| \leq \frac Cr + 
2\int_{|x-y|\leq r}\frac1{|x-y|^n}\,d\eta(y) \leq C(r)$$
 for all $r>0$ big enough,
by the dominated convergence theorem we infer that
$$\int_{Q_0} |\RR(a_k\eta)|^2 d\eta \to \int_{Q_0} |\RR(b\eta)|^2 d\eta\quad\mbox{ as\;
$k\to\infty$.}$$  
Using also that $\|b\|_{L^\infty(\eta)}\leq \limsup_k \|a_k\|_{L^\infty(\eta)}$, it follows 
that $J(b)\leq \limsup_k J(a_k)$, which proves the claim that $J(\cdot)$ attains a minimum at $b$.

\vv

The estimate \rf{eqa331} for $\nu=b\,\eta$ follows from the fact that $J(b)\leq J(1)$, because 
the property (iii) implies that $\|b\|_{L^\infty(\eta)}\geq 1$.

\vv
To prove \rf{eq:DesCTPNu} we perform a blow-up argument taking advantage of the fact that $b$ is a minimizer for $J$. Let $B$ be any ball contained in $Q_0$ and centered on $\supp\nu\cap Q_0$. Let
\begin{equation}
P_{\M}(B) = \bigcup_{R\in\M} (B+z_R)
\end{equation}
be the ``periodic extension'' of $B$ with respect to $\M$. Now, for every $0\leq t<1$, define
\begin{equation}
b_t = (1-t\chi_{P_{\M}(B)})b + t\,\frac{\nu(B)}{\nu(Q_0)}\,b.
\end{equation}
It is clear that $b_t\in\mathcal{A}$ for all $0\leq t<1$ and $b_0=b$. Therefore,
\begin{equation}
\begin{aligned}
J(b) & \leq J(b_t) = \lambda \|b_t\|_{\infty} \eta(Q_0) + \int_{Q_0} |\RR(b_t\eta)|^2 b_t\,d\eta 
\\
	&\leq \lambda \left(1+t\frac{\nu(B)}{\nu(Q_0)}\right)\|b\|_{\infty}\eta(Q_0) + \int_{Q_0} |\RR(b_t\eta)|^2 b_t\, d\eta := h(t).
\end{aligned}
\end{equation}
\par\medskip
Since $h(0)=J(b)$, we have that $h(0)\leq h(t)$ for $0\leq t<1$ and, thus $h'(0+)\geq 0$
(assuming that $h'(0+)$ exists). Notice that
\begin{equation*}
\frac{db_t}{dt}\Big|_{t=0}  = -\chi_{P_{\M}(B)}b + \frac{\nu(B)}{\nu(Q_0)}\,b,
\end{equation*} 
Therefore,
\begin{align*}
0 \leq h'(0+) &= \lambda\frac{\nu(B)}{\nu(Q_0)}\,\|b\|_{\infty}\eta(Q_0) + \frac{d}{dt}\Big|_{t=0}\int_{Q_0} |\RR(b_t\eta)|^2 b_t d\eta \\
	&= \lambda\frac{\nu(B)}{\nu(Q_0)}\|b\|_{\infty}\eta(Q_0) + 2\int_{Q_0}  \RR \left(\frac{db_t}{dt}\Big|_{t=0}\eta\right)\cdot \RR\nu \, b\,d\eta + \int_{Q_0} |\RR\nu|^2\, \frac{db_t}{dt}\Big|_{t=0}d\eta\\
	&=  \lambda\frac{\nu(B)}{\nu(Q_0)}\,\|b\|_{\infty}\eta(Q_0) + 2\int_{Q_0} \RR\left(\left(-\chi_{P_{\M}(B)}b + \frac{\nu(B)}{\nu(Q_0)}b\right)\eta\right)\cdot \RR\nu \, b\,d\eta \\
	& \qquad+ \int_{Q_0} |\RR\nu|^2 \left(-\chi_{P_{\M}(B)}b + \frac{\nu(B)}{\nu(Q_0)}b\right)d\eta\\
	&=  \lambda\frac{\nu(B)}{\nu(Q_0)}\|b\|_{\infty}\eta(Q_0) - 2\int_{Q_0}  \RR(\chi_{P_{\M}(B)}\nu)\cdot\RR\nu\,d\nu + 2\frac{\nu(B)}{\nu(Q_0)}\int_{Q_0} |\RR\nu|^2\,d\nu\\
	& \qquad -\int_{B}|\RR\nu|^2 \,d\nu + \frac{\nu(B)}{\nu(Q_0)}\int_{Q_0} |\RR\nu|^2\,d\nu,
\end{align*}
where we used that $P_{\M}(B)\cap Q_0=B$ in the last identity.
The fact that the derivatives above commute with the integral sign and with the
operator $\RR$ is guaranteed by the fact that $b_t$
is an affine function of $t$ and then one can expand the integrand
$|\RR(b_t\eta)|^2 b_t$ and obtain a polynomial expression on $t$.
Using also that
$\lambda\leq1$ and that $J(b)\leq2\lambda\,\nu(Q_0)$, we get
\begin{align}\label{eqfi83}
\int_B |\RR\nu|^2 \,d\nu + 2\int_{Q_0}  \RR(\chi_{P_{\M}(B)}\nu)\cdot\RR\nu\,d\nu & \leq \frac{\nu(B)}{\nu(Q_0)}\left[\lambda \|b\|_{\infty}\eta(Q_0) + 3\int_{Q_0} |\RR\nu|^2\,d\nu \right]\\
&\leq 3\,J(b)\,\nu(B)\leq 6\lambda\,\nu(B).\nonumber
\end{align}
We claim now that
\begin{equation}\label{eqafi84}
\int_{Q_0}  \RR(\chi_{P_{\M}(B)}\nu)\cdot\RR\nu\,d\nu = \int_B \RR^*((\RR\nu)\nu)\,d\nu.
\end{equation}
Assuming this for the moment, from  \rf{eqfi83} and \rf{eqafi84}, dividing by $\nu(B)$, we obtain 
\begin{equation*}
\frac{1}{\nu(B)}\int_{B} |\RR\nu|^2d\nu + \frac{2}{\nu(B)}\int_B \RR^*((\RR\nu)\nu)\,d\nu \leq 6\lambda,
\end{equation*}
and so, letting $\nu(B)\rightarrow 0$ and applying Lebesgue's differentiation theorem, we obtain
\begin{equation*}
|\RR\nu(x)|^2 + 2\RR^*((\RR\nu)\nu)(x) \leq  6\,\lambda\quad \mbox{ for $\nu$-a.e. $x\in\R^{n+1}$,}
\end{equation*}
 as desired.

It remains to prove the claim \rf{eqafi84}. By the uniform convergence of $\wt\RR_r(\chi_{P_{\M}(B)}\nu)$
and $\wt\RR_r\nu$ to $\RR(\chi_{P_{\M}(B)}\nu)$ and $\RR\nu$, respectively, we have
\begin{equation}\label{eqlim498}
\int_{Q_0}  \RR(\chi_{P_{\M}(B)}\nu)\cdot\RR\nu\,d\nu = \lim_{r\to\infty}
\int_{Q_0}  \wt\RR_r(\chi_{P_{\M}(B)}\nu)\cdot\wt\RR_r\nu\,d\nu.
\end{equation}
Since $\wt K_r(x-\cdot)$ has compact support, for all $x\in Q_0$,
\begin{align*}
\wt\RR_r(\chi_{P_{\M}(B)}\nu)(x) = \int_{P_{\M}(B)} \wt K_r(x-y)\,d\nu(y)
& = \sum_{P\in \M}  \int \wt K_r(x-y)\,d((T_P)_\#(\chi_B\nu)(y).
\end{align*}
For the last identity we have used the fact that the sum above runs only over a finite number of $P\in\M$ because there is only a finite number
of non-zero terms (in fact, we may assume these $P\in\M$ to be independent of 
$x\in Q_0$). Thus we have
$$\wt\RR_r(\chi_{P_{\M}(B)}\nu)(x) = \sum_{P\in \M}  \int_B \wt K_r(x-y-z_P)\,d\nu(y)
= \sum_{P\in\M} \wt\RR_r(\chi_B\nu)(x-z_P),$$
and so
\begin{align*}
\int_{Q_0}  \wt\RR_r(\chi_{P_{\M}(B)}\nu)(x)\cdot\wt\RR_r\nu\,d\nu(x) &=
\sum_{P\in \M} \int_{Q_0} \wt\RR_r(\chi_B\nu)(x-z_P) 
\cdot\wt\RR_r\nu(x)\,d\nu(x)\\
& = \sum_{P\in \M} \int_{Q_0-z_P} \wt\RR_r(\chi_B\nu)(x) 
\cdot\wt\RR_r\nu(x+z_P)\,d((T_P)^{-1}_{\,\#}\nu)(x)
\end{align*}
Since $\wt\RR_r\nu$ is $\M$-periodic, $\wt\RR_r\nu(x+z_P)=\wt\RR_r\nu(x)$ and 
$(T_P)^{-1}_{\,\#}\nu = \nu$, and then by Fubini, 
\begin{align}\label{eqalf42}
\int_{Q_0}  \wt\RR_r(\chi_{P_{\M}(B)}\nu)(x)\cdot\wt\RR_r\nu(x)\,d\nu(x)
& = \sum_{P\in \M} \int_{Q_0-z_P} \wt\RR_r(\chi_B\nu)(x) 
\cdot\wt\RR_r\nu(x)\,d\nu(x)\\
 & =\int \wt\RR_r(\chi_B\nu)(x) 
\cdot\wt\RR_r\nu(x)\,d\nu(x)\nonumber \\
& = \int_B \wt\RR_r^*((\wt\RR_r\nu)\nu)(y)\,d\nu(y).\nonumber
\end{align}
Since $\wt\RR_r\nu$ converges uniformly to $\RR\nu$ as $r\to\infty$ and $\wt\RR_r^*$ tends
to $\RR^*$ in operator norm in $L_\M^\infty(\eta)\to L_\M^\infty(\eta)$, we deduce that
$$\lim_{r\to\infty}\int_B \wt\RR_r^*((\wt\RR_r\nu)\nu)(y)\,d\nu(y) =
\int_B \RR^*((\RR\nu)\nu)\,d\nu.$$
Together with \rf{eqlim498} and \rf{eqalf42} this yields \rf{eqafi84}.
\end{proof}

\subsection{A maximum principle.}

\begin{lemma}
\label{Le:PrMax}
Assume that 
$
\ds\int_{Q_0} |\RR\eta|^2\,d\eta \leq \lambda \eta(Q_0)
$
 for some $0<\lambda\leq1$,
and let $b$ and $\nu$ be as in Lemma \ref{Le:DesigualdadAERiNu}. Let $K_S>10$ be some (big) constant and let $S$ be the horizontal strip
\begin{equation*}
S = \bigl\{x\in\R^{n+1}\colon |x_{n+1}|\leq K_S\ell(Q_0)\bigr\}.
\end{equation*}
Also, set
\begin{equation*}
f(x) = c_S\,x_{n+1}e_{n+1} = c_S(0,\dots,0,x_{n+1}), \quad\mbox{ with\; $c_S = \displaystyle\int
\dfrac{1}{\bigl(|y_H|^2+(K_S\ell(Q_0))^2\bigr)^{\frac{n+1}2}}\,d\nu(y).$}
\end{equation*}
Then, we have
\begin{equation}\label{eqd*1}
|\RR\nu(x)-f(x)|^2 + 4\RR^*((\RR\nu)\nu)(x) \lesssim \lambda^{1/2} + \frac1{K_S^2} \quad \mbox{ for all
$x\in S$.}
\end{equation}
Further, 
\begin{equation}\label{eqd*2}
c_S \lesssim \frac1{K_S\ell(Q_0)}.
\end{equation}
\end{lemma}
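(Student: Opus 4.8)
The plan is to prove Lemma \ref{Le:PrMax} by combining the almost-everywhere inequality \eqref{eq:DesCTPNu} from Lemma \ref{Le:DesigualdadAERiNu} with a harmonicity/maximum-principle argument applied to the function $u = |\RR\nu - f|^2 + 4\RR^*((\RR\nu)\nu)$ on the strip $S$. First I would record the pointwise bound \eqref{eqd*2}: since $\nu$ is supported (mod $\M$) in $Q_0$ and has growth of order $n$ (inherited from $\eta$, hence from \eqref{eqgrowth589}), the integral defining $c_S$ splits over the cubes $P\in\M$; on the central cube we use $\nu(Q_0)\approx \ell(Q_0)^n$ and the denominator $\gtrsim (K_S\ell(Q_0))^{n+1}$, and on the far cubes we sum $\sum_{P} \ell(P)^n/(|z_P| + K_S\ell(Q_0))^{n+1}\lesssim 1/(K_S\ell(Q_0))$. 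This also shows $\|f\|_{\infty,S}\lesssim 1$ and, more precisely, $|f(x)|\lesssim |x_{n+1}|/(K_S\ell(Q_0)^2)\cdot\ell(Q_0)^n\cdot\ldots$, i.e. $f$ is controlled on $S$.

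Next, the key observation is that the function $g:=\RR\nu - f$ satisfies: its square-modulus plus $4\RR^*((\RR\nu)\nu)$ is \emph{subharmonic} away from $\supp\nu$. Indeed, by Lemma \ref{lemtyp} (applied to the $\M$-periodic, bounded density measure $\eta$, noting $b\in L^\infty_\M(\eta)$ so $\nu = b\eta$ falls in its scope), $\RR\nu$ is continuous on $\R^{n+1}$ and harmonic off $\supp\nu$; the same holds componentwise for $\RR^*((\RR\nu)\nu)$ since $(\RR\nu)\nu$ is again a finite ($\M$-periodic) measure with bounded density and compact support mod $\M$. The linear function $f$ is harmonic, so $g$ is harmonic off $\supp\nu$, hence $|g|^2$ is subharmonic everywhere (each component $g_i$ is harmonic off $\supp\nu$ and $|g|^2$ is a convex function of a harmonic vector field — subharmonic on all of $\R^{n+1}\setminus\supp\nu$, and $|g|^2$ is continuous). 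Adding the harmonic function $4\RR^*((\RR\nu)\nu)$ keeps $u$ subharmonic on $\R^{n+1}\setminus\supp\nu$.

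Then I would apply the maximum principle on the bounded-in-the-vertical-direction region $S\setminus\supp\nu$. Subharmonicity forces $u$ to attain its supremum over $S$ either on the horizontal boundary $\partial S = \{|x_{n+1}| = K_S\ell(Q_0)\}$, or on $\supp\nu$, or ``at horizontal infinity'' — but $u$ is $\M$-periodic (as $\RR\nu$, $f$ in the relevant combination, and $\RR^*((\RR\nu)\nu)$ are all $\M$-periodic; here one must check $f$'s non-periodicity is harmless because on the diagonal combination entering $u$ only the periodic part matters, or alternatively work on one fundamental domain and use periodicity to discard the lateral boundary), so ``horizontal infinity'' does not arise. On $\supp\nu\subset Q_0$ we have $\nu$-a.e. from \eqref{eq:DesCTPNu} that $|\RR\nu|^2 + 2\RR^*((\RR\nu)\nu)\leq 6\lambda$, and expanding $|\RR\nu - f|^2 \leq |\RR\nu|^2 + 2|\RR\nu||f| + |f|^2$ together with $|f|\lesssim 1$ on $S$ and $|\RR\nu|\lesssim 1$ on $Q_0$ (from Lemma \ref{lem83}(a), say, or boundedness of $\RR_\eta$), one gets $u\lesssim \lambda + |f|\lesssim\lambda + 1/K_S$ pointwise $\nu$-a.e.; by continuity of $u$ and density this extends to all of $\supp\nu$. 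Wait — I should be careful: $\eqref{eq:DesCTPNu}$ holds $\nu$-a.e.\ but $u$ is continuous, so it holds on $\overline{\supp\nu}$. On $\partial S$, the cancellation in $\RR\nu(x) - f(x)$ is the crucial point: $f$ was chosen precisely so that for $x$ with $|x_{n+1}| = K_S\ell(Q_0)$, the vertical component of $\RR\nu(x)$ is close to $f(x)$ (this is the analogue of a Poisson-kernel computation), while Lemma \ref{lem83}(b) gives $|\RR^H\nu(x)|\lesssim 1/(K_S\ell(Q_0)^{n-1})\cdot\ell(Q_0)^n/\ell(Q_0)\ldots\lesssim 1/K_S$ and $|\RR\nu(x)|\lesssim 1/K_S^{?}$... more precisely $|\RR\nu(x)|\lesssim 1$ and the horizontal part is $\lesssim 1/K_S$. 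So on $\partial S$, $|\RR\nu - f|^2 \lesssim |\RR^H\nu|^2 + |\RR_{n+1}\nu - c_S x_{n+1}|^2 \lesssim 1/K_S^2 + (\text{error in vertical component})$.

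The main obstacle I expect is the careful estimate of $|\RR_{n+1}\nu(x) - c_S x_{n+1}|$ for $x\in\partial S$: one must show that the vertical Riesz transform of the (periodic, essentially horizontal) measure $\nu$, evaluated at height $\pm K_S\ell(Q_0)$, is well-approximated by the explicit linear function, with error $\lesssim \lambda^{1/2}$. This should follow by writing $\RR_{n+1}\nu(x) = \int \frac{x_{n+1} - y_{n+1}}{|x-y|^{n+1}}\,d\nu(y)$, noting $|y_{n+1}|\leq \kappa_0\ell(Q_0)\ll K_S\ell(Q_0)$ so $x_{n+1} - y_{n+1}\approx x_{n+1}$ and $|x-y|^2\approx |y_H - x_H|^2 + (K_S\ell(Q_0))^2$ up to relative errors $O(1/K_S)$, which matches the definition of $c_S$ up to $O(1/K_S)$; the $\lambda^{1/2}$ gain (rather than just $O(1/K_S)$) presumably enters through the flatness encoded in $\eqref{eq:DesCTPNu}$ or through the estimate $\alpha^H_{\wt\mu}(3AQ_0)\leq\wt\delta$ transferred to $\nu$, controlling how far $\nu$ is from a multiple of $\HH^n|_H$ and hence how close $\RR\nu$ at large height is to the Riesz transform of that flat measure. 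Assembling: $u\lesssim \lambda^{1/2} + 1/K_S^2$ on $\partial S\cup\supp\nu$, hence by the maximum principle on all of $S$, which is \eqref{eqd*1}. I would also double-check that $\RR^*((\RR\nu)\nu)$ is genuinely the relevant harmonic function and that no extra term from its behavior on $\partial S$ is needed — it is already incorporated into $u$, so only its harmonicity is used, not a boundary bound.
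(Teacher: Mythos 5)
Your skeleton is essentially the paper's: establish \eqref{eqd*2} from the growth of order $n$ of $\nu$, observe that $F=|\RR\nu-f|^2+4\RR^*((\RR\nu)\nu)$ is continuous, $\M$-periodic (note that $f$ depends only on $x_{n+1}$ and the translations in $\M$ are horizontal, so $f$ is genuinely $\M$-periodic and your hedge there is unnecessary) and subharmonic off $\supp\nu$, bound $F$ on $\supp\nu$ via the almost everywhere inequality of Lemma \ref{Le:DesigualdadAERiNu}, bound $\RR^H\nu$ on $\partial S$ via Lemma \ref{lem83}(b), and compare $\RR_{n+1}\nu$ with $c_Sx_{n+1}$ on $\partial S$ by a kernel estimate. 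But there is a genuine gap at the boundary: you explicitly assert that no boundary bound for $4\RR^*((\RR\nu)\nu)$ is needed (``only its harmonicity is used''). That is wrong: this term is part of $F$, can a priori be large and positive on $\partial S$, and the maximum principle gives nothing until it is controlled there. In the paper this is precisely where the $\lambda^{1/2}$ comes from: one applies the periodic estimate \eqref{eqb11} of Lemma \ref{lem83} with $\RR^*$ in place of $\RR$ (allowed by antisymmetry) to the $\M$-periodic function $(\RR\nu)\,b$, obtaining $|\RR^*((\RR\nu)\nu)(x)|\lesssim \ell(Q_0)^{-n}\int_{Q_0}|\RR\nu|\,d\nu\lesssim\lambda^{1/2}$ for $x\in\partial S$, by Cauchy--Schwarz and \eqref{eqa331}. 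Correspondingly, your proposed source of the $\lambda^{1/2}$ --- a flatness ($\alpha$-coefficient) improvement in the estimate of $\RR_{n+1}\nu-c_Sx_{n+1}$ on $\partial S$ --- is neither how the proof goes nor needed: comparing $K_{n+1}(x-y)$ with $K_{n+1}(x_0-y_H)$ for $x_0=(0,K_S\ell(Q_0))$ and using the definition of $c_S$ already gives an error $\lesssim 1/K_S$ with no flatness input at all.

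A secondary flaw: on $\supp\nu$ you control the cross term $2|\RR\nu||f|$ by claiming $|\RR\nu|\lesssim1$ on $Q_0$. No such pointwise bound is available with controlled constants (the density of $\eta$, hence the operator norm in Lemma \ref{lem28}, is not uniformly bounded, and Lemma \ref{lem83}(a) only controls the far part of the transform). Use instead $|\RR\nu-f|^2\le 2|\RR\nu|^2+2|f|^2$, so that $F\le 2\bigl(|\RR\nu|^2+2\RR^*((\RR\nu)\nu)\bigr)+2|f|^2\le 12\lambda+2\bigl(c_S\ell(Q_0)\bigr)^2\lesssim\lambda+K_S^{-2}$ on $\supp\nu$; this avoids the unjustified bound and also recovers the stated $K_S^{-2}$ rather than the weaker $K_S^{-1}$ your version yields.
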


\begin{proof}
The inequality \rf{eqd*2} is very easy. Indeed, we just have to use that $\nu(B(x,r))\lesssim r^n$
for all $x\in\R^{n+1}$ and $r>0$, and use standard estimates which we leave for the reader.

To prove \rf{eqd*1}, we denote
\begin{equation*}
F(x) = |\RR\nu(x)-f(x)|^2 + 4\RR^*((\RR\nu)\nu)(x).
\end{equation*}
It is clear that $F$ is subharmonic in $\R^{n+1}\setminus\supp(\nu)$ and continuous in the whole space $\R^{n+1}$, by Lemma \ref{lemtyp}. So if we show that the estimate in \rf{eqd*1} holds
for all $x\in\supp\nu \cup \partial S$, then this will be also satisfied in the whole $S$. Indeed, since $F$ is $\M$-periodic and continuous in $S$, the maximum of $F$ in $S$ is attained, and since $F$ is subharmonic in $\interior{S}\setminus \supp\nu$, this must be attained in 
$\supp\nu \cup \partial S$.

\vv
First we check that the inequality in \rf{eqd*1} holds for all $x\in\supp\nu$. To this end, recall
that by Lemma \ref{Le:DesigualdadAERiNu}
\begin{equation*}
|\RR\nu(x)|^2 + 2\RR^*((\RR\nu)\nu)(x) \leq 6\lambda\quad\mbox{$\nu$-almost everywhere in $\supp(\nu)$,}
\end{equation*}
and this inequality extends to the whole $\supp(\nu)$ by continuity. Therefore we have, for all $x\in\supp(\nu)$,
\begin{align*}
F(x) &= |\RR\nu(x)-f(x)|^2 + 4\RR^*((\RR\nu)\nu)(x) \leq 2|\RR\nu(x)|^2 + 2|f(x)|^2 + 4\RR^*((\RR\nu)\nu)(x) \\
	&\leq 12\lambda + 2|f(x)|^2 \leq 12\lambda + \bigl(c_S\ell(Q_0)\bigr)^2\lesssim \lambda + \frac1{K_S^2},
\end{align*}
where we took into account that $|x_{n+1}|\leq\frac12\ell(Q_0)$ for $x\in\supp\nu$ and we used \rf{eqd*2}.
\vv

Our next objective consists in getting an upper bound for $F$ in $\partial S$. 
By applying Lemma \ref{lem83} to the function $\RR\nu$ (which is $\M$-periodic), with $\RR^*$ instead of $\RR$
(since $\RR$ is antisymmetric we are allowed to do this) we deduce that, for  all $x\in\partial S$,
$$\bigl|\RR^*((\RR\nu)\nu)(x)\bigr|\lesssim \frac1{\ell(Q_0)^n}\int_{Q_0}|\RR\nu|\,d\nu
\lesssim \frac1{\ell(Q_0)^n}\left(\int_{Q_0}|\RR\nu|^2\,d\nu\right)^{1/2}\,\nu(Q_0)^{1/2}
\lesssim\lambda^{1/2}.
$$

It suffices to show now that $|\RR\nu(x)-f(x)|\lesssim\frac1{K_S}$ for all $x\in\partial S$.
 We write $\RR\nu(x) = (\RR^H\nu(x),\RR_{n+1}\nu(x))$.
From  \rf{eqb12} we infer that
$$\bigl|\RR^H \nu(x)\bigr|\lesssim  \frac{1}{K_S\,\ell(Q_0)^{n}}\,\nu(Q_0)\lesssim \frac1{K_S}.$$

Hence it just remains to prove that
\begin{equation}\label{eqak423}
\bigl|\RR_{n+1} \nu(x)\,e_{n+1}-f(x)\bigr| 
\lesssim  \frac{1}{K_S} \quad \mbox{ for all $x\in\partial S$.}
\end{equation}
To prove this estimate we can assume without loss of generality that $x_{n+1} ={K_S}\ell(Q_0)$
and that $x_H\in Q_0\cap H$, by the $\M$-periodicity of $\RR_{n+1} \nu$.
Since
$f(x) = c_S\,{K_S}\,\ell(Q_0)\,e_{n+1}$ for this point $x$, \rf{eqak423} is equivalent to
\begin{equation}\label{eqak424}
\bigl|\RR_{n+1} \nu(x)-c_S\,{K_S}\,\ell(Q_0)\bigr|
\lesssim  \frac{1}{K_S}.
\end{equation}
Note first that
$$\RR_{n+1} \nu(x) = \lim_{r\to0}\int\phi_r(x-y)\,\frac{x_{n+1} - y_{n+1}}{|x-y|^{n+1}}\,d\nu(y)
=\int\frac{x_{n+1} - y_{n+1}}{|x-y|^{n+1}}\,d\nu(y)
,$$
by an easy application of the dominated convergence theorem (using that 
$|x_{n+1}-y_{n+1}|\le \dist(x,H) + \ell(Q_0)$).  Consider the point $x_0=(0,{K_S}\ell(Q_0))$. Since for all
$y\in\supp\nu$,
$$|x-x_0|+|y-y_H|\leq\ell(Q_0)\leq \frac12 |x-y|,$$
and since the $(n+1)$-th component of $K(\cdot)$, which we denote by $K_{n+1}(\cdot)$, is a standard Calder\'on-Zygmund kernel,
$$\bigl|K_{n+1}(x-y)- K_{n+1}(x_0-y_H)\bigr| \lesssim \frac{|x-x_0|+|y-y_H|}{|x-y|^{n+1}}
\lesssim \frac{\ell(Q_0)}{|x-y|^{n+1}}.$$
Therefore, integrating with respect to $\nu$, we derive
$$\bigl|\RR_{n+1} \nu(x)- c_S\,{K_S}\,\ell(Q_0)\bigr| =\left|\int \bigl(K_{n+1}(x-y)- K_{n+1}(x_0-y_H)
\bigr)\,d\nu(y)\right|\lesssim \int\!\!\frac{\ell(Q_0)}{|x-y|^{n+1}}\,d\nu(y).$$
Since $\dist(x,\supp\nu)\gtrsim {K_S}\,\ell(Q_0)$ and $\nu$ is a measure with growth of order $n$, by standard estimates it follows that 
$$\int\frac{\ell(Q_0)}{|x-y|^{n+1}}\,d\nu(y)\lesssim \frac1{K_S},$$
which proves \rf{eqak424} and finishes the proof of the lemma.
\end{proof}

\vv

The next result is an immediate consequence of Lemma \ref{Le:PrMax}.

\begin{lemma}
\label{Le:PrMaxxx}
Assume that, for some $0<\lambda\leq1$, the inequality
\begin{equation*}
\int_{Q_0} |\RR\eta|^2\,d\eta \leq \lambda \eta(Q_0)
\end{equation*}
is satisfied, and let $b$ and $\nu$ be as in Lemma \ref{Le:DesigualdadAERiNu}. 
Then, we have
\begin{equation}\label{eqd*11}
|\RR\nu(x)|^2 + 4\RR^*((\RR\nu)\nu)(x) \lesssim \lambda^{1/2} \quad \mbox{ for all
$x\in \R^{n+1}$.}
\end{equation}
\end{lemma}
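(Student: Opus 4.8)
The plan is to deduce Lemma \ref{Le:PrMaxxx} directly from the maximum principle in Lemma \ref{Le:PrMax} by taking $K_S\to\infty$. Recall that Lemma \ref{Le:PrMax} asserts, for any fixed (big) constant $K_S>10$ and the associated strip $S=\{|x_{n+1}|\le K_S\ell(Q_0)\}$ and the associated function $f(x)=c_S\,x_{n+1}e_{n+1}$, the pointwise bound
\begin{equation*}
|\RR\nu(x)-f(x)|^2 + 4\RR^*((\RR\nu)\nu)(x) \lesssim \lambda^{1/2} + \frac1{K_S^2}\quad\mbox{ for all }x\in S,
\end{equation*}
together with $c_S\lesssim \frac1{K_S\ell(Q_0)}$.

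The key observation is that the left-hand side of \rf{eqd*11} does not depend on $K_S$ at all, whereas the left-hand side of \rf{eqd*1} differs from it only by the presence of the term $f(x)$, which is small on bounded sets once $K_S$ is large. So first I would fix an arbitrary point $x\in\R^{n+1}$ and an arbitrary $R>0$ with $|x_{n+1}|\le R$; then, for all $K_S$ large enough that $K_S\ell(Q_0)\ge R$, the point $x$ lies in the strip $S$ and Lemma \ref{Le:PrMax} applies. Writing $|\RR\nu(x)|^2\le 2|\RR\nu(x)-f(x)|^2 + 2|f(x)|^2$ and using $|f(x)| = c_S|x_{n+1}|\le c_S R\lesssim \frac{R}{K_S\ell(Q_0)}$ by \rf{eqd*2}, we obtain
\begin{equation*}
|\RR\nu(x)|^2 + 4\RR^*((\RR\nu)\nu)(x) \le 2\bigl(|\RR\nu(x)-f(x)|^2 + 4\RR^*((\RR\nu)\nu)(x)\bigr) + 2|f(x)|^2 \lesssim \lambda^{1/2} + \frac1{K_S^2} + \frac{R^2}{K_S^2\ell(Q_0)^2}.
\end{equation*}
(Here one uses that adding $|\RR\nu(x)-f(x)|^2\ge 0$ on the right is harmless.) Letting $K_S\to\infty$ with $x$ and $R$ held fixed, the last two terms vanish and we are left with $|\RR\nu(x)|^2 + 4\RR^*((\RR\nu)\nu)(x)\lesssim \lambda^{1/2}$, with an implicit constant independent of $x$. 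Since $x$ was arbitrary, this is exactly \rf{eqd*11}.

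There is essentially no obstacle here; the only point requiring a little care is the bookkeeping of which quantities depend on $K_S$. The function $f$, the constant $c_S$, and the strip $S$ all depend on $K_S$, but the quantity we want to bound does not, and the implicit constants in Lemma \ref{Le:PrMax} are uniform in $K_S$ (this is why that lemma was stated with $\lesssim$ and an explicit $K_S^{-2}$ rather than absorbing everything). One should also note that the hypothesis $\int_{Q_0}|\RR\eta|^2\,d\eta\le\lambda\,\eta(Q_0)$ and the resulting $b,\nu$ are the same throughout — they do not change with $K_S$ — so the passage to the limit is legitimate. Thus the lemma follows immediately.
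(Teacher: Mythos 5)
Your proposal takes exactly the paper's route: apply Lemma \ref{Le:PrMax} and let $K_S\to\infty$, using \rf{eqd*2} to see that $f(x)=c_S x_{n+1}e_{n+1}\to0$ for each fixed $x$, with the implicit constants in \rf{eqd*1} uniform in $K_S$. One displayed step, however, is not quite right as written: the inequality
$$|\RR\nu(x)|^2 + 4\RR^*((\RR\nu)\nu)(x) \le 2\bigl(|\RR\nu(x)-f(x)|^2 + 4\RR^*((\RR\nu)\nu)(x)\bigr) + 2|f(x)|^2$$
doubles the term $4\RR^*((\RR\nu)\nu)(x)$, which is only legitimate when that term is nonnegative, and there is no reason for $\RR^*((\RR\nu)\nu)(x)\ge0$ at a given point. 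The repair is immediate and stays within your argument: for fixed $x$ the vector $\RR\nu(x)$ is finite (by the continuity of $\RR\nu$ from Lemma \ref{lemtyp}), so
$$|\RR\nu(x)|^2-|\RR\nu(x)-f(x)|^2 = 2\,\RR\nu(x)\cdot f(x)-|f(x)|^2 \le 2\,|\RR\nu(x)|\,|f(x)| \longrightarrow 0 \quad\mbox{as } K_S\to\infty,$$
since $|f(x)|\le c_S|x_{n+1}|\lesssim |x_{n+1}|/(K_S\ell(Q_0))$. Hence, keeping $x$ fixed and letting $K_S\to\infty$ in \rf{eqd*1}, the left-hand side converges to $|\RR\nu(x)|^2+4\RR^*((\RR\nu)\nu)(x)$ while the right-hand side stays $\lesssim\lambda^{1/2}$, which gives \rf{eqd*11}; this pointwise passage to the limit is precisely the paper's proof.
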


\begin{proof}
This follows by letting ${K_S}\to\infty$ in the inequality \rf{eqd*1}, taking into account that
$c_S\to0$, by \rf{eqd*2}.
\end{proof}
\vv

\subsection{The contradiction}

The estimate in the next lemma is the one that will allow us to contradict the assumption \rf{eqcvo992} and to complete the proof of the Key Lemma \ref{keylemma}.

\begin{lemma} \label{lemcontra22}
There is some constant $c_3>0$ depending only\footnote{In fact, keeping track of the dependencies, one can check that $c_3$ depends only on $n$ and $C_0$, and not on $C_1$. However, this is not necessary for the proof of the Key Lemma.} on $n,C_0,C_1$ such that
\begin{equation}\label{eqcontra220}
\int_{Q_0} |\RR\eta|^2d\eta \geq c_3 \,\eta(Q_0)
\end{equation}
\end{lemma}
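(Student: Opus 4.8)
The plan is to argue by contradiction: suppose $\int_{Q_0}|\RR\eta|^2\,d\eta \leq \lambda\,\eta(Q_0)$ for $\lambda$ as small as we please, and derive a lower bound on the ``vertical flux'' of $\RR\nu$ that is incompatible with smallness. Here $b$ and $\nu=b\eta$ are the minimizers furnished by Lemma~\ref{Le:DesigualdadAERiNu}, so that the pointwise inequality \eqref{eqd*11} from Lemma~\ref{Le:PrMaxxx} holds everywhere: $|\RR\nu(x)|^2 + 4\,\RR^*((\RR\nu)\nu)(x) \lesssim \lambda^{1/2}$. First I would integrate this inequality against $d\nu$ over a large box, say over $\wh AQ_0$ for a suitable intermediate scale $1\ll \wh A\ll A$, or directly exploit the periodicity: integrating the term $\RR^*((\RR\nu)\nu)$ over one period $Q_0$ against $d\nu$ and using the $\M$-periodicity together with antisymmetry of $\RR$ should reproduce $\int_{Q_0}|\RR\nu|^2\,d\nu$ up to controllable errors, which would only give $\int_{Q_0}|\RR\nu|^2\,d\nu\lesssim \lambda^{1/2}\nu(Q_0)$ — not yet a contradiction. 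The real input must come from a \emph{geometric} lower bound: the measure $\nu$ is very flat (it is essentially $c_H\HH^n|_H$ up to the small error $\wt\delta$, by Lemma~\ref{lemalfa} transferred to $\nu$, since $b\approx 1$ on a big portion), and for such a measure $\RR^*((\RR\nu)\nu)$ cannot be everywhere as negative as \eqref{eqd*11} would require unless $\RR\nu$ is genuinely large somewhere.

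The key computation I would carry out is the following. Choose a test vector field, the constant vertical field $e_{n+1}$, and compute $\int_{Q_0} e_{n+1}\cdot \RR\nu \,d\nu$, or better, compare $\RR\nu$ with the ``model'' Riesz transform of $c_H\HH^n|_H$. For the horizontal hyperplane $H$ with constant density $c$, the Riesz transform $\RR(c\HH^n|_H)(x)$ is a well-understood explicit function: it vanishes on $H$ and equals $\pm\frac{c\,\omega_n}{2}\,\mathrm{sgn}(x_{n+1})\,e_{n+1}$ (up to the normalizing constant of $K$) off $H$, so it has a jump of fixed size $\approx c_H\approx 1$ across $H$. Thus the periodized analogue of the identity $\RR^*((\RR\nu)\nu) = $ (something involving the square of this jump) forces $\int |\RR\nu|^2\,d\nu$ to be $\gtrsim \nu(Q_0)$ unless the flatness is destroyed. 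Concretely: using \eqref{eqd*11} evaluated at points $x$ just above and just below $H$ (inside the strip $S$), combined with the explicit computation of $\RR^H\nu$ and $\RR_{n+1}\nu$ from Lemma~\ref{lem83}(b) — which shows $\RR^H\nu$ is small and $\RR_{n+1}\nu(x)\approx c_S K_S\ell(Q_0)$ is comparable to the total flux — one extracts that the flux $\int_{Q_0\cap H}$-type quantity of $\RR\nu$ is bounded below by a dimensional constant times $c_H\,\ell(Q_0)^n\approx \nu(Q_0)$. Plugging this into the (reverse) direction gives $\int_{Q_0}|\RR\nu|^2\,d\nu \gtrsim \nu(Q_0)$. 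Since $\nu=b\eta$ with $1\le\|b\|_\infty\le 2$ and $\int_{Q_0}b\,d\eta=\eta(Q_0)$, and since by Lemma~\ref{lem83}(a) and the localization one compares $\int_{Q_0}|\RR\nu|^2\,d\nu$ with $\int_{Q_0}|\RR\eta|^2\,d\eta$ up to errors of order $1/A^2$ and powers of $\wt\ve,\wt\delta$, this contradicts $\int_{Q_0}|\RR\eta|^2\,d\eta\le\lambda\,\eta(Q_0)$ once $\lambda$ (hence $\ve,\delta,\ve_0,\theta_0,\kappa_0$) is small enough and $A$ large. Chasing the dependencies shows the resulting constant $c_3$ depends only on $n, C_0, C_1$.

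In more detail the order of steps would be: (1) record that by Lemma~\ref{Le:PrMaxxx}, $F(x):=|\RR\nu(x)|^2+4\RR^*((\RR\nu)\nu)(x)\lesssim\lambda^{1/2}$ everywhere; (2) integrate $\RR^*((\RR\nu)\nu)$ against $\chi_{Q_0}\,d\nu$, using $\M$-periodicity and antisymmetry to rewrite $\int_{Q_0}\RR^*((\RR\nu)\nu)\,d\nu$ in terms of $\int_{Q_0}|\RR\nu|^2\,d\nu$ plus a ``boundary'' term coming from the interaction between $Q_0$ and its translates — this is where the periodization is essential; (3) show that boundary term is, up to small errors controlled by $\wt\delta$ and $\theta_0^{1/(n+1)}$ and $1/A$, equal to a fixed positive multiple of $c_H^2\,\ell(Q_0)^n$, by replacing $\nu$ with $c_H\HH^n|_H$ (valid because $\alpha^H_\nu(3AQ_0)\le\wt\delta\ll1$ and the Riesz kernel estimates of Lemma~\ref{lem83}) and performing the explicit hyperplane computation; (4) combine (1)–(3) to get $\int_{Q_0}|\RR\nu|^2\,d\nu\ge c\,\nu(Q_0)$ for a dimensional $c$; (5) transfer back to $\eta$ via $1\le\|b\|_\infty\le 2$, \eqref{eqa331}, and the comparison estimates already established, obtaining \eqref{eqcontra220}.

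The main obstacle I expect is step~(3): making rigorous the claim that the cross-terms $\int_{Q_0}\RR(\chi_{P_\M(Q_0)\setminus Q_0}\,\nu)\cdot\RR\nu\,d\nu$ — the flux through the ``faces'' of the fundamental domain — genuinely detect the hyperplane structure and are bounded below, rather than cancelling. This requires carefully exploiting that $\nu$ is flat (so that $\RR\nu$ off $H$ is close to the constant-in-each-half-space vertical field, up to $O(\wt\delta + 1/A)$ errors via Lemma~\ref{lem83}(b) and Lemma~\ref{lemalfa}), and that the horizontal components of $\RR\nu$ contribute negligibly. One must be vigilant that all the error terms generated ($\wt\delta^{\alpha}$, $\theta_0^{\beta}$, $\kappa_0^{1/2}$, $A^{-1}$, and the $L^2$-smallness $\lambda^{1/2}$) stay strictly below the fixed dimensional lower bound; this is exactly what Remark~\ref{remparameters} arranges, by first fixing $A$ large and then $\ve_0,\kappa_0,\delta,\theta_0$ small with $\theta_0\ll\kappa_0$.
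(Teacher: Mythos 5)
Your overall frame (argue by contradiction, invoke the minimizer $b,\nu$ from Lemma \ref{Le:DesigualdadAERiNu} and the everywhere--valid inequality \rf{eqd*11}, and use Lemma \ref{lem83} to control $\RR^*((\RR\nu)\nu)$ away from $H$) matches the paper's setup, but the pivotal step (3)--(4) of your plan has a genuine gap. First, you assume $\nu=b\eta$ inherits the flatness of $\eta$ ``since $b\approx1$ on a big portion'': the variational lemma gives only $0\le b\le2$ and $\int_{Q_0}b\,d\eta=\eta(Q_0)$, so no flatness of $\nu$ is available, and Lemma \ref{lemalfa} does not transfer. Second, and more seriously, the inference from a lower bound on the vertical ``flux/jump'' of $\RR\nu$ to the on--support bound $\int_{Q_0}|\RR\nu|^2\,d\nu\gtrsim\nu(Q_0)$ has no mechanism behind it: the jump of the Riesz transform of a flat measure is seen \emph{off} the support, while for the exactly flat model $\nu=c\,\HH^n|_H$ one has $\RR\nu=0$ on $H$, so all the on--support quantities you propose to lower bound (including the periodized cross--terms, which by the identity \rf{eqafi84} with $B$ replaced by $Q_0$ in fact vanish identically) can be zero even though the flux is of size $1$. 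Indeed, under the contradiction hypothesis \rf{eqa331} already forces $\int_{Q_0}|\RR\nu|^2\,d\nu\le\lambda\,\nu(Q_0)$, so the contradiction cannot be extracted from integrals of $|\RR\nu|^2$ against $\nu$; it must come from the values of $\RR\nu$ off $\supp\nu$, tested against Lebesgue measure. Also, the quantitative input you cite for the flux is not available as stated: the paper only proves the upper bound \rf{eqd*2} for $c_S$ (never a lower bound), the asymptotics $\RR_{n+1}\nu\approx c_SK_S\ell(Q_0)$ is established only on $\partial S$, and Lemma \ref{lem83}(b) requires $\dist(x,H)\ge\ell(Q_0)$, so it does not apply ``just above and just below $H$''.

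The missing idea, which is how the paper closes the argument, is a reproducing--field trick against $\HH^{n+1}$: take $\varphi$ smooth with $\chi_{Q_0}\le\varphi\le\chi_{2Q_0}$ and set $\psi=C_5\nabla\varphi$, so that $\RR^*(\psi\,\HH^{n+1})=\varphi$. Then
$$\nu(Q_0)\le\int\varphi\,d\nu=\int\RR\nu\cdot\psi\,d\HH^{n+1}\le\Bigl(\int|\RR\nu|^2|\psi|\,d\HH^{n+1}\Bigr)^{1/2}\Bigl(\int|\psi|\,d\HH^{n+1}\Bigr)^{1/2},$$
and the weighted integral $\int|\RR\nu|^2|\psi|\,d\HH^{n+1}$ is bounded by $C\lambda^{1/2}\nu(Q_0)$ using \rf{eqd*11} pointwise (with respect to Lebesgue measure, which is exactly why the maximum principle is needed everywhere) together with Lemma \ref{lem83}(a) for the far part of $\RR^*((\RR\nu)\nu)$ and a duality estimate for the near part; this yields $\nu(Q_0)\lesssim\lambda^{1/4}\nu(Q_0)$, i.e.\ $\lambda\gtrsim1$, with no flatness of $\nu$ used at all. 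If you prefer a ``flux''-flavoured proof, your instinct can be salvaged, but differently from what you wrote: one can show $\RR_{n+1}\nu(x)\gtrsim1$ at points with $\dist(x,H)\approx10\,\ell(Q_0)$ purely from the kernel's positivity above the slab containing $\supp\nu$ and the mass normalization $\nu(Q_0)=\eta(Q_0)\approx\ell(Q_0)^n$ (no flatness needed), and then \rf{eqd*11} together with Lemma \ref{lem83}(b) applied to $\RR^*((\RR\nu)\nu)$ at such points gives $\lambda\gtrsim1$ directly; but that lower bound is an extra estimate you would have to prove, and it is not contained in the lemmas you cite.
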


\begin{proof}
Suppose that, for some $0<\lambda\leq1$, we have
\begin{equation}\label{eqcontra22}
\int_{Q_0} |\RR\eta|^2d\eta \leq \lambda \,\eta(Q_0)
\end{equation}
 We intend to show that then 
$\lambda\geq c_3$ for some constant $c_3$ depending only on $n$ and $C_0$. 

Let $b$ and $\nu$ be as in Lemma \ref{Le:DesigualdadAERiNu}.
By Lemma \ref{Le:PrMaxxx}, we have
\begin{equation}
\label{eq:129}
|\RR\nu(x)|^2 + 4\RR^*((\RR\nu)\nu)(x)\lesssim \lambda^{1/2}\quad \mbox{ everywhere in $\R^{n+1}$.}
\end{equation}
 Now pick a smooth function $\varphi$ with $\chi_{Q_0}\leq \varphi \leq \chi_{2Q_0}$ and $\|\nabla\varphi\|_{\infty}\lesssim \frac{1}{\ell(Q_0)}$. Set $\psi = C_5 \nabla \varphi$, so that $\RR^*(\psi \HH^{n+1})=\varphi$. Then, we have
\begin{align*}
\eta(Q_0)=\nu(Q_0) &\leq \int \varphi\, d\nu = \int \RR^*(\psi \HH^{n+1})\,d\nu \\
		    & = \int \RR\nu \,\psi\, d\HH^{n+1} \leq \left( \int |\RR\nu|^2 |\psi|\, d\HH^{n+1} \right)^{1/2} \left(\int |\psi|\,d\HH^{n+1}\right)^{1/2}.
\end{align*}
\par\medskip
First of all, observe that 
\begin{equation*}
\|\psi\|_\infty \lesssim\frac1{\ell(Q_0)}\quad \mbox{ and }\quad\int |\psi|\,d\HH^{n+1} \lesssim \ell(Q_0)^n
\end{equation*}
and so
\begin{equation}
\label{eq:132}
\eta(Q_0) \lesssim \left( \int |\RR\nu|^2 |\psi| d\HH^{n+1} \right)^{1/2} \ell(Q_0)^{n/2}.
\end{equation}
Furthermore,  by \rf{eq:129} we have
\begin{align}\label{eqkkll}
\int |\RR\nu|^2 &|\psi| \,d\HH^{n+1} \le C\,\lambda^{1/2}\int |\psi| d\HH^{n+1} + 4\left| \int \RR^*((\RR\nu)\nu)|\psi|\,d\HH^{n+1}\right|\\
& \lesssim \lambda^{1/2}\ell(Q_0)^n + \left| \int \RR^*(\chi_{(5Q_0)^c}(\RR\nu)\nu)|\psi|\,d\HH^{n+1}\right|
+\left| \int \RR^*\bigl(\chi_{5Q_0}(\RR\nu)\nu\bigr)|\psi|\,d\HH^{n+1}\right|.\nonumber
\end{align}
To estimate the first integral on the right hand side we apply Lemma \ref{lem83} (a) with
$\wt A=5$ and $f=\RR\nu\, b$ (where $b$ is such that $b\eta=\nu$), and then we deduce that
for all $x\in 2Q_0$,
\begin{align*}
\bigl|\RR^*\bigl(\chi_{(5Q_0)^c}(\RR\nu)\nu\bigr)(x) &\lesssim \frac1{\ell(Q_0)^n}\,\int_{Q_0} |\RR\nu\, b|\,d\eta \\
&= \frac1{\ell(Q_0)^n}\,\int_{Q_0} |\RR\nu|\,d\nu\lesssim \left(\avint_{Q_0} |\RR\nu|^2\,d\nu
\right)^{1/2}\lesssim \lambda^{1/2}.
\end{align*}
Thus, recalling that $\psi$ is supported in $2Q_0$,
$$\left| \int \RR^*(\chi_{(5Q_0)^c}(\RR\nu)\nu)|\psi|\,d\HH^{n+1}\right|
\lesssim \lambda^{1/2}\,\|\psi\|_1\lesssim \lambda^{1/2}\,\nu(Q_0).$$
Concerning the last integral on the right hand side of \rf{eqkkll}, we have
\begin{align*}
\left| \int \RR^*\bigl(\chi_{5Q_0}(\RR\nu)\nu\bigr)|\psi|\,d\HH^{n+1}\right|
& = \left| \int_{5Q_0}\RR\nu\cdot\RR(|\psi|\,d\HH^{n+1})\,d\nu\right| \\
&\leq \left( \int_{5Q_0}|\RR\nu|^2\,d\nu\right)^{1/2} 
\left( \int_{5Q_0}|\RR(|\psi|\,d\HH^{n+1})|^2\,d\nu\right)^{1/2}. 
\end{align*}
The first integral on the right hand side does not exceed $c\lambda\,\nu(Q_0)$
(by \rf{eqa331} and the periodicity of $\RR\nu$). For the second one,
using that $|\psi|\lesssim \frac1{\ell(Q_0)}\chi_{2Q_0}$, it follows easily that
$\|\RR(|\psi|\,\HH^{n+1})\|_\infty\lesssim 1$. So we get
$$\left| \int \RR^*\bigl(\chi_{5Q_0}(\RR\nu)\nu\bigr)|\psi|\,d\HH^{n+1}\right|\lesssim
\lambda^{1/2}\nu(Q_0).$$
So from \rf{eqkkll} and the last estimates we deduce that
$$\int |\RR\nu|^2 |\psi| d\HH^{n+1}\lesssim \lambda^{1/2}\nu(Q_0).$$
Thus, by \rf{eq:132},
$$\nu(Q_0)\lesssim \lambda^{1/4}\nu(Q_0).$$
That is, $\lambda\gtrsim 1.$
\end{proof}

\vvv
Now recall that we are trying to prove the Key Lemma \ref{keylemma} by contradiction. We assumed 
 that
\begin{equation}\label{eqassu990}
\mu\Biggl(\bigcup_{Q\in \LD} Q\Biggr)> (1-\ve_0)\,\mu(Q_0).
\end{equation}
Using this hypothesis, we showed in Lemma \ref{lem8.5} and Remark \ref{remparameters} that
$\int_{Q_0}|\RR\eta|^2\,d\eta\ll \eta(Q_0)$ if $A$ is big enough and are  $\delta,\ve,\kappa_0,\theta_0$ small enough and chosen suitably, under the assumption that $\ve_0$ is small enough too.
This contradicts the conclusion of Lemma \ref{lemcontra22}. Hence, \rf{eqassu990} cannot hold and thus we are done.

\vvv

\section{Construction of the AD-regular measure $\zeta$ and the uniformly rectifiable set $\Gamma$
in the Main Lemma} \label{sec10}

Denote
\begin{equation}\label{eqff09}
F = Q_0\cap\supp\mu\setminus \bigcup_{Q\in \LD} Q.
\end{equation}
Recall that the Key Lemma \ref{keylemma} tells us that
$$\mu(F)\geq \ve_0\,\mu(Q_0).$$
It is easy to check that $0<\Theta_*^n(x,\mu)\leq \Theta^{n,*}(x,\mu)< \infty$ for $\mu$-a.e. $x\in F$. Since $\RR_\mu$ is bounded on $L^2(\mu|_{F})$ it follows that $\mu|_F$ is $n$-rectifiable, by the
Nazarov-Tolsa-Volberg theorem \cite{NToV-pubmat}. However, to get a big piece of a set contained in a uniformly $n$-rectifiable
set $\Gamma$ like the one required in the Main Lemma and in Theorem \ref{teo0} we have to argue more carefully.
 To this end, first we will construct an auxiliary AD-regular measure $\zeta$ such that $\zeta(F)\gtrsim\mu(F)$,
and then we will apply the Nazarov-Tolsa-Volberg theorem \cite{NToV} for AD-regular measures.
\vv

Next we are going to construct the aforementioned auxiliary measure $\zeta$. The arguments 
for this construction can be considered as a quantitative version of the ones from \cite{NToV-pubmat}, which
rely on a covering theorem of Pajot (see \cite{Pajot}).

Recall the notation $\sigma = \mu|_{Q_0}$. Consider the maximal dyadic operator
$$\M_{\DD_\sigma} f(x) = \sup_{Q\in\DD_{\sigma}:x\in Q} \frac1{\sigma(Q)} \int_Q |f|\,d\sigma,$$
where $\DD_{\sigma}$ is the David-Mattila lattice associated $\sigma$.
Let $F$ be as in \rf{eqff09} and set
$$\wt F= \Bigl\{x\in F:\M_{\DD_\sigma} (\chi_{F^c})(x) \leq 1-\frac{\ve_0}2\Bigr\}.$$
We wish to show that 
\begin{equation}\label{eqsss39}
\sigma(\wt F)\geq \frac12\,\sigma(F).
\end{equation}
 To this end, note that
$$F\setminus \wt F= \Bigl\{x\in F:\M_{\DD_\sigma} (\chi_{F^c})(x) > 1-\frac{\ve_0}2\Bigr\}$$
and consider a collection of maximal (and thus disjoint) cells $\{Q_i\}_{i\in J}\subset\DD_\sigma$ such that $\sigma(Q_i\setminus F)
>(1-\frac{\ve_0}2)\sigma(Q_i)$. Observe that
$$F\setminus\wt F = \bigcup_{i\in J} Q_i\cap F.$$
 Clearly, the cells $Q_i$ satisfy $\sigma(Q_i\cap F)\leq\frac{\ve_0}2\sigma(Q_i)$ and so we have
$$\sigma(F\setminus \wt F) \leq \sum_{i\in J} \sigma(Q_i\cap F) \leq \sum_{i\in J}\frac{\ve_0}2 \,\sigma(Q_i)\leq \frac{\ve_0}2\,\sigma(Q_0)
\leq \frac12\,\sigma(F),$$
which proves \rf{eqsss39}.

For each $i\in J$ we consider the family $\AZ_i$ of maximal doubling cells from $\DD_\sigma^{db}$ which cover $Q_i$, and we define
$$\AZ=\bigcup_{i\in J}\AZ_i.$$
Finally, we denote by $\AZ_0$ the subfamily of the cells $P\in\AZ$ such that $\sigma(P\cap F)>0$.
Now, for each $Q\in \AZ_0$ we consider an $n$-dimensional sphere $S(Q)$ concentric with $B(Q)$ and
with radius $\frac14 r(B(Q))$. We define
$$\zeta = \sigma|_{\wt F} + \sum_{Q\in \AZ_0} \HH^n|_{S(Q)}.$$

\vv

\begin{rem}\label{remqi}
If $P\in\AZ_0$ and $P\subset Q_i$ for some $i\in J$, then 
$$\ell(P)\approx_{\theta_0,C_0}\ell(Q_i).$$
Indeed, since $P$ is a maximal doubling cell contained in $Q_i$, by Lemma
\ref{lemcad23} and the fact that $3.5B_P\subset 100 B(P)$,
$$\Theta_\sigma(3.5B_P)\lesssim
\Theta_\sigma(100B(P))\lesssim A_0^{-9n(J(P)-J(Q_i))}\,\Theta_\sigma(100B(Q_i))
\lesssim_{C_0} A_0^{-9n(J(P)-J(Q_i))}.$$
Since $\sigma(P\cap F)>0$, it turns out that $P$ is not contained
in any cell from $\LD$, and so $\Theta_\sigma(3.5B_P)>\theta_0$.
So we have 
$$\theta_0\lesssim_{C_0} A_0^{-9n(J(P)-J(Q_i))},$$
which implies that $|J(P)-J(Q_i)|\lesssim_{\theta_0,C_0} 1$.\vv

A very similar argument shows that if $P\in\DD_\sigma$ satisfies $P\cap F\neq\varnothing$ (and so it is not contained in any cell from $\LD$), then there exists some $Q\in\DD_\sigma^{db}$ which
contains $Q$ and such that
$$\ell(P)\approx_{\theta_0,C_0}\ell(Q).$$
The details are left for the reader.

From the two statements above, if follows that {\em for any cell $P\in\DD_\sigma$ which is not strictly contained in any cell from $\AZ_0$ there exists some cell $\wh P\in\DD_\sigma^{db}$ which is not contained in any cell $Q_i$, $i\in J$, so that $P\subset\wh P$ and $\ell(P)\approx_{\theta_0,C_0}\ell(Q).$}
\end{rem}

\vv

\begin{lemma}\label{lemAD}
The measure $\zeta$ is AD regular, with the AD-regularity constant depending on $C_0$, $\theta_0$,
and $\ve_0$.
\end{lemma}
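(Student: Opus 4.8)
The plan is to verify the two-sided bound $\zeta(B(x,r))\approx r^n$ separately for the upper and lower inequalities, distinguishing the two natural cases $0<r\le\ell(Q_0)$ and $r\ge\ell(Q_0)$ (the latter being essentially trivial since $\zeta$ is supported in a bounded region and has total mass $\approx\ell(Q_0)^n$, so we really only need $r\lesssim\ell(Q_0)$). The measure $\zeta$ has two pieces: the ``flat'' part $\sigma|_{\wt F}$, which inherits good density bounds from \eqref{eqAD**} since $\wt F\subset F$, and the union of spheres $\sum_{Q\in\AZ_0}\HH^n|_{S(Q)}$, each of which is $n$-AD-regular at scales up to $r(B(Q))\approx\ell(Q)$. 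The heart of the matter is controlling overlaps between these pieces and between spheres of different sizes.

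First I would record the elementary facts: each $S(Q)$ satisfies $\HH^n(S(Q)\cap B(y,t))\lesssim \min(t,r(B(Q)))^n$ for all $y,t$, and $\HH^n(S(Q))\approx r(B(Q))^n\approx\ell(Q)^n\approx\sigma(Q)\approx\mu(Q)$ (using that $Q\in\DD_\sigma^{db}$ and $\Theta_\sigma(3.5B_Q)>\theta_0$, exactly as in Remark \ref{remqi}). For the \emph{upper bound} $\zeta(B(x,r))\lesssim r^n$: for the $\sigma|_{\wt F}$ part use \eqref{eqAD**} directly; for the spheres, fix $B(x,r)$ and split $\AZ_0$ into cells with $\ell(Q)\le r$ and $\ell(Q)>r$. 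For the small cells, since the balls $\frac14 B(Q)$ (hence the spheres $S(Q)$, which have radius $\frac14 r(B(Q))$) have bounded overlap — indeed the $\frac12 B(Q)$ are pairwise disjoint by Remark \ref{remfac1} — only boundedly many small $S(Q)$ can meet $B(x,r)$ per dyadic scale, and a cell $Q\in\AZ_0$ with $S(Q)\cap B(x,r)\ne\varnothing$ and $\ell(Q)=2^{-j}r$ satisfies $Q\subset C B(x,r)$, so $\sum \HH^n(S(Q))\lesssim\sum_Q\mu(Q)\lesssim\mu(CB(x,r))\lesssim r^n$ by the growth condition (b). For the large cells $\ell(Q)>r$: each contributes $\HH^n(S(Q)\cap B(x,r))\lesssim r^n$, and at most boundedly many can meet $B(x,r)$, again because the balls $\frac12 B(Q)$ are disjoint while $r(B(Q))\gtrsim r$, so $B(x,r)$ can intersect $S(Q)=\partial(\frac14 B(Q))$ for only $O(1)$ such $Q$.

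For the \emph{lower bound} $\zeta(B(x,r))\gtrsim r^n$, fix $x\in\supp\zeta$ and $0<r\le\ell(Q_0)$. If $x\in S(Q)$ for some $Q\in\AZ_0$ with $\ell(Q)\ge r$, then $\zeta(B(x,r))\ge\HH^n(S(Q)\cap B(x,r))\gtrsim r^n$ by $n$-AD-regularity of the sphere. Otherwise — either $x\in\wt F$, or $x\in S(Q)$ with $\ell(Q)<r$ — I would find a cell $P\in\DD_\sigma$ with $x\in P$ (or $x$ near $P$), $\ell(P)\approx r$, and $P$ not strictly contained in any cell of $\AZ_0$; by the last displayed statement of Remark \ref{remqi} there is $\wh P\in\DD_\sigma^{db}$ with $P\subset\wh P$, $\ell(\wh P)\approx_{\theta_0,C_0}\ell(P)\approx r$, and $\wh P$ not inside any $Q_i$. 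Then $\wh P$ is covered, up to $\sigma$-measure zero outside $\bigcup\AZ_0$, by cells of $\AZ_0$ of comparable size, each carrying a sphere of $\HH^n$-mass $\approx\mu(Q)$; summing, $\zeta$ restricted to a bounded dilate of $B(x,r)$ has mass $\gtrsim\mu(\wh P)\approx\Theta_\sigma(3.5B_{\wh P})\ell(\wh P)^n\gtrsim\theta_0\,r^n$ since $\wh P\notin\LD$ and is doubling. One must slightly enlarge the ball by a $\theta_0,C_0$-dependent factor to make sure the relevant spheres are captured, which only affects the AD-regularity constant.

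The main obstacle I expect is the bookkeeping in the lower bound when $x$ lies on a small sphere $S(Q)$ with $\ell(Q)\ll r$ but $x$ is not near $\wt F$: one must be certain that within $B(x,r)$ there is genuinely a full ``$n$-dimensional worth'' of $\zeta$-mass and not just the thin sphere $S(Q)$ one started on. This is handled by the observation that $x\in S(Q)$ forces $Q$ (hence a definite neighborhood of $x$ at scale $\gtrsim\ell(Q)$, and by doubling up to scale $r$) to contain points of $F$, around which \eqref{eqAD**} and the covering by $\AZ_0$ again produce mass $\gtrsim r^n$; making the ``definite neighborhood'' claim precise requires carefully tracking the constants $A_0,K_0,\theta_0,C_0$ through Lemma \ref{lemcad23} and Remark \ref{remqi}, but involves no new idea. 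All estimates are of standard Calderón--Zygmund covering type, so once the case analysis is set up the computations are routine.
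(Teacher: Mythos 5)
Your upper-bound argument is correct and close to the paper's (each sphere $S(Q)$ has $\HH^n(S(Q))\approx\ell(Q)^n\approx_{\theta_0,C_0}\sigma(Q)$, the cells of $\AZ_0$ are disjoint and those whose spheres meet $B(x,r)$ are either boundedly many large ones or contained in a bounded dilate of $B(x,r)$, so the growth of $\mu$ finishes), and your skeleton for the lower bound — reduce via Remark \ref{remqi} to a doubling cell $\wh P$ of side length $\approx r$ near $x$ which is not contained in any stopping cell $Q_i$ — is also the paper's. The gap is in the transfer step. The claim that $\wh P$ is covered, up to a $\sigma$-null set, by cells of $\AZ_0$ of comparable size is false on both counts: the cells of $\AZ_0$ inside $\wh P$ have side length comparable to the stopping cell $Q_i$ containing them, which may be much smaller than $\wh P$ (harmless), and, more seriously, the portions of $\wh P$ lying inside the $Q_i$'s are covered by cells of $\AZ=\bigcup_i\AZ_i$ that need not belong to $\AZ_0$: any $P\in\AZ$ with $\sigma(P\cap F)=0$ (e.g.\ a cell lying entirely inside low-density cells) carries no sphere and contributes nothing to $\zeta$. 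Such cells can carry as much as a $(1-\tfrac{\ve_0}{2})$ fraction of $\sigma(\wh P)$, so the inequality $\zeta(CB(x,r))\gtrsim\mu(\wh P)$ with a constant independent of $\ve_0$ that you deduce from the covering claim is not only unjustified but false in general; this is also why your final bound $\gtrsim\theta_0\,r^n$ misses the $\ve_0$-dependence which the statement of the lemma allows and which is genuinely needed.

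The missing ingredient is the stopping condition defining $\wt F$: since $\wh P$ is not contained in any $Q_i$, and the $Q_i$ are the maximal cells with $\sigma(Q_i\setminus F)>(1-\tfrac{\ve_0}{2})\sigma(Q_i)$, one gets $\sigma(\wh P\cap F)\ge\tfrac{\ve_0}{2}\,\sigma(\wh P)\gtrsim_{\theta_0}\ve_0\,\ell(\wh P)^n$ (the last step using that $\wh P$ is doubling and not contained in any cell from $\LD$). Only this $F$-portion of the mass can be transferred to $\zeta$: since every $Q_i$ meeting $\wh P$ is contained in $\wh P$, one has $\sigma(\wh P\cap F)=\sigma(\wh P\cap\wt F)+\sum_{P\in\AZ_0,\,P\subset\wh P}\sigma(P\cap F)$, and $\sigma(P\cap F)\le\sigma(P)\lesssim_{C_0}\ell(P)^n\approx\HH^n(S(P))$ with $S(P)\subset 2B_{\wh P}$, which yields $\zeta(2B_{\wh P})\gtrsim_{C_0}\sigma(\wh P\cap F)\gtrsim_{\theta_0,\ve_0,C_0}\ell(\wh P)^n$; this is exactly the paper's key estimate \rf{eqdk742}. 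With this step in place of your covering claim (and the harmless dilation/rescaling you already mention), the rest of your case analysis goes through.
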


\begin{proof}
First we will show the upper AD-regularity of $\zeta$. That is, we will prove that $\zeta(B(x,r))\leq C(C_0,\theta_0)\,r^n$ for all
$x,r$. By the upper AD-regularity of $\sigma$, it is enough to show that the measure
$$\nu = \sum_{Q\in \AZ_0} \HH^n|_{S(Q)}$$
is also upper AD-regular. So we have to prove that 
\begin{equation}\label{eqgrnu33}
\nu(B(x,r))\leq C(C_0,\theta_0)\,r^n\quad \mbox{ for all $x\in\bigcup_{Q\in\AZ_0}S(Q)$ and all $r>0$.}
\end{equation}
Take $x\in S(Q)$, for some $Q\in\AZ_0$. Clearly the estimate above holds if the only sphere $S(P)$, $P\in\AZ_0$, that intersects $B(x,r)$ is just $S(Q)$ itself.
So assume that $B(x,r)$ intersects a sphere $S(P)$, $P\in\AZ_0$, with $P\neq Q$.
Recall that $\frac12 B(Q)\cap \frac12B(P)=\varnothing$, by 
Remark \ref{remfac1}, and thus for some constant $C_6$,
$P\subset B(x,C_6r)$. Hence,
$$\nu(B(x,r))\leq \sum_{P\in\AZ_0:P\subset B(x,C_6r)} \nu(\tfrac14S(P))\lesssim \sum_{P\in\AZ_0:P\subset B(x,C_6r)} \ell(P)^n.$$
Note now that by the definition of $\AZ_0$, $\sigma(F\cap P)>0$, which implies that
$P\not\in\LD$ and that $P$ is not contained in any cell from $\LD$, and thus taking also into account that $P\in\DD^{db}$,
\begin{equation}\label{eqas568}
\sigma(P)\gtrsim \sigma(3.5B_P)\gtrsim \theta_0\,\ell(P)^n.
\end{equation}
Together with the upper AD-regularity of $\sigma$ this yields
$$
\nu(B(x,r))\lesssim \frac1{\theta_0}\sum_{P\in\AZ_0:P\subset B(x,C_6r)} \sigma(P)\lesssim 
\frac1{\theta_0}\,\sigma(B(x,C_6r)) \lesssim_{C_0,\theta_0}r^n,
$$
which concludes the proof of \rf{eqgrnu33}.

It remains now to show the lower AD-regularity of $\zeta$. 
First we will prove that 
\begin{equation}\label{eqdk742}
\zeta(2B_Q)\gtrsim_{\theta_0,\ve_0,C_0} \ell(Q)^n \quad\mbox{ if $Q\in\DD_\sigma^{db}$ is not contained in any cell $Q_i$, $i\in J$.}
\end{equation}
Indeed, note that by the definition of the cells $Q_i$, $i\in J$,
$$\sigma(Q\setminus F)\leq  \Bigl(1-\frac{\ve_0}2\Bigr)\,\sigma(Q),$$
or equivalently,
$$\sigma(Q\cap F)\geq \frac{\ve_0}2\,\sigma(Q).
$$
Since $Q$ is not contained in any cell from $\LD$ (by the definitions of $F$ and $\AZ_0$) and is doubling, 
\begin{equation}\label{eqdk743}
\sigma(Q\cap F)\gtrsim_{\ve_0} \sigma(3.5B_Q)\gtrsim_{\theta_0,\ve_0}\ell(Q)^n.
\end{equation}
On the other hand, by the construction of $\zeta$,
$$\sigma(Q\cap F) = \sigma(Q\cap\wt F) + \sum_{P\in\AZ_0:P\subset Q}\sigma(P\cap F) \lesssim_{C_0}
\zeta(Q\cap\wt F) + \sum_{P\in\AZ_0:P\subset Q}\HH^n(S(P)).$$
We may assume that all the cells $P\subset Q$ satisfy $S(P)\subset 2B_Q$, just by choosing the constant $A_0$ in the construction of the lattice $\DD_\sigma$ big enough. Then we get
$$\sigma(Q\cap F)  \lesssim_{C_0}
\zeta(Q\cap\wt F) + \sum_{P\in\AZ_0:S(P)\subset 2B_Q}\zeta(S(P))  \lesssim_{C_0}
\zeta(2B_Q)
.$$
Together with \rf{eqdk743}, this gives \rf{eqdk742}.

To prove the lower AD regularity of $\zeta$, note that by Remark \ref{remqi} there is some constant $C'(C_0,\theta_0)$ such that 
if $x\in S(Q)$, $Q
\in\AZ_0$, and $ C'(C_0,\theta_0)\,\ell(Q)< r\leq \diam(Q_0)$, then there exists $P\in\DD_\sigma^{db}$ not contained in any cell $Q_i$, $i\in J$, such that $2B_P\subset B(x,r)$, with $\ell(P)\approx_{\theta_0,C_0} r$. The same holds for $0 < r\leq \diam(Q_0)$ if $x\in\wt F$. 
From \rf{eqdk742}  we deduce that
$$\zeta(B(x,r))\geq \zeta(2B_P) \gtrsim_{\theta_0,\ve_0,C_0} \ell(P)^n \approx_{\theta_0,\ve_0,C_0} r^n.$$
In the case that $ r\leq C'(C_0,\theta_0)\,\ell(Q)$ for  $x\in S(Q)$, $Q
\in\AZ_0$, the lower AD-regularity of $\HH^n|_{S(Q)}$ gives the required lower estimate
for $\zeta(B(x,r))$.
\end{proof}

\vv

\begin{lemma}\label{lemrieszeta}
The Riesz transform $\RR_\zeta$ is bounded in $L^2(\zeta)$, with a bound on the norm depending on
$C_0$, $C_1$, $\theta_0$, and $\ve_0$.
\end{lemma}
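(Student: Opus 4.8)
The plan is to verify a $T1$-type testing condition for $\RR_\zeta$, comparing $\zeta$ scale by scale either to $\sigma=\mu|_{Q_0}$ (on which $\RR$ is bounded, by assumption (f) of the Main Lemma) or to a single sphere (on which $\RR$ is bounded, with an absolute constant, after rescaling). Since $\zeta$ is $n$-AD-regular by Lemma \ref{lemAD}, by the $T1$ theorem for AD-regular measures (see \cite{Tolsa-llibre}) it suffices to show that
$$\int_B |\RR(\chi_{C_7B}\zeta)|^2\,d\zeta \lesssim \zeta(B)$$
for a fixed large constant $C_7$ and every ball $B$ centered on $\supp\zeta$ of radius at most $\diam(\supp\zeta)\approx\ell(Q_0)$; this testing inequality gives both the weak boundedness property and $\RR\zeta\in BMO(\zeta)$.

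First I would record the geometric separation that makes the comparison work. Since the balls $\tfrac12 B(Q)$, $Q\in\DD_\sigma$, are pairwise disjoint (Remark \ref{remfac1}) and $S(Q)$ is the sphere of radius $\tfrac14 r(B(Q))$ concentric with $B(Q)$, one gets $\dist(S(Q),S(Q'))\gtrsim\max(\ell(Q),\ell(Q'))$ for $Q\neq Q'$; and since $\wt F$ is disjoint from every $Q_i$ while $\supp\sigma\cap B(Q)\subset Q\subset Q_i$ when $Q\in\AZ_0$, also $\dist(S(Q),\wt F)\gtrsim\ell(Q)$. In particular, inside the $\tfrac1{100}\ell(Q)$-neighbourhood of a sphere $S(Q)$ the support of $\zeta$ reduces to $S(Q)$ itself. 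Next, each $Q\in\AZ_0$ is doubling, meets $F$, and is not contained in any cell of $\LD$, so $\theta_0\ell(Q)^n\lesssim\sigma(Q)\lesssim\ell(Q)^n\approx\HH^n(S(Q))$; hence the auxiliary measure
$$\sigma' \;=\; \sigma|_{\wt F} \;+\; \sum_{Q\in\AZ_0}\frac{\HH^n(S(Q))}{\sigma(Q)}\,\sigma|_Q$$
is a density bounded above and below (with constants depending on $\theta_0$) times the restriction of $\sigma=\mu|_{Q_0}$ to $\wt F\cup\bigcup_{\AZ_0}Q$. Since $L^2$ boundedness of $\RR$ is preserved under restriction of the measure and under multiplication by such a density, $\RR_{\sigma'}$ is bounded in $L^2(\sigma')$ with a constant controlled by $C_0,C_1,\theta_0$; in particular $\sigma'$ satisfies the testing inequality.

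Then I would fix $B=B(x_0,r)$ and write $\chi_{C_7B}\zeta=\chi_{C_7B}\sigma'+\sum_{Q}g_Q$, where $g_Q=\chi_{C_7B}\bigl(\HH^n|_{S(Q)}-\tfrac{\HH^n(S(Q))}{\sigma(Q)}\sigma|_Q\bigr)$. For the boundedly many cells $Q$ with $\ell(Q)\gtrsim r$ (boundedly many by the disjointness of the $\tfrac12 B(Q)$) the contribution is treated directly, such a sphere meeting $C_7B$ in an essentially flat $n$-disc of radius $\lesssim r$; for the remaining $Q$ the bump $g_Q$ has \emph{total mass zero} and is supported in $B(z_Q,\ell(Q))$ with $\|g_Q\|\approx\ell(Q)^n$, and $\sum_{Q}\ell(Q)^n\approx\sum_Q\HH^n(S(Q))\lesssim\zeta(C'B)\lesssim r^n$. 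The term $\int_B|\RR(\chi_{C_7B}\sigma')|^2\,d\zeta$ is $\lesssim r^n$: on $\wt F$ it equals $\int_B|\RR(\chi_{C_7B}\sigma')|^2\,d\sigma'\lesssim\sigma'(C_7B)\lesssim r^n$, and on each $S(Q)$ one splits off $\RR(\chi_{2B_Q}\sigma')$, estimated in $L^2(\HH^n|_{S(Q)})$ because $\sigma'|_{2B_Q}$ and $\HH^n|_{S(Q)}$ are two measures of $n$-growth with bounded Riesz transform, so $\RR$ is bounded on their sum by Proposition 2.25 of \cite{Tolsa-llibre}; the complementary piece $\RR(\chi_{C_7B\setminus 2B_Q}\sigma')$ is smooth on $B_Q$ and reduces to an average over $Q$ of $|\RR(\chi_{C_7B}\sigma')|^2$, which sums (the $Q\in\AZ_0$ being pairwise disjoint) to $\lesssim\|\RR(\chi_{C_7B}\sigma')\|_{L^2(\sigma')}^2\lesssim r^n$. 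Finally, for $\int_B|\RR(\sum_Q g_Q)|^2\,d\zeta$ I would split each $\RR g_Q$ into its part on $B(z_Q,\ell(Q))$ — where $\|\RR g_Q\|_{L^2(\zeta)}^2\lesssim\ell(Q)^n$, using that $\zeta$ there sees only $S(Q)$, together with Proposition 2.25 for the $\sigma|_Q$-part — and its part off $B(z_Q,\ell(Q))$, where the cancellation $\int g_Q=0$ yields $|\RR g_Q(x)|\lesssim\ell(Q)^{n+1}(\ell(Q)+|x-z_Q|)^{-(n+1)}$; testing the latter sum against $h\in L^2(\zeta|_B)$ and using $\int\ell(Q)^{n+1}(\ell(Q)+|x-z_Q|)^{-(n+1)}|h|\,d\zeta\lesssim\ell(Q)^n\inf_{S(Q)}M_\zeta h$ followed by $\sum_Q\ell(Q)^n\inf_{S(Q)}M_\zeta h\lesssim\int_{C'B}M_\zeta h\,d\zeta\lesssim r^{n/2}\|h\|_{L^2(\zeta)}$ (Hardy--Littlewood, the spheres being disjoint and contained in $C'B$) gives $\lesssim r^n$. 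Summing the contributions proves the testing inequality, hence the lemma.

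The hard part will be the bookkeeping in the last two steps. Because the spheres $S(Q)$, $Q\in\AZ_0$, may accumulate towards $\partial Q_i$ and hence towards $\wt F$, a naive pointwise bound on $\RR$ of the sphere part of $\zeta$, or on $\sum_Q\RR g_Q$, diverges logarithmically; one must instead exploit (i) the separations $\dist(S(Q),S(Q'))\gtrsim\max(\ell(Q),\ell(Q'))$ and $\dist(S(Q),\wt F)\gtrsim\ell(Q)$, which confine the singular behaviour of each piece to a region where $\zeta$ coincides with a single nice set, and (ii) the fact (Remark \ref{remqi}) that the cells of $\AZ_0$ are pairwise disjoint and of controlled size inside each gap $Q_i$, which makes the $g_Q$ behave like an almost-orthogonal system and lets the relevant sums telescope; at the smallest scales this is most naturally organized as an induction on the radius $r$, and throughout, the transfer of $L^2$ estimates between $\sigma|_Q$, the spheres, and $\zeta$ via Proposition 2.25 of \cite{Tolsa-llibre} is what keeps the individual pieces under control. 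This part closely parallels the argument in \cite{NToV-pubmat} (the quantitative analogue of Pajot's covering theorem referred to at the beginning of the section).
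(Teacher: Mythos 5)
There is a genuine gap, and it sits exactly where you yourself say ``the hard part'' is. Your decomposition $\chi_{C_7B}\zeta=\chi_{C_7B}\sigma'+\sum_Q g_Q$ uses the \emph{full} measures $\sigma|_Q$, $Q\in\AZ_0$, and the separation you record ($\dist(S(Q),S(Q'))\gtrsim\max(\ell(Q),\ell(Q'))$, $\dist(S(Q),\wt F)\gtrsim\ell(Q)$) concerns only the spheres; it says nothing about $\dist(\supp(\sigma|_Q),S(P))$ or $\dist(\supp(\sigma|_Q),\wt F)$, which can be as small as $\approx\ell(P)$ or $0$: points of $\wt F$ and whole cells $P\in\AZ_0$ belonging to a \emph{different} $Q_j$ (hence of arbitrarily small side length, Remark \ref{remqi} only controls sizes inside one $Q_i$) can sit right across $\partial Q_i$ from $\supp(\sigma|_Q)$. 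Consequently your key near--field claim --- that on $B(z_Q,\ell(Q))$ ``$\zeta$ sees only $S(Q)$'', so that $\|\chi_{B(z_Q,\ell(Q))}\RR g_Q\|_{L^2(\zeta)}^2\lesssim\ell(Q)^n$ follows from boundedness on the single sphere plus Proposition 2.25 --- is false: $\zeta|_{B(z_Q,\ell(Q))}$ may contain many small spheres $S(P)$ and a piece of $\wt F$ at distance $\ll\ell(Q)$ from $\supp(\sigma|_Q)$, where $|\RR(\chi_Q\sigma)|$ can be of size $\log\bigl(\ell(Q)/\dist\bigr)$ and the cancellation $\int g_Q=0$ gives nothing. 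Moreover, even granting individual near--field bounds, a given point $x\in\supp\zeta$ lies in $B(z_Q,\ell(Q))$ for up to logarithmically many $Q$'s (Whitney--type chains of cells of increasing size approaching $x$ across cell boundaries), so passing from $\sum_Q|\RR g_Q|^2$ to $|\sum_Q\RR g_Q|^2$ on the near regions loses an unbounded factor. You flag the logarithmic divergence and invoke ``almost--orthogonality'' and ``induction on the radius'', but no mechanism is given, and this is precisely the content of the lemma, not bookkeeping.

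The paper's proof removes these near--field interactions structurally rather than estimating them. First, $\zeta=\sigma|_{\wt F}+\nu$ with $\nu=\sum_{Q\in\AZ_0}\HH^n|_{S(Q)}$, and the whole interaction between $\wt F$ and the spheres is absorbed by the black box on sums of measures (Proposition 2.25 of \cite{Tolsa-llibre}), so one only needs $\RR_\nu$ bounded in $L^2(\nu)$. Second, $\nu$ is compared not with $\sigma|_Q$ but with $\wt\sigma=\sum_Q c_Q\,\sigma|_{I_\kappa(Q)}$ supported on the \emph{inner} subsets $I_\kappa(Q)$; the small--boundary and doubling properties give $\sigma(I_\kappa(Q))\gtrsim\sigma(Q)$, and now all pieces $S(P)\cup I_\kappa(P)$ are mutually separated at scale $D(P,Q)\gtrsim\max(\ell(P),\ell(Q))$ (display \rf{eqdkl93001}), so the smooth--kernel/mass--matching estimate and the maximal--function Schur argument (your far--field step, which is fine) apply to \emph{all} non--local interactions, with no near--field regime left over; the local parts are handled on a single sphere or a single cell, and a duality step upgrades $\RR^{nl}_\nu:L^2(\nu)\to L^2(\wt\sigma)$ to $L^2(\nu)\to L^2(\nu)$. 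If you want to salvage your $T1$/testing framework, the minimal repair is to build $\sigma'$ and the bumps $g_Q$ from $\sigma|_{I_\kappa(Q)}$ instead of $\sigma|_Q$ and to keep $\sigma|_{\wt F}$ out of the comparison (merging it only at the end via Proposition 2.25); as written, the argument does not go through.
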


To prove this result we will follow very closely the arguments in the last part of the proof of the Main Lemma 2.1
of \cite{NToV-pubmat}. For completeness, we will show all the details.

\vv
For technical reasons, it will  be convenient to work with an $\ve$-regularized
version $\wh \RR_{\nu,\ve}$ of the  Riesz transform $\RR_{\nu}$. For a measure $\nu$ with growth of order $n$, we set
$$
\wh \RR_{\nu,\ve}f(x) = \int \frac{x-y}{\max(|x-y|,\ve)^{n+1}}\,f(y)\,d\nu(y).
$$
It is easy to check that
$$|\wh \RR_{\nu,\ve}f(x) -  \RR_{\nu,\ve}f(x)|\leq c\,M_n f(x)\qquad\mbox{for all $x\in\R^{n+1}$},$$
where $c$ is independent of $\ve$ and
$M_n$ is the maximal operator defined by
$$M_n f(x)=\sup_{r>0} \frac1{r^n}\int_{B(x,r)}|f|\,d\nu.$$
Since $M_n$ is bounded in $L^2(\nu)$ (because $\nu$ has growth of order $n$), it turns out that
$\RR_\nu$ is bounded in $L^2(\nu)$ if and only if the operators $\wh \RR_{\nu,\ve}$ are bounded in $L^2(\nu)$
uniformly on $\ve>0$. The advantage of $\wh \RR_{\nu,\ve}$ over $\RR_{\nu,\ve}$ is that the kernel
$$\wh K_\ve(x) = \frac{x}{\max(|x|,\ve)^{n+1}}$$
is continuous and
satisfies the smoothness condition
$$|\nabla \wh K_\ve(x)|\leq \frac{c}{|x|^{n+1}},\quad |x|\neq\ve$$
(with $c$ independent of $\ve$),
which implies that $\wh K_\ve(x-y)$
is a standard Calder\'on-Zygmund kernel (with constants independent of $\ve$), unlike the kernel of $\RR_{\nu,\ve}$.

\vv
\begin{proof}[Proof of Lemma \ref{lemrieszeta}]
To shorten notation, in the arguments below we will allow all the implicit constants in the relations $\lesssim$ and $\approx$ to depend on $C_0,C_1,\theta_0,\ve_0$.

Denote
$$\nu = \sum_{Q\in \AZ_0} \HH^n|_{S(Q)},$$
so that $\zeta = \sigma|_{\wt F} +\nu$.
Since $\RR_\sigma$ is bounded in $L^2(\sigma)$, it is enough to show that $\RR_\nu$ is bounded 
in $L^2(\nu)$. Indeed,  the boundedness of both operators implies the boundedness of $\RR_{\sigma+\nu}$ in $L^2(\sigma+\nu)$ (see Proposition 2.25 of \cite{Tolsa-llibre}, for example).

As in \rf{eqik00}, given $\kappa>0$, for each $Q\in\AZ_0$, we consider the set
$$I_{\kappa}(Q) = \{x\in Q:\dist(x,\supp\sigma\setminus Q)\geq \kappa\ell(Q)\}.$$
By the small boundary condition of $Q$, the fact that $Q$ is doubling, 
and that $\sigma(Q)\gtrsim \theta_0\,\ell(Q)^n$ (as shown in \rf{eqas568}), we deduce
there exists some $\kappa>0$ small enough such that
\begin{equation}\label{eqdfy782}
\sigma(I_\kappa(Q))\geq \frac12\,\sigma(Q)\gtrsim \theta_0\,\ell(Q)^n.
\end{equation}

We consider the measure
$$\wt\sigma = \sum_{Q\in \AZ_0} c_Q\,\sigma|_{I_\kappa(Q)},$$
with $c_Q=\HH^n(S(Q))/\sigma(I_\kappa(Q))$. By \rf{eqdfy782} it follows that the constants $c_Q$, $Q\in \AZ_0$, have a uniform bound depending on $\theta_0$, and thus $\RR_{\wt\sigma}$ is bounded in $L^2(\wt\sigma)$ (with a norm possibly depending on $\theta_0$). Further,
$\nu(S(Q)) = \wt\sigma(Q)$ for each $Q\in \AZ_0$. 

It is clear that, in a sense, $\wt\sigma$ can be considered as an approximation of $\nu$ (and
 conversely). To prove the boundedness of $\RR_\nu$ in $L^2(\nu)$, 
 we will prove that $\wh \RR_{\nu,\ve}$ is bounded in $L^2(\nu)$ uniformly on $\ve>0$ by comparing 
 it to $\wh \RR_{\wt\sigma,\ve}$.
First we need to introduce some local and non local operators: given $z\in \bigcup_{Q\in \AZ_0} S(Q)$, we denote by
$S(z)$ the sphere $S(Q), Q\in \AZ_0,$ that contains $z$. Then we write, for $z\in\bigcup_{Q\in \AZ_0} S(Q)$,
$$\RR^{loc}_{\nu,\ve} f(z) = \wh \RR_{\nu,\ve}(f\chi_{S(z)})(z),
 \qquad \RR^{nl}_{\nu,\ve} f(z) = \wh \RR_{\nu,\ve}(f\chi_{\R^{n+1} \setminus S(z)})(z).$$
We define analogously $\RR^{loc}_{\wt\sigma,\ve} f$ and $\RR^{nl}_{\wt\sigma,\ve} f$:
given $z\in \bigcup_{Q\in \AZ_0} Q$, we denote by
$Q(z)$ the cell $Q\in \AZ_0,$ that contains $z$. Then for $z\in\bigcup_{Q\in \AZ_0} Q$, we set
$$\RR^{loc}_{\wt\sigma,\ve} f(z) = \wh \RR_{\wt\sigma,\ve}(f\chi_{Q(z)})(z),
 \qquad \RR^{nl}_{\wt\sigma,\ve} f(z) = \wh \RR_{\wt\sigma,\ve}(f\chi_{\R^{n+1} \setminus Q(z)})(z).$$

It is straightforward to check that $\RR^{loc}_{\nu,\ve}$ is bounded in $L^2(\nu)$, and
that $\RR^{loc}_{\wt\sigma,\ve}$ is bounded in $L^2(\wt\sigma)$, both uniformly
on $\ve$ (in other words, 
$\RR^{loc}_{\nu}$ is bounded in $L^2(\nu)$ and
 $\RR^{loc}_{\wt\sigma}$ is bounded in $L^2(\wt\sigma)$). 
Indeed,
$$\|\RR^{loc}_{\nu,\ve}f\|_{L^2(\nu)}^2 = \sum_{Q\in \AZ_0}
\|\chi_{S(Q)}\wh \RR_{\nu,\ve}(f\chi_{S(Q)})\|_{L^2(\nu)}^2 \lesssim\sum_{Q\in\AZ_0}
\|f\chi_{S(Q)}\|_{L^2(\nu)}^2 = \|f\|_{L^2(\nu)}^2,$$
by the boundedness of the Riesz transforms on $S(Q)$. Using the boundedness
of $\RR_\sigma$ in $L^2(\sigma)$, it follows analogously that $\RR^{loc}_{\wt\sigma,\ve}$ is bounded in 
$L^2(\wt\sigma)$.

\vspace{2mm}
\noindent
{\bf Boundedness of $\RR^{nl}_{\nu}$ in $L^2(\nu)$. }
We must show that 
$\RR^{nl}_{\nu}$ is bounded in $L^2(\nu)$. To this end, we will compare $\RR^{nl}_{\nu}$ to $\RR^{nl}_{{\wt\sigma}}$.
 Observe first that, since 
$\RR^{nl}_{{\wt\sigma},\ve} = \wh \RR_{{\wt\sigma},\ve} - \RR^{loc}_{{\wt\sigma},\ve}$, and both
$\wh \RR_{{\wt\sigma},\ve}$ and $\RR^{loc}_{{\wt\sigma},\ve}$ are bounded in $L^2({\wt\sigma})$, it follows that 
$\RR^{nl}_{{\wt\sigma},\ve}$ is bounded in $L^2({\wt\sigma})$ (all uniformly on $\ve>0$).

Note also that for two different cells $P,Q\in\AZ_0$, we have
\begin{equation}\label{eqdkl93001}
\dist(S(P),S(Q))\approx \dist(I_\kappa(P),I_\kappa(Q))\approx \dist(S(P),I_\kappa(Q))\approx D(P,Q),
\end{equation}
where $D(P,Q)=\ell(P) + \ell(Q) + \dist(P,Q)$ and the implicit constants may depend on $\kappa$.  
The arguments to prove this are exactly the same as the ones for \rf{eq19050}, \rf{eq1905} and  \rf{eq1906}, and so we omit them.
In particular, \rf{eqdkl93001} implies that $\bigl(S(P)\cup I_\kappa(P)\bigr) \cap \bigl(S(Q)\cup I_\kappa(Q)\bigr) = \varnothing$,
and thus for every $z\in\R^{n+1}$ there is at most one cell $Q\in \AZ_0$ such that $z\in S(Q)\cup I_\kappa(Q)$, which we denote by $Q(z)$.
Hence we can extend 
 $\RR^{nl}_{\nu,\ve}$ and $\RR^{nl}_{\wt\sigma,\ve}$ to $L^2(\wt\sigma + \nu)$ by setting 
$$\RR^{nl}_{\nu,\ve} f(z) = \wh \RR_{\nu,\ve}(f\chi_{\R^{n+1} \setminus S(Q(z))})(z), \qquad
\RR^{nl}_{\wt\sigma,\ve} f(z) = \wh \RR_{\wt\sigma,\ve}(f\chi_{\R^{n+1} \setminus Q(z)})(z).$$

We will prove below that, for all $f\in L^2({\wt\sigma})$ and $g\in L^2(\nu)$ satisfying
\begin{equation}\label{eqcond33}
\int_{I_\kappa(P)} f\,d{\wt\sigma} = \int_{S(P)} g\,d\nu\quad \mbox{ for all $P\in\AZ_0$,}
\end{equation}
we have
\begin{equation}\label{eqkey1}
I(f,g) := \int |\RR^{nl}_{{\wt\sigma},\ve} f - \RR^{nl}_{\nu,\ve} g|^2\,d({\wt\sigma} + \nu) \lesssim
\|f\|_{L^2({\wt\sigma})}^2 + \|g\|_{L^2(\nu)}^2,
\end{equation} 
uniformly on $\ve$.
Let us see how the boundedness of $\RR^{nl}_{\nu}$ in $L^2(\nu)$ follows from this
 estimate.
As a preliminary step, we show that $\RR_\nu^{nl}:L^2(\nu)\to L^2({\wt\sigma})$ is bounded. To this
end, given $g\in L^2(\nu)$, we consider a function $f\in L^2({\wt\sigma})$ satisfying \rf{eqcond33}
that is constant on each ball $B_j$. It is straightforward to check that
$$
\|f\|_{L^2({\wt\sigma})}\leq \|g\|_{L^2(\nu)}.
$$
Then from the $L^2({\wt\sigma})$ boundedness of $\RR_{\wt\sigma}^{nl}$ and \rf{eqkey1}, we obtain
$$
\|\RR_{\nu,\ve}^{nl}g\|_{L^2({\wt\sigma})}\leq \|\RR_{{\wt\sigma},\ve}^{nl}f\|_{L^2({\wt\sigma})} + I(f,g)^{1/2}\lesssim
 \|f\|_{L^2({\wt\sigma})} + \|g\|_{L^2(\nu)}\lesssim\,\|g\|_{L^2(\nu)},
$$
which proves that $\RR_\nu^{nl}:L^2(\nu)\to L^2({\wt\sigma})$ is bounded. 

It is straightforward to check that the adjoint of $(\RR_{\nu,\ve}^{nl})_j:L^2(\nu)\to L^2({\wt\sigma})$ (where $(\RR_{\nu,\ve}^{nl})_j$ stands for the $j$-th component of $(\RR_{\nu,\ve}^{nl})_j$) equals $-(\RR_{\wt\sigma,\ve}^{nl})_j:L^2(\wt\sigma)\to L^2({\nu})$. So by duality we deduce that
$\RR_{\wt\sigma}^{nl}:L^2(\wt\sigma)\to L^2({\nu})$ is also bounded.

To prove now the $L^2(\nu)$ boundedness of $\RR_\nu^{nl}$, we consider an arbitrary function
$g\in L^2(\nu)$, and we construct $f\in L^2({\wt\sigma})$ satisfying \rf{eqcond33} which is constant
in each ball $P$.  Again, we have $\|f\|_{L^2({\wt\sigma})}\leq \|g\|_{L^2(\nu)}.$
Using the boundedness of $\RR_{\wt\sigma}^{nl}:L^2({\wt\sigma})\to L^2(\nu)$ together with \rf{eqkey1}, we obtain
$$\|\RR_{\nu,\ve}^{nl}g\|_{L^2(\nu)}\leq \|\RR_{{\wt\sigma},\ve}^{nl}f\|_{L^2(\nu)} + I(f,g)^{1/2}\lesssim 
 \|f\|_{L^2({\wt\sigma})} + \|g\|_{L^2(\nu)}\lesssim\|g\|_{L^2(\nu)},$$
as wished. 

It remains to prove that \rf{eqkey1} holds for $f\in L^2({\wt\sigma})$ and $g\in L^2(\nu)$ satisfying
\rf{eqcond33}.
For $z\in\bigcup_{P\in\AZ_0} P$, we have
$$|\RR^{nl}_{{\wt\sigma},\ve} f(z) - \RR^{nl}_{\nu,\ve} g(z)| \leq \sum_{P\in\AZ_0:P\neq Q(z)} \left|\int \wh K_\ve(z-y)
(f(y)\,d{\wt\sigma|_{I_\kappa(P)}}(y)-g(y)\,d\nu|_{S(P)}(y))\right|,$$
where $\wh K_\ve(z)$ is the kernel of the $\ve$-regularized  $n$-Riesz transform.
By standard estimates, using \rf{eqcond33} and \rf{eqdkl93001},
and the smoothness of $\wh K_\ve$, it follows that
\begin{align*}
\biggl|\int \wh K_\ve(z-y)&
(f(y)\,d{\wt\sigma|_{I_\kappa(P)}}(y)-g(y)\,d\nu|_{S(P)}(y))\biggr| \\ &= \left|\int_{P} (\wh K_\ve(z-y)- K_\ve(z-x))
(f(y)\,d{\wt\sigma|_{I_\kappa(P)}}(y)-g(y)\,d\nu|_{S(P)}(y))\right|\\
&\lesssim\int\frac{|x-y|}{|z-y|^{n+1}}(|f(y)|\,d{\wt\sigma|_{I_\kappa(P)}}(y)+|g(y)|\,d\nu|_{S(P)}(y))\\
&\approx \frac{\ell(P)}{D(Q(z),P)^{n+1}}\,\int(|f(y)|\,d{\wt\sigma|_{I_\kappa(P)}}(y)+|g(y)|\,d\nu|_{S(P)}(y)).
\end{align*}
Recall that $Q(z)$ stands for the cell $Q, Q\in\AZ_0,$ such that $z\in S(Q)\cup I_\kappa(Q)$.

We consider the operators
$$
T_{\wt\sigma}(f)(z) = \sum_{P\in\AZ_0: P\neq Q(z)}\frac{\ell(P)}{D(Q(z),P)^{n+1}}\,\int
f\,d{\wt\sigma|_{I_\kappa(P)}}\,,
$$
and $T_\nu$, which is defined in the same way with ${\wt\sigma_{I_\kappa(P)}}$ replaced by $\nu|_{S(P)}$.
Observe that
\begin{align*}
I(f,g)& \leq c\,\|T_{\wt\sigma}(|f|) + T_\nu(|g|)\|_{L^2({\wt\sigma}+\nu)}^2 \\
&\leq 
2c\,\|T_{\wt\sigma}(|f|)\|_{L^2({\wt\sigma}+\nu)}^2 + 2c\,\|T_\nu(|g|)\|_{L^2({\wt\sigma}+\nu)}^2 \\
&= 
4c\,\|T_{\wt\sigma}(|f|)\|_{L^2({\wt\sigma})}^2 + 4c\,\|T_\nu(|g|)\|_{L^2(\nu)}^2,
\end{align*}
where, for the last equality, we took into account that both $T_{\wt\sigma}(|f|)$ and $T_\nu(|g|)$ are constant on 
$I_\kappa(P)\cup S(P)$ and that ${\wt\sigma}(I_\kappa(P))=\nu(S(P))$ for all $P\in\AZ_0$.

To complete the proof of \rf{eqkey1} it is enough to show that $T_{\wt\sigma}$ is bounded in $L^2({\wt\sigma})$ and 
$T_\nu$ in $L^2(\nu)$. We only deal with $T_{\wt\sigma}$, since the arguments for $T_{\nu}$ are
analogous. We argue by duality again. So we consider non-negative functions $f,h\in L^2({\wt\sigma})$ and we write
\begin{align*}
\int T_{\wt\sigma}(f)\,h\,d{\wt\sigma} &= \int \left(\sum_{P\in\AZ_0:P\neq Q(z)}\frac{\ell(P)}{D(P,Q(z))^{n+1}}\,\int_{P}
f\,d{\wt\sigma}\right)\,h(z)\,d{\wt\sigma}(z)\\
&\lesssim
\sum_{P\in\AZ_0}\ell(P) \int_{P}f\,d{\wt\sigma}  \int_{\R^{n+1}\setminus P}\frac1{\bigl(\dist(z,P) + \ell(P))^{n+1}}\,
\,h(z)\,d{\wt\sigma}(z).
\end{align*}
From the growth of order $n$ of ${\wt\sigma}$, it follows easily that
$$
\int_{\R^{n+1} \setminus P}\frac1{(\dist(z,P) + \ell(P))^{n+1}}\,
\,h(z)\,d{\wt\sigma}(z)\lesssim \frac{1}{\ell(P)}\,M_{\wt\sigma} h(y)\quad \mbox{ for all $y\in P$,}
$$
 where $M_{\wt\sigma}$ stands for the (centered) maximal Hardy-Littlewood operator (with respect to ${\wt\sigma}$).
 Then we deduce that
$$\int T_{\wt\sigma}(f)\,h\,d{\wt\sigma}\lesssim \sum_{P\in\AZ_0} \int_{P}f(y)\,M_{\wt\sigma} h(y)\,d{\wt\sigma}(y)\lesssim
\|f\|_{L^2({\wt\sigma})} \|h\|_{L^2({\wt\sigma})},$$
by the $L^2({\wt\sigma})$ boundedness of $M_{\wt\sigma}$. Thus $T_{\wt\sigma}$ is bounded in $L^2({\wt\sigma})$.
\end{proof}

\vv

\begin{proof}[\bf Proof of the Main Lemma \ref{mainlemma}]
By Lemmas \ref{lemAD}, \ref{lemrieszeta}, and the Nazarov-Tolsa-Volberg theorem of 
\cite{NToV},  $\zeta$ is a uniformly $n$-rectifiable measure. So it only remains to note that
the set $\Gamma:=\supp\zeta$ satisfies the required properties from the Main Lemma: it is uniformly $n$-rectifiable and, by \rf{eqsss39} and recalling that $\wt F\subset \Gamma$ and $\sigma = \mu|_{Q_0}$, we have
$\mu(\Gamma) \geq \mu(\wt F) =\sigma(\wt F) \geq \frac{\ve_0}2\,\mu(Q_0).$
\end{proof}


\vvv

\end{document}